\documentclass[11pt,reqno]{amsart}

\usepackage[utf8]{inputenc}
\usepackage[margin=1.18in]{geometry}
\usepackage{amsmath,amssymb,amsfonts,mathtools,mathrsfs}
\usepackage{enumitem}
\usepackage{comment}
\usepackage{microtype}
\usepackage{booktabs}
\usepackage[dvipsnames]{xcolor}
\usepackage[colorlinks=true,citecolor=MidnightBlue,linkcolor=BrickRed,
  urlcolor=RoyalBlue]{hyperref}
\usepackage[backend=bibtex,style=alphabetic,maxnames=99]{biblatex}
\numberwithin{equation}{section}

\newtheorem{theorem}{Theorem}[section]
\newtheorem{proposition}[theorem]{Proposition}
\newtheorem{lemma}[theorem]{Lemma}
\newtheorem{corollary}[theorem]{Corollary}
\theoremstyle{definition}
\newtheorem{definition}[theorem]{Definition}
\newtheorem{assumption}[theorem]{Assumption}
\theoremstyle{remark}
\newtheorem{remark}[theorem]{Remark}

\newcommand{\C}{\mathbb C}
\newcommand{\R}{\mathbb R}
\newcommand{\T}{\mathbb T}
\newcommand{\E}{\mathbb E}
\newcommand{\Pp}{\mathbb P}
\newcommand{\1}{\mathbf 1}
\newcommand{\HS}{\mathrm{HS}}
\newcommand{\op}{\mathrm{op}}
\newcommand{\cir}{\mathrm{circ}}

\newcommand{\Prob}{\mathbb P}
\renewcommand{\P}{\mathbb P}
\newcommand{\Coeff}{\operatorname{Coeff}}
\newcommand{\dist}{\operatorname{dist}}
\newcommand{\Gr}{\operatorname{Gr}}
\newcommand{\clip}{\operatorname{clip}}
\newcommand{\GL}{\operatorname{GL}}
\newcommand{\Emb}{\operatorname{Emb}}
\DeclareMathOperator{\rank}{rank}
\DeclareMathOperator{\tr}{tr}
\DeclareMathOperator{\TV}{TV}
\newcommand{\introheading}[1]{%
  \par\medskip\noindent\textbf{#1.}\par\smallskip}

\title[Non-Hermitian random band matrices]
{The circular law for non-Hermitian random band matrices: optimal bandwidth, periodic profile and discrete law}

\author{Yi Han}
\thanks{Department of Mathematics, Massachusetts Institute of Technology,
Cambridge, MA.\newline Email: \texttt{hanyi16@mit.edu}.}
\thanks{A Lean 4  formalization of the whole paper is available online,
see Section~\ref{subsec:lean-verification} for more details.}
\hypersetup{
  pdftitle={The circular law for non-Hermitian random band matrices: optimal bandwidth, periodic profile and discrete law},
  pdfauthor={Yi Han}
}

\begin{document}

\begin{abstract}
We consider non-Hermitian random band matrices with growing bandwidth and
study convergence of their empirical spectral distributions to the circular
law. Let \(N\) denote the matrix size and \(W_N\) the bandwidth. From a
universality perspective, it is conjectured that the circular law holds
whenever \(W_N\to\infty\). Previous results have mainly required
\(W_N\gg N^{1/2}\), with the principal exception of \cite{Han2511}, which
proves the \(W_N\to\infty\) circular law for an open-boundary
block-tridiagonal model. Here we prove the circular law at this optimal
threshold for several periodic models and under the near-optimal condition
\(W_N\gg\log N\) for a genuinely discrete model. For the periodic
hard-indicator profile and its uniform and polynomially tapered
generalizations, we prove the circular law under bounded-density and
finite-third-moment assumptions whenever \(W_N\to\infty\). At the same
threshold, we prove the circular law for
continuous, integrable profiles locally bounded below on every finite
interval, including exponentially and Gaussian decaying profiles, with
circular complex Gaussian entries. For the periodic full-block model, we
prove the circular law under bounded-density and finite-third-moment
assumptions whenever \(W_N\to\infty\). The finite-third-moment assumption in the
bounded-density results can be weakened to a finite
\((2+\alpha)\)-moment assumption after a corresponding modification of the
proof. For the same full-block model without a density assumption, we prove
the circular law for centered variance-one real subgaussian atoms when
\(W_N\gg\log N\). The
proof uses compositions of random transfer operators. An established
high-band circular-law input on a short auxiliary ring calibrates the full
exterior coefficient norm, and a boundary-uniform local comparison lifts
this calibration to target rings even when \(N/W_N\) is arbitrarily large.
\end{abstract}

\maketitle
\tableofcontents

\section{Introduction}
\label{sec:introduction}

Let \(A\) be an \(N\times N\) matrix with eigenvalues
\(\lambda_1,\ldots,\lambda_N\), counted with algebraic multiplicity.  Its
empirical spectral distribution (ESD) is denoted by
\[
 \mu_A:=\frac1N\sum_{j=1}^N\delta_{\lambda_j}.
\]
For a matrix with iid centered entries of variance \(N^{-1}\), the
convergence of \(\mu_A\) to the uniform probability measure
\(\mu_{\cir}\) on the unit disk is the classical circular law.  The
complex Gaussian ensemble goes
back to Ginibre \cite{Ginibre1965}, while Girko's non-rigorous work
\cite{Girko1985} initiated the Hermitization approach.  Rigorous proofs are gradually developed in \cite{Bai1997,GotzeTikhomirov2010,PanZhou2010}.
Tao and Vu proved the circular law under the optimal second-moment
assumption \cite{TaoVuKrishnapur2010}; see
\cite{BordenaveChafai2012} for a survey and further references.  Its
Hermitian counterpart is Wigner's semicircle law, originating in
\cite{Wigner1955CharacteristicVectors} and extended to substantially
more general real-symmetric ensembles in
\cite{Wigner1958RootsSymmetricMatrices}.  At the global scale, the
semicircle law convergence is accessible by the moment method since the eigenvalues are all real.  In the
non-Hermitian setting, however, eigenvalues are unstable under
perturbations and moments do not determine the spectral measure in the
same way. This is the reason why Hermitization arguments are introduced \cite{Girko1985}, and thus the proof of non-Hermitian spectral convergence is usually much more complicated.

Beyond dense iid ensembles, the circular law is known for
Bernoulli-thinned $p_N$-sparse iid matrices, first when
\(Np_N\gg(\log N)^2\) and subsequently under the optimal condition
\(Np_N\to\infty\)
\cite{BasakRudelson2019,RudelsonTikhomirov2019}.  At fixed average
degree (constant $p_N$),
\cite{SahSahasrabudheSawhney2025} proved that the ensemble still has a deterministic limiting
spectral law, but that limit is not the circular law.  For matrices with
independent entries and a nonconstant variance profile, deterministic
equivalents and limiting empirical spectral distributions are available
under quantitative irreducibility, while a local inhomogeneous circular
law is known when the variances are uniformly comparable
\cite{CookHachemNajimRenfrew2018,CookHachemNajimRenfrew2022,
AltErdosKruger2018}.  These settings do not directly cover a
deterministic one-dimensional band mask with \(W/N\to0\).

Random band matrices are the basic one-dimensional inhomogeneous model, and in this paper we consider non-Hermitian band matrices. Let $X_N$ be a square random matrix with independent entries where only entries within an effective distance \(W=W_N\) from the diagonal are
substantial, so \(X_N\) interpolates between a finite-range random operator
and a mean-field matrix.  At fixed bandwidth $W$, the limiting spectral measure, if it exists\footnote{The limiting ESD for these non-Hermitian $X_N$ with constant $W$ has only been proven to exist in the special case in \cite{GoldsheidKhoruzhenko2005}, but we expect that the proof strategy can be modified to cover more general model profiles.},
should retain detailed information about the entry law and the local
geometry of the band structure of $X_N$.  Once \(W_N\to\infty\), one expects this local dependence to be
washed out at the level of the empirical spectral distribution.  Under the doubly stochastic normalizing assumption $\mathbb{E}[XX^*]=\mathbb{E}[X^*X]=I_N$, this
leads to the global-universality prediction
\[
 W_N\longrightarrow\infty
 \qquad\Longrightarrow\qquad
 \mu_{X_N}\Longrightarrow\mu_{\cir},
\]
under suitable nondegeneracy assumptions. In contrast to the
expected Anderson type localization-delocalization transition of eigenvectors near \(W\asymp N^{1/2}\), this prediction on $\mu_{X_N}$ concerns
only the global eigenvalue distribution and is expected to remain valid
on both sides of that scale $N^{1/2}$. As a comparison, for Hermitian random band matrices which is much better studied by now, complete
delocalization, quantum unique ergodicity, and bulk universality when $W\gg N^{1/2}$ are established in \cite{YauYin2025,ErdosRiabov2025}, while
exponential localization on the sharp \(W^2\) scale when \(W^2\ll N\) is
proved in \cite{Drogin2025}. To our best knowledge, no such localization or delocalization results have been rigorously proven for non-Hermitian random band matrices by now, but see the recent paper \cite{ShcherbinaShcherbina2025} for a transition on the characteristic functions.

\introheading{The two basic periodic geometries and one generalization}In this paper we consider several different band matrix profiles, but they essentially belong to one of the two geometries below. All indices below are cyclic.  

The first model is the scalar model, which has the compact form
\begin{equation}\label{eq:intro-scalar-model}
 (X_N^{\rm sc})_{i,i+s}=\sqrt{q_s}\,\xi_{i,s},\qquad
 |s|\le W,\qquad
 \sum_{|s|\le W}q_s=1,\qquad q_s\asymp W^{-1},
\end{equation}
with all remaining entries zero, where the \(\xi_{i,s}\) are independent,
centered, and variance one.  This includes the hard indicator band
\(q_s=(2W+1)^{-1}\) and its uniformly comparable variants.

The second one is the periodic full-block model with \(N=mW\).  Its \((j,k)\)-th
\(W\times W\) block is
\begin{equation}\label{eq:intro-block-model}
 (X_N^{\rm blk})[j,k]=\frac1{\sqrt{3W}}
 \begin{cases}
  A_j,&k=j,\\
  B_j,&k=j+1,\\
  C_j,&k=j-1,\\
  0,&\text{otherwise},
 \end{cases}
 \qquad j,k\in\mathbb Z/m\mathbb Z,
\end{equation}
where all entries of the independent blocks \(A_j,B_j,C_j\) are copies
of one centered variance-one atom.  Thus \eqref{eq:intro-scalar-model}
is an entrywise scalar band, whereas \eqref{eq:intro-block-model} has
three complete random \(W\times W\) blocks in every block row.

We also consider an extension of the scalar model without a hard
band cutoff. The entries are independent centered circular complex
Gaussians, and the variance at displacement \(s\) (i.e., at a location with distance $s$ to the diagonal) is
\[
 q_s=\frac{f(s/W)}
 {\displaystyle\sum_{t\in\mathbb Z\cap[-N/2,N/2)}f(t/W)},
 \qquad s\in\mathbb Z\cap[-N/2,N/2),
\]
where \(f:\mathbb R\to(0,\infty)\) is an integrable variance profile,
for example we can take $f(x)=\exp(-x)$.
Thus entries at all cyclic displacements have positive variance,
and \(W\) determines the scale of the profile rather than a strict
band cutoff.

Section~\ref{sec:model} gives the
precise density, moment, profile, and bandwidth assumptions.

\introheading{The non-Hermitian obstruction}
Girko's Hermitization reduces the circular law to convergence of
logarithmic determinants.  If
\(s_1(X_N-zI)\ge\cdots\ge s_N(X_N-zI)\) are the singular values, then
\begin{equation}\label{logpotentialsums}
 \frac1N\log|\det(X_N-zI)|
 =\frac1N\sum_{j=1}^N\log s_j(X_N-zI).
\end{equation}Under the normalizing assumption $\mathbb{E}[XX^*]=\mathbb{E}[X^*X]=I_N$, and assuming suitable moments and for sufficiently large $W_N$, it is not hard to show that the singular value measure of $X_N-zI$ converges to a fixed limiting measure.
However, weak convergence of the singular-value measure controls every bounded
continuous test function, but not the logarithm at the hard edge.  One
must show that the least singular value and the remaining small singular
values make a negligible contribution.  Controlling this hard-edge
contribution is already the decisive step in the dense circular-law
proofs \cite{Bai1997,GotzeTikhomirov2010,PanZhou2010,
TaoVuKrishnapur2010}.  For dense non-Hermitian matrices,
sharp estimates on the least singular value are available for iid square and rectangular matrices
\cite{RV,RVrect}, and for broad inhomogeneous square ensembles
\cite{LivshytsTikhomirovVershynin2021}.  This polynomial behavior is the
benchmark for the band-matrix estimates below.  The problem becomes
substantially harder for a genuinely sublinear band profile, where each row sees only \(O(W_N)=o(N)\)
independent variables.
Least-singular-value and small-singular-value estimates are therefore the
main analytic obstruction to a band matrix circular law.  A deterministic band
structure, in contrast to a sparse iid matrix, requires band-dependent structural information that is absent from the dense
invertibility theory quoted above.

\introheading{Previous bandwidth regimes}
The first circular-law results for genuinely sublinear block bandwidth
were obtained in \cite{JainJanaLuhORourke2021} for full block matrices when $W\geq N^{32/33}\log N$.  For cyclic indicator band matrices with Gaussian entries, \cite[Corollary~1.2]{Tikhomirov2023} proved the
circular law when \(W\ge N^{33/34}\).  Subsequent high-band results
treated more general doubly stochastic profiles and improved the
admissible powers of \(N\); see \cite{Han2410}.  The preprint
\cite{Han2508} pushed several singular-value estimates in this program
toward much smaller bandwidths \(W_N\), though still in regimes above
\(\sqrt N\).  The band-profile least-singular-value inputs used in all these papers above give only an exponentially small estimate, typically of the form
\begin{equation}\label{singuupdates}
 s_{\min}(X_N-zI_N)\ge
 \exp\!\left\{-\frac{N}{W}N^{o(1)}\right\}
 \quad\text{or}\quad W^{-CN/W},
\end{equation}
rather than the polynomial hard-edge scale available in dense models
\cite{JainJanaLuhORourke2021,Tikhomirov2023,Han2508}.  This reflects a
limitation common to all currently available methods for non-Hermitian
band matrices.  For balanced periodic bands, one expects a much
stronger, plausibly polynomial, lower bound, but no such estimate is
known.  Thus all band circular-law proofs cited above use the exponential
lower bound that is currently available.

In the polynomial high-band regimes treated by the previous works, the strategy is to take a cutoff in the singular value summation \eqref{logpotentialsums} and treat the two components: the small singular values and the remaining bulk singular values, differently. We can for example choose the cutoff at, for some $y\in(0,1)$,
\[
 a_N=\frac{N^{o(1)}}{W^y}\longrightarrow0.
\]
The Hermitized bulk estimate already controls the logarithmic potential
coming from singular values above \(a_N\).  Writing
\(s_j=s_j(X_N-zI_N)\), the remaining task is precisely to prove
\[
 \frac1N\sum_{j:\,s_j\le a_N}|\log s_j|
 \xrightarrow{\mathbb P}0.
\]
Depending on the exact geometry of the band and the entry distribution of $X_N$, there exists a value $y\in(0,1)$ where we have proven (\cite{Han2410,Han2508}) mesoscopic singular value counting estimate which leaves
\(O(Na_N)=N^{1+o(1)}/W^y\) singular values in \([0,a_N]\). Meanwhile, the
exponential least singular value estimate in \eqref{singuupdates} only gives
\(|\log s_j|\le (N/W)N^{o(1)}\).  Their worst-case contribution is
therefore \(N^{1+o(1)}/W^{1+y}\). We call this a simultaneous-control strategy as we simultaneously use the worst case estimate for both the least singular value and for the mesoscopic singular value counting estimate. This explains why simultaneous-control arguments
weaken as \(W\) decreases and why \(\sqrt N\) appears as a natural scale
for that method\footnote{Determining the largest value $y$ where this mesoscopic counting still holds is an open problem, but the recent band matrix papers \cite{YauYin2025,ErdosRiabov2025} suggest we may hope for any $y<2$, which gives rise to $W_N\gg N^{1/3+\epsilon}$.}. But as earlier explained, $\sqrt{N}$ is clearly not the transition point for the
global circular law.

To our knowledge, \cite{Han2511} was the first circular-law result for a
genuinely banded non-Hermitian ensemble to reach the optimal qualitative
condition \(W_N\to\infty\), for an open-boundary block-tridiagonal model.
Its transfer-based mechanism was inspired in part by the
fixed-bandwidth non-Hermitian Jacobi model of
\cite{GoldsheidKhoruzhenko2005}.  In the Jacobi setting, a
\(2\times2\) transfer product relates
the limiting eigenvalue measure to a Lyapunov exponent through a Thouless
formula.  The argument of \cite{Han2511} adapts this viewpoint to growing
block size, and hence to transfer dimension growing with \(W\): the
determinant is computed recursively along the chain, and a long system is
decomposed into shorter systems whose log determinants can be identified
in a high-band regime.  The open endpoints supply fixed starting and
ending boundary conditions; neither exists in the cyclic models
\eqref{eq:intro-scalar-model}--\eqref{eq:intro-block-model}.  Thus neither
model is a direct specialization of the open-chain theorem.

\introheading{Main results}
This paper establishes the growing-bandwidth circular law for three
periodic model classes, with a fourth theorem accounting for discrete
atoms in the full-block model.
\begin{enumerate}[leftmargin=2.1em]
 \item For the cyclic scalar indicator band, and for uniformly comparable
       weights on the same band, Theorem~\ref{thm:indicator} proves the
       circular law under \(W_N\to\infty\), bounded density, and a finite
       third moment.  Corollary~\ref{cor:tapered-indicator} also allows
       polynomial decay at the two endpoints of the band.
 \item For a translation-invariant profile without compact support,
       Theorem~\ref{thm:gaussian-profile} proves the same conclusion under
       \(W_N\to\infty\) for circular complex Gaussian entries, provided
       that the profile is locally bounded below on every finite interval.
       This includes exponential and Gaussian decay.
 \item For the periodic full-block band model, Theorem~\ref{thm:density-main}
       proves the circular law under \(W_N\to\infty\) for normalized
       bounded-density entry laws with a finite third moment. 
 \item For the same periodic full-block geometry with a real subgaussian
       atom, which may be discrete, Theorem~\ref{thm:main} removes the
       density assumption under \(W_N/\log N\to\infty\).  In particular,
       the theorem includes Rademacher entries.
\end{enumerate}
The finite third moment in the bounded-density statements is a convenient
common hypothesis for the mesoscopic input.  As explained after that
input, it can be replaced by a finite \((2+\alpha)\)-moment at the cost of
changing the auxiliary scales; see Remark~\ref{rem:third-moment}.

\introheading{Idea of the proof}
The main new difficulty relative to the open-chain argument of
\cite{Han2511} comes from periodicity.  For an open chain, one can build
the determinant recursively from a distinguished first block to a last
one.  Once the two ends are joined, there is no preferred starting point,
and the terms created when the recursion closes may cancel.  We first use
an independent short ring, in a regime where the high-band circular law is
already available, to determine the typical logarithmic contribution per
unit length.  We then prove that this contribution can be passed through
successive portions of the original ring, with an error that is negligible
after normalization and with a bound that is independent of where the ring
is cut.  Iterating this local comparison around the ring gives the desired
logarithmic determinant.  The bounded-density and discrete full-block
cases use different local estimates to control cancellation, but the
passage from the short ring to the full periodic matrix is the same.  This
separation between a model-dependent local estimate and a common global
argument is the main methodological contribution.  The noncompact
Gaussian theorem is then obtained from the compact scalar result by a
variance cutoff and rotationally invariant tail removal.

\introheading{Related spectral questions}
Several recent works study related spectral questions for non-Hermitian
band and inhomogeneous matrices.  Absence of outliers, stability under
finite-rank perturbations, and high-probability invertibility for related
inhomogeneous and band models are studied in
\cite{Han2408Outliers,Han2507Invertibility}.  These results address
complementary questions but do not by themselves provide the logarithmic
integrability required for a circular law.  For the Gaussian Green-kernel
band profile, a transfer-operator analysis of characteristic
polynomials near \(W\asymp\sqrt N\) in the Hermitian setting appears in
\cite{Shcherbina2020}; the corresponding non-Hermitian off-critical and
critical regimes are treated in
\cite{ShcherbinaShcherbina2025,ShcherbinaShcherbina2604}.  Beyond band
matrices, least-singular-value estimates also drive Brown-measure
convergence for noncommutative polynomials in Ginibre matrices
\cite{Han2606Brown}.

\introheading{Organization}
Section~\ref{sec:model} defines the models and states the main results.
Section~\ref{sec:high-band-input} collects the common
least-singular-value, mesoscopic, and short-ring inputs.  The first proof
part proves the scalar indicator theorem and then extends it to
noncompact Gaussian profiles.  The second proof part proves the two
cyclic full-block theorems, first for discrete subgaussian atoms and then
for bounded-density atoms.  Appendix~A records
the minor repair of the full-block least-singular-value input from
\cite{JainJanaLuhORourke2021}, and Appendix~B proves the density-based
least-singular-value theorem used by the bounded-density branches.

\section{Models and main results}
\label{sec:model}

Write $\T_N=\mathbb Z/N\mathbb Z$ and choose the centered representatives
\[
 \mathcal D_N:=\{-\lfloor N/2\rfloor,\ldots,\lceil N/2\rceil-1\}.
\]
Thus every site in $\T_N$ is represented by exactly one element of
$\mathcal D_N$.  Set
\[
 |i-j|_N:=\min_{m\in\mathbb Z}|i-j+mN|,
\]
and read all matrix indices modulo $N$.  For $A\in\C^{N\times N}$, let
\[
 \mu_A:=\frac1N\sum_{j=1}^N\delta_{\lambda_j(A)},
\]
where the eigenvalues are counted with algebraic multiplicity.  Let
\(\mu_{\cir}\) denote normalized area measure on the unit disk and set
\begin{equation}\label{eq:circular-potential}
 U_{\cir}(z):=\int_{|w|\le1}\log|w-z|\,\frac{d^2w}{\pi}
 =\begin{cases}
 \frac12(|z|^2-1),&|z|\le1,\\[2mm]
 \log|z|,&|z|>1.
 \end{cases}
\end{equation}

\subsection{Scalar indicator bands}

Let $W=W_N\in\mathbb N$ satisfy $W\to\infty$ and $2W+1\le N$.  Denote by
\(\mathcal S_W=[-W,W]\cap\mathbb Z\).  The deterministic variance weights
may depend on $N$; we write $q_s=q_{N,s}$ and suppress the first index.
They are translation invariant and satisfy, for constants
$0<c_0\le C_0<\infty$ independent of $N$,
\begin{equation}\label{eq:indicator-weights}
 q_s=0\quad(s\notin\mathcal S_W),\qquad
 \sum_{s=-W}^Wq_s=1,\qquad
 \frac{c_0}{2W+1}\le q_s\le\frac{C_0}{2W+1}.
\end{equation}

We write $b_s=\sqrt{q_s}$ and then define the cyclic scalar band matrix via
\begin{equation}\label{eq:indicator-model}
 (X_N)_{i,i+s}=b_s\xi_{i,s},\qquad i\in\T_N,\quad -W\le s\le W.
\end{equation}
All other entries are zero. For example, we recover the periodic indicator hard cutoff profile considered in \cite{Tikhomirov2023} if we take $c_0=C_0=1$ and thus $q_s$ is constant on the band.

The variables $\xi_{i,s}$ are independent
copies of an atom $\xi$ satisfying the following standing assumption.

\begin{assumption}[Indicator atom and bounded density]
\label{ass:indicator-density}
The atom is centered, normalized, and has a finite third moment:
\begin{equation}\label{eq:atom-normalization}
 \E\xi=0,\qquad \E|\xi|^2=1.
\end{equation}
\begin{equation}\label{eq:atom-moment}
 \E|\xi|^3<\infty.
\end{equation}
In addition, there is a constant $L<\infty$ such that one of the following alternatives holds:
\begin{enumerate}[label=\textnormal{(\roman*)},leftmargin=2.2em]
 \item $\xi$ is real and its law has a Lebesgue density bounded by $L$;
 \item $\xi$ is complex and its law has a density, with respect to planar
       Lebesgue measure on $\C\simeq\R^2$, bounded by $L$.
 \item $\xi$ is complex and there is a fixed $\theta\in\R$ such that,
       writing
       \[
        U:=\Re(e^{-\mathrm i\theta}\xi),\qquad
        V:=\Im(e^{-\mathrm i\theta}\xi),
       \]
       the conditional law of $U$ given $V=v$ has a Lebesgue density
       bounded by $L$, uniformly for $v$ outside a set of $V$-measure zero.
       No independence between $U$ and $V$ is required.  This includes the
       case in which $(U,V)$ are independent and only $U$ has a bounded
       density.
\end{enumerate}
\end{assumption}

Constants in the indicator-profile part may depend on $c_0,C_0,L$ and on
the fixed spectral parameter $z$, but not on $N$ or $W$.

\begin{theorem}[Circular law for indicator profiles]\label{thm:indicator}
Let $X_N$ be the cyclic matrices defined by
\eqref{eq:indicator-weights}--\eqref{eq:indicator-model}, with bandwidths
$W_N\to\infty$ satisfying $2W_N+1\le N$ and with atom satisfying
Assumption~\ref{ass:indicator-density}.  Then
\[
 \mu_{X_N}\Longrightarrow\mu_{\cir}
 \qquad\text{in probability as }N\to\infty.
\]
 In fact, we prove the Hermitized statement that, for every fixed
$z\in\C$,
\begin{equation}\label{eq:indicator-logdet-goal}
 \frac1N\log|\det(X_N-zI_N)|\xrightarrow{\Pp}U_{\cir}(z).
\end{equation}
\end{theorem}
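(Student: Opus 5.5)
By Girko's Hermitization, Theorem~\ref{thm:indicator} follows from \eqref{eq:indicator-logdet-goal} for almost every fixed $z$. Convergence of the log-potentials in probability, together with tightness from $\frac1N\|X_N\|_{\HS}^2\to1$, yields $\mu_{X_N}\Rightarrow\mu_{\cir}$ in probability by the standard replacement lemma. So the whole task is the log-determinant. I would split it into an upper and a lower bound.

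\emph{Upper bound.} Since $\E X_NX_N^*=\E X_N^*X_N=I_N$, the singular value measure of $X_N-zI$ converges to the deterministic measure $\nu_z$ with $\int\log s\,d\nu_z=U_{\cir}(z)$. The logarithm is bounded above on the relevant range because $\|X_N\|_{\HS}^2/N$ concentrates. Hence
\[
\frac1N\log|\det(X_N-zI)|\le U_{\cir}(z)+o(1)
\]
with high probability. This direction needs no hard-edge control.

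\emph{Lower bound via transfer operators.} Write the band matrix in $W$-blocks, with $N=mW+r$ and the remainder absorbed into one block. Then $(X_N-zI)v=0$ becomes a recursion in which each block row determines the next $W$-block of coordinates from the previous two. The corner block $B_j$ is upper/lower triangular with bounded-density diagonal, and it is invertible almost surely. The periodic determinant then equals, up to the product of $\det B_j$, a determinant $\det(T_m\cdots T_1-I)$ of a $2W\times 2W$ transfer product, or a Schur-complement version of it. I would control $\frac1N\log$ of this in three steps.

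First, for a short ring of length $N_0=W^{1+\epsilon'}$ (more precisely $N_0$ with $W\ge N_0^{1/2+\delta}$), the high-band circular law from Section~\ref{sec:high-band-input} gives $\frac1{N_0}\log|\det|\to U_{\cir}(z)$. This calibrates the exterior norm growth rate per unit length of the transfer products: the normalized $\log\|\wedge^{W}T_{k}\cdots T_1\|$ over $k\asymp N_0/W$ blocks is $U_{\cir}(z)-\frac{W}{N_0}\sum\log|\det B_j|$, up to $o(N_0)$.

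Second, I would cut the long ring into $N/N_0$ segments and compare the log of the top exterior coefficient of the full product with the sum over segments. Submultiplicativity gives the upper inequality for free. For the lower inequality one needs that consecutive segments do not align badly. This is the boundary-uniform local comparison: conditioned on anything at the boundary, the incoming $W$-dimensional subspace is not nearly annihilated. I would prove this using the bounded-density small-ball estimates, namely the density-based least singular value theorem of Appendix~B, applied to a short window of length $\asymp W$ with arbitrary boundary data. The resulting loss is $e^{-O(W\log W)}$ per junction. This is $o(N_0)$ per segment, hence negligible after normalization.

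Third, I would close the ring. I would show that $\det(T-I)$ is comparable to $\|\wedge^W T\|$, the dominant exterior coefficient, using the exponential least-singular-value bound \eqref{singuupdates}. That bound costs $(N/W)N^{o(1)}$ only once, not per singular value, so after dividing by $N$ it is $o(1)$ provided $W\to\infty$.

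\emph{Main obstacle.} The hard step is the boundary-uniform junction estimate in the second step. It must hold uniformly in the cut position and in the boundary condition, with failure probability summable over $N/N_0$ junctions. It must also survive cancellations produced when the periodic recursion closes. I would first try a conditional anti-concentration argument: expose the fresh $\xi$ in the first $W$-window of each segment, and use the third moment together with the density bound to get a small-ball estimate for the projected exterior vector. Concentration of $\frac1N\log|\det|$, obtained by averaging over the nearly independent segments, then upgrades the expectation bounds to convergence in probability.
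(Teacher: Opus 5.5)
Your upper bound is fine in principle, and the paper does not need it because its pressure comparison is two-sided. The lower bound, however, has a genuine gap in exactly the two places where the long regime $N>W^{1+\gamma}$ is decided.

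\emph{Closure.} The exponential least-singular-value bound you invoke is available here only for $W\ge N^{1/2+\chi}$ (Theorem~\ref{thm:high-band-lsv}), so it is unavailable in the long regime. Even granting it, one lower bound on $s_{\min}$ does not make $|\det(I-M)|=|\sum_r(-1)^r\tr\wedge^rM|$ comparable to its dominant term. Converting it into a bound on $\log|\det|$ costs $\log(1/s_{\min})$ for every small singular value: of $X_N-zI_N$, or of $I-M$, roughly one for each singular value of $M$ near $1$. So it is not paid ``only once''; this is the simultaneous-control bookkeeping that the introduction explains breaks down near $W\approx\sqrt N$.

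The paper uses no least-singular-value input for the closure. It reserves $2W$ fresh rows $Q$ and writes $\det(X_N-zI_N)=\pm Z_B$ with $Z_B=\sum_r(-1)^r\tr(B^{(r)}Q^{(r)})$, by \eqref{eq:periodic-det}. The shift--reset word of Lemma~\ref{lem:singleton-word} then isolates, in a single exterior degree, a square-free monomial of degree $2W$ in the fresh atoms with coefficient at least $\max_r\|B^{(r)}\|e^{-CW\log(eW)}$ (Lemma~\ref{lem:isolated-monomial}). The multiaffine density bound yields $\E_Q|\log|Z_B|-\log\max_r\|B^{(r)}\||\le CW\log(eW)$ whatever cancellation occurs (Proposition~\ref{prop:fresh-closure}).

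This comparison is with $\max_{0\le r\le2W}$, not with degree $W$. Nothing forces the dominant degree to be $W$ (with non-symmetric weights it may well depend on $z$), and the short-ring determinant calibrates only this maximum, so your first step must be phrased for $\max_r$. Also, the short-ring anchor (Proposition~\ref{prop:finite-moment-anchor}) needs $W\ge N_0^{8/9+\omega}$, not $W\ge N_0^{1/2+\delta}$; your choice $N_0=W^{1+\epsilon'}$ with small $\epsilon'$ is nonetheless fine.

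\emph{Junctions.} Appendix~B is the wrong tool. Theorem~\ref{thm:high-band-lsv} controls $s_{\min}$ of the whole $N\times N$ matrix in the high-band regime and says nothing about an exterior pairing $\langle u,BQ^{(r)}v\rangle$ with arbitrary boundary frames. The estimate actually needed is Lemma~\ref{lem:projective}, proved by the same word/monomial isolation.

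More seriously, requiring junction failures to be summable over the $N/N_0$ junctions cannot reach $W\to\infty$ at an arbitrary rate. The available small-ball bound (Lemma~\ref{lem:multiaffine}) only gives $\Pp\{\text{junction loss}>t\}\lesssim We^{-ct/W}$. Summability then forces $t\gtrsim W\log(N/N_0)$ per junction, and a normalized total loss of order $\log(N/N_0)/W^{\epsilon'}$. This does not vanish when $\log N$ is large compared with $W^{\epsilon'}$. A union bound over interfaces is precisely what produces the $W\gg\log N$ restriction in Theorem~\ref{thm:main}.

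The paper never intersects events over cells. It propagates the recursively normalized vectors $v_j=C_j^{(r)}v_{j-1}/\|C_j^{(r)}v_{j-1}\|$, applies Lemma~\ref{lem:projective} conditionally on the past, and telescopes expectations. Each junction therefore costs $O(W\log(eW))$ in mean, bad realizations included; see \eqref{eq:pressure-lift-r}. Fluctuations are handled separately by row-wise Efron--Stein concentration of the pressures (Proposition~\ref{prop:pressure-concentration}), and the fresh closure is used only once per ring.
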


\medskip\noindent
The finite-third-moment hypothesis may be replaced by
\(\E|\xi|^{2+\alpha}<\infty\) for any fixed \(0<\alpha\le1\), after
adjusting the mesoscopic scales; see Remark~\ref{rem:third-moment}.

\begin{remark}\label{rem:uniform-generalization}
Nothing in the proof uses equality of the active weights $q_s$.  The result is
therefore an indicator-profile theorem together with its natural uniform
generalization \eqref{eq:indicator-weights}. 

Indeed, the same proof strategy extends to more general translation-invariant
profiles on $\mathcal S_W$ that are positive in the interior and decay no
faster than a fixed power near the two endpoints.  For example, taking
$f(x)=1-|x|$ for $|x|\le1$ in the following corollary gives a triangular
variance profile supported on the band.
\end{remark}

\begin{corollary}[Indicator profiles with polynomially controlled endpoints]
\label{cor:tapered-indicator}
Let $W=W_N\in\mathbb N$ satisfy $W\to\infty$ and $2W+1\le N$.
Let $f:\R\to[0,\infty)$ have finite total variation, vanish for
$|x|\ge1$, and satisfy, for fixed $c,C>0$ and $\kappa\ge0$,
\begin{equation}\label{eq:tapered-profile}
 c(1-|x|)^\kappa\le f(x)\le C,
 \qquad |x|<1.
\end{equation}
Define
\begin{equation}\label{eq:tapered-weights}
 Z_W^{\rm tap}:=\sum_{|s|\le W}f\left(\frac{s}{W+1}\right),
 \qquad q_s^{\rm tap}:=\frac{f(s/(W+1))}{Z_W^{\rm tap}},
 \qquad |s|\le W.
\end{equation}
Let $(\xi_{i,s})_{i\in\T_N,\,|s|\le W}$ be independent copies of an atom
satisfying Assumption~\ref{ass:indicator-density}, and define the cyclic
matrix $X_N^{\rm tap}$ by
\begin{equation}\label{eq:tapered-model}
 (X_N^{\rm tap})_{i,i+s}:=\sqrt{q_s^{\rm tap}}\,\xi_{i,s},
 \qquad i\in\T_N,\quad |s|\le W,
\end{equation}
with all other entries equal to zero.  Then all conclusions of
Theorem~\ref{thm:indicator} hold with $X_N^{\rm tap}$ in place of $X_N$.
\end{corollary}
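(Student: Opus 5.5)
The plan is to rerun the proof of Theorem~\ref{thm:indicator} for the profile \eqref{eq:tapered-weights}, isolating the few places where the two-sided comparability $c_0/(2W+1)\le q_s\le C_0/(2W+1)$ from \eqref{eq:indicator-weights} is used and replacing each by a weaker substitute. Since $f$ has finite total variation, vanishes outside $[-1,1]$ and is bounded below by $c(1-|x|)^\kappa$, we have $Z_W^{\rm tap}=(W+1)\int f+O(1)\asymp W$. Hence $q^{\rm tap}_s\le C'/(2W+1)$ for all $s$, and on the core band $|s|\le W/2$ the lower bound $q^{\rm tap}_s\ge c'/(2W+1)$ holds with $c'$ depending on $c,\kappa,f$. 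Near the endpoints we only get the polynomial lower bound $q^{\rm tap}_s\gtrsim W^{-1}\bigl((W+1-|s|)/(W+1)\bigr)^\kappa\ge W^{-1-\kappa}$.

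The inputs are handled as follows, in order.

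\textbf{Bulk.} The Hermitized bulk limit (singular value law of $X_N-z$) only uses $\sum_s q_s=1$, translation invariance, and $\max_s q_s\to0$. All three hold, so the limit measure is the same and the bulk contribution to \eqref{eq:indicator-logdet-goal} is unchanged.

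\textbf{Mesoscopic counting.} The counting estimate of Section~\ref{sec:high-band-input} should be proved using the core-band comparable block alone. We condition on the entries with $|s|>W/2$, treat them as a deterministic perturbation of bounded norm, and note that the core submatrix is a uniformly comparable band of width $\asymp W$. The counting bounds depend only on the upper variance bound and on a lower bound on a macroscopic portion of the band, so this should go through.

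\textbf{Least singular value.} The density-based estimate is only exponentially small, of the form $W^{-CN/W}$. It should tolerate polynomially small variances, because rescaling the endpoint entries by $W^{-\kappa/2}$ only changes constants in the exponent. Alternatively, we again condition on the non-core entries and apply the bound to the core band plus a fixed shift.

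\textbf{Transfer-operator and short-ring step.} The transfer construction uses invertibility of the extreme-diagonal coefficient blocks, formed from entries at $s=\pm W$, which now have variance only $\gtrsim W^{-1-\kappa}$. Since these have bounded density, the small-ball estimates for these blocks lose only a factor $W^{O(\kappa)}$ per block. Over $N/W$ blocks this gives $\exp(O(\kappa (N/W)\log W))$, which is negligible against the required $o(N)$ precisely because $W\to\infty$. The short-ring calibration uses the high-band circular law for general doubly stochastic profiles, as in \cite{Han2410}, which covers the tapered profile.

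I expect the main obstacle to be the boundary-uniform local comparison. There the anti-concentration of the extreme couplings enters every step of the chain, so one must check that the $W^{O(\kappa)}$ losses accumulate additively in the logarithm rather than multiplicatively across steps. The first thing I would try is to absorb the taper by the deterministic diagonal rescaling $D^{-1}(X-z)D$ restricted to each block, which does not change the determinant. If that fails, I would redefine the transfer blocks using the offsets $s\in[W/2,W]$ with aggregated density bounds.
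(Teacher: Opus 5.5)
Your weight bounds and overall plan agree with the paper. However, the long-range transfer step, which you flag as the main obstacle, is misdiagnosed, and the accounting you give for it would not close.

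In the scalar argument the extreme couplings are never anti-concentrated along the chain. The denominators are cleared in $\mathcal A_i^{(r)}=\beta_i\bigwedge^rT_i$, which is row-linear (Lemma~\ref{lem:row-linearity}). So $\prod_i\beta_i$ is simply part of the mean pressure $F_r$, and $F_r$ is identified by calibration on the short ring rather than estimated. Suppose one really had to pay for small $\beta_i$ at every step, or for the triangular $W\times W$ blocks whose diagonals are the offset-$\pm W$ entries. Since $\log|\beta_i|^{-1}\approx\tfrac{1+\kappa}{2}\log W$ on each of the $N$ rows, the cost would be of order $N\log W$, not $o(N)$, already for $\kappa=0$. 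Your count of $W^{O(\kappa)}$ per block over $N/W$ blocks undercounts this by a factor $W$ in the exponent.

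The remedies you propose also fail. The similarity $D^{-1}(X-z)D$ with $D=\operatorname{diag}(\lambda^i)$ multiplies offset $s$ by $\lambda^s$. It therefore cannot enlarge the $s=W$ and $s=-W$ entries simultaneously, and it does not respect the cyclic wrap-around.

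The missing idea is that the taper enters the long argument in only three places, all harmless.
\begin{enumerate}
\item The deterministic weight of the one full monomial isolated by the singleton reset word in Lemmas~\ref{lem:isolated-monomial} and~\ref{lem:projective} now satisfies $\prod_{t=1}^{2W}b_{\operatorname{off}(\ell_t)}\ge(cW^{-(1+\kappa)/2})^{2W}\ge e^{-C_\kappa W\log(eW)}$. This is the same order as before. It is paid once per closure, i.e.\ once per mesoscopic cell of length $\asymp W^{1+\delta}$ and once at the final seam, so after summing it is still $O(NW^{-\delta}\log W)=o(N)$.
\item The center weight $b_0^{\rm tap}\ge c_bW^{-1/2}$ needed in Lemma~\ref{lem:operator-affine} is unchanged.
\item The one-row inverse cost of Lemma~\ref{lem:inverse-row} becomes $C_\kappa\log(eW)$ because $q^{\rm tap}_{\pm W}\ge cW^{-1-\kappa}$. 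It is only incurred on the fewer than $q$ remainder rows.
\end{enumerate}

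On the short-ring side, conditioning on the non-core entries is unnecessary and not covered by the cited inputs. The remaining random part is no longer doubly stochastic, the counting estimate is not stable under $O(1)$-norm perturbations, and Theorem~\ref{thm:high-band-lsv} is stated only for a scalar shift $zI$. Instead, Proposition~\ref{prop:mesoscopic-counting} and Lemma~\ref{lem:local-finite-moment-bulk} use only the upper bound $\mathfrak b\asymp W$. Theorem~\ref{thm:high-band-lsv} applies verbatim with bandwidth parameter $W'=\lfloor W/2\rfloor$, because its lower-bound hypothesis concerns only $|i-j|_M\le W'$, and $W\ge M^{8/9+\omega}$ gives $W'\ge M^{8/9+\omega/2}$. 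This reproves Proposition~\ref{prop:finite-moment-anchor} for the tapered ring, so you need not appeal to an external general-profile circular law.
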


\subsection{Translation-invariant profiles without compact support}
We next extend the preceding strategy to noncompact translation-invariant
profiles, including exponentially decaying examples.  The proof truncates the band, applies the
indicator Theorem \ref{thm:indicator} to the compact core of size $\approx W_N$, and then removes the outside profile using
Gaussian rotational invariance.

The precise assumption on the profile function $f$ is as follows.
Let $f:\R\to[0,\infty)$ be continuous, integrable and of finite total
variation.  We assume explicitly that it has no zeros on any finite
interval and normalize it by
\begin{equation}\label{eq:f-assumptions}
 \int_{\R}f(x)\,dx=1,\qquad \TV(f)<\infty,\qquad
 m_f(R):=\inf_{|x|\le R}f(x)>0\quad\text{for every }0<R<\infty.
\end{equation}
For an arbitrary positive sequence $W=W_N\to\infty$, define the variance
profile
\begin{equation}\label{eq:gaussian-profile}
 \begin{aligned}
 Z_{N,W}&:=\sum_{s\in\mathcal D_N}f(s/W),
 &q_s^{(N,W)}&:=\frac{f(s/W)}{Z_{N,W}},\\
 (G_{N,f})_{i,i+s}&:=\sqrt{q_s^{(N,W)}}\,g_{i,s},
 &i&\in\T_N,\quad s\in\mathcal D_N,
 \end{aligned}
\end{equation}
where the $g_{i,s}$ are independent standard circular complex Gaussians,
normalized by $\E g_{i,s}=0$ and $\E|g_{i,s}|^2=1$ (equivalently, with
density $\pi^{-1}e^{-|w|^2}\,d^2w$).
The variance matrix is doubly stochastic by translation invariance.

\begin{theorem}[Gaussian translation-invariant profiles]
\label{thm:gaussian-profile}
For every profile $f$ satisfying \eqref{eq:f-assumptions} and every
sequence $W_N\to\infty$, let $G_{N,f}$ be the matrix defined by
\eqref{eq:gaussian-profile}.  Then
\[
 \mu_{G_{N,f}}\Longrightarrow\mu_{\cir}
 \qquad\text{in probability.}
\]
Moreover, for every fixed \(z\in\C\),
\[
 \frac1N\log|\det(G_{N,f}-zI_N)|
 \xrightarrow{\Pp}U_{\cir}(z).
\]
\end{theorem}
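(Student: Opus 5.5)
By Girko's Hermitization, as in Theorem~\ref{thm:indicator}, it suffices to prove the logarithmic-determinant statement for every fixed \(z\). The circular law then follows once \(\frac1N\log|\det(G_{N,f}-zI_N)|\to U_{\cir}(z)\) in probability for a.e.\ \(z\), together with the tightness of \(\mu_{G_{N,f}}\), which follows from \(\E\frac1N\tr G G^*=1\). The plan is to split \(G_{N,f}=G^{\le}+G^{>}\), where \(G^{\le}\) keeps the displacements \(|s|\le RW\) for a large fixed \(R\), and to compare each piece with a model already covered.

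\textbf{Step 1: the core.} Set \(p_R:=\sum_{|s|\le RW}q_s\). By a Riemann-sum argument using continuity, integrability and finite total variation of \(f\), we have \(Z_{N,W}/W\to1\) and \(p_R\to\int_{-R}^Rf=:1-\varepsilon_R\), with \(\varepsilon_R\to0\) as \(R\to\infty\). The normalized core \(p_R^{-1/2}G^{\le}\) has translation-invariant weights on \([-RW,RW]\). These weights lie between \(m_f(R)/(2RW)\) and \(\|f\|_\infty/(2RW)\) up to constants, so they satisfy \eqref{eq:indicator-weights} with bandwidth \(W'=\lfloor RW\rfloor\to\infty\), \(c_0,C_0\) depending only on \(R,f\), and complex Gaussian atoms, which meet Assumption~\ref{ass:indicator-density}(ii). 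If \(2W'+1>N\), the core is essentially a full matrix. That case can be treated directly by the dense variance-profile theory with uniformly comparable variances cited in the introduction, or by simply shrinking \(R\) along the subsequence. Theorem~\ref{thm:indicator} then gives
\[
\frac1N\log|\det(G^{\le}-zI)|=\log\sqrt{p_R}+\frac1N\log\bigl|\det\bigl(p_R^{-1/2}G^{\le}-zp_R^{-1/2}I\bigr)\bigr|\to \log\sqrt{p_R}+U_{\cir}(z/\sqrt{p_R}),
\]
and the right-hand side tends to \(U_{\cir}(z)\) as \(R\to\infty\). Here we need the convergence for the perturbed point \(z/\sqrt{p_R}\); it is fixed for each \(R\), so the theorem applies directly.

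\textbf{Step 2: removing the tail by Gaussian invariance.} Instead of perturbation, we realize \(G\) in distribution as \(\sqrt{1-\varepsilon}\,\tilde G+\sqrt{\varepsilon}\,H\). Here \(\tilde G\) and \(H\) are independent Gaussian matrices with the same profile \(q_s\) (rescaled accordingly), and the core and tail variances are matched by a Gaussian decomposition: every entry \(g\) with variance \(q_s\) is written as the sum of an independent piece of variance \((1-\varepsilon)q_s\) and one of variance \(\varepsilon q_s\). Rather than this, a cleaner route is to write each entry in the core region as the core Gaussian plus nothing, and each tail entry as a pure tail Gaussian. We then condition on \(G^{>}\) and use that \(G^{\le}-z+G^{>}\) is a Gaussian core perturbed by an independent deterministic matrix of Hilbert--Schmidt norm squared \(\approx\varepsilon_RN\). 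This requires two bounds.

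The upper bound on \(\log|\det|\) comes from concavity/Jensen-type bounds on \(\frac1N\sum\log s_j\) via the Hermitized singular-value measure. The bulk singular values of \(G-z\) and \(G^{\le}-z\) differ by a small amount in Lévy distance, since \(\|G^{>}\|_{\HS}^2/N\le2\varepsilon_R\) with high probability.

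The lower bound requires controlling the small singular values of \(G-z\). This is where rotational invariance enters. \(G^{\le}-z+M\), with \(M\) independent of the core, is a shifted Gaussian band matrix, and the least-singular-value and mesoscopic inputs of Section~\ref{sec:high-band-input} are stated for deterministic shifts. Equivalently, the whole transfer-operator argument behind Theorem~\ref{thm:indicator} applies with \(zI\) replaced by \(zI-M\), provided the local estimates are uniform over such shifts. In effect this reduces to convergence of \(\frac1N\log|\det(G^{\le}+M-z)|\) to the same limit as with \(M=0\), up to \(o_R(1)\).

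\textbf{Main obstacle.} The hard step is this lower bound: showing that adding the non-banded tail \(G^{>}\) cannot create many tiny singular values. Here I would first exploit the Gaussian structure directly. Decompose \(G\overset{d}{=}\sqrt{1-\delta}\,G'+\sqrt{\delta}\,G''\), where \(G''\) is an independent copy with the same profile and \(\delta\) is small. Conditioning on \(G'\), the matrix \(G-z\) then contains an independent Gaussian component with a full band of variance \(\gtrsim\delta m_f(1)/W\) on \(|s|\le W\).

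For the least singular value, a small-ball estimate for Gaussian matrices with a deterministic shift gives a bound of the form \(s_{\min}\ge\exp(-CN/W\cdot N^{o(1)})\), matching \eqref{singuupdates}. For the count of small singular values, the band-Gaussian mesoscopic estimate is applied uniformly in the shift. Combining these with the bulk Lévy-distance comparison and letting \(N\to\infty\), then \(R\to\infty\) and \(\delta\to0\), yields the stated limit \(U_{\cir}(z)\).
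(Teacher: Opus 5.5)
Your lower bound in Step~2 has a genuine gap, and it is the heart of the theorem. Conditioning on $G^{>}$ leaves a deterministic \emph{matrix} shift $M$ with nonzero entries at every cyclic displacement. Theorem~\ref{thm:high-band-lsv}, Proposition~\ref{prop:mesoscopic-counting} and Lemma~\ref{lem:local-finite-moment-bulk} concern only the scalar shift $zI_N$, not $zI_N-M$. The transfer-operator proof of Theorem~\ref{thm:indicator} needs finite range. The matrix $G^{\le}+M-zI_N$ is not banded, so it has no $2W$-dimensional companion transfer, and ``replacing $zI$ by $zI-M$'' has no meaning there. Your fallback (a floor $s_{\min}\ge\exp(-N^{1+o(1)}/W)$ plus mesoscopic counting) is exactly the simultaneous-control strategy the introduction explains cannot reach small bandwidth. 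Counting is available only above $a_N\asymp W^{-1/8}N^{\tau}$, so the normalized hard-edge contribution is bounded only by $a_N N^{1+o(1)}/W=N^{1+o(1)}W^{-9/8}$. This is $o(1)$ only when $W\gg N^{8/9}$. Moreover, Theorem~\ref{thm:high-band-lsv} is stated for scalar shifts and only for $W\ge N^{1/2+\chi}$. The $\sqrt{1-\delta}$ splitting leaves the same hard-edge problem, so this route cannot give $W_N\to\infty$.

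The missing idea is that no hard-edge control of $G_{N,f}-zI_N$ is needed once you work in expectation. Let $Y=G^{\le}$ and $E=G^{>}$; they are independent and $E\stackrel{\mathrm d}{=}e^{\mathrm i\theta}E$. Fubini and Jensen's formula for the polynomial $w\mapsto\det(Y+wE-zI_N)$, which is almost surely nonzero at $w=0$, give
\[
 \E\log|\det(Y+E-zI_N)|
 =\E\frac1{2\pi}\int_0^{2\pi}\log|\det(Y+e^{\mathrm i\theta}E-zI_N)|\,d\theta
 \ge\E\log|\det(Y-zI_N)|,
\]
which is \eqref{eq:tail-Jensen-lower}; this is where rotational invariance enters. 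The upper bound is your $L_0\le L_a$ plus Mirsky, at cost $\sqrt{t_R}/a$. The squeeze closes with $a=t_R^{1/4}$ and the linear hard-edge mass of the shifted-Ginibre singular law. This requires the core limits in expectation, for both $L_0$ and the cutoff functional $L_a$ (Proposition~\ref{prop:compact-gaussian-core}). Your Step~1 gives only $L_0$ in probability, and your L\'evy-distance comparison presupposes the core singular-value limit. Convergence in probability then follows from the Efron--Stein bound $\operatorname{Var}\log|\det(G_{N,f}-zI_N)|\le CN\log^2(eN)$ of Lemma~\ref{lem:compact-Gaussian-concentration}.

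Two smaller points. First, $p_R=v_{R,N}$ depends on $N$, so $z/\sqrt{p_R}$ is not a fixed point and Theorem~\ref{thm:indicator} does not apply verbatim. The paper instead sandwiches between fixed radii $r_\pm$, using Lemma~\ref{lem:radial-monotonicity} and phase invariance of the core. Second, in the branch $W/N\ge c$, shrinking $R$ does not help. There, however, the whole profile is comparable to $N^{-1}$ and the proof of Proposition~\ref{prop:finite-moment-anchor} applies directly.
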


\begin{remark}[On the circular Gaussian assumption]
In Theorem~\ref{thm:gaussian-profile}, Circular Gaussian law (as opposed to merely assuming the bounded density assumption) is used in the
tail-removal step \eqref{eq:tail-Jensen-lower}. But the actual property being used there is only the rotational invariance,
\(g_{i,s}\stackrel{\mathrm d}=e^{\mathrm i\theta}g_{i,s}\).
Accordingly, the proof extends to rotationally invariant complex
laws satisfying alternative~(ii) of Assumption~\ref{ass:indicator-density}.
Rotational invariance is used only for this unbounded-support profile model.
\end{remark}

\begin{remark}[Why local positivity is assumed]
\label{rem:gaussian-local-positivity}
The condition $m_f(R)>0$, although somewhat technical, is still quite general
as it imposes no lower bound on the decay as $|x|\to\infty$.  For example, it
covers functions with exponential or Gaussian decay:
\[
 f_{\exp}(x)=\frac12e^{-|x|},\quad
 f_{\text{gauss}}(x)=\frac{1}{\sqrt{2\pi}}e^{-|x|^2/2}.
\]
The former is the translation-invariant infinite-volume
Green-kernel profile corresponding to the covariance
$(I+W^2(-\Delta))^{-1}$ studied by Shcherbina and Shcherbina
\cite{ShcherbinaShcherbina2025}.  Thus the present theorem includes the
periodic exponential-profile analogue of their model.  The assumption
$m_f(R)>0$ is used in
\eqref{eq:normalized-core-comparability} and
\eqref{eq:dense-profile-comparability}.  It may be removable by a more
refined argument, and we do not expect it to be intrinsic.
\end{remark}

\subsection{The cyclic full-block model}

This subsection introduces a second matrix model.  To keep the formulas
light, we again write \(X_N\); here, in the full-block statements of
Section~\ref{sec:high-band-input}, and throughout the full-block part,
\(X_N\)
denotes the periodic full-block band matrix \eqref{eq:model}, not the
scalar indicator matrix \eqref{eq:indicator-model}.

Let \(N=mW\), where \(m\ge 4\), and read block indices modulo \(m\).
For \(1\le j\le m\), let \(A_j,B_j,C_j\) be mutually independent
\(W\times W\) random matrices.  All their scalar entries are independent
copies of one fixed real or complex random variable \(\xi\), normalized by
\begin{equation}
 \mathbb E\xi=0,\qquad \mathbb E|\xi|^2=1.
 \label{eq:atom}
\end{equation}
The additional assumptions on \(\xi\) are stated separately in the two
theorems below.

Define the variance-normalized blocks
\[
 \mathsf A_j=(3W)^{-1/2}A_j,\qquad
 \mathsf B_j=(3W)^{-1/2}B_j,\qquad
 \mathsf C_j=(3W)^{-1/2}C_j
\]
and the cyclic block-tridiagonal matrix
\begin{equation}
 X_N=
 \begin{pmatrix}
  \mathsf A_1&\mathsf B_1&&&\mathsf C_1\\
  \mathsf C_2&\mathsf A_2&\mathsf B_2&&\\
  &\ddots&\ddots&\ddots&\\
  &&\mathsf C_{m-1}&\mathsf A_{m-1}&\mathsf B_{m-1}\\
  \mathsf B_m&&&\mathsf C_m&\mathsf A_m
 \end{pmatrix}.
 \label{eq:model}
\end{equation}
Thus row \(j\) of the block equation contains
\(\mathsf C_j\) at the left neighbor, \(\mathsf A_j\) at the current
site, and \(\mathsf B_j\) at the right neighbor.  Every scalar row has
exactly \(3W\) independent entries of variance \((3W)^{-1}\).

\begin{theorem}[Subgaussian, possibly discrete, atoms]
\label{thm:main}
Let \(X_N\) be the periodic full-block band matrix defined in
\eqref{eq:model}.  Assume that \(\xi\) is real and
\(\|\xi\|_{\psi_2}\le K\), where \(K\) is fixed.  Here
\[
 \|Y\|_{\psi_2}:=\inf\left\{t>0:
   \E\exp\left(\frac{|Y|^2}{t^2}\right)\leq2\right\}
\]
denotes the subgaussian Orlicz norm.  Assume further that
\begin{equation}
 W=W_N\longrightarrow\infty,
 \qquad \frac{W}{\log N}\longrightarrow\infty.
 \label{eq:bandwidth}
\end{equation}
Then the empirical eigenvalue measure of \(X_N\) converges in probability
to the uniform probability measure \(\mu_{\cir}\) on the unit disk.
More precisely, for every fixed \(z\in\C\),
\begin{equation}
 \frac1N\log|\det(X_N-zI_N)|
 \xrightarrow{\mathbb P}
 U_{\cir}(z)
 \label{eq:logdet-goal}
\end{equation}
In particular, \(\Pp\{\det X_N=0\}\to0\), so \(X_N\) is nonsingular
with high probability.
\end{theorem}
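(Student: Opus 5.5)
We prove \eqref{eq:logdet-goal} for each fixed $z$; convergence of $\mu_{X_N}$ to $\mu_{\cir}$ in probability then follows from the standard Hermitization lemma, since $\frac1N\|X_N\|_{\HS}^2$ is bounded in probability. The determinant is computed by a transfer-operator recursion along the block chain. Write $Y=X_N-zI_N$ and let $Y_{[a,b]}$ be the principal block submatrix on block indices $a,\dots,b$ (an open chain). When the blocks $\mathsf B_j$ are invertible, the block Schur complement recursion expresses $\det Y$ through a product of $2W\times 2W$ transfer matrices
\[
 T_j=\begin{pmatrix}-\mathsf B_j^{-1}(\mathsf A_j-z) & -\mathsf B_j^{-1}\mathsf C_j\\ I&0\end{pmatrix}.
\]
For the cyclic closure this gives $\det Y=\pm\prod_j\det\mathsf B_j\cdot\det(I-T_m\cdots T_1)$. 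We will not manipulate this product directly. Instead we use the cut-and-compare scheme described in the introduction: split the ring into $r$ arcs of $\ell$ blocks each, with $\ell$ a large but $N$-independent multiple, chosen so that an auxiliary short ring of length $\ell$ lies in a high-band regime ($W\ge (\ell W)^{1-\epsilon}$). The circular law is already established there by \cite{JainJanaLuhORourke2021}, in the corrected form of Appendix~A.

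\textbf{Step 1 (calibration).} Let $X'$ be an independent cyclic full-block matrix with $\ell$ blocks. From the high-band input collected in Section~\ref{sec:high-band-input}, $\frac{1}{\ell W}\log|\det(X'-z)|\to U_{\cir}(z)$. Combined with the subgaussian least-singular-value bound, which has failure probability $e^{-cW}$ and is what the hypothesis $W\gg\log N$ is for, this calibrates the growth rate of the exterior norm $\frac1{\ell W}\log\|\Lambda^{W}(T_\ell\cdots T_1)\|$ on arcs of length $\ell$. The subgaussian tail is what gives both uniform integrability and a union bound over all $N/W\le N$ cut positions.

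\textbf{Step 2 (local comparison).} We show that for each arc $[a,a+\ell)$, the top-$W$ exterior coefficient of the transfer product, applied to the incoming $W$-dimensional subspace produced by the previous arcs, agrees with its calibrated value up to an error $o(\ell W)$ in the logarithm. This bound must hold uniformly over the incoming boundary subspace. The deterministic algebra is Plücker/Grassmannian: $\log\|\Lambda^W(M)V\|-\log\|\Lambda^W(M)\|\ge \log s_{\min}$ of a compressed $W\times W$ random block. Here the subgaussian invertibility bound for the rectangular compressions is used, conditioned on the past and hence independent of the arc's blocks. This is the replacement for the density-based bound: small-ball probabilities for discrete atoms give $\Prob(s_{\min}\le \epsilon W^{-1/2})\le C\epsilon+e^{-cW}$, and the error term is summable over $N/W$ arcs only because $W\gg\log N$.

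\textbf{Step 3 (closing the ring).} Iterating Step 2 around the ring gives $\log\|\Lambda^W(T_m\cdots T_1)\|=NU_{\cir}(z)+o(N)$ in probability. It remains to relate this to $\log|\det(I-T_m\cdots T_1)|$, that is, to rule out cancellation in the cyclic closure. We do this by sandwiching. The upper bound follows from Hadamard together with the bulk singular-value convergence and the calibrated exterior norm. For the lower bound, we cut one arc open and write $\det Y$ as $\det(\text{open chain})$ times a $W\times W$ Schur complement. The open chain is controlled by the argument of \cite{Han2511} (via Steps 1 and 2), and the Schur complement has a log-determinant of at least $-W\cdot O(\log N)$ with high probability, by the least-singular-value input. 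This is $o(N)$ because $m=N/W\to\infty$ or, when $m$ is bounded, because the statement is already the high-band regime.

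The main obstacle is Step 2: making the comparison uniform in the boundary subspace for discrete atoms without anti-concentration from densities. We would first try a net argument over the Grassmannian combined with the Rudelson--Vershynin rectangular small-ball bound, with the net cardinality $e^{CW^2}$ absorbed by the $e^{-cW^2}$ tail of the full-block compression.
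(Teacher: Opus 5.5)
Your skeleton matches the paper: cleared transfers and exterior norms, calibration on an independent short ring in the high-band regime, a cell-by-cell lower bound through resets conditioned on the past, and a union bound over $O(N/W)$ interfaces with $e^{-cW}$ tails. The gap is that the two steps carrying the difficulty are not supplied, and the routes you sketch for them fail for discrete atoms.

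\textbf{Step~2.} The quantity to control is the Pl\"ucker pairing $\langle\widehat w,\mathscr R\,\widehat v\rangle$ of the incoming wedge with the core's top singular wedge through a fresh reset.
\begin{itemize}
\item It is not $s_{\min}$ of a block with independent entries; after clearing denominators it is a multiaffine polynomial of degree $\asymp W$. So Rudelson--Vershynin does not apply.
\item A one-site reset can kill it identically: the degree-$W$ minor of $T_j$ on the second coordinate block is $\det 0=0$.
\item There is no $e^{-cW^2}$ tail to pay for an $e^{CW^2}$ Grassmannian net. For Rademacher atoms at $z=0$, two rows of the central block row of a packet coincide with probability $2^{-3W}$, and then the packet determinant vanishes for every boundary relation.
\end{itemize}
Uniformity over frames is also not needed. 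Both wedges are independent of the fresh reset, so a per-frame estimate applied conditionally suffices. That estimate is the missing key lemma, Theorem~\ref{thm:local-complex-frame}, and the paper proves it as follows:
\begin{itemize}
\item use a three-site packet whose central block is untouched by the outside deformation, and write the packet polynomial as $\det\Theta_{11}\det(\Delta-zI+\operatorname{Emb}_O(Q_\Theta))$;
\item strip the unboundedly large part of $Q_\Theta$ by strong RRQR (Lemma~\ref{lem:local-rrqr});
\item apply Cook's deformed-square bound (Lemma~\ref{lem:local-cook-input}) twice, to two complete iid squares in the residual, which has size $\ge W$ because of the central block;
\item pass to arbitrary frames through the degenerate relations $\Theta_\lambda$.
\end{itemize}
The resulting failure probability is only $p_W\lesssim\sqrt{\log W/W}$. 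This cannot be union-bounded over cells. Neither can your $C\epsilon$ term without $\epsilon\le N^{-1}$, which costs order $W\log N$ per cell rather than $o(\ell W)$ when $W$ is barely above $\log N$. The paper instead integrates the failure through capped losses, \eqref{eq:gb-unconditional-loss}. The hypothesis $W\gg\log N$ enters only through the union bound over the interface events of Lemma~\ref{lem:local-interface-control}, not through any least singular value of the ring.

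\textbf{Steps~3 and~1.} Step~3 rests on an input that does not exist, and so does Step~1, which needs the same no-cancellation closure on the short ring to turn $\log|\det(X'-z)|$ into an exterior norm.
\begin{itemize}
\item Bounding the Schur complement by $\log|\det S|\ge W\log s_{\min}(X_N-z)$ with Proposition~\ref{prop:jjlo-block-lsv} gives only $-25N\log(3W)$, and only when $W\ge N/W$.
\item On the short ring the same bound misses $o(\ell W)$ by a factor $\log W$.
\item A polynomial least-singular-value bound for periodic bands at this bandwidth is open. Even with one, $W\log N=o(N)$ needs $m\gg\log N$, not $m\to\infty$.
\item Conditioning on the open chain instead leaves one random block plus a deformation of unbounded norm, to which deformed least-singular-value results do not apply.
\end{itemize}
The paper never asks for an absolute floor. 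It reserves three fresh sites and compares $|\mathscr D_\Theta(\Xi)|$ with its own coefficient norm $\mathscr C(\Theta)=e^{\pm CW\log W}\det(I+\Theta^*\Theta)^{1/2}$, which is within $2^W$ of $\max_r\|\bigwedge^r\Theta\|$ (Lemma~\ref{lem:local-boundary-volume}). Corollary~\ref{corollary885} makes this comparison uniform in $\Theta=R_{\rm out}$ with failure probability $p_W=o(1)$, which suffices because it is used once per ring. This is also why the paper works with $\max_r$ over all degrees; your fixed degree $W$ is not justified. Finally, your Step~3 identity drops the clearing term $\sum_j\log|\det\mathsf B_j|$, which is of order $N$.
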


\begin{remark}[On the real and subgaussian assumptions]
The Rademacher law \(\P\{\xi=1\}=\P\{\xi=-1\}=1/2\) is the main
discrete example.  The restriction to real-valued atoms is inherited
from Nguyen's overcrowding estimate \cite[Theorem~1.4]{Nguyen}, used in
Lemma~\ref{lem:local-interface-control}; it is not imposed by the
deterministic transfer argument.  The subgaussian hypothesis is used
mainly through the same input and the accompanying uniform block-norm
tails.  It is used once more to supply the norm tail in the repaired
full-block least-singular-value estimate,
Proposition~\ref{prop:jjlo-block-lsv}, whereas the high-band
singular-value comparison uses only a finite third moment through
Lemma~\ref{lem:local-finite-moment-bulk}.  We expect that the real and
subgaussian assumptions can be weakened once corresponding complex-valued
overcrowding and weaker-tail singular-value inputs are available.\end{remark}

In the case of bounded density laws, we again recover the optimal $W_N\to\infty$ circular law:

\begin{theorem}[Bounded-density atoms]
\label{thm:density-main}
Let \(X_N\) be the periodic full-block band matrix defined in
\eqref{eq:model}, and assume that \(\xi\) satisfies
Assumption~\ref{ass:indicator-density}.
If
\begin{equation}
 W=W_N\longrightarrow\infty,
 \label{eq:density-bandwidth}
\end{equation}
then the empirical eigenvalue measure of \(X_N\) converges in probability
to \(\mu_{\cir}\).  Moreover, for every fixed \(z\in\C\), the
logarithmic-determinant convergence
\eqref{eq:logdet-goal} holds.
\end{theorem}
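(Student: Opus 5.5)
Following the strategy described in the introduction, we use Girko's Hermitization: by the standard replacement principle it suffices to prove \eqref{eq:logdet-goal} for each fixed $z$. The determinant of $X_N-zI_N$ will be written through transfer operators along the block chain. For each $j$ the block row $\mathsf C_jx_{j-1}+(\mathsf A_j-z)x_j+\mathsf B_jx_{j+1}=0$ can be solved for $x_{j+1}$ whenever $\mathsf B_j$ is invertible, which holds almost surely under Assumption~\ref{ass:indicator-density}. This gives $2W\times2W$ transfer matrices $T_j$, and the Schur-complement identity
\[
 \det(X_N-zI_N)=\pm\prod_{j=1}^m\det\mathsf B_j\cdot\det\bigl(I_{2W}-T_m\cdots T_1\bigr)
\]
reduces the problem to the log-determinant of $I-\mathcal T$ for a product $\mathcal T$ of transfer matrices around the ring. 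The danger is the one noted in the introduction: unlike the open chain of \cite{Han2511}, there is no fixed boundary vector, and $\det(I-\mathcal T)$ may suffer cancellation.

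\emph{Step 1 (short-ring calibration).} Choose an auxiliary length $m_0=m_0(W)$ with $m_0\to\infty$ slowly, so that $W\ge (m_0W)^{1-\delta}$ puts the auxiliary ring of $m_0$ blocks in the high-band regime. The high-band circular law collected in Section~\ref{sec:high-band-input} then gives $\frac{1}{m_0W}\log|\det(X^{(m_0)}-z)|\to U_{\cir}(z)$. This identifies the typical exterior growth per block of the product $T_{m_0}\cdots T_1$ acting on the top $W$-dimensional exterior power, after the $\det\mathsf B_j$ factors are subtracted. Concretely, the calibrated quantity is the full exterior coefficient norm $\frac{1}{m_0W}\log\|\wedge^W(T_{m_0}\cdots T_1)\|$.

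\emph{Step 2 (boundary-uniform local comparison).} Cut the target ring into $m/m_0$ segments of length $m_0$. For each segment, we show that for every boundary $W$-plane $E$ (uniformly in $E$), $\log\|\wedge^W(T_{\rm seg})v_E\|$ equals the calibrated value up to $o(m_0W)$ with overwhelming probability. Here $v_E$ denotes a unit vector representing $E$ in the exterior power. The upper bound comes from norm bounds. The lower bound comes from the density-based least-singular-value theorem of Appendix~B applied to the segment with boundary data inserted, which gives only a $W^{-Cm_0}$ loss; this is negligible against $m_0W$. The mesoscopic counting input, using the finite third moment, controls the intermediate small singular values. Because the estimate is uniform in $E$, the segments can be chained: the image plane of one segment becomes the boundary of the next, and the errors add to $o(mW)=o(N)$.

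\emph{Step 3 (closing the ring).} Finally, relate $\log|\det(I-\mathcal T)|$ to $\log\|\wedge^W\mathcal T\|$. The upper bound is trivial: expand the determinant and bound it by the sum of exterior norms. For the lower bound, apply the least-singular-value input once more to the whole cyclic matrix. This gives $|\log s_{\min}(X_N-z)|\le (N/W)N^{o(1)}$, and we must show that the closing term loses only $o(N)$.

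We expect this last cancellation control to be the main obstacle. We would first try to isolate a single interface block and resample its entries, using its bounded density, to anti-concentrate $\det(I-\mathcal T)$ conditionally on everything else. Small-ball estimates for a polynomial in the bounded-density entries would then give a loss of $O(W\log W)=o(N)$ with high probability.
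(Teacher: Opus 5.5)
Your overall architecture (Floquet identity, short-ring calibration, lifting along the ring, one final closure) matches the paper's, but the two analytic mechanisms you propose would not work.

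\textbf{Step 2.} Theorem~\ref{thm:high-band-lsv} concerns a doubly stochastic band matrix on $\{\|X\|_{\HS}\le R\sqrt N\}$ with $W\ge N^{1/2+\chi}$. A segment with an arbitrary boundary plane inserted is not of that form, and the mesoscopic counting bound does not apply to it either; in the paper the third moment enters only through the high-band anchor, Proposition~\ref{prop:density-block-high-band}.

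More fundamentally, a least-singular-value floor is quantitatively too weak. Let $\widehat w$ be the top right singular $W$-wedge of the segment. The relevant ratio is $\|\wedge^WT_{\rm seg}v_E\|/\|\wedge^WT_{\rm seg}\|\ge|\langle\widehat w,v_E\rangle|$, which is a $W\times W$ determinant, i.e.\ a product of $W$ principal-angle cosines. Bounding it through $s_{\min}^W$ with $s_{\min}\ge W^{-Cm_0}$ costs $Cm_0W\log W$, which is not $o(m_0W)$. (A bound holding simultaneously for all $E$ is false anyway: take $E$ spanned by the bottom singular directions. Only a bound for planes independent of the segment can be used, and that is all the chaining needs.)

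The paper instead opens every cell with a three-site reset packet. It regards $\langle\widehat w,\mathcal Q^{(r)}\widehat v\rangle$ as a polynomial affine in each of the $3W$ fresh row groups. Its coefficient norm is at least $e^{-O(W\log W)}$ uniformly in the decomposable unit wedges, via the artificial relation $\Theta_\lambda^{(r;U,V)}$ and the all-minor estimates. Corollary~\ref{cor:dens-coefficient-evaluation} then gives an expected loss $O(W\log W)$ per cell (Proposition~\ref{propositions6.8}).

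These losses are integrated in expectation. Your ``overwhelming probability per segment'' would instead require a union bound over $N/(m_0W)$ segments. That fails when $N/W$ grows arbitrarily fast, because $m_0$ is capped by a power of $W$ for the calibration to apply.

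You also have no concentration mechanism. Two conversions need one: turning the log-determinant of a single random short ring into a deterministic constant, and passing from mean bounds to the random target product. The paper does both with the row-group Efron--Stein estimate, Lemma~\ref{lem:dens-row-concentration}, which rests on the scale-free Lemma~\ref{lem:dens-affine-log}.

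\textbf{Step 3.} Theorem~\ref{thm:high-band-lsv} is simply unavailable for the target when $W\le N^{1/2}$; this is the simultaneous-control barrier described in the introduction. In any case an $s_{\min}$ floor would not control cancellation in $\det(I-\mathcal T)$.

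Your fallback, resampling fresh entries and using a polynomial small-ball bound, is the right idea. The missing ingredient is the reference scale. A small-ball bound only compares $|P(\Xi)|$ with the coefficient norm of $P$, and the comparison must be with a quantity independent of the resampled entries. That means the outside transfer $R_{\rm out}$, not the full monodromy $\mathcal T$.

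The paper proves that the closing polynomial has coefficient norm $e^{\pm O(W\log W)}\det(I+R_{\rm out}^*R_{\rm out})^{1/2}$. It does so in three moves:
\begin{enumerate}
\item reserve a packet of seven fresh blocks;
\item convert $\mathscr D_\Theta$ into the coordinate determinant of Proposition~\ref{prop:local-double-elimination};
\item combine Lemma~\ref{lem:local-all-minor} with the graph--Gram identity (Proposition~\ref{prop:dens-reset-seam}).
\end{enumerate}
The packet mask is what makes every outer minor pattern $(I,J)$ appear. It is not clear that a single resampled interface block achieves this.

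This also shows that your fixed degree $W$ is the wrong quantity. The volume $\det(I+R^*R)^{1/2}$ is comparable to $\max_{0\le r\le 2W}\|\wedge^rR\|=\prod_i\max\{1,s_i(R)\}$. The number of singular values above one is not pinned at $W$, and your own upper bound in Step 3 naturally produces the maximum over all degrees. The paper calibrates $\max_rF_{W,r}$ and runs the lower bound in a deterministic maximizing degree $r_*$.
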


\medskip\noindent
The finite-third-moment hypothesis may be replaced by
\(\E|\xi|^{2+\alpha}<\infty\) for any fixed \(0<\alpha\le1\), after
adjusting the mesoscopic scales; see Remark~\ref{rem:third-moment}.

\medskip\noindent
\paragraph{On the density hypothesis and band geometry.} Finally, we discuss the role of the bounded density hypothesis in this paper. The fact that we succeeded in proving circular law in Theorem \ref{thm:main} at the absence of a density hypothesis is tied to the periodic full-block
geometry.  Each block interface is a complete i.i.d. \(W\times W\) square,
and the packet reduction again exposes complete random square panels.
Nguyen's estimate \cite{Nguyen}, subgaussian block-norm tails, and the deformed-square
input of Lemma~\ref{lem:local-cook-input} can therefore replace
density-based small-ball estimates.  Each interface event fails with probability at most
\(e^{-cW}\), while the ring contains \(O(N/W)\) interfaces; hence

\[
 \P\{\text{at least one interface fails}\}
 \leq C\frac{N}{W}e^{-cW}=o(1)
\]
under \(W/\log N\to\infty\).  This is the source of the logarithmic
bandwidth condition.  The proof introduces another weaker error term from Cook \cite{Cook}, but that error is averaged through
the pressure recursion and is not union-bounded over all cells, preserving $W\gg \log N$.

The same mechanism does not presently give the optimal \(W\to\infty\)
result for Bernoulli scalar-indicator bands or generic non-block
profiles, because their boundary reductions do not in general expose the
same complete random squares.  It also does not cover the present full-block
model at sublogarithmic bandwidth \(W=o(\log N)\), where the simultaneous
interface estimate above no longer closes.  To our knowledge, these
optimal non-block and sublogarithmic full-block regimes with Bernoulli entry distribution remain open and
appear to require new discrete anti-concentration or an annealed interface
argument.

\subsection{Lean formalization and generative AI usage}\label{subsec:lean-verification}

The author has used ChatGPT 5.6 to assist with several technical parts of the mathematical proof. The main proof structure remains from the author, and he has independently checked all mathematical proofs and takes full responsibility for the correctness of the argument.

A companion formalization of the whole paper in Lean~4, based on mathlib, is publicly
available.\footnote{\url{https://github.com/hanyi162013-Yihan/random-band-circular-law-lean}}
The project formalizes substantial parts of the arguments in
Sections~3--10, and the proof for all the four main theorems. The formalized
proofs are checked by Lean's kernel without admitted proofs or additional
mathematical axioms except those stated below. We have adopted the following external hard analytic inputs in the project without formalizing them: (1) the free probability comparison
estimates of Bandeira, Boedihardjo and van Handel \cite{bandeira2023matrix}, (2) the estimates of Cook \cite{Cook}
and (3) Nguyen \cite{Nguyen}, (4) the geometric Brascamp--Lieb inequality and (5) a modified version of \cite{JainJanaLuhORourke2021}, Theorem 2.1: they appear explicitly as theorem hypotheses. Among these, all main results of the paper use (1), the results under the density assumption (i) or (iii) in Assumption \ref{ass:indicator-density} uses (4), and only Theorem \ref{thm:main} uses results (2), (3), (5).
Along the project, we have independently formalized other reusable results such as (1) the required Ginibre log potential estimates
 and (2) the Tao--Vu replacement principle, and (3) the mesoscopic singular value counting estimate in \cite{Han2410}.
The formalization verifies these specified proof chains, while the exact intermediate statement or numerical constant in Lean may differ from those printed in this manuscript.

\medskip\noindent\textbf{Acknowledgements.}
Part of this work was completed while the author was affiliated with the
Institute for Advanced Study in Princeton.  During that period, the author
was supported by an IAS fellowship provided by the S.~S. Chern Foundation
for Mathematical Research Fund and the Fund for Mathematics.

\section{The high-band input and mesoscopic scales}
\label{sec:high-band-input}
This section is a common toolbox for both proof parts.  We first record the
least-singular-value, mesoscopic counting, and local bulk inputs used in
both branches.  We then verify two high-band anchors: a finite-moment
short-ring anchor for scalar indicator bands and a subgaussian anchor for
cyclic full-block bands.

The scalar indicator model has a self-similarity property: a fresh local
ring has the same form as the target matrix, but with a different
relation between its size and the bandwidth.  We encode that
self-similarity in the following auxiliary model.

For each ambient target pair $(N,W)$, let
$\mathbf q^{(N,W)}=(q_s^{(N,W)})_{|s|\le W}$ be an admissible weight
vector satisfying \eqref{eq:indicator-weights}.  We keep this vector fixed
while the auxiliary size $M$ varies and suppress its superscript.
For every $M$ with $2W+1\le M$, let $H_{M,W}$ denote the auxiliary cyclic indicator band
\begin{equation}\label{eq:auxiliary-ring}
 (H_{M,W})_{i,i+s}:=\sqrt{q_s}\,\xi_{i,s},
 \qquad i\in\T_M,\quad |s|\le W,
\end{equation}
where the variables are independent copies of an atom $\xi$ satisfying
Assumption~\ref{ass:indicator-density}.  All scalar auxiliary-ring
assertions below are uniform over the admissible weight arrays.  When
calibrating a target $X_N$, this
$\mathbf q$ is its weight vector.  Thus $H_{M,W}$ is a fresh cyclic matrix
of size $M$, not a principal submatrix of $X_N$, and
$X_N\stackrel{d}=H_{N,W_N}$.  For any variance-profile
matrix $A$, write
\[
 \mathfrak b(A):=\left(\max_{i,j}\E|A_{ij}|^2\right)^{-1}.
\]
In particular, $\mathfrak b(H_{M,W})\asymp W$.

\subsection{High-band estimates}
This subsection records least-singular-value estimates and singular-value
rigidity estimates valid for large $W$.
\begin{theorem}[Least singular value bound]
\label{thm:high-band-lsv}
Let $X=(\sigma_{ij}\xi_{ij})_{i,j\in[N]}$, where the $\sigma_{ij}\ge0$ are
deterministic and the $\xi_{ij}$ are independent, centered, and normalized
by $\E|\xi_{ij}|^2=1$.  Suppose that $(\sigma_{ij}^2)$ is doubly stochastic and, for fixed
$c_{\mathrm{lsv}},C_{\mathrm{lsv}}>0$,
\[
 \max_{i,j}\sigma_{ij}^2\le \frac{C_{\mathrm{lsv}}}{W},\qquad
 \sigma_{ij}^2\ge \frac{c_{\mathrm{lsv}}}{W}
 \quad\text{if }|i-j|_N\le W.
\]
Assume uniformly in $i,j,N$ that, in the real case, every $\xi_{ij}$ has
a density bounded by $L$, while in the complex case its law has a density
with respect to planar Lebesgue measure on $\C\simeq\R^2$ bounded by $L$.
Alternative~\textnormal{(iii)} of
Assumption~\ref{ass:indicator-density} is also admissible, provided that
one fixed direction works for every
$i,j,N$ and that the conditional density bound $L$ is uniform.  If
$W\ge N^{1/2+\chi}$ for some fixed $\chi>0$, then for every $K_z,R>0$ and
$\kappa\in(0,\chi/4)$ there is
$C=C(K_z,R,\kappa,\chi,c_{\mathrm{lsv}},C_{\mathrm{lsv}},L)$ such that,
for all sufficiently large $N$,
uniformly in $|z|\le K_z$ and $t>0$,
\[
 \Pp\left\{s_{\min}(X-zI_N)
 \le t\exp\left(-N^{3\kappa}\frac NW\right),\ 
 \|X\|_{\HS}\le R\sqrt N\right\}
 \le Ct+\exp(-N^{1+\kappa/4}).
\]
\end{theorem}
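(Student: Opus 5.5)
We plan to prove Theorem~\ref{thm:high-band-lsv} with the block-decomposition strategy used for band matrices in \cite{JainJanaLuhORourke2021,Tikhomirov2023,Han2508}, replacing the discrete anti-concentration steps by density-based small-ball estimates. Set $Y=X-zI_N$ and cut the cyclic index set into $m\asymp N/W$ consecutive intervals $I_1,\dots,I_m$, each of length about $W/2$. Every pair of adjacent intervals then lies inside the band, so on $I_j\times I_{j\pm1}$ and on the diagonal blocks the variances satisfy $\sigma_{ij}^2\ge c_{\mathrm{lsv}}/W$. Let $x$ be a unit vector with $\|Yx\|_2\le\varepsilon$, where $\varepsilon=t\exp(-N^{3\kappa}N/W)$. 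We run a transfer-type recursion. Because some interval carries mass $\|x_{I_j}\|\ge m^{-1/2}$, and the block rows touching $I_j$ carry a full-rank random block, we can propagate a lower bound on the mass from one interval to the next. On the good event each step loses at most a factor $W^{C}$, or $e^{N^{3\kappa}}$ after the union bounds. After $m$ steps the accumulated loss is $\exp(-N^{3\kappa}N/W)$, which is exactly the scale in the statement. The periodic closure is harmless here because we are only bounding $s_{\min}$ from below, not computing the determinant.

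\textbf{Key steps.} The argument proceeds in four steps.

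\emph{Step 1: block invertibility.} For the off-diagonal random blocks $Y[I_j,I_{j+1}]$, which are square with independent bounded-density entries of variance $\asymp 1/W$, we show that $s_{\min}\ge W^{-C}\delta$ with failure probability at most $C\delta$ plus a negligible term. For this we use the density versions of the small-ball arguments: tensorization of the bounded density gives $\Pp\{|\langle \mathrm{row},v\rangle -a|\le s\}\le CLs\sqrt W$ uniformly in the direction $v$. In case (iii) we project onto the direction $e^{\mathrm i\theta}$ and condition on $V$.

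\emph{Step 2: chain the recursion.} Solving the block equation row by row expresses $x_{I_{j+1}}$ through $x_{I_j},x_{I_{j-1}}$ and $(Yx)_{I_j}$. This yields the bound
\[
\|x\|\le \prod_j\bigl(1+\|Y\|\,s_{\min}(Y[I_j,I_{j+1}])^{-1}\bigr)^2\bigl(\|x_{I_1}\|+\|x_{I_2}\|+\varepsilon\bigr).
\]
On the event $\|X\|_{\HS}\le R\sqrt N$, the block norms are at most $R\sqrt N$, so each factor is polynomial in $N$ up to the small-ball loss.

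\emph{Step 3: handle the starting pair.} The vector $(x_{I_1},x_{I_2})$ must itself be shown not to be tiny. We average over a random rotation of the cut point, which is available by cyclic symmetry, or take a union bound over the $m$ possible starting positions. We then close the loop using the wrap-around rows: these give an $O(W)$-dimensional condition on a random matrix with independent density entries. Applying a small-ball bound for a fixed vector via the conditional density, together with a net argument over the $O(W)$-dimensional space of initial data, gives probability at most $Ct$.

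\emph{Step 4: exceptional events.} The allowance $\exp(-N^{1+\kappa/4})$ absorbs the product of the $m$ failure probabilities, each of which we take at the level $\exp(-N^{3\kappa})$. It also absorbs the net cardinality and the large-norm event. The condition $W\ge N^{1/2+\chi}$ and $\kappa<\chi/4$ are used to make the net of size $\exp(CW\log N)$ compatible with these bounds.

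\textbf{Main obstacle.} The hardest step is Step 3 combined with the union bound across the $m$ steps. The per-step failure probability must be $\delta$-linear for only one distinguished step, which is where the final factor $t$ comes from. All the other steps need super-polynomially small failure probability, and that is purchased by sacrificing the factor $e^{N^{3\kappa}}$. To get the decoupling, we will first try conditioning sequentially on the $\sigma$-algebras generated by the earlier blocks. This uses that the block $Y[I_j,I_{j+1}]$ is independent of everything already revealed, because each entry is used exactly once.
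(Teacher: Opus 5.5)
There is a genuine gap; in fact the plan fails at two independent points. First, the recursion in Step~2 presupposes a block-tridiagonal structure that the hypotheses do not give you. The theorem only assumes $\sigma_{ij}^2\le C_{\mathrm{lsv}}/W$ everywhere and $\sigma_{ij}^2\ge c_{\mathrm{lsv}}/W$ for $|i-j|_N\le W$. Entries farther from the diagonal may be nonzero: a doubly stochastic profile putting half of each row's mass on the band and spreading the rest at level $\asymp N^{-1}$ is admissible and gives a dense matrix. The paper also applies the theorem beyond a strict band, e.g.\ to the tapered profile through the inner band $W'=\lfloor W/2\rfloor$.

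Even for the pure indicator band, intervals of length $W/2$ make row block $I_j$ meet $I_{j\pm2}$. So ``$x_{I_{j+1}}$ in terms of $x_{I_j},x_{I_{j-1}}$ and $(Yx)_{I_j}$'' is false. A true transfer has to invert the outermost coupling block $Y[I_j,I_{j+2}]$, which is triangular with band-edge entries on its diagonal. Random triangular matrices are typically exponentially ill-conditioned, so you lose $e^{cW}$ per step and $e^{cN}$ overall. That is far below the target scale $\exp(-N^{3\kappa}N/W)\ge\exp(-N^{1/2-\chi/4})$. A transfer through full square coupling blocks is the mechanism behind Proposition~\ref{prop:jjlo-block-lsv} for the full-block model, and even there it only yields a polynomial failure probability.

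Second, the probability bookkeeping in Steps~3--4 cannot produce $Ct+\exp(-N^{1+\kappa/4})$. You need all $m$ coupling blocks to be well conditioned at once, so their failure probabilities add, not multiply. Thresholds $e^{-N^{3\kappa}}$ cost about $m\sqrt W e^{-N^{3\kappa}}$. That is neither $O(t)$ for small $t$ nor below $e^{-N^{1+\kappa/4}}$, since $3\kappa<1+\kappa/4$. Lowering the thresholds to $e^{-N^{1+\kappa/4}}$ instead makes the accumulated loss $\exp(-N^{1+\kappa/4}N/W)$.

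The closing step is also unsound. $O(W)$ wrap-around equations against $O(W)$-dimensional initial data form a square system. For such a system the cardinality $(C/\varepsilon)^{\dim}$ of a net cancels the tensorized small-ball gain, so no net argument gives decay, let alone a bound linear in $t$.

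The overwhelming-probability input must come from the joint randomness of $\asymp N$ rows, which is what Appendix~\ref{app:high-band-lsv} does. It uses a cyclic partition into $J\asymp N/W$ intervals of size $\asymp W$. Write $X_z=X-zI_N$, $H_i$ for the span of its columns other than the $i$th, and $\delta=\exp(-N^{\kappa}N/W)$, $\Lambda=\log(1/\delta)$. Proposition~\ref{prop:lsv-normal-spread} shows that, outside probability $\exp(-N^{1+\kappa/4})$, every unit normal to $H_i$ has mass at least $\delta$ on every block.

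Its proof uses a blockwise complex net of cardinality $\prod_j(C\zeta_j/h^2)^{m_j}$. This is cancelled by the small-ball factors $CW\delta^2/\zeta_{\pi(j)}$ of $d-1$ rows per block. The non-bijective path map $\pi$ from the light block $k$ to a heavy block $\ell$ leaves a gain $(\zeta_k/\zeta_\ell)^{d-1}\approx\delta^{cW}$ against an entropy cost $O(J\Lambda+N\log N)$. The requirement $J\Lambda\ll W\Lambda$ is exactly $W\gg\sqrt N$.

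The factor $t$ then comes from a single distance-to-hyperplane step, $\|X_zv\|_2\ge|v_i|\dist([X_z]^i,H_i)$ with $|v_i|\ge N^{-1/2}$. This uses the $\delta$-mass of the normal on the block containing $i$, one density small-ball bound, and a union bound over $i$. No transfer structure is needed, and off-band entries are simply conditioned on.
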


A preliminary version of this least-singular-value estimate was proposed
in \cite{Han2508}.  The present formulation is proved self-containedly in
Appendix~\ref{app:high-band-lsv}.

A related least-singular-value estimate for much more general
inhomogeneous variance profiles was obtained in
\cite[Theorem~3.4]{Tikhomirov2023}.  That result allows an essentially
arbitrary deterministic variance profile, but assumes independent centered
subgaussian atoms with uniformly bounded densities (and, in the complex
case, independent real and imaginary parts).  The proof of
Theorem~\ref{thm:high-band-lsv} is independent of that work: it requires no
moment beyond the normalized second moment, while its doubly stochastic
band-ellipticity hypotheses cover every bounded-density profile to which
the theorem is applied below.

\paragraph{High probability Hilbert-Schmidt norm control}
For the iid matrices used below, the Hilbert--Schmidt cutoff can be removed
at a fixed $R$ using only the second moment.  Set
\[
 a_{ij}:=\frac{\sigma_{ij}^2}{N},\qquad Y_{ij}:=|\xi_{ij}|^2.
\]
Then $\sum a_{ij}=1$ and
$\max a_{ij}\le C_{\mathrm{lsv}}/(NW)$.  For
$Y_{ij}^{(K)}:=Y_{ij}\wedge K$ and
$r_K:=\E(Y_{11}-Y_{11}^{(K)})$, truncation, Markov, and Chebyshev give,
whenever $r_K\le\varepsilon/3$,
\[
 \Pp\left\{\left|\frac{\|X\|_{\HS}^2}{N}-1\right|>\varepsilon\right\}
 \le \frac{3r_K}{\varepsilon}
 +\frac{9C_{\mathrm{lsv}}K}{\varepsilon^2NW}.
\]
Indeed, the second term uses independence and
$(Y_{ij}^{(K)})^2\le K Y_{ij}$.  First let $N\to\infty$ with $K$ fixed
and then let $K\to\infty$.  More quantitatively, if
$K_R$ is fixed so that $r_{K_R}\le(R^2-1)/3$, then
\begin{equation}\label{eq:HS-quantitative-tail}
 \Pp\{\|X\|_{\HS}>R\sqrt N\}
 \le C_R\inf_{K\ge K_R}
 \left\{r_K+\frac{K}{NW}\right\}.
\end{equation}
This is the natural rate under a bare second moment and may be arbitrarily
slow.  If $\E|\xi|^{2+\alpha}<\infty$, choosing
$K=(NW)^{2/(2+\alpha)}$ gives the explicit bound
\begin{equation}\label{eq:HS-quantitative-moment}
 \Pp\{\|X\|_{\HS}>R\sqrt N\}
 \le C_{R,\alpha,C_{\mathrm{lsv}}}\E|\xi|^{2+\alpha}
 (NW)^{-\alpha/(2+\alpha)}.
\end{equation}
Assuming only two moments on $\xi$, by dominated convergence we have $\lim_{K\to\infty}r_K=0$. Then
\begin{equation}\label{eq:lsv-remove-HS-cutoff}
 \frac{\|X\|_{\HS}^2}{N}\xrightarrow{\Pp}1,
 \qquad
 \Pp\{\|X\|_{\HS}>R\sqrt N\}=o(1)
 \quad(R>1\text{ fixed}).
\end{equation}
Taking $R=2$ in Theorem~\ref{thm:high-band-lsv}, for every $t_N\to0$,
\begin{align*}
 &\Pp\left\{s_{\min}(X-zI_N)
 \le t_N\exp\left(-N^{3\kappa}\frac NW\right)\right\}\\
 &\qquad\le C t_N+\exp(-N^{1+\kappa/4})
 +C_2\inf_{K\ge K_2}\left\{r_K+\frac{K}{NW}\right\}=o(1).
\end{align*}
No moment above two is used here.  For non-identically distributed atoms,
the same argument requires uniform integrability of
$\{|\xi_{ij}|^2\}$.

The next proposition is a lightly repaired form of
\cite[Theorem~2.1]{JainJanaLuhORourke2021}.  Its conclusion and main
argument remain valid; only the auxiliary operator-norm estimate used to
construct the good event requires a small replacement under the stated
hypotheses.

\begin{proposition}[Repaired subgaussian full-block LSV]
\label{prop:jjlo-block-lsv}
Fix $K,K_0<\infty$ and a real atom law $\xi$ satisfying
$\E\xi=0$, $\E\xi^2=1$, and $\|\xi\|_{\psi_2}\le K$.  There are
$m_*=m_*(\xi,K_0)$ and $C_{\xi,K_0}<\infty$ such that the following holds.
Let $b\mid N$, write $m=N/b$, and let $\widetilde{\mathsf B}_N$ be the
periodic block-tridiagonal matrix whose diagonal and two cyclic neighboring
block diagonals consist of independent $b\times b$ iid matrices with common
atom $\xi$; all other blocks vanish.  Set
$\mathsf B_N=(3b)^{-1/2}\widetilde{\mathsf B}_N$.  Suppose that
$b\ge m\ge m_*$.
Then, for every deterministic $z\in\C$ with $|z|\le K_0$,
\[
 \Pp\left\{s_{\min}(\mathsf B_N-zI_N)\le(3b)^{-25m}\right\}
 \le \frac{C_{\xi,K_0}}{\sqrt{3b}}.
\]
No absolute-continuity assumption is imposed; in particular, the result
includes discrete atoms such as the Rademacher law.
\end{proposition}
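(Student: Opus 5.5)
This is a repaired version of \cite[Theorem~2.1]{JainJanaLuhORourke2021}, so the plan is to follow the architecture of that proof and change only the step the paper flags as defective: the operator-norm estimate that defines the good event. Write $\mathsf B_N-zI_N$ in block form with rows indexed by $j\in\mathbb Z/m\mathbb Z$. The diagonal blocks are $(3b)^{-1/2}\widetilde A_j-z$, and the cyclic off-diagonal blocks are $(3b)^{-1/2}\widetilde B_j$ and $(3b)^{-1/2}\widetilde C_j$.

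\textbf{Good event.} Each square block $\widetilde A_j,\widetilde B_j,\widetilde C_j$ is an iid $b\times b$ matrix with a real subgaussian atom. The standard net argument gives
\[
 \Pp\{\|\widetilde A_j\|\ge C_K\sqrt b\}\le e^{-cb},
\]
and likewise for $\widetilde B_j,\widetilde C_j$. A union bound over the $3m\le 3b$ blocks shows that all normalized blocks have norm at most $C$, except on probability $3be^{-cb}$. This is $o(b^{-1/2})$, so it fits inside the allowed failure probability. I expect this to be the repaired input: the original argument seems to have used a global norm bound that was not justified as stated, and the blockwise bound uses only subgaussianity.

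\textbf{Invertibility of interface blocks.} Rudelson--Vershynin \cite{RV} gives, for every deterministic shift and without any density assumption,
\[
 \Pp\{s_{\min}(\widetilde B_j)\le \varepsilon b^{-1/2}\}\le C\varepsilon+e^{-cb}.
\]
Take $\varepsilon=b^{-2}$. The failure probability for one block is then $Cb^{-2}$, and a union bound over $m\le b$ blocks costs $O(b^{-1})$, which is below $b^{-1/2}$.

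\textbf{Transfer-matrix step.} On the good event, suppose $v=(v_1,\dots,v_m)$ satisfies $\|(\mathsf B_N-z)v\|\le\delta$. Block row $j$ can be solved for $v_{j+1}$ through $\widetilde B_j^{-1}$. Each such step multiplies the error by at most $(C\sqrt{3b}\,b^{2})$. After going once around the ring, the vector $(v_1,v_2)$ approximately satisfies a $2b\times 2b$ closure equation $(T-I)(v_1,v_2)\approx 0$, where $T$ is the product of the transfer matrices, with error at most $(3b)^{Cm}\delta$. I will also need $\|v\|\le (3b)^{Cm}\|(v_1,v_2)\|$, which follows from the same propagation bounds.

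\textbf{Closing the ring (main obstacle).} It remains to show that the closure equation cannot be satisfied too well. I would condition on every block except one, say $\widetilde A_1$. The closure matrix then depends affinely on $\widetilde A_1$ through the final row. So the question becomes the least singular value of (deterministic matrix) $+\,(3b)^{-1/2}\widetilde A_1$, after it has been projected and multiplied by the fixed transfer factors. Apply the Rudelson--Vershynin bound for deterministically shifted square iid matrices (the shifted version of \cite{RV}, uniform over shifts of norm at most $(3b)^{Cm}$). It gives $s_{\min}\ge b^{-2}$ times the norm distortion, except on probability $O(b^{-1/2})$ coming from the term $C\varepsilon$ with $\varepsilon\asymp b^{-1/2}$. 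Collecting the distortions $(3b)^{Cm}$ from propagation and from the norms, the exponent stays at most $25m$ once $m\ge m_*$ absorbs the constants. This yields $s_{\min}(\mathsf B_N-zI_N)\ge(3b)^{-25m}$ outside an event of probability $C_{\xi,K_0}(3b)^{-1/2}$.

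The condition $b\ge m$ is used only so that the union bounds over $m$ blocks cost at most $O(b\,e^{-cb}+m b^{-2})$.
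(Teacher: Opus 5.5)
Your good-event step and interface floor are exactly the paper's repair. The paper keeps the argument of \cite[Theorem~2.1]{JainJanaLuhORourke2021} and rebuilds only its event $\mathcal E_K$. It uses the subgaussian block-norm tail $2e^{-cb}$ and an interface floor $s_{\min}\ge b^{-5}$ with failure $Cb^{-2}+Ce^{-cb^{1/50}}$; your Rudelson--Vershynin version with $\varepsilon=b^{-2}$ does the same job. It union-bounds over $O(m)\le O(b)$ blocks and then runs the rest of that proof on $\mathcal E_K$ unchanged. (The original defect was not a global norm bound but the printed $O(b^{-2})$ tail for a single block under only $4+\varepsilon$ moments; the fix is the same.) The trouble is that you do not import the remainder. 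You replace it with your own closing argument, and that argument has a genuine gap.

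Take initial data $(v_1,v_2)$ and condition on everything except $\widetilde A_1$, writing $\mathsf A_1=(3b)^{-1/2}\widetilde A_1$. The closure matrix is then
\[
 S=\begin{pmatrix} R_1 & R_2\\ \mathsf A_1+D_{11} & D_{12}\end{pmatrix}.
\]
The deterministic top row comes from block row $m$, and $\mathsf A_1$ enters only the bottom-left block. To reach a square iid-plus-deterministic matrix, you must parametrize $\ker[R_1\ R_2]$ by $v_1$, that is, invert $R_2$, obtaining $\mathsf A_1+D_{11}-D_{12}R_2^{-1}R_1$.

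First problem: $R_2$ is singular exactly when the open block chain on sites $2,\dots,m$ (obtained by setting $v_1=0$) is singular. So $s_{\min}(R_2)$ is itself a least-singular-value problem of the same type, and neither your good event nor the propagation bounds control it. Approximate null vectors with $v_1\approx0$ do not see $\widetilde A_1$ at all.

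Second problem: even where $R_2$ is invertible, the shift $D_{11}-D_{12}R_2^{-1}R_1$ can have norm $(3b)^{Cm}$, which is exponential in $b$ when $m\asymp b$. The Rudelson--Vershynin bound $\Pp\{s_{\min}(G+F)\le\varepsilon n^{-1/2}\}\le C\varepsilon+e^{-cn}$ for discrete iid $G$ uses $\|G+F\|\lesssim\sqrt n$ in its net step. No version uniform over shifts of this size is available, and polynomial-shift estimates have constants depending on the exponent, so they are not uniform in $m$.

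A union bound over which block to condition on does not rescue this, since $m$ choices at cost $b^{-1/2}$ each exceed the target $C/\sqrt{3b}$. Large deterministic deformations of discrete iid blocks need additional structure; compare the RRQR-plus-Cook argument in the proof of Proposition~\ref{prop:local-terminal}. As written, the closure step is unproved. The simplest fix is to invoke the remainder of \cite[Theorem~2.1]{JainJanaLuhORourke2021} on the repaired event, as the paper does.
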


\begin{proof}
See Appendix~\ref{app:repaired-jjlo}.
\end{proof}

\begin{remark}[Relation to the result of Jain--Jana--Luh--O'Rourke]
The least-singular-value argument of
\cite[Theorem~2.1]{JainJanaLuhORourke2021} is retained.  The only
modification concerns its auxiliary good event: the printed
\cite[Proposition~2.3]{JainJanaLuhORourke2021} asserts an
$O(b^{-2})$ tail for the event that an iid $b\times b$ block has norm
larger than $C\sqrt b$, under only a finite $(4+\varepsilon)$ moment.
Such a rate does not follow from that hypothesis: the event that one of
the $b^2$ entries exceeds $C\sqrt b$ may already have probability of
order $b^{-\varepsilon/2}$ up to logarithmic factors.
Appendix~\ref{app:repaired-jjlo} replaces
this step by the standard exponential block-norm tail available under
the subgaussian hypothesis of Theorem~\ref{thm:main}.
\end{remark}

This is precisely the full-block mask in \eqref{eq:model}, with $b=W$.
It is complementary to, but does not replace,
Theorem~\ref{thm:high-band-lsv}, which is the estimate used for the
bounded-density indicator and full-block branches.  We use
Proposition~\ref{prop:jjlo-block-lsv} only once, at the hard edge of the
subgaussian full-block proof below.  Notice also that this
least-singular-value argument only requires
$b\ge N/b$; the stronger condition
$b\ge N^{32/33}\log N$ belongs to the circular-law theorem in the same
paper, not to its least-singular-value estimate.

\begin{proposition}[Mesoscopic singular-value counting]
\label{prop:mesoscopic-counting}
Let $X=(b_{ij}\xi_{ij})_{i,j\in[N]}$ have a doubly stochastic variance
profile, where the $\xi_{ij}$ are independent copies of a fixed centered
unit-variance atom $\xi$ satisfying $\E|\xi|^3<\infty$, and suppose
$\mathfrak b(X)\ge N^{c}$ for some $c>0$.  For fixed $z\in\C$ and
$\tau>0$, with probability at
least $1-N^{-10}$, every interval $I\subset[-5,5]$ of length
$|I|\ge\mathfrak b(X)^{-1/8}N^\tau$ satisfies
\[
 \#\left\{\lambda\in I:\lambda\in\operatorname{Spec}
 \begin{pmatrix}0&X-zI_N\\(X-zI_N)^*&0\end{pmatrix}\right\}
 \le C_zN|I|.
\]
\end{proposition}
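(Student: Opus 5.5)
The plan is the standard local-law route for the Hermitized matrix, with the third moment supplying the smallness in the cumulant expansion. Set $Y=X-zI_N$ and
\[
H_z=\begin{pmatrix}0&Y\\ Y^*&0\end{pmatrix},\qquad G(\eta)=(H_z-\eta)^{-1},\quad \eta=E+\mathrm i\kappa .
\]
The interval count reduces to a bound on $\Im\tr G$: if $I$ has center $E$ and length $|I|=2\kappa$, then
\[
\#\{\lambda\in I\}\le 2\kappa\,\Im\tr G(E+\mathrm i\kappa).
\]
So it suffices to show that, with probability at least $1-N^{-10}$,
\[
\frac1{2N}\Im\tr G(E+\mathrm i\kappa)\le C_z
\]
uniformly over a grid of $E\in[-5,5]$ and over $\kappa\ge\kappa_0:=\mathfrak b(X)^{-1/8}N^{\tau}/2$. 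The grid is upgraded to all $E$ and $\kappa$ by Lipschitz continuity of $G$ in $\eta$, which costs at most a factor $\kappa^{-2}\le N^2$.

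First, the deterministic equivalent. Because the variance profile is doubly stochastic, the vector Dyson (matrix Dyson) equation for $H_z$ has a constant solution. This solution is the scalar solution $(m_1,m_2)$ of the Ginibre Hermitization equations for the circular law at spectral parameter $z$. It is bounded by $C_z$ on $|E|\le5$, and the stability operator $1-m^2S$ (with $S$ the variance operator) has bounded inverse on $\ell^\infty$, uniformly down to $\Im\eta>0$ for $|E|$ bounded, possibly with a $|z|$-dependent constant near $E=0$ when $|z|=1$. I will cite this stability from the Ginibre case. For a doubly stochastic $S$ this is immediate: $S$ is a contraction on $\ell^\infty$ with top eigenvector $\mathbf 1$, and on that direction it reduces exactly to the Ginibre stability.

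Second, the local law via a bootstrap in $\kappa$. Start at $\kappa=10$, where trivial bounds hold. At each step, derive the self-consistent equation for the diagonal entries $G_{ii}$ using Schur complements, or equivalently cumulant expansion in the entries. The fluctuation terms are controlled by large deviation bounds for quadratic forms with weights $b_{ij}^2\le\mathfrak b^{-1}$. Under only a third moment we cannot use high-probability concentration directly. Instead I truncate the entries at $|\xi_{ij}|\le \mathfrak b^{1/2-\delta}$ and recenter; the third moment makes the truncation and recentering errors at most $\mathfrak b^{-1/2+\delta}$ per entry, which is affordable. I then run high-moment estimates for the bounded entries, giving an entrywise error of order $(\mathfrak b\,\kappa)^{-1/2}N^{\epsilon}$. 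Stability then yields $|G_{ii}-m|\le N^{\epsilon}(\mathfrak b\kappa)^{-1/2}$, hence $\Im\tr G\le C_zN$, for all $\kappa\ge\mathfrak b^{-1}N^{C\epsilon}$. This is far below the stated threshold $\mathfrak b^{-1/8}N^\tau$, so a much cruder argument suffices. In particular I will not need optimal fluctuation averaging; the weak entrywise law, which loses powers of $\kappa$, is enough at scale $\mathfrak b^{-1/8}$.

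The main obstacle is the truncation step under only $\E|\xi|^3<\infty$ combined with the $1-N^{-10}$ probability requirement. The event that some entry exceeds $\mathfrak b^{1/2-\delta}$ is not that rare: under a third moment alone its probability is only about $N W\,\mathfrak b^{-3/2}$. The plan is therefore not to truncate on a global event. Instead, write $X=X^{\le}+X^{>}$, where $X^{>}$ collects the large entries. The number of large entries in the whole matrix is at most $N^{1-c'}$ with probability $1-N^{-10}$, by a Chernoff bound, since their count is a sum of independent indicators. Hence $X^{>}$ has rank at most $N^{1-c'}$. By eigenvalue interlacing for $H_z$, this rank-$r$ perturbation changes every interval count by at most $2r\le CN|I|$, because $|I|\ge\mathfrak b^{-1/8}N^\tau$, provided $c'$ is chosen accordingly. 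The $1/8$ exponent leaves room to choose $\delta$ and $c'$ compatibly, and the bounded matrix $X^{\le}$ then has the required high-probability local law.
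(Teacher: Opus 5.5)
Your route differs from the paper's. The paper does not reprove this statement; it imports it from \cite[Corollary~3.5]{Han2410}. As the proof of Lemma~\ref{lem:local-finite-moment-bulk} shows, the input there is a direct comparison of the Hermitized Stieltjes transform at scale $\mathfrak b^{-1/8}N^{\tau}$ with the free reference transform $m_z^\circ$, which for a doubly stochastic profile is exactly the scalar Ginibre transform. That comparison needs no stability estimate for the vector Dyson equation. Your Schur-complement bootstrap does need one, and the one you assert is false. For the Hermitization, the linearization acts on the two diagonal vectors as
\[
 \mathcal T(x,y)=\bigl(bc\,S^{\mathsf T}x+a^2Sy,\ a^2S^{\mathsf T}x+bc\,Sy\bigr),
\]
where $a,b,c$ are the entries of the constant Ginibre solution $M$, whose eigenvalues are $\mu_\pm=-(\eta+m\pm|z|)^{-1}$. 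Take $E=0$, $|z|<1$, and symmetric $S$ with $Sv=\lambda v$. On the antisymmetric mode $(v,-v)$ one finds that $1-\mathcal T$ acts as $1-\lambda\,\Im m/(\kappa+\Im m)$. For Ginibre only $\lambda=1$ occurs, and that mode is removed by the exact identity $\tr G_{11}=\tr G_{22}$. A band profile with $\mathfrak b\ll N$, however, has eigenvalues with $1-\lambda\asymp(\mathfrak b/N)^2$, so $\|(1-\mathcal T)^{-1}\|_{\infty\to\infty}\gtrsim\min\{\kappa^{-1},(N/\mathfrak b)^2\}$. Your justification, that $S$ is an $\ell^\infty$ contraction with top eigenvector $\mathbf 1$, gives no control on the complement of $\mathbf 1$. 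Hence the entrywise bound $N^{\epsilon}(\mathfrak b\kappa)^{-1/2}$ down to $\kappa\ge\mathfrak b^{-1}N^{C\epsilon}$ does not follow.

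The counting reduction and the interlacing step are fine, and the argument can be repaired, but only at the critical scale. For every doubly stochastic $S$ one has the crude bound $\|\mathcal T\|_{\infty\to\infty}\le|a|^2+|bc|=\tfrac12(|\mu_+|^2+|\mu_-|^2)=\Im m/(\kappa+\Im m)$. This follows from the parallelogram law and the identity $\Im\mu_\pm=(\kappa+\Im m)|\mu_\pm|^2$, and gives $\|(1-\mathcal T)^{-1}\|_{\infty\to\infty}\le 1+\Im m/\kappa$. With this bound the bootstrap closes only if the entrywise error is $\ll\kappa^2$. Your error term also omits a contribution coming from the truncation. Truncate $h_{ij}=b_{ij}\xi_{ij}$ at a level $t$, rather than $\xi_{ij}$: when the profile has many tiny variances, the number of large atoms $\xi_{ij}$ need not be $O(N^{1-c'})$. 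The term $\sum_k(|h_{ik}|^2-s_{ik})G_{kk}$ then contributes a per-row error of order $t^2$. Under only a third moment, a given row carries an entry of size about $t$ with probability as large as $\mathfrak b^{-1/2}t^{-3}$, not $N^{-D}$, so this term survives in any $\ell^\infty$ entrywise law. Consequently you need $t\ll\kappa$, while interlacing needs $\#\{|h_{ij}|>t\}\approx N\mathfrak b^{-1/2}t^{-3}\lesssim N\kappa$. These two conditions are compatible exactly when $\kappa\gg\mathfrak b^{-1/8}$, using the $N^\tau$ slack and forcing $t\approx\mathfrak b^{-1/8}$. With a $(2+\alpha)$ moment the same balance gives precisely the exponent $\rho_\alpha=\alpha/(2(3+\alpha))$ of Remark~\ref{rem:third-moment}. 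So the threshold in the statement is not far above what your method needs; it is exactly where the crude-stability version of it stops. Your claimed freedom in choosing $\delta$ rests entirely on the false bounded-stability claim.
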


This is \cite[Corollary~3.5]{Han2410}, whose underlying resolvent estimate
\cite[Proposition~3.4]{Han2410} requires only a finite third moment.

\begin{lemma}[Local finite-moment bulk comparison]
\label{lem:local-finite-moment-bulk}
Let $A_N=(b_{ij}\xi_{ij})_{i,j\in[N]}$ have a doubly stochastic variance profile, where
the $\xi_{ij}$ are independent copies of an atom satisfying
$\E\xi=0$, $\E|\xi|^2=1$, and $\E|\xi|^3<\infty$.  Denote by
$\mathfrak b_N=(\max_{i,j}b_{ij}^2)^{-1}$ and assume
$\mathfrak b_N\ge N^\varepsilon$.  If $G_N$ is normalized circular
Ginibre, then, for every fixed $z\in\C$ and $R<\infty$, there is
$\zeta>0$ such that
\begin{equation}\label{eq:local-finite-moment-bulk}
 \sup_{0\le x\le R^2}
 \left|\nu_{A_N-zI_N}([0,x])-\nu_{G_N-zI_N}([0,x])\right|
 =O_{\Pp}(N^{-\zeta}),
 \qquad
 \nu_B:=\frac1N\sum_{j=1}^N\delta_{s_j(B)^2}.
\end{equation}
\end{lemma}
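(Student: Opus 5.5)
We compare the squared singular-value distributions of $A_N-zI_N$ and $G_N-zI_N$ through their Hermitizations, i.e.\ via Stieltjes transforms, and then convert resolvent closeness into closeness of distribution functions on $[0,R^2]$. Write
\[
 \mathcal H_B(z)=\begin{pmatrix}0&B-zI_N\\(B-zI_N)^*&0\end{pmatrix},
\]
whose eigenvalues are $\pm s_j(B-zI_N)$. Hence $\nu_{B-zI_N}([0,x])$ is one half of the fraction of eigenvalues of $\mathcal H_B(z)$ lying in $[-\sqrt x,\sqrt x]$.

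\textbf{Step 1: deterministic equivalent.} Because the variance profile of $A_N$ is doubly stochastic, the matrix Dyson equation for $\mathcal H_{A_N}(z)$ has a constant solution. This solution coincides with the Ginibre self-consistent solution $m_z(\eta)$, whose density is $\rho_z$. We then invoke the local law underlying Proposition~\ref{prop:mesoscopic-counting}, namely \cite[Proposition~3.4]{Han2410}, which uses only a finite third moment. For spectral parameters $w=E+\mathrm i\eta$ with $|E|\le R+1$ and $\eta\ge \mathfrak b_N^{-1/8}N^{\tau}$, it gives
\[
 \left|\tfrac1{2N}\tr(\mathcal H_{A_N}(z)-w)^{-1}-m_z(w)\right|\le N^{-\zeta_1}
\]
with probability $1-N^{-10}$, after a union bound over an $N^{-3}$-net and Lipschitz continuity in $w$. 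The same statement holds for $G_N$, which is a special case since it has a flat profile and Gaussian entries. Consequently the two Stieltjes transforms agree within $2N^{-\zeta_1}$ on this domain.

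\textbf{Step 2: Helffer--Sjöstrand conversion.} Let $\chi_x$ be a smoothing of $\mathbf 1_{[-\sqrt x,\sqrt x]}$ at scale $\eta_0=N^{-\zeta_2}$, chosen so that $\eta_0\gg \mathfrak b_N^{-1/8}N^\tau$; this is possible because $\mathfrak b_N\ge N^\varepsilon$, by taking $\tau,\zeta_2$ small relative to $\varepsilon$. By the Helffer--Sjöstrand formula, $\frac1{2N}\tr\chi_x(\mathcal H)$ is an integral of the Stieltjes transform over $\{\eta\ge\eta_0\}$ plus a contribution from $\eta<\eta_0$. The latter is bounded using $\|\partial_{\bar w}\tilde\chi\|\lesssim \eta/\eta_0^2$ together with the bound $\eta|\Im s(w)|\le \#\{\text{eigenvalues in }[E-\eta,E+\eta]\}/N$. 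That eigenvalue count is controlled by Proposition~\ref{prop:mesoscopic-counting} at scale $\eta_0$, which yields $O(\eta_0)$. The region $\eta\ge\eta_0$ contributes at most $N^{-\zeta_1}\eta_0^{-2}$. The smoothing error itself, $|\chi_x-\mathbf 1|$, is supported on two intervals of length $\eta_0$, and the same counting proposition bounds their mass by $C\eta_0$ for both matrices. Choosing $\zeta_2<\zeta_1/3$ gives a bound of $N^{-\zeta}$ for each fixed $x$.

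\textbf{Step 3: uniformity and main obstacle.} Uniformity in $x\in[0,R^2]$ follows from monotonicity of the distribution functions: we take an $N^{-\zeta}$-grid in $\sqrt x$ and use the counting bound once more to control the increments between grid points. The near-edge range $x\lesssim\eta_0^2$ is handled by the same trivial counting bound, since both measures assign mass $O(\eta_0)$ there; the hard edge is therefore not a problem. I expect the main obstacle to be Step~1: the local law must be available uniformly near $E=0$, including for $|z|<1$ where $\rho_z$ has support touching zero. We also have to verify that, under only the third moment, the cited resolvent estimate covers all $w$ with $\eta\ge\eta_0$ rather than only a compact spectral window. If the probability bound is weaker than claimed, $O_{\Pp}$ suffices anyway.
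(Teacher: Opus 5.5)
Your argument is correct in outline. It converts resolvent closeness into closeness of distribution functions by a different mechanism than the paper.

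The paper uses the comparison $|m^z_{A_N}-m^\circ_z|=O_{\Pp}(N^{-d})$ only at the single height $v_N=\mathfrak b_N^{-1/8}N^c$, uniformly for $|u|\le R+2$. That range already includes $u$ near $0$, so your worry about the hard edge is covered by the cited estimate. The paper then runs a Poisson-kernel sandwich:
\begin{itemize}
\item interval indicators are convolved with $P_{v_N}$, and endpoints are moved by $\Delta=\sqrt{v_N}$;
\item the Poisson tails cost $O(v_N/\Delta)$, charged against total mass one;
\item moving the endpoints costs $O(\Delta)$, using only the bounded density of the reference Hermitized law.
\end{itemize}
That route needs no counting bound for the random measures and no local law at other heights, and it is automatically uniform over all intervals $I\subset[-R,R]$. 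Your Helffer--Sj\"ostrand route instead needs the local law on the whole range $\eta\in[\eta_0,O(1)]$, plus an a priori bound on the random measures at scale $\eta_0$. In exchange, your smoothing is sharper: the error is of order $\eta_0+N^{-\zeta_1}\eta_0^{-1}$ rather than $\sqrt{v_N}$. Since any polynomial rate suffices, both arguments close.

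Three repairs to your write-up.

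\textbf{The counting inequality is reversed.} You wrote $\eta|\Im s(E+\mathrm i\eta)|\le N^{-1}\#\{\text{eigenvalues in }[E-\eta,E+\eta]\}$. This fails because distant eigenvalues contribute to $\eta\Im s$ but not to the count; in fact the count is at most $4N\eta\Im s(E+\mathrm i\eta)$. What you need instead is:
\begin{itemize}
\item the map $\eta\mapsto\eta\Im s(E+\mathrm i\eta)$ is nondecreasing, so $\eta\Im s(E+\mathrm i\eta)\le\eta_0\Im s(E+\mathrm i\eta_0)$ for $\eta\le\eta_0$;
\item $\Im s(E+\mathrm i\eta_0)=O(1)$, which follows from your Step~1 at height $\eta_0$ together with the boundedness of $\Im m_z$.
\end{itemize}

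\textbf{Transition intervals.} The same local-law bound controls the mass of the two transition intervals of $\chi_x$. This is preferable to quoting Proposition~\ref{prop:mesoscopic-counting}, which is stated only for intervals in $[-5,5]$, whereas you need $|E|$ up to $R$.

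\textbf{Uniformity in $x$.} Once the local law holds simultaneously on the whole spectral domain, your bound is valid for all $x\in[0,R^2]$ at once. The grid argument in Step~3 is therefore not needed.
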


\begin{proof}
Let $\mu_A^z$ denote the empirical law of the Hermitization of $A-zI$,
and let $m_A^z$ be its Stieltjes transform.  Let $m_z^\circ$ denote the
deterministic free Hermitized Stieltjes transform written
$m_{z,\mathrm{free}}^o$ in \cite{Han2410}; for a doubly stochastic profile
it is the same scalar reference transform as for normalized Ginibre.  Choose
$0<c<\varepsilon/8$ and set $v_N=\mathfrak b_N^{-1/8}N^c$.
The explicit bound \cite[Proposition~3.4, (3.11)]{Han2410}, together with
the net argument in its proof, gives, for every fixed $R$ and some $d>0$,
\begin{equation}\label{eq:local-stieltjes-comparison}
 \sup_{|u|\le R+2}
 |m_{A_N}^z(u+iv_N)-m_z^\circ(u+iv_N)|
 =O_{\Pp}(N^{-d}).
\end{equation}
The same proof is uniform for $u$ in any fixed compact interval, with
constants depending on that interval.
Indeed, all four terms in (3.11) are polynomially small after this
substitution.  The Gaussian estimate \cite[(3.16)]{Han2410} gives the same
conclusion for $G_N$.

For completeness, convolve interval indicators with the Poisson kernel
$P_v(t)=\pi^{-1}v/(t^2+v^2)$ and enlarge or shrink their endpoints by
$\Delta=\sqrt v$.  The kernel mass outside distance $\Delta$ is
$O(v/\Delta)$, while the reference Hermitized law has bounded density by
\cite[(3.15) and the proof of Theorem~3.6]{Han2410}.  Thus the usual upper
and lower smoothing inequalities on $[-R-1,R+1]$ give
\[
 \sup_{I\subset[-R,R]}
 |\mu_{A_N}^z(I)-\mu_{G_N}^z(I)|
 \le C_R\bigl(N^{-d}+\Delta+v_N/\Delta\bigr)
 =O_{\Pp}(N^{-\zeta}).
\]
No spectral-tail estimate enters because every interval is contained in a
fixed compact set.  Taking $I=[-\sqrt x,\sqrt x]$ proves
\eqref{eq:local-finite-moment-bulk}.
\end{proof}

\subsection{Finite-moment short-ring anchor}

\begin{proposition}[Finite-moment short-ring anchor]
\label{prop:finite-moment-anchor}
Let $H_{M,W}$ be as in \eqref{eq:auxiliary-ring}, with atom satisfying
Assumption~\ref{ass:indicator-density}.  For every
$\omega\in(0,1/9)$, if $M,W\to\infty$ and
\begin{equation}\label{eq:finite-moment-high-band}
 W\ge M^{8/9+\omega},
\end{equation}
then, for every fixed $z\in\C$,
\begin{equation}\label{eq:HB-anchor}
 \frac1M\log|\det(H_{M,W}-zI_M)|
 \xrightarrow{\Pp}U_{\cir}(z).
\end{equation}
\end{proposition}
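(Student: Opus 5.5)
We prove \eqref{eq:HB-anchor} by the simultaneous-control strategy sketched in the introduction: split the singular values of $H_{M,W}-zI_M$ at a cutoff $a_M\to0$. The bulk is handled by Lemma~\ref{lem:local-finite-moment-bulk}, the hard edge by Theorem~\ref{thm:high-band-lsv} and Proposition~\ref{prop:mesoscopic-counting}. Write $s_j=s_j(H_{M,W}-zI_M)$ and
\[
 \frac1M\log|\det(H_{M,W}-zI_M)|=\frac1{2M}\sum_j\log s_j^2 .
\]
The corresponding Ginibre quantity $\frac1M\log|\det(G_M-zI_M)|$ converges in probability to $U_{\cir}(z)$; this is classical and has polynomial hard-edge control. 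It therefore suffices to compare $\int\log x\,d\nu$ for the two squared-singular-value measures $\nu_{H_{M,W}-zI_M}$ and $\nu_{G_M-zI_M}$.

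\textbf{Step 1 (large singular values).} Since $\mathfrak b(H_{M,W})\asymp W\ge M^{8/9}$, the upper spectral tail is controlled as follows. By \eqref{eq:lsv-remove-HS-cutoff}, $\|H\|_{\HS}^2/M\to1$, so $\nu([R^2,\infty))$ and $\int_{x\ge R^2}\log x\,d\nu$ are $O(R^{-2}\log R)$ in probability, because $\log x\le x$. On $[a_M^2,R^2]$ we integrate $\log x$ by parts against the distribution functions. Lemma~\ref{lem:local-finite-moment-bulk} makes the difference of distribution functions $O_{\Pp}(M^{-\zeta})$. The boundary and integral terms then contribute $O(M^{-\zeta}\log(1/a_M))$, which tends to zero for any polynomial cutoff $a_M$ with $\zeta$ fixed. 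The same range for Ginibre is standard.

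\textbf{Step 2 (small singular values).} Here we show $\frac1M\sum_{s_j\le a_M}|\log s_j|\to0$ in probability, and similarly for Ginibre.

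\emph{Counting.} Choose $a_M=W^{-1/8}M^{\tau}$ with small $\tau>0$. Apply Proposition~\ref{prop:mesoscopic-counting} to $I=[-a_M,a_M]$, which is admissible since $\mathfrak b\asymp W$. This gives $\#\{j:s_j\le a_M\}\le C M a_M$ with probability $1-M^{-10}$.

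\emph{Least singular value.} We have $W\ge M^{8/9+\omega}\ge M^{1/2+\chi}$ with $\chi=7/18$, say. Theorem~\ref{thm:high-band-lsv}, combined with the HS-cutoff removal, then gives with probability $1-o(1)$
\[
 s_{\min}\ge t_M\exp\!\bigl(-M^{3\kappa}M/W\bigr),\qquad t_M=1/\log M .
\]
Hence every $|\log s_j|\le 2M^{3\kappa}M/W$, and the small-value contribution is at most
\[
 C a_M M^{3\kappa}\frac MW=C M^{1+\tau+3\kappa}W^{-9/8}.
\]
This is $\le CM^{1+\tau+3\kappa-(9/8)(8/9+\omega)}=CM^{\tau+3\kappa-9\omega/8}\to0$, once $\tau,\kappa$ are chosen small relative to $\omega$. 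This computation is exactly where the exponent $8/9$ comes from.

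\emph{Middle range.} Singular values between $s_{\min}$ and $a_M$ need no separate treatment: they are covered by the same worst-case bound.

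\textbf{Step 3 (conclusion).} Combining Steps 1 and 2 and letting $R\to\infty$ gives \eqref{eq:HB-anchor}.

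\textbf{Main obstacle.} The main difficulty is verifying the hypotheses of the inputs for the auxiliary ring. Theorem~\ref{thm:high-band-lsv} needs $\sigma_{ij}^2\ge c/W$ on the cyclic band, which holds by \eqref{eq:indicator-weights}, and a bounded density of one of the types (i)--(iii), which is given. One must also check that Lemma~\ref{lem:local-finite-moment-bulk} and Proposition~\ref{prop:mesoscopic-counting} hold uniformly over the weight arrays. The balance of exponents in Step 2 is the only quantitative constraint; if the counting window $\mathfrak b^{-1/8}$ were weaker, the threshold would shift accordingly.
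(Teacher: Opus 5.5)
Your proposal is correct and follows essentially the paper's route: the cutoff $a_M\asymp W^{-1/8}M^\tau$, Proposition~\ref{prop:mesoscopic-counting} for the count, Theorem~\ref{thm:high-band-lsv} with the Hilbert--Schmidt cutoff removed, Lemma~\ref{lem:local-finite-moment-bulk} with integration by parts on $[a_M^2,R^2]$, a second-moment upper tail, and the exponent balance $\tau+3\kappa<9\omega/8$, with only cosmetic differences (your $t_M=1/\log M$ versus $t=M^{-2}$, and a Ginibre hard edge handled ``similarly'' rather than by the inverse-moment bound). One small correction: the upper-tail bound $O(R^{-2}\log R)$ comes from $\log x\le(2\log R/R^2)\,x$ for $x\ge R^2\ge e$, not from $\log x\le x$ alone.
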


\begin{proof}
Write $\beta=8/9+\omega$ and choose $\chi,\kappa,\tau>0$ so small that
\begin{equation}\label{eq:hard-edge-parameter-choice}
 \frac12+\chi<\beta,\qquad \kappa<\frac\chi4,\qquad
 \tau<\frac\beta8,\qquad \tau+3\kappa<\frac{9\beta}{8}-1,
\end{equation}
which is possible because $\beta>8/9$.  Apply Theorem
\ref{thm:high-band-lsv} with $t=M^{-2}$ and Proposition
\ref{prop:mesoscopic-counting} to the interval
\[
 [-a_M,a_M],\qquad
 a_M:=\mathfrak b(H_{M,W})^{-1/8}M^\tau
 \asymp W^{-1/8}M^\tau=o(1).
\]
With probability $1-o(1)$,
\[
 s_{\min}(H_{M,W}-zI_M)\ge e^{-L_M},\qquad
 \#\{j:s_j(H_{M,W}-zI_M)\le a_M\}\le CMa_M,
\]
where $L_M=M^{1+3\kappa}/W+2\log M$.  Here
\eqref{eq:indicator-weights} satisfies the least-singular-value hypotheses
with $c_{\mathrm{lsv}}=c_0/3$ and
$C_{\mathrm{lsv}}=C_0$.  Consequently,
\begin{align}
 \frac1M\left|\sum_{s_j\le a_M}\log s_j\right|
 &\le Ca_ML_M \notag\\
 &\le C\left(M^{1+\tau+3\kappa}W^{-9/8}
       +M^\tau W^{-1/8}\log M\right)=o(1).       \label{eq:hard-edge-sum}
\end{align}

It remains to identify the logarithm above $a_M$.  Let $G_M$ be normalized
circular Ginibre and define $\nu_A=M^{-1}\sum_j\delta_{s_j(A)^2}$.
For each fixed $R>1$, Lemma~\ref{lem:local-finite-moment-bulk} gives
\begin{equation}\label{eq:finite-moment-bulk-comparison}
 \sup_{0\le x\le R^2}
 |\nu_{H_{M,W}-zI_M}([0,x])-\nu_{G_M-zI_M}([0,x])|
 =O_{\Pp}(M^{-\zeta}).
\end{equation}
Since $a_M$ is polynomially small, integration by parts, equivalently
\cite[Lemma~4.2]{Han2410}, yields
\begin{equation}\label{eq:truncated-log-bulk-comparison}
 \left|\frac12\int_{a_M^2}^{R^2}\log x\,
 d(\nu_{H_{M,W}-zI_M}-\nu_{G_M-zI_M})(x)\right|
 \le C_R(1+|\log a_M|)M^{-\zeta}=o_{\Pp}(1).
\end{equation}
No operator-norm estimate is needed at the upper edge.  Indeed, for
$R>\sqrt e$ and $A=H_{M,W}$ or $G_M$,
\begin{equation}\label{eq:upper-log-tail-second-moment}
 \E\frac1M\sum_{s_j(A-zI_M)>R}\log s_j(A-zI_M)
 \le \frac{\log R}{R^2}\frac1M\E\|A-zI_M\|_{\HS}^2
 =(1+|z|^2)\frac{\log R}{R^2}.
\end{equation}

Finally, the Ginibre small-singular-value argument in
\cite[(4.9), Corollary~4.10, and Lemmas~4.11--4.12]
{BordenaveChafai2012}
gives, for some $p=p(z)>0$,
\[
 \frac1M\sum_{j=1}^M s_j(G_M-zI_M)^{-p}=O_{\Pp}(1).
\]
For $0<s\le a\le1$,
$\log(1/s)\le(2/p)a^{p/2}s^{-p}$; hence, because
$a_M\lesssim M^{-(\beta/8-\tau)}$,
\begin{equation}\label{eq:ginibre-hard-edge}
 \frac1M\sum_{s_j(G_M-zI_M)\le a_M}
 |\log s_j(G_M-zI_M)|=o_{\Pp}(1).
\end{equation}
The full Ginibre logarithmic determinant converges to $U_{\cir}(z)$ by the
same references.  Combine \eqref{eq:hard-edge-sum},
\eqref{eq:truncated-log-bulk-comparison},
\eqref{eq:upper-log-tail-second-moment}, and
\eqref{eq:ginibre-hard-edge}, first letting $M\to\infty$ and then
$R\to\infty$, to obtain \eqref{eq:HB-anchor}.
\end{proof}
We now choose the mesoscopic scales.  Fix
\begin{equation}\label{eq:scale-choice}
 0<\delta<\gamma<\frac18,\qquad
 0<\omega_0<\frac1{1+\gamma}-\frac89,\qquad
 m_0(W):=\lceil W^{1+\delta}\rceil.
\end{equation}
If $M\le W^{1+\gamma}$, then
\begin{equation}\label{eq:mesoscopic-window-high-band}
 W\ge M^{1/(1+\gamma)}\ge M^{8/9+\omega_0}.
\end{equation}
For large $W$, this covers both $M\in[m_0(W),2m_0(W)]$ and the direct
target regime $N\le W^{1+\gamma}$.
\begin{remark}[Lower moments]\label{rem:third-moment}
For both bounded-density branches, the finite third moment enters only
through Proposition~\ref{prop:mesoscopic-counting} and
Lemma~\ref{lem:local-finite-moment-bulk}.  Under
$\E|\xi|^{2+\alpha}<\infty$, $0<\alpha\le1$, the underlying resolvent
comparison in the proof of Proposition \ref{prop:mesoscopic-counting} can instead be run at scale
\[
 \mathfrak b_N^{-\rho_\alpha}N^\tau,
 \qquad \rho_\alpha:=\frac{\alpha}{2(3+\alpha)}.
\]
Then a similar version of Proposition \ref{prop:finite-moment-anchor} works for
\begin{equation}\label{eq:lower-moment-anchor-scale}
 W\ge M^{\theta_\alpha+\omega},
 \qquad \theta_\alpha:=\frac1{1+\rho_\alpha}
 =\frac{2(3+\alpha)}{6+3\alpha}.
\end{equation}
The indicator lifting in Part~1 then uses the window in
\eqref{eq:lower-moment-anchor-scale}, with
$0<\delta<\gamma<\rho_\alpha$; the bounded-density
full-block anchor in Part~2 is adjusted at the same scale.  The lifting
arguments themselves are unchanged, and we can then prove the circular
law for both bounded-density models under the finite-$(2+\alpha)$-moment
assumption.  We omit this longer resolvent
bookkeeping and retain the finite third moment in the theorem; at
$\alpha=1$ the displayed exponents are $1/8$ and $8/9$.  Reaching exactly
two moments is qualitatively different: the available singular-value
rigidity estimates for band and inhomogeneous matrices require a moment
strictly above two.
\end{remark}

\subsection{Subgaussian full-block high-band anchor}

\begin{proposition}[High-band log potential for subgaussian full-block rings]
\label{prop:subgaussian-block-high-band}
Let $X_N$ be the cyclic full-block matrix \eqref{eq:model}, with $N=mW$,
and let the atom be real, centered, variance one, and uniformly subgaussian.
For every fixed $\omega\in(0,1/9)$, if $N,W\to\infty$ and
\begin{equation}
 W\ge N^{8/9+\omega},
 \label{eq:subgaussian-block-high-band}
\end{equation}
then, for every fixed $z\in\C$,
\begin{equation}
 \frac1N\log|\det(X_N-zI_N)|
 \xrightarrow{\Pp}U_{\cir}(z).
 \label{eq:gb-high-band-logdet}
\end{equation}
\end{proposition}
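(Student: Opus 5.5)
The proof will follow the template of Proposition~\ref{prop:finite-moment-anchor}, splitting the log-determinant into a hard-edge part, a bulk part, and an upper tail. The only change is at the hard edge. Theorem~\ref{thm:high-band-lsv} requires a bounded density, so it will be replaced by the repaired full-block estimate Proposition~\ref{prop:jjlo-block-lsv} with $b=W$ and $m=N/W$.

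First the hypotheses of Proposition~\ref{prop:jjlo-block-lsv} must be checked. Since $W\ge N^{8/9+\omega}$, we have $m=N/W\le N^{1/9-\omega}\le W$ for large $N$. The proposition also needs $m\ge m_*$. If $m<m_*$ stays bounded along a subsequence, I would treat that case separately. Either one can pad the argument, or one can note that in this regime the matrix is a dense-profile matrix with $m\ge4$ fixed blocks, where the circular law for doubly stochastic variance profiles with comparable blocks is available. The cleaner route is to absorb bounded $m$ by a separate citation, or to allow $m_*$ to be fixed and use the known circular law for block-profile matrices with a fixed number of blocks \cite{CookHachemNajimRenfrew2018}. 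With $m\ge m_*$, the proposition gives, with probability $1-O(W^{-1/2})$,
\[
s_{\min}(X_N-zI_N)\ge(3W)^{-25m},
\qquad\text{so}\qquad
L_N:=25\frac NW\log(3W).
\]

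Next I would apply Proposition~\ref{prop:mesoscopic-counting}. The subgaussian atom has a finite third moment, and $\mathfrak b(X_N)=3W\ge N^{c}$. Take the cutoff $a_N=(3W)^{-1/8}N^\tau$ with $\tau<\beta/8$, where $\beta=8/9+\omega$. Then, with probability $1-N^{-10}$, at most $CNa_N$ singular values lie in $[0,a_N]$. The hard-edge contribution is therefore bounded by
\[
Ca_NL_N\lesssim N^{1+\tau}W^{-9/8}\log W,
\]
which is $o(1)$ because $W^{9/8}\ge N^{1+9\omega/8}$, once $\tau<9\omega/8$. This exponent bookkeeping is the only place where $8/9$ enters, and it is the main point to verify. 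The repaired estimate gives the better exponent $N/W$, with no extra $N^{3\kappa}$ loss, so the bookkeeping is in fact easier than before.

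The bulk part is handled by Lemma~\ref{lem:local-finite-moment-bulk}, which gives polynomial closeness of $\nu_{X_N-zI_N}$ to the Ginibre singular-value law on $[0,R^2]$. Integration by parts then gives the analogue of \eqref{eq:truncated-log-bulk-comparison}, with error $C_R(1+|\log a_N|)N^{-\zeta}=o(1)$. The upper tail is handled exactly as in \eqref{eq:upper-log-tail-second-moment}, using $\E\|X_N\|_{\HS}^2=N$. The Ginibre hard-edge estimate \eqref{eq:ginibre-hard-edge} and the Ginibre log-potential limit are unchanged. Combining these, first letting $N\to\infty$ and then $R\to\infty$, yields \eqref{eq:gb-high-band-logdet}.

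I expect the main obstacle to be the bounded-$m$ regime ($m<m_*$), which lies outside the scope of Proposition~\ref{prop:jjlo-block-lsv}. I would first try to resolve it by regrouping blocks, and otherwise by invoking the fixed-block dense variance-profile circular law.
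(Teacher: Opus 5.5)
Your argument for the regime $m\ge m_*$ is the paper's proof. It uses the same repaired full-block floor $s_{\min}\ge(3W)^{-25m}$, the same mesoscopic count at $a_N=(3W)^{-1/8}N^\tau$ with $\tau<\min\{\beta/8,\,9\omega/8\}$, the bulk comparison of Lemma~\ref{lem:local-finite-moment-bulk}, the Hilbert--Schmidt upper tail, the Ginibre inputs, and the same exponent bookkeeping.

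The gap is the bounded regime $4\le m<m_*$. This regime is genuinely allowed by $W\ge N^{8/9+\omega}$ (take $N=4W$), and neither of your proposed fixes closes it.
\begin{itemize}
\item Regrouping does not work. Refining each $W$-block into $k$ pieces turns every complete off-diagonal block into fine blocks at fine block-distance up to $2k-1$. The refined matrix is therefore not block tridiagonal, and Proposition~\ref{prop:jjlo-block-lsv} still does not apply. Coarsening only lowers $m$.
\item Citing a circular law for fixed-block variance profiles gives at best $\mu_{X_N}\Rightarrow\mu_{\cir}$. The proposition, however, asserts convergence of $\frac1N\log|\det(X_N-zI_N)|=\int\log|\lambda-z|\,d\mu_{X_N}(\lambda)$ at every fixed $z$. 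Weak convergence does not control the logarithmic singularity at $\lambda=z$: a single eigenvalue exponentially close to $z$ changes the normalized log-determinant by order one. The lower bound therefore needs exactly the hard-edge control that is missing.
\end{itemize}

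What you need is a polynomial least-singular-value bound for each of the finitely many fixed values of $m$. The paper obtains it from Cook's broadly-connected-profile estimate \cite[Theorem~1.12]{Cook}, applied to $\sqrt{3W}(X_N-zI_N)$.
\begin{itemize}
\item Each fixed cyclic three-neighbour block mask is broadly connected, so the theorem applies.
\item Since $W\asymp N$, the required $O(\sqrt N)$ operator-norm event holds with probability $1-o(1)$ by the subgaussian norm tail.
\item Taking $t=N^{-1/4}$ gives $s_{\min}(X_N-zI_N)\ge N^{-C}$ with probability $1-o(1)$.
\end{itemize}
Then $L_N\le C\log N$, and the hard-edge contribution is $O(N^\tau W^{-1/8}\log N)=o(1)$. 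The rest of your argument goes through unchanged. The paper simply runs both cases with the unified floor $L_N\le C\bigl(\frac NW\log(3W)+\log N\bigr)$.
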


\begin{proof}
Set $\beta=8/9+\omega$ and choose
\begin{equation}
 0<\tau<\min\left\{\frac\beta8,\frac{9\beta}{8}-1\right\},
 \qquad a_N:=(3W)^{-1/8}N^\tau=o(1).
 \label{eq:subgaussian-block-cutoff}
\end{equation}
Write $m=N/W$.  Since $\beta>1/2$, the high-band condition implies
$m\le W^{(1-\beta)/\beta}\le W$ for all large $N$.  If $m\ge m_*$, the sole use of the
Jain--Jana--Luh--O'Rourke estimate,
Proposition~\ref{prop:jjlo-block-lsv}, gives
\begin{equation}
 s_{\min}(X_N-zI_N)\ge (3W)^{-25m}
 \label{eq:subgaussian-block-floor}
\end{equation}
outside an event of probability $O(W^{-1/2})$.  For the finitely many
values $4\le m<m_*$, multiply by $\sqrt{3W}$ and apply the
broadly-connected-profile estimate \cite[Theorem~1.12]{Cook}.  Each fixed
cyclic three-neighbor block mask is broadly connected: at the block
level, the radius-one cyclic neighborhood of every nonempty proper set
contains an adjacent block, and each supported block is complete
bipartite.  Since only finitely many $m$ occur, the connectivity
parameters may be chosen uniformly.  Since \(W\asymp N\) in this case,
the standard subgaussian matrix-norm tail
\cite[Theorem~4.4.5]{vershynin2019high} gives the fixed
\(O(\sqrt N)\) operator-norm event required there with probability
\(1-o(1)\).
Taking $t=N^{-1/4}$ in that theorem gives
$s_{\min}(\sqrt{3W}(X_N-zI_N))\ge N^{-3/4}$ with probability
$1-o(1)$, and hence $s_{\min}(X_N-zI_N)\ge N^{-C}$.
Thus in both
cases, with probability $1-o(1)$,
\begin{equation}
 s_{\min}(X_N-zI_N)\ge e^{-L_N},
 \qquad
 L_N\le C\left(\frac NW\log(3W)+\log N\right).
 \label{eq:subgaussian-block-unified-floor}
\end{equation}

The scalar variance profile of $X_N$ is doubly stochastic and has
$\mathfrak b(X_N)=3W$.  Proposition~\ref{prop:mesoscopic-counting},
applied to $[-a_N,a_N]$, therefore gives
\(\#\{j:s_j(X_N-zI_N)\le a_N\}\le C_zN a_N\) with probability
$1-o(1)$.  On the intersection with
\eqref{eq:subgaussian-block-unified-floor},
\begin{align}
 \frac1N\sum_{s_j(X_N-zI_N)\le a_N}|\log s_j(X_N-zI_N)|
 &\le C_za_NL_N \notag\\
 &\le C_z\left(N^{1+\tau}W^{-9/8}\log(3W)
       +N^\tau W^{-1/8}\log N\right)=o(1),
 \label{eq:subgaussian-block-hard-edge}
\end{align}
where the last equality follows from \eqref{eq:subgaussian-block-high-band}
and the choice of $\tau$.

Let $G_N$ be normalized circular Ginibre and write
$\nu_A:=N^{-1}\sum_j\delta_{s_j(A)^2}$.  By
Lemma~\ref{lem:local-finite-moment-bulk}, for every fixed $R>1$ and some
$\zeta>0$,
\begin{equation}
 \sup_{0\le x\le R^2}
 |\nu_{X_N-zI_N}([0,x])-\nu_{G_N-zI_N}([0,x])|
 =O_{\Pp}(N^{-\zeta}).
 \label{eq:subgaussian-block-bulk}
\end{equation}
Integration by parts on $[a_N^2,R^2]$ makes the corresponding truncated
logarithms differ by
$O_{\Pp}((1+|\log a_N|)N^{-\zeta})=o_{\Pp}(1)$.
No operator-norm estimate is needed at the upper edge: for $R>\sqrt e$
and $A=X_N$ or $G_N$,
\begin{equation}
 \E\frac1N\sum_{s_j(A-zI_N)>R}\log s_j(A-zI_N)
 \le (1+|z|^2)\frac{\log R}{R^2}.
 \label{eq:subgaussian-block-upper-tail}
\end{equation}
The Ginibre inverse-moment estimate quoted in the proof of
Proposition~\ref{prop:finite-moment-anchor} shows that its logarithmic
mass below $a_N$ is $o_{\Pp}(1)$, while its full logarithmic determinant
converges to $U_{\cir}(z)$.  Combining these facts with
\eqref{eq:subgaussian-block-hard-edge} and
\eqref{eq:subgaussian-block-bulk}, then letting first $N\to\infty$ and
then $R\to\infty$, proves \eqref{eq:gb-high-band-logdet}.
\end{proof}

\part{Scalar indicator bands and Gaussian profiles}

\section{Exterior transfer and local density tools}
This section has two main outputs.  First, the periodic determinant identity
\eqref{eq:periodic-det} expresses $\det(X_N-zI_N)$ through the exterior
powers of a transfer product.  Second, the bounded-density hypothesis yields
the fresh-closure estimate of Proposition~\ref{prop:fresh-closure}, the
projective estimate of Lemma~\ref{lem:projective}, and the pressure
concentration bound of Proposition~\ref{prop:pressure-concentration}.
\subsection{Scalar transfer and the periodic determinant}
\label{sec:transfer}

We now prove Theorem~\ref{thm:indicator}.  The direct regime
$N\le W^{1+\gamma}$ is already covered by
Proposition~\ref{prop:finite-moment-anchor}.  Hence suppose
\begin{equation}\label{eq:long-regime}
 N>W^{1+\gamma}.
\end{equation}
In particular $2W<N$ for all sufficiently large $W$.  For $X_N-zI_N$,
write the coefficient in row $i$ at signed offset $s$, $-W\le s\le W$, as
\begin{equation}\label{eq:row-coefficients}
 a_{i,s}:=b_s\xi_{i,s}-z\1_{\{s=0\}},\qquad
 \alpha_i:=a_{i,-W},\qquad \beta_i:=a_{i,W}.
\end{equation}
$\alpha_i$ and $\beta_i$ are the leftmost and rightmost entries. Since $q_{-W},q_W>0$ and Assumption~\ref{ass:indicator-density} implies
$\Pp\{\xi=0\}=0$, both
$\alpha_i=b_{-W}\xi_{i,-W}$ and
$\beta_i=b_W\xi_{i,W}$ are nonzero almost surely.

For a solution of $(X_N-zI_N)u=0$, use the $2W$-dimensional state
\[
 s_i=(u_{i-W},u_{i-W+1},\ldots,u_{i+W-1})^{\mathsf T}.
\]
To determine the relation between successive states, let
$S:\mathbb C^{2W}\to\mathbb C^{2W}$ be the left shift
$Se_j=e_{j-1}$, with $e_0=0$, and write
\(c_i=(a_{i,-W},\ldots,a_{i,W-1})^{\mathsf T}\).  Solving the $i$th row
for $u_{i+W}$ gives the linear operator
$T_i:\mathbb C^{2W}\to\mathbb C^{2W}$ defined by
\begin{equation}\label{eq:companion-transfer}
 s_{i+1}=T_i s_i,\qquad
 T_i=S-\beta_i^{-1}e_{2W}c_i^{\mathsf T}.
\end{equation}
Here the transpose denotes the coordinate bilinear pairing
\[
 c_i^{\mathsf T}s_i:=\sum_{j=1}^{2W}(c_i)_j(s_i)_j,
\]
with no complex conjugation.  Thus
$e_{2W}c_i^{\mathsf T}$ maps $s_i$ to
$e_{2W}(c_i^{\mathsf T}s_i)$.

By a standard computation we verify that
\begin{equation}\label{eq:companion-determinant}
 \det T_i=\frac{\alpha_i}{\beta_i},
\end{equation}
so that matrices $T_i$ and all their exterior powers are invertible almost
surely.

For $0\le r\le2W$, define the denominator-cleared exterior transfer
\begin{equation}\label{eq:exterior-transfer}
 \mathcal A_i^{(r)}:=\beta_i\,\bigwedge\nolimits^rT_i,\qquad
 \mathcal A_{[p,q]}^{(r)}
 :=\mathcal A_q^{(r)}\cdots\mathcal A_p^{(r)}.
\end{equation}
The scalar factor in \eqref{eq:exterior-transfer} is one copy of
$\beta_i$, independently of $r$.  Let
$(e_1^*,\ldots,e_{2W}^*)$ be the basis dual to
$(e_1,\ldots,e_{2W})$.  For each $r\ge1$, we define the contraction by $e_j^*$ as the following map
\begin{align}
 \iota_{e_j^*}:\bigwedge^r\C^{2W}&\longrightarrow
 \bigwedge^{r-1}\C^{2W},                                      \label{eq:contraction-definition}\\
 \iota_{e_j^*}(v_1\wedge\cdots\wedge v_r)
 &:=\sum_{k=1}^r(-1)^{k-1}e_j^*(v_k)\,
 v_1\wedge\cdots\wedge\widehat{v_k}\wedge\cdots\wedge v_r.
 \notag
\end{align}
We write $e_{2W}\wedge\,\cdot$ for left exterior multiplication by
$e_{2W}$.

To better exploit the independence of the random entries $a_{i,s}$, we rewrite the exterior product $\mathcal A_i^{(r)}$ as an affine sum over the random entries $a_{i,s}$:
\begin{lemma}[Row-linearity]\label{lem:row-linearity}
There are deterministic contractions $K_s^{(r)}$ such that
\begin{equation}\label{eq:row-linear-exterior}
 \mathcal A_i^{(r)}=\sum_{s=-W}^W a_{i,s}K_s^{(r)}.
\end{equation}
More explicitly, with $j=s+W+1$ for $-W\le s<W$,
\begin{equation}\label{eq:coefficient-operators}
 K_W^{(r)}=\bigwedge\nolimits^rS,\qquad
 K_s^{(r)}=-\big(e_{2W}\wedge\,\cdot\big)\circ
 \big(\bigwedge\nolimits^{r-1}S\big)\circ\iota_{e_j^*},\qquad
 \|K_s^{(r)}\|\le1.
\end{equation}
For $r=0$, the second expression is zero and $K_W^{(0)}=1$.
\end{lemma}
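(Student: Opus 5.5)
The plan is to write $\beta_iT_i$ as an affine expression in the row coefficients and then expand its exterior power, using that the random part of $T_i$ has rank one. From \eqref{eq:companion-transfer} we have $\beta_iT_i=\beta_iS-e_{2W}c_i^{\mathsf T}$. Here $c_i^{\mathsf T}=\sum_{s=-W}^{W-1}a_{i,s}e_{s+W+1}^*$ and $\beta_i=a_{i,W}$, so
\[
 \beta_iT_i=a_{i,W}S-\sum_{s=-W}^{W-1}a_{i,s}\,e_{2W}e_j^*,\qquad j=s+W+1 .
\]
Since $\mathcal A_i^{(r)}=\beta_i\bigwedge^rT_i$ carries only one factor of $\beta_i$ while $\bigwedge^r$ is homogeneous of degree $r$, we cannot simply take $\bigwedge^r$ of this sum. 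We instead compute $\bigwedge^r(S+uv^{\mathsf T})$ for the rank-one perturbation $u=-\beta_i^{-1}e_{2W}$, $v=c_i$.

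\textbf{Step 1 (exterior power of a rank-one update).} For a linear map $M$ and a rank-one map $P=u\,v^{\mathsf T}$, I would show
\[
 \bigwedge\nolimits^r(M+P)=\bigwedge\nolimits^rM+(u\wedge\cdot)\circ\bigwedge\nolimits^{r-1}M\circ\iota_{v}.
\]
Here $\iota_v$ is contraction by the functional $v^{\mathsf T}$, defined as in \eqref{eq:contraction-definition}. To prove this, apply the left side to $w_1\wedge\cdots\wedge w_r$ and expand multilinearly. Every term in which $P$ is applied to two or more factors vanishes, because each such factor is a multiple of $u$ and $u\wedge u=0$. The terms with exactly one $P$ are
\[
 \sum_k Mw_1\wedge\cdots\wedge v(w_k)u\wedge\cdots\wedge Mw_r .
\]
Moving $u$ to the front produces the sign $(-1)^{k-1}$, which matches the definition of $\iota_v$. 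This sign bookkeeping is the only delicate point, and it is the step I expect to be the main (mild) obstacle.

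\textbf{Step 2 (assemble).} Take $M=S$ and $u=-\beta_i^{-1}e_{2W}$, and use linearity of $\iota_v$ in $v$, writing $\iota_{c_i}=\sum_{s<W}a_{i,s}\iota_{e_j^*}$. Multiplying by $\beta_i$ cancels the factor $\beta_i^{-1}$ in the correction term and gives
\[
 \mathcal A_i^{(r)}=a_{i,W}\bigwedge\nolimits^rS-\sum_{s<W}a_{i,s}\,(e_{2W}\wedge\cdot)\circ\bigwedge\nolimits^{r-1}S\circ\iota_{e_j^*}.
\]
This is exactly \eqref{eq:row-linear-exterior} with the operators \eqref{eq:coefficient-operators}. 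For $r=0$, $\bigwedge^0T_i=1$, so $\mathcal A_i^{(0)}=\beta_i=a_{i,W}$, which gives $K_W^{(0)}=1$ and $K_s^{(0)}=0$ for $s<W$.

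\textbf{Step 3 (norm bound).} In the orthonormal wedge basis $e_J=e_{j_1}\wedge\cdots\wedge e_{j_r}$, the three building blocks act as follows.
\begin{itemize}
 \item $\bigwedge^rS$ sends each $e_J$ to $0$ or to a distinct basis vector, since $S$ maps distinct basis vectors injectively to basis vectors or to $0$. It is therefore a partial isometry with norm at most $1$.
 \item $\iota_{e_j^*}$ sends $e_J$ to $\pm e_{J\setminus\{j\}}$ or to $0$. It is injective on its support, so it is also a partial isometry.
 \item $e_{2W}\wedge\cdot$ sends $e_J$ to $\pm e_{J\cup\{2W\}}$ or to $0$, again injectively.
\end{itemize}
A composition of partial isometries of this kind, each mapping basis vectors injectively to signed basis vectors or zero, has norm at most $1$. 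This gives $\|K_s^{(r)}\|\le1$.
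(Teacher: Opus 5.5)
Your proposal is correct and follows the paper's own argument: expand $\bigwedge^r$ of the rank-one perturbation $S-\beta_i^{-1}e_{2W}c_i^{\mathsf T}$ according to how many factors use the perturbation, discard the terms using it twice or more because $e_{2W}\wedge e_{2W}=0$, and observe that the single factor $\beta_i$ clears the $\beta_i^{-1}$ in the one-use terms. You also supply the sign check matching $\iota_{e_j^*}$ and the partial-isometry justification of $\|K_s^{(r)}\|\le1$, both of which the paper leaves implicit.
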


\begin{proof}
We expand the wedge of the rank-one perturbation in
\eqref{eq:companion-transfer} depending on how many times the expansion uses the perturbation $e_{2W}c_i^{\mathsf T}$.  Terms using the perturbation twice contain
$e_{2W}\wedge e_{2W}$ and vanish.  The term using it zero times is
$\beta_i\,\bigwedge\nolimits^rS$; the terms using it once are exactly the
contractions in
\eqref{eq:coefficient-operators}.
\end{proof}

Then we relate the determinant $\det(X_N-zI_N)$ to the trace of these transfer operators:
\begin{lemma}[Periodic determinant identity]\label{lem:periodic-det}
There is a deterministic sign $\varsigma_{N,W}\in\{\pm1\}$ such that
\begin{equation}\label{eq:periodic-det}
 \det(X_N-zI_N)=\varsigma_{N,W}
 \sum_{r=0}^{2W}(-1)^r\tr\mathcal A_{[1,N]}^{(r)}.
\end{equation}
\end{lemma}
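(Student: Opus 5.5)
The identity is an instance of the transfer-matrix evaluation of a determinant of a periodic banded operator. The plan is to realize $X_N-zI_N$ as a Schur-complement type object of a block matrix whose determinant is $\det(I_{2W}-\mathcal T)$ up to explicit factors, where $\mathcal T:=T_N\cdots T_1$. Then we expand $\det(I-\mathcal T)=\sum_r(-1)^r\tr\bigwedge^r\mathcal T$.

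\textbf{Step 1 (kernel correspondence).} For a solution $u$ of $(X_N-zI_N)u=0$, the states satisfy $s_{i+1}=T_is_i$. Periodicity $u_{k+N}=u_k$ then forces $s_{N+1}=s_1$, i.e.\ $s_1\in\ker(I-\mathcal T)$. Conversely, any fixed vector of $\mathcal T$ generates a periodic solution. This is valid because $N>2W$, so the state $s_1$ consists of $2W$ distinct cyclic coordinates and the map $u\mapsto s_1$ is injective on solutions.

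This shows that both sides vanish simultaneously, but we need an exact identity. For that:

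\textbf{Step 2 (exact determinant via the lifted system).} Consider the $N$ unknowns $u_1,\dots,u_N$ and write the equations in the order that solves row $i$ for $u_{i+W}$. Eliminating $u_{W+1},\dots,u_{N+W}$ successively expresses all variables in terms of $s_1$.

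Concretely, we form the $(N+2W)\times(N+2W)$ block system in the unknowns $(u,s_1)$. It consists of the rows of $X_N-zI_N$ together with the $2W$ identification equations $s_1=(u_{1-W},\dots,u_W)$. Computing its determinant in two ways gives
\[
\det(X_N-zI_N)=\pm\Big(\prod_{i=1}^N\beta_i\Big)\det(I_{2W}-\mathcal T).
\]
The first way uses Schur complement with respect to the $u$ block after the identification, giving $\det(X_N-zI_N)$ times $\pm1$. The second way eliminates $u$ triangularly in the lower-triangular order $u_{W+1},u_{W+2},\dots$. Each pivot is $\beta_i$, and the Schur complement is $I-\mathcal T$.

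A cleaner route is to check this as a polynomial identity in the entries. Both sides are polynomials in $(a_{i,s})$ once multiplied by $\prod\beta_i$. They are of degree one in each row's variables: for the right side this follows from Lemma~\ref{lem:row-linearity}, and for the left side it is obvious. Both vanish on the same irreducible hypersurface by Step~1. Comparing one coefficient then fixes the constant, which must be $\pm1$. For example, take $a_{i,s}=0$ except $\beta_i=1$: then $X_N-zI_N$ is the cyclic shift by $W$, with determinant $\pm1$, and $\mathcal T=S'^N$ is a deterministic permutation-like product. The sign depends only on $N,W$.

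\textbf{Step 3 (exterior expansion).} We use the identity $\det(I-\mathcal T)=\sum_{r=0}^{2W}(-1)^r\tr\bigwedge^r\mathcal T$ together with the multiplicativity of exterior powers. This gives
\[
\Big(\prod_i\beta_i\Big)\bigwedge^r\mathcal T=\mathcal A^{(r)}_{[1,N]}.
\]
The factor $\beta_i$ is the same for every $r$, so this matches the definition \eqref{eq:exterior-transfer} and yields \eqref{eq:periodic-det}.

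\textbf{Main obstacle.} The main obstacle is Step 2's exact normalization: ensuring no extra factor such as $\prod\alpha_i$ or a non-sign constant appears, and that the sign is deterministic. I would settle this by the multilinearity/irreducibility argument together with the explicit evaluation at the pure shift configuration. Irreducibility of $\det$ of a generic band matrix can be sidestepped by noting that both sides are degree-one in $\beta_i$ with matching leading coefficients computed recursively.
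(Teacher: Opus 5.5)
Your argument is correct in substance and uses the same principle as the paper: evaluate one auxiliary determinant in two elimination orders. The difference is in the choice of auxiliary system.

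The paper lifts to the $2WN\times2WN$ cyclic state system $\mathscr L$ for $s_{i+1}-T_is_i=0$. Eliminating $s_2,\dots,s_N$ gives $\pm\det(I_{2W}-P)$. The relabelling $v_{j,k}=s_{j+W-k+1,k}$ together with a triangular change of variables reduces $\mathscr L$ to $\operatorname{diag}(I,D_\beta^{-1}(X_N-zI_N))$.

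You instead border $X_N-zI_N$ by only the $2W$ unknowns $s_1$. One elimination returns $\pm\det(X_N-zI_N)$. The other runs the scalar recursion with pivots $\beta_1,\dots,\beta_N$ and leaves the Schur complement $\pm(I_{2W}-\mathcal T)$ coming from the identification rows. Your system is much smaller and makes the pivots $\beta_i$ visible directly, with no coordinate relabelling. The paper's state-space lift has the merit of paralleling the block Floquet identity (Lemma~\ref{lem:local-block-floquet}), which is proved later by the same device.

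There are three things to fix.

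First, suppose the rows of $X_N-zI_N$ are written in the cyclic unknowns. Then the rows $i<2W$ contain some of $u_{1-W},\dots,u_W$. In your elimination order $u_{W+1},\dots,u_{N+W}$ these coordinates are the \emph{last} pivots, so the elimination is not triangular as stated. You must first add multiples of the identification rows to replace those coordinates by the corresponding entries of $s_1$. This leaves the determinant unchanged, and it is exactly the step the paper performs with its identification rows.

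Second, once that step is done the normalization you worry about is automatic. Every pivot is some $\beta_i$ or $\pm1$, so no factor $\prod\alpha_i$ and no other constant can appear. You should therefore drop the fallback ``irreducible hypersurface'' argument. Irreducibility of the generic cyclic band determinant is asserted without proof, and the proposed sidestep via ``leading coefficients computed recursively'' is not an argument.

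Third, at your test point you have $T_i=S$, which is nilpotent. Hence $\mathcal T=S^N=0$; it is not ``permutation-like''. The conclusion $\det(I-\mathcal T)=1$ still holds, so the constant is still $\pm1$.
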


\begin{proof}
Let $\mathscr L$ be the cyclic block matrix of the state equations
$s_{i+1}-T_is_i=0$:
\[
 \mathscr L=
 \begin{pmatrix}
 -T_1&I_{2W}&0&\cdots&0\\
 0&-T_2&I_{2W}&\ddots&\vdots\\
 \vdots&&\ddots&\ddots&0\\
 0&\cdots&0&-T_{N-1}&I_{2W}\\
 I_{2W}&0&\cdots&0&-T_N
 \end{pmatrix}.
\]
Eliminating $s_2,\ldots,s_N$ leaves
$(I_{2W}-P)s_1=0$, where $P=T_N\cdots T_1$; hence
\begin{equation}\label{eq:block-state-determinant}
 \det\mathscr L=\varepsilon_1\det(I_{2W}-P)
\end{equation}
for a deterministic sign $\varepsilon_1\in\{\pm 1\}$.

For the second evaluation, work on the probability-one event
\[
 \Omega_\beta:=\{\beta_i\ne0\text{ for every }i\in\T_N\}.
\]
Write $d:=2W$, and write $s_{i,k}$ for the $k$th coordinate of $s_i$, with
row indices understood modulo $N$.  Relabel the state coordinates by
\[
 v_{j,k}:=s_{j+W-k+1,k},
 \qquad j\in\T_N,\quad 1\le k\le d.
\]
For fixed $j$, all $v_{j,k}$ represent the same underlying scalar
coordinate $u_j$.  Indeed, the $k$th equation, $k<d$, in block row
$i=j+W-k$ is
\[
 s_{i+1,k}-s_{i,k+1}=v_{j,k}-v_{j,k+1}=0.
\]
Now set
\[
 w_{j,k}:=v_{j,k}-v_{j,k+1}\quad(1\le k<d),
 \qquad u_j:=v_{j,d}.
\]
For every $j$ this is a determinant-one triangular change of variables.
After fixed row and column permutations, the first $N(d-1)$ equations
are therefore the identity block $w_{j,k}=0$.

Modulo these identification equations, the last equation in block row
$i$ becomes
\begin{align*}
 s_{i+1,d}+\beta_i^{-1}\sum_{k=1}^d
     a_{i,-W+k-1}s_{i,k}
 &=\beta_i^{-1}\left(\beta_i u_{i+W}
     +\sum_{s=-W}^{W-1}a_{i,s}u_{i+s}\right)\\
 &=\beta_i^{-1}\bigl((X_N-zI_N)u\bigr)_i.
\end{align*}
Before imposing $w=0$, these last $N$ equations may contain additional
terms linear in $w$.  Adding suitable multiples of the identification
rows removes them without changing the determinant.  Thus, up to the
fixed row and column permutations, $\mathscr L$ reduces to
\[
 \begin{pmatrix}
  I_{N(d-1)}&0\\
  0&D_\beta^{-1}(X_N-zI_N)
 \end{pmatrix},
 \qquad D_\beta:=\operatorname{diag}(\beta_1,\ldots,\beta_N).
\]
Only the fixed permutations contribute a sign.  Hence, on $\Omega_\beta$,
\[
 \det\mathscr L=\varepsilon_2
 \left(\prod_{i=1}^N\beta_i^{-1}\right)\det(X_N-zI_N)
\]
for a deterministic $\varepsilon_2=\varepsilon_2(N,W)\in\{\pm1\}$.
Combining the two evaluations gives
\[
 \det(X_N-zI_N)=\varsigma_{N,W}\Big(\prod_{i=1}^N\beta_i\Big)
 \det(I_{2W}-P).
\]
Finally,
$\det(I_{2W}-P)=\sum_{r=0}^{2W}(-1)^r\tr(\wedge^rP)$ and exterior powers are
functorial.  Since
$\mathcal A_{[1,N]}^{(r)}=(\prod_i\beta_i)\wedge^rP$, the identity follows.
\end{proof}

The alternating sum in \eqref{eq:periodic-det} may exhibit cancellation.
We therefore reserve the final $2W$ rows and use their randomness to compare
the evaluated sum with
$\max_{0\le r\le2W}\|\mathcal A_{[1,N-2W]}^{(r)}\|$.
This is the purpose of the next two subsections.

\subsection{The full exterior reset word}
\label{sec:reset}

Let $E=\C^{2W}$ and use the orthonormal basis
$e_I=e_{i_1}\wedge\cdots\wedge e_{i_r}$ of $\wedge^rE$, indexed by
$I=\{i_1<\cdots<i_r\}\subset[2W]$.  It is useful to regard all exterior
degrees simultaneously in the full exterior space
$\mathscr F(E)=\bigoplus_{r=0}^{2W}\wedge^rE$.
Throughout this section, Hermitian inner products are linear in their second
argument.

Define the label-to-offset map
\[
 \operatorname{off}(\star):=W,\qquad
 \operatorname{off}(j):=-W+j-1\quad(j\in[2W]),
\]
and write $K_\ell^{(r)}:=K_{\operatorname{off}(\ell)}^{(r)}$ for
$\ell\in\{\star\}\cup[2W]$.  On the full exterior space, set
\[
 K_\ell:=\bigoplus_{r=0}^{2W}K_\ell^{(r)}.
\]
Up to a sign of modulus one,
$K_\star$ shifts every occupied site one step left and kills a state
containing site $1$.  The reset $K_j$ removes the particle at $j$, shifts
all remaining particles left, and inserts a particle at $2W$; it is zero if
the contraction or one of the shifts is impossible.

The following combinatorial lemma selects one coefficient from the resulting
matrix polynomials by using this explicit shift--reset action.
\begin{lemma}[Singleton-domain word]\label{lem:singleton-word}
For every $I,J\subset[2W]$ with $|I|=|J|=r$, there is a word
$\omega=(\ell_1,\ldots,\ell_{2W})
\in(\{\star\}\cup[2W])^{2W}$ such that
\begin{equation}\label{eq:singleton-word}
 K_{\ell_{2W}}^{(r)}\cdots K_{\ell_1}^{(r)}
 =\varsigma_{I,J}|e_I\rangle\langle e_J|,\qquad
 K_{\ell_{2W}}^{(q)}\cdots K_{\ell_1}^{(q)}=0\quad(q\ne r),
\end{equation}
where $|\varsigma_{I,J}|=1$ and the operator $|e_I\rangle\langle e_J|$ is the rank-one operator mapping $v$ to $e_I\langle e_J,v\rangle$.
\end{lemma}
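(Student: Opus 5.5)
The plan is to reduce the lemma to a deterministic particle system on basis vectors and then handle two things separately: existence of a word carrying \(e_J\) to \(e_I\), and the fact that any such word annihilates every other basis vector, in every degree.

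\textbf{Step 1 (action on basis vectors).} First compute the action of each letter on the basis \(e_K\), \(K\subset[2W]\), directly from \eqref{eq:coefficient-operators}.
\begin{itemize}
\item The letter \(K_\star=\bigwedge^r S\) sends \(e_K\) to \(\pm e_{K-1}\) if \(1\notin K\), and to \(0\) otherwise.
\item For \(j\in[2W]\), \(K_j\) sends \(e_K\) to \(\pm e_{((K\setminus\{j\})-1)\cup\{2W\}}\) when \(j\in K\) and \(1\notin K\setminus\{j\}\), and to \(0\) otherwise. The final wedge with \(e_{2W}\) never vanishes, because a shifted set lies in \([2W-1]\).
\end{itemize}
Each letter is therefore a signed partial injection on basis vectors: from \(j\) and the image one recovers \(K\). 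Any word is then a sum of signed matrix units \(\pm|e_{f(K)}\rangle\langle e_K|\) over its domain. This already gives \(|\varsigma_{I,J}|=1\) once we show that the domain is \(\{J\}\) and that \(f(J)=I\).

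It is convenient to use diagonal labels. A particle at position \(p\) at the start of step \(t\) (one step per letter) that is never reset keeps the label \(p+t\). Original particles \(j\in K\) carry labels \(j+1\le 2W+1\). A particle inserted at step \(s\) carries label \(2W+s+1>2W+1\), and at the start of any later step \(t\le 2W\) it sits at position \(2W+s-t+1\ge2\). So a particle inserted at step \(s\) that survives to the end ends at position \(s\), and such particles are never forced into site \(1\) during the word.

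\textbf{Step 2 (a word with \(e_J\mapsto\pm e_I\)).} Define the word greedily along the trajectory of \(J\). At step \(t\), reset if \(t\in I\) or if a particle occupies site \(1\); otherwise use \(\star\).
\begin{itemize}
\item If a particle sits at site \(1\), the reset label is \(1\).
\item Otherwise the reset removes the non-\(I\)-inserted particle of smallest current position, and the label is that position.
\end{itemize}
This is well defined for three reasons:
\begin{itemize}
\item At \(t\in I\) fewer than \(r\) particles have been \(I\)-inserted, and the particle count stays \(r\), so a removable particle exists.
\item A particle at site \(1\) is never \(I\)-inserted, by Step 1.
\item Shifts never kill, since site \(1\) is always cleared by a reset.
\end{itemize}
\(I\)-inserted particles are never removed. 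After \(2W\) steps exactly \(r\) of them are present, at positions given by their insertion times, so the configuration is exactly \(I\).

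\textbf{Step 3 (the domain is \(\{J\}\), all degrees).} This is the step needing care. Fix the word from Step 2 and let \(K\) (of any size \(q\)) survive it. Every original particle of \(K\) has position at most \(2W\), so within \(2W\) steps it must be removed by a reset before it hits site \(1\) under a shift. At a reset step \(t\) with label \(\ell\), the removed particle has diagonal label \(\ell+t\), which is fixed by the word. Moreover, the inserted particles present at any time depend only on the word, not on \(K\), since insertions occur exactly at reset steps.

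Consequently, at each reset step of the word, the particle at the prescribed position is either the word-determined inserted particle or an original particle with the prescribed diagonal. The original diagonals are forced to coincide with those removed along the \(J\)-trajectory, and this gives \(K=J\). In particular, for \(q\ne r\) no \(K\) of size \(q\) survives, which yields the second identity in \eqref{eq:singleton-word}.

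The main obstacle is making this identification rigorous. The intended route is an induction on \(t\) showing that the surviving configuration coincides with the \(J\)-configuration at every step. The inductive step uses that a reset requires a particle at its label position, and a shift requires site \(1\) to be empty.
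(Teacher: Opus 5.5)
Your argument is correct, but it reaches uniqueness by a different route from the paper's.

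\emph{The paper's route.} The paper builds one explicit two-phase word:
\begin{itemize}
\item a prefix of $r$ resets $\ell_t=j_t-t+1$, which flushes $J$ to $H_r=\{2W-r+1,\dots,2W\}$;
\item a suffix of shifts and resets that carries $H_r$ to $I$.
\end{itemize}
It then checks the domain for that particular word. Surviving the prefix forces $J\subseteq K$. In the suffix, every planned reset lies strictly to the right of any extra particle, so each extra particle is pushed into site $1$ and killed.

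\emph{Your route.} Your Step~3 actually proves a word-independent statement, and it needs no induction. The forward induction you sketch would in fact have $K=J$ as its base case, so it should be dropped. The closure is already contained in your observations:
\begin{itemize}
\item For \emph{any} word of length $2W$, each original particle $x$ of a surviving $K$ reaches site $1$ at step $x\le 2W$. There only a label-$1$ reset avoids killing the vector, so every original particle must be removed by some reset.
\item Inserted particles carry diagonal labels $\ge 2W+2$, while original particles carry labels $\le 2W+1$. Hence the word-determined label $\ell_t+t$ alone decides whether reset $t$ removes an original particle, and which one (namely $\ell_t+t-1$).
\end{itemize}
Together these give $K=\{\ell_t+t-1:\ t\text{ a reset step},\ \ell_t+t\le 2W+1\}$ for every survivor $K$. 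So a length-$2W$ word has at most one surviving basis vector in the whole exterior algebra. Since $J$ survives your Step~2 word, the domain is $\{J\}$ and every degree $q\neq r$ vanishes.

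\emph{Comparison.} Your route cleanly decouples existence from uniqueness. Any word sending $e_J$ to $\pm e_I$ then works, whether your lazy greedy word or the paper's flush-then-place word. Your counting argument in Step~2 correctly shows the greedy word is well defined and ends at $I$. The paper's route instead produces a fully explicit word, at the cost of a word-specific survival check.
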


\begin{proof}
For any $A\subset[2W]$, the support action is
\[
 K_\star e_A=
 \begin{cases}
  e_{\{a-1:a\in A\}},&1\notin A,\\
  0,&1\in A,
 \end{cases}
 \qquad
 K_je_A=
 \begin{cases}
  \pm e_{\{a-1:a\in A\setminus\{j\}\}\cup\{2W\}},
       &j\in A,\ 1\notin A\setminus\{j\},\\
  0,&\text{otherwise}.
 \end{cases}
\]
The sign in the second formula comes from exterior ordering and is absorbed
into $\varsigma_{I,J}$.

\textbf{The prefix step.} Write $J=\{j_1<\cdots<j_r\}$ and first use the $r$ reset labels
\[
 \ell_t:=p_t=j_t-t+1,\qquad 1\le t\le r,
\]
as the prefix.  Thus the original particle at $j_t$ is at $p_t$ after the
first $t-1$ left shifts.
For the input $J$, after $t$ steps the remaining $r-t$ sites are moved leftward, and the first $t$ sites are sent to the rightmost, so that the sites are now at
\[
 \{j_{t+1}-t,\ldots,j_r-t\}
 \cup\{2W-t+1,\ldots,2W\};
\]
in particular, the prefix sends $J$ to
$H_r=\{2W-r+1,\ldots,2W\}$.  Conversely, survival through this prefix
forces the original input to contain every $j_t$.  Thus every surviving
input is of the form $J$ together with extra particles, and after the
prefix its state is $H_r\cup E_0$ for some $E_0\subset[2W-r]$.

\textbf{The second phase: sending to coordinates $I$.} To send $H_r$ to $I$, define
\[
 I_-=I\cap[r],\qquad K_I=[r]\setminus I_-,\qquad
 T_I=\{t\in[2W-r]:r+t\in I\}.
\]
Since $|K_I|=|T_I|$, choose a bijection $\phi:T_I\to K_I$.  During the
remaining $2W-r$ steps use
\[
 \ell_{t+r}:=q_t=\begin{cases}
 \star,&t\notin T_I,\\
 2W-r+\phi(t)-t+1,&t\in T_I.
 \end{cases},\quad 1\le t\le 2W-r.
\]
A particle $k\in[r]$, initially at $2W-r+k$, is left unreset precisely when
$k\in I_-$ and then finishes at $k$.  If $\phi(t)=k$, it is reset at time
$t$ and finishes at $r+t$.  Hence the suffix sends $H_r$ to $I$.

It remains to check the map is nonzero only on $e_J$.  If $E_0\ne\varnothing$, take an
extra particle initially at $x\in E_0$.  Until it dies, its position at time
$t$ is $x-t+1$.  A planned reset is strictly to its right because
\[
 q_t-(x-t+1)=2W-r+\phi(t)-x\ge1.
\]
At time $t=x$ the extra particle is therefore at site $1$.  A shift kills
it, while a planned reset has label $q_x\ge2$ and also leaves the site-$1$
factor to be killed by the shift.  Thus every strict superset of $J$ dies.
Inputs of size less than $r$ cannot contain $J$; inputs of size $r$ survive
only when equal to $J$; and inputs of larger size have an extra particle.
This proves both assertions in \eqref{eq:singleton-word}.
\end{proof}
The next lemma concerns a product of exactly $2W$ operators, the length of
the reset word in Lemma~\ref{lem:singleton-word}.  Later we split the full
transfer into an outside product and a final product of $2W$ fresh rows.
\begin{lemma}[An isolated full monomial]\label{lem:isolated-monomial}
Let $Q^{(r)}=\mathcal A_{2W}^{(r)}\cdots\mathcal A_1^{(r)}$ be formed from
$2W$ fresh rows with full randomness.  For arbitrary deterministic operators $B^{(r)}$ on $\wedge^rE$ defined for each $0\le r\le 2W$, set
\[
 Z_B:=\sum_{r=0}^{2W}(-1)^r\tr(B^{(r)}Q^{(r)}),\qquad
 M_B:=\max_{0\le r\le2W}\|B^{(r)}\|.
\]
If $M_B>0$, then, among all the $(2W+1)\times 2W$ row variables in these $2W$ rows, $Z_B$ has a square-free monomial
of degree $2W$ whose coefficient $c_B$ satisfies
\begin{equation}\label{eq:isolated-coefficient}
 |c_B|\ge M_B\exp\{-C W\log(eW)\}.
\end{equation}
\end{lemma}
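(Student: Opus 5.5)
The plan is to expand $Z_B$ completely by the row-linearity of Lemma~\ref{lem:row-linearity} and read off the coefficient of one well-chosen degree-$2W$ monomial. For the fresh rows $t=1,\dots,2W$ we have $\mathcal A_t^{(r)}=\sum_{\ell}a_{t,\operatorname{off}(\ell)}K_\ell^{(r)}$, where $\ell$ ranges over $\{\star\}\cup[2W]$. Multiplying out gives
\[
 Q^{(r)}=\sum_{\omega}\Big(\prod_{t=1}^{2W}a_{t,\operatorname{off}(\ell_t)}\Big)K_{\omega}^{(r)},
 \qquad K_\omega^{(r)}:=K_{\ell_{2W}}^{(r)}\cdots K_{\ell_1}^{(r)},
\]
with the sum over all words $\omega=(\ell_1,\dots,\ell_{2W})$. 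We regard $Z_B$ as a polynomial in the independent variables $\xi_{t,s}$, and we substitute $a_{t,s}=b_s\xi_{t,s}-z\1_{\{s=0\}}$. A fixed word $\omega$ then contributes exactly one monomial of full degree $2W$, namely $\prod_t\xi_{t,\operatorname{off}(\ell_t)}$. All its other contributions (those using the constant $-z$ in some row) have degree strictly less than $2W$, because they omit that row's variable. Distinct words select distinct variables in at least one row, so they produce distinct top-degree monomials. Each such monomial is square-free, since it contains exactly one variable from each of the $2W$ distinct rows.

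Consequently the coefficient of $\prod_t\xi_{t,\operatorname{off}(\ell_t)}$ in $Z_B$ is exactly
\[
 c_\omega=\Big(\prod_{t=1}^{2W}b_{\operatorname{off}(\ell_t)}\Big)\sum_{r=0}^{2W}(-1)^r\tr\big(B^{(r)}K_\omega^{(r)}\big).
\]
No other word can cancel it, because the top-degree monomial determines the word. This uniqueness bookkeeping, including the handling of the diagonal constant $-z$, is the only point needing care. I expect it to be the main (mild) obstacle; the rest is choosing the word.

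Choosing the word: pick $r$ with $\|B^{(r)}\|=M_B$. Since $\wedge^rE$ has dimension $\binom{2W}{r}$, some matrix entry satisfies
\[
 |\langle e_J,B^{(r)}e_I\rangle|\ge \|B^{(r)}\|\binom{2W}{r}^{-1}\ge M_B\,4^{-W}.
\]
Apply Lemma~\ref{lem:singleton-word} to this pair $(I,J)$. It gives a word $\omega$ with $K_\omega^{(r)}=\varsigma_{I,J}|e_I\rangle\langle e_J|$ and $K_\omega^{(q)}=0$ for $q\ne r$. The alternating sum therefore collapses to the single term
\[
 \pm\varsigma_{I,J}\tr\big(B^{(r)}|e_I\rangle\langle e_J|\big)=\pm\varsigma_{I,J}\langle e_J,B^{(r)}e_I\rangle,
\]
whose modulus is at least $M_B4^{-W}$.

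It remains to bound the prefactor. By \eqref{eq:indicator-weights}, each $b_s\ge\sqrt{c_0/(2W+1)}$, so $\prod_t b_{\operatorname{off}(\ell_t)}\ge (c_0/(3W))^{W}$. Multiplying the two bounds gives
\[
 |c_B|\ge M_B\,4^{-W}(c_0/3W)^W\ge M_B\exp\{-CW\log(eW)\},
\]
with $C$ depending only on $c_0$. This is \eqref{eq:isolated-coefficient}.
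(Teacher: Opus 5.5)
Your proposal is correct and follows the paper's proof essentially step for step. You pick $r$ attaining $M_B$ and an entry $\langle e_J,B^{(r)}e_I\rangle$ of size at least $4^{-W}M_B$, use the singleton word of Lemma~\ref{lem:singleton-word} to kill every other exterior degree, bound the weight product via \eqref{eq:indicator-weights}, and observe that the $-z$ translation only affects lower-degree monomials; your explicit remark that the top-degree monomial determines the word (because $\operatorname{off}$ is injective) spells out the uniqueness that the paper uses implicitly.
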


Here and below, $\E_Q$ denotes expectation over the $2W$ fresh rows
defining $Q$, conditionally on all other randomness when present.

\begin{proof}
Choose $r$ attaining $M_B$ and coordinate sets $I,J\subset[2W]$ such that
 \begin{equation}\label{isolateds}
 |\langle e_J,B^{(r)}e_I\rangle|
 \ge\binom{2W}{r}^{-1}\|B^{(r)}\|\ge2^{-2W}M_B.
\end{equation}
Expand each $\mathcal A_i^{(r)}$ by
\eqref{eq:row-linear-exterior}, and use the word $\omega$ from
Lemma~\ref{lem:singleton-word} associated with the coordinate sets $I,J$.
In row $t$, select the coefficient with label $\ell_t$.  The resulting
monomial contains one distinct atom from each of the $2W$ rows, and its
coefficient in $Z_B$ is
\[
 \left(\prod_{t=1}^{2W}b_{\operatorname{off}(\ell_t)}\right)
 \sum_{q=0}^{2W}(-1)^q
 \operatorname{tr}\!\left(
 B^{(q)}K_{\ell_{2W}}^{(q)}\cdots K_{\ell_1}^{(q)}\right).
\]
By \eqref{eq:indicator-weights}, the weight factor is at least
$\exp(-CW\log(eW))$.  By \eqref{eq:singleton-word}, the same word vanishes
in every exterior degree $q\ne r$.  In degree $r$, use
$K_{\ell_{2W}}^{(r)}\cdots K_{\ell_1}^{(r)}
=\varsigma_{I,J}|e_I\rangle\langle e_J|$ and \eqref{isolateds}.
If a selected coefficient is diagonal, the deterministic translation by
$-z$ changes only lower-degree monomials.  Therefore the full degree-$2W$
coefficient obeys \eqref{eq:isolated-coefficient}.
\end{proof}

\subsection{Density estimates and one fresh closure}
\label{sec:density-scan}
Lemma~\ref{lem:isolated-monomial} extracts a large coefficient of full degree
$2W$ in $Z_B$.  We now derive small-ball estimates from that coefficient,
without estimating the remaining coefficients separately.
\begin{lemma}[Multiaffine density bound]\label{lem:multiaffine}
Let $P(x_1,\ldots,x_k)$ be a complex-valued polynomial which is affine in
each of $k$ independent variables, and suppose that the coefficient of
$x_1\cdots x_k$ is $c\ne0$.  If the variables are real with densities
bounded by $L$, then, for $0<\rho<1$,
\begin{equation}\label{eq:multiaffine-smallball}
 \Pp\{|P|\le |c|\rho^k\}\le 2Lk\rho.
\end{equation}
If instead they are complex with planar densities bounded by $L$, then
\begin{equation}\label{eq:multiaffine-complex-smallball}
 \Pp\{|P|\le |c|\rho^k\}\le \pi Lk\rho^2.
\end{equation}
In either case,
\begin{equation}\label{eq:multiaffine-log}
 \E\left(\log\frac{|c|}{|P|}\right)_+
 \le C_L k\log(ek).
\end{equation}
The same logarithmic conclusion holds under alternative~\textnormal{(iii)}
of Assumption~\ref{ass:indicator-density}.
\end{lemma}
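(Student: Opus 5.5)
The plan is to induct on $k$, peeling off one variable at a time and conditioning on the others. Write
\[
P=x_kQ_1(x_1,\ldots,x_{k-1})+Q_0(x_1,\ldots,x_{k-1}),
\]
where $Q_1$ is multiaffine in $k-1$ variables with top coefficient $c$. For $k=1$, $P=cx_1+d$, and $\{|P|\le|c|\rho\}$ forces $x_1$ into a real interval of length $2\rho$, or a planar disk of area $\pi\rho^2$. The bounded density then gives $2L\rho$, respectively $\pi L\rho^2$.

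For the induction step, split the event according to the size of $|Q_1|$:
\[
\{|P|\le|c|\rho^k\}\subset\{|Q_1|\le|c|\rho^{k-1}\}\cup\bigl\{|Q_1|>|c|\rho^{k-1},\ |x_kQ_1+Q_0|\le|c|\rho^k\bigr\}.
\]
On the second event we condition on $x_1,\ldots,x_{k-1}$. By independence $x_k$ keeps its density, and $x_k$ must lie within distance $|c|\rho^k/|Q_1|<\rho$ of the point $-Q_0/Q_1$. Hence the second event has probability at most $2L\rho$ in the real case and $\pi L\rho^2$ in the complex case. The first event is bounded by the inductive hypothesis. Summing the two gives $2Lk\rho$, respectively $\pi Lk\rho^2$.

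Under alternative (iii), each variable is $x=e^{i\theta}(U+iV)$. Conditionally on $V$ (and on the other variables), the map $U\mapsto x_kQ_1+Q_0$ is a real-affine map into $\C$ with slope $|Q_1|$. Projecting onto the direction $e^{i\theta}Q_1/|Q_1|$ confines $U$ to an interval of length $2\rho$, and the conditional density bound $L$ gives $2L\rho$ exactly as in the real case. Thus the real-case bound, and hence the log bound below, hold under (iii) as well.

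For \eqref{eq:multiaffine-log}, we use layer-cake. Set $Y=(\log(|c|/|P|))_+$. For $u>0$, choose $\rho=e^{-u/k}$, so that $\Pp\{Y>u\}=\Pp\{|P|<|c|\rho^k\}\le 2Lke^{-u/k}$ (the complex case is even better). Then
\[
\E Y\le u_0+\int_{u_0}^\infty 2Lke^{-u/k}\,du=u_0+2Lk^2e^{-u_0/k}.
\]
Choosing $u_0=k\log(2Lk+e)$ yields $\E Y\le k\log(2Lk+e)+k\le C_Lk\log(ek)$. The only mildly delicate point is the requirement $\rho<1$, but this holds automatically since $u>0$.

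I expect no serious obstacle here. The step needing the most care is alternative (iii), where the conditioning has to be done on $V$ jointly with the other variables so that only the real affine one-dimensional anti-concentration is used.
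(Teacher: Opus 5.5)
Your proof is correct and follows the paper's argument. Both proofs induct on $k$ via $P=x_kQ_1+Q_0$, split on whether $|Q_1|$ exceeds $|c|\rho^{k-1}$, and then integrate the tail $\min\{1,C_Lke^{-dt/k}\}$ with $\rho=e^{-t/k}$ to get the logarithmic bound. The only difference is in alternative (iii): the paper conditions on all $V_1,\ldots,V_k$ at once and applies the real estimate to the resulting multiaffine polynomial in the $U_j$, whose top coefficient still has modulus $|c|$, whereas you run the same projection inside each induction step; both are fine. One small correction: in the complex case the tail $\pi Lke^{-2u/k}$ is not pointwise below $2Lke^{-u/k}$ for small $u$, so bound it by $\pi Lke^{-u/k}$ and repeat your computation with $\pi$ in place of $2$.
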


\begin{proof}
We use induction on $k$; for $k=1$, integrate the density over an interval
in the real case and over a disk in the complex case.  Write
$P=x_kP_1+P_0$.  On $|P_1|\ge|c|\rho^{k-1}$, conditioning on all variables
except $x_k$ bounds the probability of $|P|\le|c|\rho^k$ by $2L\rho$ in
the real case and by $\pi L\rho^2$ in the complex case.  The induction
hypothesis controls the complement and proves
\eqref{eq:multiaffine-smallball} and
\eqref{eq:multiaffine-complex-smallball}.  Letting $\rho\downarrow0$ in
either bound gives $\Pp\{P=0\}=0$.  Taking $\rho=e^{-t/k}$ and
integrating $\min\{1,C_Lke^{-dt/k}\}$, with $d=1$ or $2$, proves
\eqref{eq:multiaffine-log}.

Under the directional alternative, condition on all orthogonal
coordinates $V_1,\ldots,V_k$.  Because the pairs $(U_j,V_j)$ are
independent across $j$, the variables $U_j$ remain conditionally
independent.  The polynomial is multiaffine in them, their conditional
densities are bounded by $L$, and the modulus of the top coefficient remains
$|c|$.  Apply the real estimate conditionally and then average.
\end{proof}

\begin{proposition}[One fresh closure]\label{prop:fresh-closure}
For the $2W$ fresh rows in Lemma~\ref{lem:isolated-monomial}, uniformly over
all deterministic families $B=(B^{(r)})_{r=0}^{2W}$ with $M_B>0$,
\begin{equation}\label{eq:fresh-closure-L1}
 \E_Q\left|\log|Z_B|-\log M_B\right|
 \le C W\log(eW).
\end{equation}
The same estimate holds conditionally when $B$ is measurable with respect to
a sigma-field independent of the fresh rows.
\end{proposition}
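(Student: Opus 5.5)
We split $\left|\log|Z_B|-\log M_B\right|$ into its positive and negative parts,
\[
 \E_Q\left|\log|Z_B|-\log M_B\right|
 =\E_Q\bigl(\log|Z_B|-\log M_B\bigr)_+
 +\E_Q\bigl(\log M_B-\log|Z_B|\bigr)_+ ,
\]
and bound each part by $CW\log(eW)$, uniformly in $B$.

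\textbf{Lower tail.} Here we use Lemma~\ref{lem:isolated-monomial} and Lemma~\ref{lem:multiaffine} together. By Lemma~\ref{lem:row-linearity}, each $\mathcal A_i^{(r)}$ is affine in the row variables $a_{i,s}=b_s\xi_{i,s}-z\1_{\{s=0\}}$. Hence $Z_B$ is a polynomial that is affine in each of the $(2W+1)\cdot2W$ independent atoms $\xi_{i,s}$. Lemma~\ref{lem:isolated-monomial} supplies a square-free monomial of degree $k=2W$, in $2W$ distinct atoms, with coefficient $|c_B|\ge M_B e^{-CW\log(eW)}$.

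Lemma~\ref{lem:multiaffine} requires the top coefficient to be the product of all its variables. To arrange this, condition on every atom outside the selected $2W$. The resulting polynomial in the selected atoms is multiaffine, and its coefficient of their product is still $c_B$. This holds because $c_B$ is the full coefficient of that monomial in $Z_B$: other monomials that contain these atoms together with additional atoms contribute, after conditioning, terms in which the selected atoms carry random multiplicative factors. This is the point needing care.

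To handle it, fix the monomial $x_1\cdots x_k$ in the selected variables and view $Z_B$ as a polynomial in those variables with coefficients depending on the others. The coefficient of $x_1\cdots x_k$ is then a random polynomial $\tilde c$ in the remaining atoms. Its constant term, i.e.\ its value when all other atoms vanish, is exactly $c_B$. However, $\tilde c$ need not be bounded below, so this direct route has a gap.

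The fix is to run the induction of Lemma~\ref{lem:multiaffine} over \emph{all} variables. By maximality, choose a square-free monomial of $Z_B$ that is maximal with respect to inclusion among monomials with nonzero coefficient. A square-free monomial of maximal total degree is automatically inclusion-maximal, so conditioning on the other atoms leaves its coefficient deterministic. Since the row structure allows at most one atom per row in any nonzero monomial contributing to top degree $2W$ (each row contributes at most $\mathcal A_i$ once, so degree at most $2W$), the selected monomial has maximal degree $2W$. It is therefore already inclusion-maximal, and its coefficient is exactly $c_B$.

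Applying \eqref{eq:multiaffine-log} conditionally then gives
\[
 \E_Q\left(\log\frac{|c_B|}{|Z_B|}\right)_+\le C_L\,2W\log(2eW).
\]
Combined with $\log M_B-\log|c_B|\le CW\log(eW)$, this gives the lower-tail bound. Alternative~(iii) is covered by the same lemma.

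\textbf{Upper tail.} We bound $|Z_B|\le\sum_r\binom{2W}{r}M_B\|Q^{(r)}\|$, and $\|Q^{(r)}\|\le\prod_{i}\|\mathcal A_i^{(r)}\|\le\prod_i\sum_s|a_{i,s}|$ using $\|K_s\|\le1$. Taking logarithms and applying Jensen's inequality gives
\[
 \E\log\sum_s|a_{i,s}|\le\log\bigl(\textstyle\sum_s b_s+|z|\bigr)\le C\log(eW),
\]
since $\sum_s b_s\le C\sqrt W$. Therefore
\[
 \E_Q\bigl(\log|Z_B|-\log M_B\bigr)_+\le 2W\log2+\log(2W+1)+2W\cdot C\log(eW).
\]

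Adding the two parts gives the claimed estimate. The conditional version follows by applying the argument on each fibre: since $B$ is independent of the fresh rows, the bounds hold for almost every realization of $B$. The main obstacle is the lower tail, specifically verifying that the isolated monomial's coefficient survives conditioning as the top multiaffine coefficient; this is handled by the degree-$2W$ maximality described above.
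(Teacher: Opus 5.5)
Your proposal is correct and follows the paper's proof. For the negative part, you condition on all atoms outside the full-degree monomial of Lemma~\ref{lem:isolated-monomial} and apply \eqref{eq:multiaffine-log} with \eqref{eq:isolated-coefficient}. For the positive part, you use the trace bound $|Z_B|\le 2^{2W}M_B\max_r\|Q^{(r)}\|$, row-linearity with $\|K_s^{(r)}\|\le1$, and Jensen. Your remark that a degree-$2W$ monomial takes one atom from every row, hence is inclusion-maximal and keeps coefficient exactly $c_B$ after conditioning, is the justification the paper leaves implicit; in the upper tail, just apply Jensen to $\log_+$ (e.g.\ via $\log_+x\le\log(1+x)$) rather than to $\log$, since you are bounding a positive part.
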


\begin{proof}
For the negative part, condition on every fresh variable except the $2W$
atoms in the monomial from Lemma~\ref{lem:isolated-monomial}. Note that $Z_B$ is separately affine in each random entry. Applying
\eqref{eq:multiaffine-log} and then
\eqref{eq:isolated-coefficient} gives
\[
 \E_Q(\log M_B-\log|Z_B|)_+\le CW\log(eW).
\]
For the positive part, the trace bound in every exterior degree gives
\[
 |Z_B|\le2^{2W}M_B\max_{0\le r\le2W}\|Q^{(r)}\|.
\]
By row-linearity and \(\|K_s^{(r)}\|\le1\),
\[
 \max_r\log_+\|Q^{(r)}\|
 \le\sum_{i=1}^{2W}\log_+\Big(\sum_{s=-W}^W|a_{i,s}|\Big).
\]
The expectation of each summand is $O(\log(eW))$ by
\eqref{eq:atom-normalization} and Jensen's inequality.  This proves the
positive half of \eqref{eq:fresh-closure-L1}.
\end{proof}

We also need a lower bound on the action of a fresh transfer product on one
prescribed starting vector.

\begin{lemma}[Projective observability]\label{lem:projective}
Fix $r$, a deterministic unit vector $v\in\wedge^r\C^{2W}$, and a
deterministic nonzero operator $B$ on $\wedge^r\C^{2W}$.  For a fresh
product $Q^{(r)}$ defined in Lemma \ref{lem:isolated-monomial},
\begin{equation}\label{eq:projective}
 \E_Q\log\|BQ^{(r)}v\|\ge\log\|B\|-CW\log(eW).
\end{equation}
The conclusion remains valid conditionally when $B$ and $v$ are measurable
with respect to the past, while the fresh rows in $Q^{(r)}$ are independent of that past.
\end{lemma}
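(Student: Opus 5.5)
The plan is to reduce to the machinery of Lemma~\ref{lem:isolated-monomial} and Lemma~\ref{lem:multiaffine}, with the vector $v$ replacing the trace pairing. Since $\|B\|>0$, choose a unit vector $y$ with $\|B^*y\|=\|B\|$ and set $w:=B^*y/\|B\|$, also a unit vector. Then
\[
 \|BQ^{(r)}v\|\ge|\langle y,BQ^{(r)}v\rangle|=\|B\|\,|\langle w,Q^{(r)}v\rangle|.
\]
So it suffices to show $\E_Q\log|\langle w,Q^{(r)}v\rangle|\ge -CW\log(eW)$ uniformly over unit $v,w\in\wedge^r\C^{2W}$. Write $P:=\langle w,Q^{(r)}v\rangle=\tr\big(|v\rangle\langle w|\,Q^{(r)}\big)$. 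By row-linearity \eqref{eq:row-linear-exterior}, $P$ is affine in each atom of the $2W$ fresh rows. Equivalently, $P$ is $Z_{B'}$ up to sign for the family $B'^{(r)}=|v\rangle\langle w|$ and $B'^{(q)}=0$ for $q\ne r$, which has $M_{B'}=1$.

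Next I isolate a top-degree monomial. Pick coordinate sets $I,J$ with $|\langle w,e_I\rangle\langle e_J,v\rangle|\ge\binom{2W}{r}^{-2}\ge 2^{-4W}$. This is possible because $\sum_I|\langle w,e_I\rangle|^2=1$ and similarly for $v$, so some coordinate of each has modulus at least $\binom{2W}{r}^{-1/2}$. Apply Lemma~\ref{lem:singleton-word} to the pair $(I,J)$. The degree-$2W$ square-free monomial selecting label $\ell_t$ in row $t$ then has coefficient
\[
 \Big(\prod_t b_{\operatorname{off}(\ell_t)}\Big)\varsigma_{I,J}\langle w,e_I\rangle\langle e_J,v\rangle .
\]
Only degree $r$ is present, and the $-z$ shifts affect only lower-degree terms. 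By \eqref{eq:indicator-weights} this coefficient has modulus at least $\exp(-CW\log(eW))$. This is exactly the argument of Lemma~\ref{lem:isolated-monomial} with the Hilbert--Schmidt-type choice of $I,J$; indeed Lemma~\ref{lem:isolated-monomial} applied to $B'$ already gives it, because $\|B'^{(r)}\|=1$.

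Then I apply the small-ball bound. Condition on all fresh atoms other than the $2W$ atoms in the selected monomial. The polynomial remains multiaffine in those, with the same top coefficient $c$, so \eqref{eq:multiaffine-log} gives
\[
 \E_Q(\log|c|-\log|P|)_+\le C W\log(eW).
\]
The same holds under alternative (iii). Hence $\E_Q\log|P|\ge\log|c|-CW\log(eW)\ge -C'W\log(eW)$. Combining this with the first display proves \eqref{eq:projective}.

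For the conditional version, note that $y$, $w$, $I$, $J$, and the word are measurable functions of $(B,v)$. Apply the above to the regular conditional law given the past; the fresh rows are independent of it, so the bound holds pointwise and hence conditionally. The only delicate point is making sure the selected monomial survives with coefficient $c$ after conditioning, i.e.\ that the atoms within one row enter affinely. Row-linearity handles this: each row contributes at most one atom to any monomial. Unlike Proposition~\ref{prop:fresh-closure}, no upper bound on $\|Q\|$ is needed, since only the lower bound is claimed.
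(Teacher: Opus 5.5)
Your proposal is correct and follows the paper's proof essentially verbatim. Your $y$ and $w=B^*y/\|B\|$ are the top left and right singular vectors that the paper uses, and, as in the paper, you isolate a full degree-$2W$ monomial via the singleton word of Lemma~\ref{lem:singleton-word} (equivalently Lemma~\ref{lem:isolated-monomial} applied to $|v\rangle\langle w|$) and conclude with \eqref{eq:multiaffine-log}. One slip: your coordinate choice actually gives $\binom{2W}{r}^{-1}$ rather than the weaker $\binom{2W}{r}^{-2}$ you state, which is harmless.
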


\begin{proof}
Let $u,x$ be top left and right singular vectors of $B$.  Choose coordinates
$I,J$ with
$|\langle e_I,x\rangle|,|\langle e_J,v\rangle|
\ge\binom{2W}{r}^{-1/2}$.
As in the proof of Lemma \ref{lem:isolated-monomial}, the singleton word determined by Lemma \ref{lem:singleton-word} which sends $J$ to $I$ isolates in
$\langle u,BQ^{(r)}v\rangle$ a full monomial of coefficient at least
$\|B\|\binom{2W}{r}^{-1}(c_*W^{-1/2})^{2W}$, where
$c_*:=\sqrt{c_0/3}$.
Lemma~\ref{lem:multiaffine} gives the
lower bound for the logarithm of this scalar pairing, which is bounded
above by $\log\|BQ^{(r)}v\|$.
\end{proof}

\subsection{Concentration inequality for the pressure and transfer products}
\label{sec:concentration}
Finally, we prove concentration of
$\log\|\mathcal A_{[1,n]}^{(r)}\|$ around its mean.  In the
Efron--Stein resampling step, freezing all factors except one reduces the
needed estimate to the following single-row affine lemma.

\begin{lemma}[Operator-valued affine logarithm]\label{lem:operator-affine}
Let $W\ge1$, let $b_{-W},\ldots,b_W$ be deterministic, and assume that
\[
 |b_0|\ge c_bW^{-1/2}
\]
for some fixed $c_b>0$.  Let $(\xi_s)_{s=-W}^W$ be independent copies of
a centered atom $\xi$ satisfying $\E|\xi|^2=1$.  Assume either that $\xi$
is real with a Lebesgue density bounded by $L$, or that $\xi$ is complex
with a density bounded by $L$ with respect to planar Lebesgue measure.
Alternative~\textnormal{(iii)} of
Assumption~\ref{ass:indicator-density} is also admissible.  For arbitrary
deterministic square matrices $M_s$ of the same size, denote
\[
 G=\sum_{s=-W}^W b_s\xi_sM_s-zM_0,\qquad
 \mu=\max\left\{\|M_0\|,
       \max_{-W\le s\le W}|b_s|\|M_s\|\right\}>0.
\]
Then
\begin{equation}\label{eq:operator-affine}
 \E|\log\|G\|-\log\mu|^2\le C_{z,c_b,L}\log^2(eW).
\end{equation}
\end{lemma}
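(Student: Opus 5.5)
We split $\log\|G\|-\log\mu$ into positive and negative parts and bound the second moment of each separately. Both bounds use only the row-affine structure of $G$ in the $\xi_s$.

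\textbf{Upper half.} By the triangle inequality,
\[
 \|G\|\le \mu\Big(|z|+\sum_{s}|\xi_s|\Big).
\]
Hence
\[
 (\log\|G\|-\log\mu)_+\le \log\Big(1+|z|+\sum_s|\xi_s|\Big).
\]
The inequality $\log^2(1+x)\le C(1+\log^2(1+x))$ is concave-type for large $x$, so Jensen applies. With $\E\sum_s|\xi_s|\le (2W+1)$, this gives
\[
 \E\,(\log\|G\|-\log\mu)_+^2\le C_z\log^2(eW).
\]
To be careful we use the concave function $x\mapsto\log^2(e^2+x)$, which is concave on $[0,\infty)$, together with $\E|\xi_s|\le1$.

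\textbf{Lower half: choosing the distinguished variable.} We need a small-ball estimate in which one variable carries a large coefficient. Either $\mu=|b_{s_0}|\|M_{s_0}\|$ for some $s_0$, or $\mu=\|M_0\|$. In the second case we take $s_0=0$. Since $|b_0|\ge c_bW^{-1/2}$, we get $|b_0|\|M_0\|\ge c_bW^{-1/2}\mu$. So in all cases there is $s_0$ with
\[
 |b_{s_0}|\|M_{s_0}\|\ge c_bW^{-1/2}\mu
\]
(taking $c_b\le1$).

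\textbf{Lower half: reduction to a scalar affine form.} Let $u,x$ be top singular vectors of $M_{s_0}$. The scalar
\[
 P=\langle u,Gx\rangle=b_{s_0}\|M_{s_0}\|\xi_{s_0}+R
\]
is affine in $\xi_{s_0}$, and $R$ is independent of $\xi_{s_0}$ and includes the deterministic $-z\langle u,M_0x\rangle$ term. Moreover $\|G\|\ge|P|$. Conditioning on the other variables and applying Lemma~\ref{lem:multiaffine} with $k=1$ (or its alternative (iii) version) gives
\[
 \Pp\{|P|\le |c|\rho\}\le C_L\rho^{d},\qquad |c|=|b_{s_0}|\|M_{s_0}\|,\ d\in\{1,2\}.
\]
Therefore
\[
 \Pp\{\log(|c|/|P|)>t\}\le C_Le^{-t}.
\]
This tail yields a bounded second moment of $(\log(|c|/|P|))_+$.

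\textbf{Combining.} We have
\[
 (\log\mu-\log\|G\|)_+\le(\log\mu-\log|c|)+(\log|c|-\log|P|)_+,
\]
and the first term is at most $\tfrac12\log W+\log(1/c_b)$. Squaring and taking expectations gives $C\log^2(eW)$. Together with the upper half this proves \eqref{eq:operator-affine}.

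The only delicate point is the lower half when $\mu$ is attained at $M_0$ through the $z$-term. This case is exactly why the hypothesis $|b_0|\gtrsim W^{-1/2}$ is needed, and it costs only the $\log W$ loss.
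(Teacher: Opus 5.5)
Your proof is correct and takes essentially the paper's route: choosing the distinguished index via $|b_0|\ge c_bW^{-1/2}$, a one-variable small-ball bound after testing $G$ against top singular vectors of $M_{s_*}$, and the bound $\|G\|\le\mu(|z|+\sum_s|\xi_s|)$ for the positive part. The only difference is cosmetic: you control the upper tail by Jensen with the concave function $x\mapsto\log^2(e^2+x)$ and $\E|\xi_s|\le1$, whereas the paper uses $\E S_W^2\le CW^2$ and $\log(1+x)\le x$.
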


\begin{proof}
If \(\mu=\max_s|b_s|\|M_s\|\), choose \(s_*\) attaining this maximum;
otherwise take \(s_*=0\).  Since \(|b_0|\geq c_bW^{-1/2}\), in either case
\[
 |b_{s_*}|\|M_{s_*}\|\geq c_bW^{-1/2}\mu.
\]
Test $G$ between unit vectors which approximately attain
$\|M_{s_*}\|$.  In the real case, condition on all variables except
$\xi_{s_*}$ and integrate its one-dimensional density.  In the complex
case, integrate the planar density over the resulting disk.  This gives,
respectively,
\[
 \Pp\{\|G\|\le t\mu\}\le C_{c_b,L}\sqrt W\,t,
 \qquad
 \Pp\{\|G\|\le t\mu\}\le\min\{1,C_{c_b,L}Wt^2\}.
\]
Since $\min(1,at^2)\le\sqrt a\,t$, the first displayed bound, after
changing its constant, holds in both cases for $0<t<1$.  Under the
directional alternative, condition also on all orthogonal coordinates
and use the bounded conditional density; the same real bound results.
The center created by the conditioned variables is irrelevant.
This controls the squared negative logarithm by
\(C_{c_b,L}\log^2(eW)\).
For the positive part,
\[
 \frac{\|G\|}{\mu}
 \le\sum_{s=-W}^W|\xi_s|+|z|.
\]
We upper bound this sum using only the second moment. Denote by
$S_W=\sum_{s=-W}^W|\xi_s|$.  Cauchy--Schwarz gives
\[
 \E S_W^2\le(2W+1)\sum_{s=-W}^W\E|\xi_s|^2\le C W^2.
\]
Moreover,
\[
 \log_+^2(S_W+|z|)
 \le C_z\log^2(eW)+C\log_+^2(1+S_W/W).
\]
The expectation of the last term is bounded because
$\log(1+x)\le x$ and $\E(S_W/W)^2\le C$.  This proves the required
$O(\log^2(eW))$ squared positive logarithm using only
\eqref{eq:atom-normalization}.
\end{proof}

For the indicator weights in \eqref{eq:indicator-weights},
\[
 b_0=\sqrt{q_0}\ge \sqrt{c_0/(2W+1)}
     \ge \sqrt{c_0/3}\,W^{-1/2},
\]
so the preceding lemma applies with a fixed $c_b>0$.

Let $n$ be any positive integer.
For a stationary open product of $n$ independent rows define
\begin{equation}\label{eq:pressure-def}
 Y_r(n):=\log\|\mathcal A_{[1,n]}^{(r)}\|,\qquad
 F_r(n):=\E Y_r(n),\qquad 0\le r\le2W.
\end{equation}
Then we have the following maximal concentration:
\begin{proposition}[Pressure concentration]\label{prop:pressure-concentration}
Uniformly in $0\le r\le2W$,
\begin{equation}\label{eq:pressure-variance}
 \operatorname{Var}Y_r(n)\le Cn\log^2(eW),
\end{equation}
and
\begin{equation}\label{eq:max-pressure-concentration}
 \E\max_{0\le r\le2W}|Y_r(n)-F_r(n)|
 \le C\sqrt{Wn}\log(eW).
\end{equation}
\end{proposition}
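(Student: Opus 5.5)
The variance bound will come from the Efron--Stein inequality applied to $Y_r(n)=\log\|\mathcal A_{[1,n]}^{(r)}\|$ as a function of the $n$ independent rows $(\xi_{i,s})_{|s|\le W}$, $1\le i\le n$. Let $Y_r^{(i)}$ denote the value after row $i$ is replaced by an independent copy. Then
\[
 \operatorname{Var}Y_r(n)\le\frac12\sum_{i=1}^n\E\bigl(Y_r(n)-Y_r^{(i)}\bigr)^2 .
\]
Fix $i$ and freeze every other row. Set $L=\mathcal A_{[i+1,n]}^{(r)}$ and $R=\mathcal A_{[1,i-1]}^{(r)}$. Lemma~\ref{lem:row-linearity} gives
\[
 \mathcal A_{[1,n]}^{(r)}=\sum_s a_{i,s}\,LK_s^{(r)}R=\sum_s b_s\xi_{i,s}M_s-zM_0,
 \qquad M_s:=LK_s^{(r)}R .
\]
This is exactly the affine form of Lemma~\ref{lem:operator-affine}, and $b_0\ge\sqrt{c_0/3}\,W^{-1/2}$. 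Since $\mu$ depends only on the frozen rows, it is the same for $Y_r$ and $Y_r^{(i)}$. Hence
\[
 \E\bigl(Y_r-Y_r^{(i)}\bigr)^2\le 2\E\bigl(Y_r-\log\mu\bigr)^2+2\E\bigl(Y_r^{(i)}-\log\mu\bigr)^2\le C\log^2(eW),
\]
where we condition on the frozen rows and then average. We also need $\mu>0$ almost surely. This holds because $M_0=L K_0^{(r)}R$ or $M_W=LK_W^{(r)}R$ is nonzero: the exterior transfers are invertible a.s. and the $K_s$ together span enough. More simply, $\mathcal A_{[1,n]}^{(r)}$ is itself a.s. invertible, so some $M_s\ne0$. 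Summing over $i$ gives \eqref{eq:pressure-variance}, uniformly in $r$, with all constants depending only on $c_0,C_0,L,z$.

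For the maximal bound \eqref{eq:max-pressure-concentration} I would use the crude estimate
\[
 \E\max_r|Y_r-F_r|\le\Bigl(\sum_{r=0}^{2W}\E|Y_r-F_r|^2\Bigr)^{1/2}\le\bigl((2W+1)Cn\log^2(eW)\bigr)^{1/2}.
\]
This is $C\sqrt{Wn}\log(eW)$, as required. No chaining across $r$ is needed, because $2W+1$ degrees contribute only a $\sqrt W$ factor.

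The main obstacle is justifying the application of Lemma~\ref{lem:operator-affine} conditionally. Two points need care. First, the $M_s$ must be measurable with respect to the frozen rows. Second, the $z$-term must be placed correctly: the diagonal coefficient is $a_{i,0}=b_0\xi_{i,0}-z$, so it contributes $-zM_0$, which matches the lemma. We also need $\log\|\mathcal A_{[1,n]}^{(r)}\|$ to be square integrable so that Efron--Stein applies. That follows from the same lemma, since it bounds the conditional second moment of $Y_r-\log\mu$, and $\log\mu$ is itself controlled iteratively by the same one-row estimate.
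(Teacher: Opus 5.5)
Your proposal is correct and follows the paper's proof essentially step for step. The paper also runs Efron--Stein over rows, uses row-linearity to put the full product in the affine form of Lemma~\ref{lem:operator-affine} with a conditional scale $\mu_i$ shared by the original and resampled row, and finishes with Cauchy--Schwarz over the $2W+1$ degrees.

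The one place you are vaguer than the paper is the $L^2$ integrability needed for Efron--Stein. The paper gets it directly from three ingredients:
\begin{itemize}
 \item the pathwise bound $|Y_r(n)|\le\sum_j\bigl(\log_+\|\mathcal A_j^{(r)}\|+\log_+\|(\mathcal A_j^{(r)})^{-1}\|\bigr)$;
 \item the Hodge identity $\|(\mathcal A_j^{(r)})^{-1}\|=\|\mathcal A_j^{(2W-r)}\|/|\alpha_j\beta_j|$;
 \item the bounded-density small-ball bound for $\xi$.
\end{itemize}
This is cleaner than your iterative sketch via $\log\mu$.
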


\begin{proof}
For each $i$ we resample row $i$, and write $Y_r^{(i)}(n)$ for the log norm
after replacing that row by an independent copy.  Conditional on all other
rows, we use \eqref{eq:row-linear-exterior} to rewrite the term
$\mathcal A_i^{(r)}$; then both the original and
resampled full products have the form
\[
 L_i\mathcal A_i^{(r)}R_i
 =\sum_{s=-W}^W a_{i,s}M_{i,s}
\]
with the same deterministic coefficient operators $M_{i,s}$.  The scale
\[
 \mu_i=\max\{\|M_{i,0}\|,\max_s|b_s|\|M_{i,s}\|\}
\]
is therefore the same in the two copies.
It is strictly positive almost surely.  Indeed,
$|\det T_j|=|\alpha_j/\beta_j|>0$, so the left and right histories are
invertible; and the family $(K_s^{(r)})_s$ is not identically zero in any
exterior degree.

Since every $\mathcal A_j^{(r)}$ is invertible almost surely,
\[
 |Y_r(n)|
 \le \sum_{j=1}^n\left(
 \log_+\|\mathcal A_j^{(r)}\|
 +\log_+\|(\mathcal A_j^{(r)})^{-1}\|\right).
\]
Row-linearity, the inverse identity
\[
 \|(\mathcal A_j^{(r)})^{-1}\|
 =\frac{\|\mathcal A_j^{(2W-r)}\|}{|\alpha_j\beta_j|},
\]
the second moment of $\xi$, and the bounded-density estimates
\[
 \Pp\{|\xi|\le t\}\le 2Lt\quad\text{in the real case},
 \qquad
 \Pp\{|\xi|\le t\}\le \pi Lt^2\quad\text{in the complex case},
\]
with the first bound also valid under the directional alternative, show
that every summand belongs to
$L^2$.  Hence $Y_r(n)\in L^2$, as required by Efron--Stein.

Applying Lemma~\ref{lem:operator-affine} separately to the original and
resampled rows yields
\[
 \E\big[(Y_r(n)-\log\mu_i)^2\mid\text{all other rows}\big]
 +\E\big[(Y_r^{(i)}(n)-\log\mu_i)^2
       \mid\text{all other rows}\big]
 \le C\log^2(eW).
\]
Since the two copies have the same conditional scale $\mu_i$, the
inequality
\[
 (Y_r(n)-Y_r^{(i)}(n))^2
 \le 2(Y_r(n)-\log\mu_i)^2
     +2(Y_r^{(i)}(n)-\log\mu_i)^2
\]
therefore gives
\[
 \E\big[(Y_r(n)-Y_r^{(i)}(n))^2\mid\text{all other rows}\big]
 \le C\log^2(eW).
\]
The Efron--Stein inequality \cite{EfronStein1981} proves
\eqref{eq:pressure-variance}.  Finally apply
Cauchy--Schwarz to the sum of the $2W+1$ variances:
\[
\E\max_{0\le r\le 2W}|Y_r(n)-F_r(n)|
 \le\Big(\sum_{r=0}^{2W}\E|Y_r(n)-F_r(n)|^2\Big)^{1/2}
 \le C\sqrt{Wn}\log(eW).
\]
\end{proof}

\section{Mesoscopic calibration and global pressure lifting}
This section completes the proof of Theorem~\ref{thm:indicator} by the
pressure-uplifting strategy described above; its final subsection also
proves Corollary~\ref{cor:tapered-indicator}.  We first calibrate the mean
exterior pressure on a mesoscopic ring, then lift that calibration to the
target length and perform one final cyclic closure.
\subsection{Mesoscopic calibration}
\label{sec:calibration}
Our first step is to turn the circular-law input in
Proposition~\ref{prop:finite-moment-anchor} into a deterministic limit for
\(\max_{0\leq r\leq2W}F_r\).
Recall $m_0(W)=\lceil W^{1+\delta}\rceil$ defined in \eqref{eq:scale-choice}.
Let \(m\in[m_0(W),2m_0(W)]\) and set \(\ell:=m-2W\).  On an auxiliary cyclic
ring of length $m$, reserve its first $2W$ rows and write
\begin{equation}\label{eq:cell-decomposition}
 Q^{(r)}=\mathcal A_{[1,2W]}^{(r)},\qquad
 B^{(r)}=\mathcal A_{[2W+1,m]}^{(r)},\qquad
 \mathcal A_{[1,m]}^{(r)}=B^{(r)}Q^{(r)}.
\end{equation}
We use the random and mean exterior pressures \(Y_r(\cdot)\) and
\(F_r(\cdot)\) defined in \eqref{eq:pressure-def} in the preceding
section.  The two products are independent.  The density hypothesis makes
the edge coefficients nonzero almost surely, so the companion formula
makes every \(\mathcal A_i^{(r)}\), and hence
\(B^{(r)}\) and \(Q^{(r)}\), invertible.  In particular, all
normalizations below are well defined.  By
Lemma~\ref{lem:periodic-det} and Proposition~\ref{prop:fresh-closure},
conditional on the bulk,
\begin{equation}\label{eq:det-versus-random-pressure}
 \E_Q\left|
 \log|\det(H_{m,W}-zI_m)|-\max_{0\le r\le2W}Y_r(\ell)
 \right|\le CW\log(eW).
\end{equation}
Proposition~\ref{prop:pressure-concentration} also gives
\begin{equation}\label{eq:random-versus-mean-pressure}
 \E\left|\max_{0\leq r\leq2W}Y_r(\ell)
 -\max_{0\leq r\leq2W}F_r(\ell)\right|
 \le C\sqrt{Wm}\log(eW).
\end{equation}
Since $m\asymp W^{1+\delta}$, both errors in
\eqref{eq:det-versus-random-pressure}--
\eqref{eq:random-versus-mean-pressure} are $o(m)$.

The auxiliary ring lies in the range verified in
\eqref{eq:scale-choice}.  Its normalized logarithmic determinant therefore
converges to $U_{\cir}(z)$ by
Proposition~\ref{prop:finite-moment-anchor}.  Since the quantity
\(\max_{0\leq r\leq2W}F_r(\ell)\) is deterministic, the preceding
\(L^1\) comparison forces
\begin{equation}\label{eq:mesoscopic-calibration}
 \sup_{m_0(W)\le m\le2m_0(W)}
 \left|\frac1m\max_{0\le r\le2W}F_r(m-2W)-U_{\cir}(z)\right|
 \longrightarrow0.
\end{equation}
Indeed, if the supremum did not tend to zero, one could choose a violating
sequence $m(W)$; the high-band input and
\eqref{eq:det-versus-random-pressure}--
\eqref{eq:random-versus-mean-pressure} would then give two different limits
in probability for the same scalar random variable.

\subsection{Global pressure lifting}
\label{sec:pressure-lifting}

The calibration determines the largest mean pressure on one mesoscopic
cell.  We next concatenate arbitrarily many such cells.  This is where the
projective form of the fresh-row estimate is used.

Fix \(m\in[m_0(W),2m_0(W)]\).  For \(0\leq r\leq2W\), define a cell by
\begin{equation}\label{eq:cell-transfer}
 C^{(r)}=B^{(r)}Q^{(r)},
\end{equation}
where $Q^{(r)}$ consists of the product of $\mathcal A_i^{(r)}$ from $2W$ fresh rows and $B^{(r)}$ contains the product of $\mathcal A_i^{(r)}$ from the following
$\ell=m-2W$ bulk rows.  Lemma~\ref{lem:projective}, first conditional on
$B^{(r)}$, implies that for every deterministic unit vector $v$,
\begin{equation}\label{eq:cell-lower}
 \E\log\|C^{(r)}v\|
 \ge F_r(\ell)-CW\log(eW).
\end{equation}
In the other direction, submultiplicativity and the estimate used in the
positive half of Proposition~\ref{prop:fresh-closure} give
\begin{equation}\label{eq:cell-upper}
 \E\log\|C^{(r)}\|
 \le F_r(\ell)+\E\log_+\|Q^{(r)}\|
 \le F_r(\ell)+CW\log(eW).
\end{equation}

For any $q\in\mathbb{N}_+$, we take $q$ independent cells $C_1^{(r)},\ldots,C_q^{(r)}$.  For the lower
bound, fix a unit vector $v_0$ and define successively
\[
 v_j=\frac{C_j^{(r)}v_{j-1}}{\|C_j^{(r)}v_{j-1}\|}.
\]
The vector $v_{j-1}$ is measurable with respect to the preceding cells, so
the conditional version of \eqref{eq:cell-lower} applies to cell $j$.
The cells are independent copies, and the concatenated product
$C_q^{(r)}\cdots C_1^{(r)}$ has the same law as
$\mathcal A_{[1,qm]}^{(r)}$; its expected log norm is therefore $F_r(qm)$.
Telescoping and taking expectations yields the lower bound below; the upper
bound follows by multiplying \eqref{eq:cell-upper}.  Thus, for every
\(q\geq1\) and every \(0\leq r\leq2W\),
\begin{equation}\label{eq:pressure-lift-r}
 q\{F_r(\ell)-CW\log(eW)\}
 \le F_r(qm)
 \le q\{F_r(\ell)+CW\log(eW)\}.
\end{equation}
Maximizing is now harmless: use a maximizing $r$ on the lower side and
maximize the upper side separately.  We obtain
\begin{equation}\label{eq:pressure-lift-max}
 \left|\frac1{qm}\max_{0\le r\le 2W}F_r(qm)
 -\frac1m\max_{0\le r\le 2W}F_r(m-2W)\right|
 \le C\frac{W\log(eW)}m.
\end{equation}
Together with \eqref{eq:mesoscopic-calibration}, this proves, uniformly in
\(q\geq1\) and \(m\in[m_0(W),2m_0(W)]\) (so that
\(m\sim W^{1+\delta}\)),
\begin{equation}\label{eq:global-pressure-multiples}
 \frac1{qm}\max_{0\leq r\leq2W}F_r(qm)=U_{\cir}(z)+o(1).
\end{equation}
No event is intersected over the $q$ cells: the loss in
\eqref{eq:cell-lower} is already integrated before the cells are summed.

\subsection{Remainders, the final closure, and the indicator theorem}
\label{sec:global-closure}
We now remove the divisibility restriction, extend
\eqref{eq:global-pressure-multiples} to every \(N\), and complete the
proof of the circular law.
We first control a terminal segment of small length:

\begin{lemma}[One-row forward and inverse cost]\label{lem:inverse-row}
Almost surely, every $\mathcal A_i^{(r)}$ is invertible, and
\begin{equation}\label{eq:inverse-exterior-identity}
 \|\big(\mathcal A_i^{(r)}\big)^{-1}\|
 =\frac{\|\mathcal A_i^{(2W-r)}\|}{|\alpha_i\beta_i|}.
\end{equation}
Moreover, uniformly in \(0\leq r\leq2W\),
\begin{equation}\label{eq:one-row-log-cost}
 \E\log_+\|\mathcal A_i^{(r)}\|
 +\E\log_+\|\big(\mathcal A_i^{(r)}\big)^{-1}\|
 \le C\log(eW).
\end{equation}
\end{lemma}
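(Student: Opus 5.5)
The plan has three steps: derive the inverse identity \eqref{eq:inverse-exterior-identity} from the Hodge-star duality of exterior powers, bound $\log_+\|\mathcal A_i^{(r)}\|$ by row-linearity, and bound the inverse cost using the identity together with the bounded-density small-ball estimates.

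\emph{The inverse identity.} Almost surely $\alpha_i\beta_i\neq0$, by Assumption~\ref{ass:indicator-density} and $q_{\pm W}>0$. Then \eqref{eq:companion-determinant} gives $\det T_i=\alpha_i/\beta_i\neq0$, so $T_i$ and every $\wedge^rT_i$ are invertible. For the norm, I will use the duality of complementary exterior powers. If $T$ is invertible on $\C^{d}$ with $d=2W$, then the Hodge star gives
\[
 (\wedge^r T)^{-1}=(\det T)^{-1}\,\star^{-1}\circ(\wedge^{d-r}T)^{\mathsf T}\circ\star .
\]
This is the exterior-algebra form of Cramer's rule: it follows from $\wedge^rT\,v\wedge\wedge^{d-r}T\,w=\det T\,v\wedge w$. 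Here $\star$ is unitary, so it preserves norms, and the bilinear transpose has the same operator norm as the original operator. Hence
\[
 \|(\wedge^rT_i)^{-1}\|=\frac{\|\wedge^{d-r}T_i\|}{|\det T_i|}.
\]
Multiplying by the scalars $\beta_i$ and $\beta_i^{-1}$ and using $\det T_i=\alpha_i/\beta_i$, I get
\[
 \|(\mathcal A_i^{(r)})^{-1}\|=\frac{|\beta_i|\,\|\mathcal A_i^{(2W-r)}\|}{|\beta_i|^2\,|\alpha_i/\beta_i|}=\frac{\|\mathcal A_i^{(2W-r)}\|}{|\alpha_i\beta_i|},
\]
which is \eqref{eq:inverse-exterior-identity}.

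\emph{Forward cost.} By Lemma~\ref{lem:row-linearity} and $\|K_s^{(r)}\|\le1$,
\[
 \|\mathcal A_i^{(r)}\|\le\sum_s|a_{i,s}|\le |z|+\sum_s b_s|\xi_{i,s}|.
\]
Cauchy--Schwarz and \eqref{eq:indicator-weights} give $\E(\sum_s b_s|\xi_{i,s}|)^2\le(2W+1)\sum_s q_s=2W+1$. Jensen then yields $\E\log_+\|\mathcal A_i^{(r)}\|\le C_z\log(eW)$ uniformly in $r$, since the bound does not depend on $r$.

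\emph{Inverse cost.} Use the identity and $\log_+(xy)\le\log_+x+\log_+y$:
\[
 \log_+\|(\mathcal A_i^{(r)})^{-1}\|\le\log_+\|\mathcal A_i^{(2W-r)}\|+\log_+|\alpha_i|^{-1}+\log_+|\beta_i|^{-1}.
\]
The first term is already controlled by the forward cost. For the others, $\alpha_i=b_{-W}\xi_{i,-W}$ with $b_{-W}\ge c W^{-1/2}$, so
\[
 \log_+|\alpha_i|^{-1}\le\tfrac12\log(CW)+\log_+|\xi_{i,-W}|^{-1},
\]
and similarly for $\beta_i$. The remaining expectation $\E\log_+|\xi|^{-1}$ is finite: the small-ball bounds $\Pp\{|\xi|\le t\}\le 2Lt$ (real case or alternative (iii), conditioning on $V$) or $\pi Lt^2$ (complex case) give $\E\log_+|\xi|^{-1}=\int_0^\infty\Pp\{|\xi|\le e^{-u}\}\,du\le C_L$. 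Summing the three contributions gives \eqref{eq:one-row-log-cost}.

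The main obstacle is justifying the duality identity cleanly, in particular checking that the bilinear transpose and the Hodge star do not change the operator norm, and keeping track of the scalar $\beta_i$ normalization. Everything else is a direct moment bound.
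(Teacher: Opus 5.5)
Your proposal is correct and follows essentially the same route as the paper: $\det T_i=\alpha_i/\beta_i$, the duality $\|(\wedge^rT)^{-1}\|=\|\wedge^{2W-r}T\|/|\det T|$, row-linearity for the forward cost, and $b_{\pm W}\ge cW^{-1/2}$ with $\E\log_+|\xi|^{-1}<\infty$ for the inverse cost. The paper gets the duality more directly from singular values, $\|(\wedge^rT)^{-1}\|=\prod_{j=2W-r+1}^{2W}s_j^{-1}$, which avoids the Hodge-star bookkeeping you flagged as the main obstacle; separately, your intermediate fraction has a spurious factor $|\beta_i|$ in the numerator (it should be $\|\mathcal A_i^{(2W-r)}\|/(|\beta_i|^2|\alpha_i/\beta_i|)$), although the final identity you state is right.
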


\begin{proof}
The companion formula gives
$|\det T_i|=|\alpha_i/\beta_i|$.  If the singular values of an invertible
$2W\times2W$ matrix $T$ are $s_1\ge\cdots\ge s_{2W}$, then
\[
 \|\big(\wedge^rT\big)^{-1}\|
 =\prod_{j=2W-r+1}^{2W}s_j^{-1}
 =\frac{\|\wedge^{2W-r}T\|}{|\det T|}.
\]
The product is interpreted as empty when \(r=0\).
Substituting $\mathcal A_i^{(r)}=\beta_i\wedge^rT_i$ proves
\eqref{eq:inverse-exterior-identity}.  Row-linearity bounds the forward
logarithm by
$\log_+(\sum_s|a_{i,s}|)$, whose expectation is $O(\log(eW))$ by
\eqref{eq:atom-normalization}.  Formula
\eqref{eq:inverse-exterior-identity} reduces
the inverse cost to the forward cost and
$\log_+|\alpha_i\beta_i|^{-1}$.  The real and directional hypotheses give
$\Pp\{|\xi|\le t\}\le2Lt$, while the planar-density hypothesis gives
$\Pp\{|\xi|\le t\}\le\pi Lt^2$.  Thus
$\E\log_+|\xi|^{-1}<\infty$, and the two edge weights contribute only
$O(\log W)$.
\end{proof}

We next use a perturbation argument to identify the maximum mean pressure
at every length.
Let $R$ be a product of $\mathcal A_i^{(r)}$ from $s$ consecutive rows and $P$ an independent product
next to it.  Applying submultiplicativity both to $RP$ and to $R^{-1}RP$
gives the pathwise inequality
\begin{equation}\label{eq:pathwise-remainder}
 |\log\|RP\|-\log\|P\||
 \le\log_+\|R\|+\log_+\|R^{-1}\|.
\end{equation}
By Lemma~\ref{lem:inverse-row}, the expected cost of appending or removing
$s$ rows is at most $Cs\log(eW)$.  Hence the same bound holds for the
change of every $F_r$.

Return to the long target \eqref{eq:long-regime} and reserve \(2W\) rows
for one final closure.  Set \(n:=N-2W\) and define
\begin{equation}\label{eq:balanced-cell-division}
 q=\left\lfloor\frac n{m_0(W)}\right\rfloor,\qquad
 m=\left\lfloor\frac nq\right\rfloor,\qquad
 s=n-qm.
\end{equation}
Since \(\delta<\gamma\), we have \(q\to\infty\),
\(m\in[m_0(W),2m_0(W)]\), and \(0\leq s<q\).  From
\eqref{eq:pathwise-remainder},
\begin{equation}\label{eq:mean-pressure-remainder}
 \max_{0\leq r\leq2W}|F_r(n)-F_r(qm)|\le Cs\log(eW),
 \qquad
 \frac{s\log(eW)}N\le C\frac{\log(eW)}{m_0(W)}=o(1).
\end{equation}
Equations \eqref{eq:global-pressure-multiples} and
\eqref{eq:mean-pressure-remainder}, together with $W=o(N)$, yield
\[
 \frac{qm}{N}=1-\frac{2W+s}{N}\longrightarrow1,
\]
and hence
\begin{equation}\label{eq:target-mean-pressure}
 \frac1N\max_{0\le r\le2W}F_r(N-2W)\longrightarrow U_{\cir}(z).
\end{equation}

Now we can complete the proof of Theorem~\ref{thm:indicator}:
\begin{proof}[\proofname\ of Theorem~\ref{thm:indicator}]
We only need to consider the case \(N>W^{1+\gamma}\).
Use the reserved $2W$ rows as the fresh product $Q$ and the other $N-2W$ rows
as the outside product $B$.  Proposition~\ref{prop:fresh-closure} and
Proposition~\ref{prop:pressure-concentration} give the following estimates.
If the reserved rows occur at the end of the written product, we use
$\tr(Q^{(r)}B^{(r)})=\tr(B^{(r)}Q^{(r)})$ before applying the fresh-closure
proposition:
\begin{align}
 \E\left|\log|\det(X_N-zI_N)|-\max_{0\le r\le 2W}Y_r(N-2W)\right|
 &\le CW\log(eW),                                           \label{eq:final-seam}\\
 \E\left|\max_{0\le r\le 2W}Y_r(N-2W)-\max_{0\le r\le 2W}F_r(N-2W)\right|
 &\le C\sqrt{WN}\log(eW).                                  \label{eq:final-pressure-fluctuation}
\end{align}
In the given regime of $(N,W)$,
\[
 \frac{W\log(eW)}N+\sqrt{\frac WN}\log(eW)
 \le C\big(W^{-\gamma}\log(eW)+W^{-\gamma/2}\log(eW)\big)=o(1).
\]
Combining this with \eqref{eq:target-mean-pressure} proves
\eqref{eq:indicator-logdet-goal}.  Notice that there is only one final
closure; no union bound over interfaces is present.
Taking \(z=0\) in \eqref{eq:indicator-logdet-goal} gives
\(\Pp\{\det X_N=0\}\to0\), since
\(U_{\cir}(0)=-1/2\) is finite whereas the left-hand side equals
\(-\infty\) on the singularity event.

Finally,
\begin{equation}\label{eq:HS-tightness-indicator}
 \frac1N\E\|X_N\|_{\HS}^2=1.
\end{equation}
Thus the normalized Hilbert--Schmidt norms are tight.  The Ginibre matrix
has the same limiting logarithmic potential, and the replacement principle
\cite[Theorem~4.1]{Han2410} applies to
\eqref{eq:indicator-logdet-goal} and \eqref{eq:HS-tightness-indicator}.
It identifies the limiting empirical eigenvalue measure with
\(\mu_{\cir}\) and completes the proof of
Theorem~\ref{thm:indicator}.
\end{proof}

\subsection{Polynomially controlled endpoint taper}
Essentially the same proof generalizes to profiles $f$ vanishing polynomially at the boundary, as defined in Corollary~\ref{cor:tapered-indicator}.
\begin{proof}[Proof of Corollary~\ref{cor:tapered-indicator}]
Let $H^{\rm tap}_{M,W}$ be the fresh cyclic $M\times M$ matrix defined as
in \eqref{eq:auxiliary-ring}, with $q_s$ replaced by $q_s^{\rm tap}$.
Its variance profile is doubly stochastic.  The $O(W)$ indices with
$|s|\le W/2$ satisfy
$1-|s|/(W+1)\ge1/2$, whereas for every $|s|\le W$ this quantity is at least
$(W+1)^{-1}$.  Together with the global upper bound on $f$, this gives
\begin{equation}\label{eq:tapered-discrete-bounds}
 Z_W^{\rm tap}\asymp W,\qquad
 \max_{|s|\le W}q_s^{\rm tap}\le\frac CW,\qquad
 \min_{|s|\le W}q_s^{\rm tap}\ge cW^{-1-\kappa},
\end{equation}
and $q_s^{\rm tap}\ge c/W$ when $|s|\le W/2$.

Denote by $W'=\lfloor W/2\rfloor$.  On the inner band,
\[
 \max_{i,j}\E|H^{\rm tap}_{ij}|^2\le\frac CW\le\frac{C'}{W'},
 \qquad
 \E|H^{\rm tap}_{ij}|^2\ge\frac cW\ge\frac{c'}{W'}
 \quad\text{if }|i-j|_M\le W'.
\]
Moreover, $\mathfrak b(H^{\rm tap}_{M,W})\asymp W$.  If
$W\ge M^{8/9+\omega}$, then
$W'\ge M^{8/9+\omega/2}$ for all large $M$.  Thus
Theorem~\ref{thm:high-band-lsv} applies through the inner band, while
Proposition~\ref{prop:mesoscopic-counting} and
Lemma~\ref{lem:local-finite-moment-bulk} use the global upper bound in
\eqref{eq:tapered-discrete-bounds}.  The Hilbert--Schmidt cutoff uses only
the row-variance normalization.  Repeating the proof of
Proposition~\ref{prop:finite-moment-anchor} therefore proves the required
short-ring input under the same condition $W\ge M^{8/9+\omega}$.

For the long transfer, the deterministic weight of the monomial isolated
in Lemma~\ref{lem:isolated-monomial} is now bounded below by
\[
 \big(cW^{-(1+\kappa)/2}\big)^{2W}
 \ge \exp\{-C_\kappa W\log(eW)\}.
\]
Consequently, the same word proofs of
Lemmas~\ref{lem:isolated-monomial} and~\ref{lem:projective} hold with loss
$C_\kappa W\log(eW)$.  Also
$b_0^{\rm tap}=\sqrt{q_0^{\rm tap}}\ge c_bW^{-1/2}$ for some fixed
$c_b>0$, as required in Lemma~\ref{lem:operator-affine}, while
$q_{\pm W}^{\rm tap}\ge cW^{-1-\kappa}$ makes the inverse cost in
Lemma~\ref{lem:inverse-row} at most $C_\kappa\log(eW)$ per row.  Thus every
estimate from
Proposition~\ref{prop:fresh-closure} through
Subsection~\ref{sec:global-closure} remains valid with $C$ replaced by
$C_\kappa$.  Since $m_0(W)=W^{1+\delta}+O(1)$ and
$N>W^{1+\gamma}$ in the long branch, all normalized additional losses are
still $o(1)$.  The final Hilbert--Schmidt identity is unchanged because
\(\sum_s q_s^{\rm tap}=1\).  The proof of
Theorem~\ref{thm:indicator} now
applies without any further change, including its \(z=0\)
nonsingularity conclusion.
\end{proof}

\section{Gaussian compact cores and profiles without compact support}
\label{sec:gaussian-general}

In this section we prove Theorem~\ref{thm:gaussian-profile}, the circular
law for \(G_{N,f}\) when \(f\) is not compactly supported.  We truncate
\(X_N\) at half-width \(RW\), import the circular law for the resulting
compact core, and show that the tail outside the cutoff vanishes in the
limit.

The key step is \eqref{eq:tail-Jensen-lower}, where Jensen's formula
discards the tail after the band cutoff.  This step critically uses the
rotational invariance of the entries \(g_{i,s}\).

\subsection{Fixed-cutoff compact profile input}
\label{sec:fixed-cutoff-core}

The noncompact Gaussian argument will need a fixed singular-value cutoff.
We record that conclusion separately because it does not require a hard-edge
estimate at the large target size.

For an \(N\times N\) matrix \(A\) and \(a>0\), define the raw and
truncated logarithmic singular-value functionals
\begin{equation}\label{eq:La-def}
 L_0(A):=\frac1N\log|\det A|,\qquad
 L_a(A):=\frac1N\sum_{j=1}^N\log(s_j(A)\vee a).
\end{equation}
Let $\nu_{v,z}$ be the limiting singular-value law of
$\sqrt v\,G_N-zI_N$, where $G_N$ is normalized circular Ginibre matrix
(with each entry of variance \(N^{-1}\)), and write
\begin{equation}\label{eq:Lv-def}
 U_v(z):=\int\log s\,d\nu_{v,z}(s)
 =\frac12\log v+U_{\cir}(z/\sqrt v),\qquad
 \mathcal L_{v,z}(a):=\int\log(s\vee a)\,d\nu_{v,z}(s).
\end{equation}

We recall the definition of our random band matrix:
\begin{definition}\label{CYCLIC5.1}[Circular Gaussian cyclic scalar band] Write $\mathbb T_N:=\mathbb Z/N\mathbb Z$ and choose the complete set of
cyclic displacements
\[
 \mathcal D_N:=
 \left\{-\left\lfloor\frac N2\right\rfloor,\ldots,
 \left\lceil\frac N2\right\rceil-1\right\}.
\]
We define the active band of length $2H+1$ as the following subset of $\mathcal D_N$:
\[
 \mathcal S_{N,H}:=\{-H,\ldots,H\}\subset\mathcal D_N,
 \qquad 2H+1\le N,
\]
and let $(q_s)_{s\in\mathcal S_{N,H}}$ be deterministic nonnegative
weights.  A circular Gaussian cyclic scalar band with weights
$(q_s)$ is the $N\times N$ matrix $A_N$ defined by
\[
 (A_N)_{i,i+s}:=\sqrt{q_s}\,g_{i,s},
 \qquad i\in\mathbb T_N,\quad s\in\mathcal S_{N,H},
\]
where addition in the column index is taken modulo $N$ and the
$g_{i,s}$ are independent standard circular complex Gaussian
variables.  All remaining entries are zero.  We call the matrix
normalized if
\[
 \sum_{s\in\mathcal S_{N,H}}q_s=1.
\]
Its diagonal variance is positive when $q_0>0$.
\end{definition}

\begin{lemma}[Gaussian raw-potential concentration]
\label{lem:compact-Gaussian-concentration}
Let \(\mathcal S_N\subseteq\mathcal D_N\) be any deterministic active
set containing \(0\), and let
\[
 (A_N)_{i,i+s}:=\sqrt{q_s}\,g_{i,s},
 \qquad i\in\mathbb T_N,\quad s\in\mathcal S_N,
\]
with all other entries zero.  If \(q_0\geq c/N\), then, for every fixed
\(r>0\) and \(z\in\C\),
\begin{equation}\label{eq:compact-Gaussian-variance}
\operatorname{Var}\log|\det(rA_N-zI_N)|
 \le C_{r,z,c}N\log^2(eN).
\end{equation}
\end{lemma}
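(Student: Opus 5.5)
We will prove \eqref{eq:compact-Gaussian-variance} by an Efron--Stein argument over rows, in the spirit of Proposition~\ref{prop:pressure-concentration}. Here, however, the resampled object is the determinant itself rather than a transfer product. Write $F:=\log|\det(rA_N-zI_N)|$ and let $F^{(i)}$ be the same quantity after row $i$ of $A_N$ (all variables $g_{i,s}$, $s\in\mathcal S_N$) is replaced by an independent copy. By Efron--Stein,
\[
 \operatorname{Var}F\le\frac12\sum_{i\in\T_N}\E(F-F^{(i)})^2,
\]
so it suffices to show $\E(F-F^{(i)})^2\le C\log^2(eN)$ uniformly in $i$.

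\textbf{Reduction to one row.} By cofactor expansion along row $i$, $\det(rA_N-zI_N)$ is affine in the row vector:
\[
 \det(rA_N-zI_N)=\sum_{s\in\mathcal S_N}\bigl(r\sqrt{q_s}\,g_{i,s}-z\1_{\{s=0\}}\bigr)\,c_{s},
\]
where the cofactors $c_s$ depend only on the other rows. Set
\[
 \mu_i:=\max\Bigl\{|c_0|,\ \max_{s}r\sqrt{q_s}\,|c_s|\Bigr\}.
\]
This scale is common to $F$ and $F^{(i)}$. Hence, as in Proposition~\ref{prop:pressure-concentration}, it is enough to prove
\[
 \E\bigl[(F-\log\mu_i)^2\mid\text{other rows}\bigr]\le C\log^2(eN).
\]
This is precisely Lemma~\ref{lem:operator-affine} with $1\times1$ matrices $M_s=c_s$, $b_s=r\sqrt{q_s}$, and $W$ replaced by $|\mathcal S_N|\le N$. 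The required lower bound is $b_0=r\sqrt{q_0}\ge r\sqrt c\,N^{-1/2}$, which is exactly the hypothesis $q_0\ge c/N$. Circular Gaussians have bounded planar density, so alternative (ii) applies.

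\textbf{Checking the two tails.} For the small-ball side, condition on all variables except $g_{i,s_*}$, where $s_*$ attains $\mu_i$ (or $s_*=0$), so that $r\sqrt{q_{s_*}}|c_{s_*}|\ge r\sqrt c\,N^{-1/2}\mu_i$. The planar density then gives
\[
 \Pp\{|\det|\le t\mu_i\}\le\min\{1,CNt^2\},
\]
and hence a squared negative logarithm of order $\log^2(eN)$. For the upper side,
\[
 |\det|/\mu_i\le\sum_s|g_{i,s}|+|z|,
\]
whose squared logarithm has expectation $O(\log^2(eN))$ by the second moment alone, exactly as in Lemma~\ref{lem:operator-affine}. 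These two bounds require $\mu_i>0$ almost surely and $F\in L^2$. Positivity holds because the minor $c_0$ is itself the determinant of a Gaussian-perturbed matrix whose diagonal entries all carry nondegenerate Gaussians ($q_0>0$). Such a determinant is a nonzero polynomial in the diagonal variables, so it vanishes only on a null set. Square integrability follows, for instance, by applying the same one-row estimate iteratively, or from the finiteness of $\E\log^2|\det|$ for polynomials in Gaussians.

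\textbf{Main obstacle.} The only delicate points are the $L^2$ integrability needed to invoke Efron--Stein and the requirement that the small-ball bound be uniform in the random cofactors. The second is handled by working conditionally with the common scale $\mu_i$. For the first, I would first try the observation that $\log|\det|$ is bounded above by the Hadamard bound and below through iterated one-variable conditioning on the diagonal entries. Summing the row contributions over the $N$ rows then gives $CN\log^2(eN)$.
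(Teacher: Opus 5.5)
Your proposal is correct and follows essentially the same route as the paper: Efron--Stein over rows, cofactor expansion along the resampled row with a common conditional scale $\mu_i$ that is bounded below through $q_0\ge c/N$ and the almost surely nonvanishing diagonal minor, and then Gaussian small-ball and second-moment upper-tail bounds giving a conditional $O(\log^2(eN))$ deviation. The only cosmetic difference is that you obtain the one-row estimate by invoking Lemma~\ref{lem:operator-affine} (with $1\times1$ coefficients and the index set padded to size $\asymp N$), whereas the paper writes out the Gaussian estimates directly.
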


\begin{proof}
Condition on all rows except row $i$.  Cofactor expansion has the form
\[
 \det(rA_N-zI_N)
 =\sum_{s\in\mathcal S_N}
   r\sqrt{q_s}\,g_{i,s}C_{i,i+s}-zC_{i,i}.
\]
The minor $C_{i,i}$ is nonzero almost surely: as a polynomial in the
remaining Gaussian entries, it contains the product of their diagonal
variables with nonzero coefficient.  Hence
\[
 \mu_i:=\max_{s\in\mathcal S_N} r\sqrt{q_s}|C_{i,i+s}|
 \ge r\sqrt{q_0}|C_{i,i}|>0.
\]
After division by $\mu_i$, the random part is circular Gaussian with
variance between $1$ and $N$ since there are at most $N$ Gaussians, while the translation has modulus at most
$|z|/(r\sqrt{q_0})\le C_{r,z,c}\sqrt N$.  Gaussian small-ball and upper-tail
estimates therefore give, conditionally,
\[
 \E\left[
 \left|\log|\det(rA_N-zI_N)|-\log\mu_i\right|^2
 \,\middle|\,\text{all rows except }i\right]
 \le C_{r,z,c}\log^2(eN).
\]
The same Gaussian logarithmic estimate gives
\(\log|\det(rA_N-zI_N)|\in L^2\).  Let \(Y\) be this logarithm and
\(Y^{(i)}\) its value after independently resampling row \(i\).
Conditionally on all other rows, \(Y\) and \(Y^{(i)}\) share the same
center \(\log\mu_i\), and hence
\[
 (Y-Y^{(i)})^2
 \leq
 2(Y-\log\mu_i)^2+2(Y^{(i)}-\log\mu_i)^2.
\]
Taking the conditional expectation, using the preceding bound for both
terms, and summing the Efron--Stein inequality over \(i\) proves
\eqref{eq:compact-Gaussian-variance}.
\end{proof}

\begin{proposition}[Compact Gaussian core]\label{prop:compact-gaussian-core}
Let $A_N$ be a normalized circular Gaussian cyclic scalar band whose active
offsets form a centered interval containing zero of length comparable with
$W_N$, and whose active variances are bounded above and below by fixed multiples of
\(W_N^{-1}\).  In the notation of
Definition~\ref{CYCLIC5.1}, this means that \(H\asymp W_N\) and
\(q_s\asymp W_N^{-1}\) uniformly for \(s\in\mathcal S_{N,H}\).
If \(W_N\to\infty\), then, for fixed \(r>0\), \(a>0\), and every fixed
\(z\in\C\),
\begin{align}
\E L_0(rA_N-zI_N)&\longrightarrow U_{r^2}(z),                \label{eq:compact-core-raw}\\
 \E L_a(rA_N-zI_N)&\longrightarrow\mathcal L_{r^2,z}(a).      \label{eq:compact-core-cut}
\end{align}
\end{proposition}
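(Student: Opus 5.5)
The plan is to reduce to Theorem~\ref{thm:indicator}, applied to the rescaled matrix, and then upgrade convergence in probability to convergence of expectations.

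\textbf{Step 1 (reduction).} Circular Gaussians satisfy Assumption~\ref{ass:indicator-density}(ii), have all moments, and the weights $q_s\asymp W_N^{-1}\asymp H^{-1}$ on $\mathcal S_{N,H}$ satisfy \eqref{eq:indicator-weights} with bandwidth $H$ in place of $W$, after adjusting $c_0,C_0$. Since $\det(rA_N-zI_N)=r^N\det(A_N-(z/r)I_N)$, we have
\[
 L_0(rA_N-zI_N)=\log r+L_0(A_N-(z/r)I_N).
\]
Theorem~\ref{thm:indicator}, applied at the spectral parameter $z/r$, therefore gives
\[
 L_0(rA_N-zI_N)\xrightarrow{\Pp}\log r+U_{\cir}(z/r)=U_{r^2}(z).
\]

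\textbf{Step 2 (raw expectation).} To pass to \eqref{eq:compact-core-raw} we need uniform integrability of $L_0$. Lemma~\ref{lem:compact-Gaussian-concentration} applies because $q_0\gtrsim W_N^{-1}\ge c/N$. It gives
\[
 \operatorname{Var}L_0(rA_N-zI_N)\le C\,\frac{\log^2(eN)}{N}\to0,
\]
so $L_0-\E L_0\to0$ in $L^2$. Combined with convergence in probability to the constant $U_{r^2}(z)$, this forces $\E L_0\to U_{r^2}(z)$. The argument is: if $\E L_0$ along a subsequence stayed away from $U_{r^2}(z)$, then $L_0$ would concentrate at $\E L_0$ and simultaneously at $U_{r^2}(z)$, a contradiction. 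Finiteness of $\E L_0$ (no $-\infty$ mass) follows from the $L^2$ statement in that lemma.

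\textbf{Step 3 (truncated functional).} For \eqref{eq:compact-core-cut}, $\log(s\vee a)$ is continuous and bounded below, and it grows only logarithmically at infinity. Convergence of the singular-value measure of $rA_N-zI_N$ to $\nu_{r^2,z}$ comes from the local bulk comparison, Lemma~\ref{lem:local-finite-moment-bulk}, when $\mathfrak b\asymp W_N\ge N^{\varepsilon}$. For general $W_N\to\infty$, including $W_N\ll N^\varepsilon$, I would instead use the standard Hermitized Stieltjes-transform (Girko) argument for doubly stochastic profiles with $\max q_s\to0$, a global statement that needs only $W_N\to\infty$. This gives $L_a\to\mathcal L_{r^2,z}(a)$ in probability, with the upper tail handled exactly as in \eqref{eq:upper-log-tail-second-moment}. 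For the expectation, $L_a\ge\log a$ and
\[
 L_a\le\tfrac12\log\Bigl(\tfrac1N\|rA_N-zI_N\|_{\HS}^2\vee a^2\Bigr),
\]
and the right-hand side is uniformly integrable because the Gaussian Hilbert--Schmidt norm has bounded second moment after normalization. Dominated convergence then gives \eqref{eq:compact-core-cut}.

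The main obstacle is Step~3 when $W_N$ grows slowly, since Lemma~\ref{lem:local-finite-moment-bulk} as stated needs $\mathfrak b_N\ge N^\varepsilon$. A fallback that avoids this hypothesis is to note that $L_a$ is Lipschitz in the Gaussian entries in Hilbert--Schmidt norm, so Gaussian concentration holds. One then identifies only the expectation, by comparing with an auxiliary ring of polynomially large bandwidth through the same self-similarity device used in Section~\ref{sec:calibration}.
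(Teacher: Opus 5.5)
Your proposal is correct, and Steps~1--2 match the paper's argument. Your Step~2 is even slightly more uniform: you apply Lemma~\ref{lem:compact-Gaussian-concentration} in every regime (its hypothesis $q_0\ge c/N$ holds because $2H+1\le N$). The paper uses that lemma only when $N\le W^{1+\gamma}$, and in the long regime reads the $L^1$ statement off \eqref{eq:final-seam}--\eqref{eq:final-pressure-fluctuation}.

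The genuine difference is in \eqref{eq:compact-core-cut} when $W_N$ grows slower than any power of $N$. You get the singular-value limit of $rA_N-zI_N$ from a global Hermitized law that needs only a doubly stochastic profile with $\max q_s\to0$. For Gaussian entries this is standard: Gaussian integration by parts and the Poincar\'e inequality at fixed $\eta>0$, with double stochasticity forcing the Ginibre solution of the Dyson equation. You then conclude by uniform integrability.

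The paper never invokes such a law. When $N>W^{1+\gamma}$, it cuts $[N]$ into intervals of length in $[m_0(W),2m_0(W)]$ and replaces each by a cyclic ring whose boundary rows are refreshed. Only $O(H)$ rows per cut change, so $N^{-1}\E\|A_N-\widehat A_N\|_{\HS}^2=O(H/m_0(W))$, as in \eqref{eq:periodicization-HS}. Mirsky's inequality and the $a^{-1}$-Lipschitz property of $s\mapsto\log(s\vee a)$ then transfer $L_a$ to the block-diagonal matrix, as in \eqref{eq:periodicization-La}. On each block the bandwidth is polynomial in the block size, so the polynomial-bandwidth squared-singular-value comparison (the input behind Lemma~\ref{lem:local-finite-moment-bulk}) applies blockwise. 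Expectations are then averaged over blocks, with no union bound.

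Your route is shorter but imports an external global law that the paper does not state. The paper's route stays within inputs it already uses. It is also the precise form of your vague fallback: no Gaussian concentration or pressure calibration is needed, because a fixed cutoff $a$ makes $L_a$ Lipschitz in Hilbert--Schmidt norm, so a direct coupling suffices.

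One minor slip: Jensen gives $L_a\le\frac12\log\bigl(\frac1N\|rA_N-zI_N\|_{\HS}^2+a^2\bigr)$, with $+$ rather than $\vee$. The $\vee$ version fails, for example with $N=2$, $s_1=0$, $s_2=1.2a$. Uniform integrability is unaffected.
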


\begin{proof}
Write \(W:=W_N\).
After replacing $W_N$ by a fixed multiple, this is a profile of the form
\eqref{eq:indicator-weights}.  Circular Gaussian atoms satisfy
Assumption~\ref{ass:indicator-density}, so
Theorem~\ref{thm:indicator} applies; scaling gives
\(L_0(rA_N-zI_N)=\log r+L_0(A_N-(z/r)I_N)\).  Thus the random
quantity \(L_0(rA_N-zI_N)\) converges in probability to
\(U_{r^2}(z)\).  In the long regime \(N>W^{1+\gamma}\), the
$L^1$ conclusion follows directly from
\eqref{eq:final-seam}--\eqref{eq:final-pressure-fluctuation}.  In the direct
high-band regime $N\le W^{1+\gamma}$, Lemma~\ref{lem:compact-Gaussian-concentration} gives
\[
 \frac1N\log|\det(rA_N-zI_N)|
 -\E L_0(rA_N-zI_N)\longrightarrow0
 \quad\text{in }L^2.
\]
Together with the convergence in probability in
Proposition~\ref{prop:finite-moment-anchor}, this forces the deterministic
mean to converge to $U_{r^2}(z)$ and proves
\eqref{eq:compact-core-raw} in expectation in all regimes.

We next prove \eqref{eq:compact-core-cut}.  Because of the fixed lower
cutoff \(a\), the least singular value no longer matters; only the
squared-singular-value comparison in
\cite[Theorem~3.6]{Han2410} is needed.  That theorem requires the matrix
size not to be too large relative to the bandwidth, so in the
complementary regime we apply it after decomposing the matrix into
smaller pieces.  In the direct regime \(N\leq W^{1+\gamma}\), use
\begin{equation}\label{eq:fixed-scale-La}
 L_a(rA_N-zI_N)=\log r+L_{a/r}(A_N-(z/r)I_N)
\end{equation}
and apply the squared-singular-value comparison
\cite[Theorem~3.6]{Han2410} to the normalized matrix.  In the complementary
regime partition
$[N]$ into consecutive intervals of lengths
$m_j\in[m_0(W),2m_0(W)]$.  On each interval replace the band by its cyclic
version, using the same entries away from the two endpoints and independent
entries on the boundary rows.  Denote the resulting block-diagonal matrix
by $\widehat A_N$.  If $H\asymp W$ is the half-width, only $O(H)$ rows per
cut are changed; hence the coupling satisfies
\begin{equation}\label{eq:periodicization-HS}
 \frac1N\E\|A_N-\widehat A_N\|_{\HS}^2
 \le C\frac H{m_0(W)}=o(1).
\end{equation}
Mirsky's singular-value inequality \cite{Mirsky1960} and the
$a^{-1}$-Lipschitz property of
$s\mapsto\log(s\vee a)$ give
\begin{equation}\label{eq:periodicization-La}
 \E|L_a(rA_N-zI_N)-L_a(r\widehat A_N-zI_N)|
 \le\frac r{a\sqrt N}\E\|A_N-\widehat A_N\|_{\HS}=o(1).
\end{equation}

Every diagonal block has length in
\([m_0(W),2m_0(W)]\) and effective bandwidth
comparable with $W$, so \cite[Theorem~3.6]{Han2410}, together with
\eqref{eq:fixed-scale-La}, applies to each normalized block.
It gives convergence of its squared-singular-value empirical law to the
shifted-Ginibre law.  Push this law forward by \(x\mapsto\sqrt x\), and
then integrate \(\log(s\vee a)\) on a fixed compact singular-value
interval.  Since the integrand is bounded there, convergence
in probability also gives convergence of expectations.  The convergence is
uniform over the block lengths by the usual subsequence contradiction.  For
the upper tail, use
$\log(s\vee a)\1_{\{s>K\}}\le\varepsilon_Ks^2$, where
$\varepsilon_K\downarrow0$, together with the uniform second-moment bound
for the shifted and scaled matrices.  This is an expectation argument and
requires no union bound over the blocks.  The
weighted average over the blocks therefore converges to
$\mathcal L_{r^2,z}(a)$.  Combining this with
\eqref{eq:periodicization-La} proves \eqref{eq:compact-core-cut}.
\end{proof}

\subsection{The sparse branch and the compact core}

We now prove Theorem~\ref{thm:gaussian-profile}.  Write
$X_N=G_{N,f}$ for the matrix in \eqref{eq:gaussian-profile}.  The argument
splits according to whether $W/N$ tends to zero.

First pass to a subsequence on which
\begin{equation}\label{eq:sparse-profile-regime}
 \frac WN\longrightarrow0.
\end{equation}
Fix an integer $R\ge1$ and denote by $h_{R,W}:=\lfloor RW\rfloor$.  This is a
deterministic truncation half-width; it is unrelated to the auxiliary
matrix $H_{M,W}$ in \eqref{eq:auxiliary-ring}.  For all large $N$,
$2h_{R,W}<N$.  Split
\begin{equation}\label{eq:core-tail-split}
 X_N=Y_{R,N}+E_{R,N},
\end{equation}
where $Y_{R,N}$ contains the offsets $|s|\le h_{R,W}$ and $E_{R,N}$ the
remaining offsets.  The two matrices are independent.  Define their row
variance masses by
\begin{equation}\label{eq:core-tail-masses}
 v_{R,N}:=\frac1N\E\|Y_{R,N}\|_{\HS}^2
 =\sum_{|s|\le h_{R,W}}q_s^{(N,W)},\qquad
 t_{R,N}:=1-v_{R,N}.
\end{equation}

Recall that
\(Z_{N,W}:=\sum_{s\in\mathcal D_N}f(s/W)\).  We use the following
Riemann-sum approximation.
\begin{lemma}[Riemann masses]\label{lem:Riemann-masses}
For every fixed $R$, in the $N\to\infty$ limit,
\begin{equation}\label{eq:Riemann-mass-limits}
 \frac{Z_{N,W}}W\longrightarrow1,\qquad
 v_{R,N}\longrightarrow v_R:=\int_{-R}^R f(x)\,dx,\qquad
 t_{R,N}\longrightarrow t_R:=\int_{|x|>R}f(x)\,dx.
\end{equation}
In particular, in the $R\to\infty$ limit, we have $v_R\uparrow1$ and $t_R\downarrow0$.
\end{lemma}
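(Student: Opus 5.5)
The plan is a standard Riemann-sum argument that uses only integrability, continuity and finite total variation of \(f\), together with the sparse regime \eqref{eq:sparse-profile-regime}, i.e.\ \(W/N\to0\). Set \(\Delta=1/W\) and view
\[
 \frac{Z_{N,W}}W=\sum_{s\in\mathcal D_N}\Delta\,f(s\Delta)
\]
as a Riemann sum for \(f\) over the window \([-\lfloor N/2\rfloor/W,(\lceil N/2\rceil-1)/W]\). This window expands to all of \(\R\) because \(N/W\to\infty\).

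First, compare sums to integrals on bounded windows. For any interval \([a,b]\) and mesh \(\Delta\), the bounded-variation estimate gives
\[
 \Bigl|\sum_{s\Delta\in[a,b]}\Delta f(s\Delta)-\int_a^b f\Bigr|\le \Delta\,\TV(f)+2\Delta\|f\|_\infty .
\]
To see this, write \(\int_{s\Delta}^{(s+1)\Delta}f\) and compare with \(\Delta f(s\Delta)\): each cell contributes at most \(\Delta\) times the variation of \(f\) on that cell, and the variations over disjoint cells sum to at most \(\TV(f)\). The boundary cells cost \(O(\Delta\|f\|_\infty)\). Note that \(\|f\|_\infty\le f(0)+\TV(f)<\infty\). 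Applied with \([a,b]=[-R,R]\) and offsets \(|s|\le h_{R,W}=\lfloor RW\rfloor\), this gives
\[
 \frac1W\sum_{|s|\le h_{R,W}}f(s/W)\to v_R .
\]
The same estimate holds on \([-R',R']\) for any fixed \(R'\).

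Second, control the tails of the sum uniformly. The key point is that the discrete tail mass is small: for \(R'\) large,
\[
 \frac1W\sum_{|s|>R'W}f(s/W)\le \int_{|x|>R'-1}f+\Delta\,\TV\bigl(f|_{\{|x|\ge R'-1\}}\bigr)+O(\Delta\|f\|_\infty).
\]
This uses the same cellwise comparison on the unbounded region, since the total variation is finite there too. The right-hand side tends to \(\int_{|x|>R'-1}f\) as \(W\to\infty\), which is small for large \(R'\). I expect this to be the only mildly delicate step, because integrability alone does not control sums. If needed, I will bound each \(f(s\Delta)\le \Delta^{-1}\int_{\text{cell}}f+\operatorname{Var}_{\text{cell}}(f)\) and sum over cells.

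Third, combine the two steps. Restricting to \(s\in\mathcal D_N\) only removes terms, and for large \(N\) the set \(\mathcal D_N\) contains \([-R'W,R'W]\). Hence
\[
 \limsup\Bigl|\frac{Z_{N,W}}W-1\Bigr|\le 2\int_{|x|>R'-1}f ,
\]
and letting \(R'\to\infty\) gives \(Z_{N,W}/W\to1\). Dividing then gives
\[
 v_{R,N}=\frac{W}{Z_{N,W}}\cdot\frac1W\sum_{|s|\le h_{R,W}}f(s/W)\to v_R ,
\]
and \(t_{R,N}=1-v_{R,N}\to1-v_R=t_R\) because \(\int f=1\). Finally, \(v_R\uparrow1\) and \(t_R\downarrow0\) as \(R\to\infty\) by monotone convergence.
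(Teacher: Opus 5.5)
Your proof is correct and follows essentially the same route as the paper: a mesh-$W^{-1}$ bounded-variation Riemann-sum comparison on a fixed window, control of the tails using $W/N\to0$, and then division to get $v_{R,N}$ and $t_{R,N}$. Your cellwise bound on the discrete tail mass makes explicit a step the paper compresses into ``using the integrable tails of $f$'': integrability alone does not bound the sum, and finite total variation is what closes it.
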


\begin{proof}
For a BV function $f$, the error between a mesh-$W^{-1}$ Riemann sum and the
corresponding integral on an interval is at most
$W^{-1}\TV(f)$, up to the two endpoint cells.  Apply this first on
$[-R,R]$.  In the sparse regime $W=o(N)$ the sampled interval
$[-N/(2W),N/(2W)]$ exhausts $\R$; applying the same estimate on a large
fixed interval and then using the integrable tails of $f$ proves the first
limit.  The other two claims then follow by division.
\end{proof}

We now rescale $Y_{R,N}$ to turn it into a circular Gaussian scalar band with normalized variance. Normalize the core by
\begin{equation}\label{eq:normalized-core}
 r_{R,N}:=\sqrt{v_{R,N}},\qquad r_R:=\sqrt{v_R},\qquad
 \widetilde Y_{R,N}:=r_{R,N}^{-1}Y_{R,N}.
\end{equation}
The row and column variance sums of $\widetilde Y_{R,N}$ equal one.  Its
active weights are
\[
 \widetilde q_{R,s}^{(N,W)}
 =\frac{f(s/W)\1_{\{|s|\le h_{R,W}\}}}
 {\sum_{|u|\le h_{R,W}}f(u/W)}.
\]
Continuity and positivity of $f$ give, for any fixed $R$, constants $0<c_R<C_R<\infty$ such that
\begin{equation}\label{eq:normalized-core-comparability}
 \frac{c_R}{W}\le\widetilde q_{R,s}^{(N,W)}
 \le\frac{C_R}{W},\qquad |s|\le h_{R,W}.
\end{equation}
Thus Proposition~\ref{prop:compact-gaussian-core} applies to the normalized
core $\widetilde Y_{R,N}$.

\subsection{Log potential limit of the center core matrix}

The scalar $r_{R,N}$ still depends on $N$.  We first treat the raw
logarithm without requiring a uniform-in-$r$ compact theorem.  The matrix
\(\widetilde Y_{R,N}\) is almost surely invertible: its determinant,
viewed as a polynomial in the independent diagonal Gaussian variables,
contains their product with nonzero coefficient.  Consequently, for
every \(z\in\C\), the polynomial
\(w\mapsto\det(w\widetilde Y_{R,N}-zI_N)\) is almost surely nonzero,
including when \(z=0\).  For any \(A,z\) for which
\(w\mapsto\det(wA-zI_N)\) is nonzero, define
\begin{equation}\label{eq:phase-average}
 \mathfrak M_{A,z}(r):=
 \frac1{2\pi N}\int_0^{2\pi}
 \log|\det(re^{\mathrm i\theta}A-zI_N)|\,d\theta.
\end{equation}
When \(z=0\) and \(A\) is invertible,
\(\mathfrak M_{A,0}(r)=\log r+N^{-1}\log|\det A|\), so the radial
monotonicity used below remains valid without change.

\begin{lemma}[Jensen radial monotonicity]\label{lem:radial-monotonicity}
For every nonzero complex polynomial $p$, the function
\[
 \rho\longmapsto
 \frac1{2\pi}\int_0^{2\pi}
 \log|p(\rho e^{\mathrm i\theta})|\,d\theta
\]
is nondecreasing on $(0,\infty)$.  In particular, if $p(0)\ne0$, then
\[
 \frac1{2\pi}\int_0^{2\pi}\log|p(e^{\mathrm i\theta})|\,d\theta
 \ge\log|p(0)|.
\]
\end{lemma}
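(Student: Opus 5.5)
The plan is to reduce the statement to Jensen's formula applied to the polynomial one factor at a time. Over \(\C\), write \(p(w)=c\,w^{k}\prod_{j=1}^{d}(w-w_j)\) with \(c\ne0\), \(k\ge0\) and nonzero roots \(w_j\). Since \(\log|p|\) splits into the sum of the logarithms of the factors, the circular average splits into a sum of circular averages. The constant contributes \(\log|c|\), and the monomial \(w^k\) contributes \(k\log\rho\), which is nondecreasing. So it suffices to show that for each \(a\ne0\) the average
\[
 \phi_a(\rho):=\frac1{2\pi}\int_0^{2\pi}\log|\rho e^{\mathrm i\theta}-a|\,d\theta
\]
is nondecreasing in \(\rho\).

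This is the classical identity \(\phi_a(\rho)=\max\{\log\rho,\log|a|\}\). I would check it in two cases.

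\emph{Case \(\rho<|a|\).} Write \(\log|\rho e^{\mathrm i\theta}-a|=\log|a|+\log|1-(\rho/|a|)e^{\mathrm i(\theta-\arg a)}|\). The second term is the real part of a holomorphic function on a neighborhood of the closed disk of radius \(\rho\), namely \(\log(1-w/a)\) with the principal branch, since \(|w/a|<1\) there. It vanishes at \(w=0\), so its mean over the circle is zero by the mean value property.

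\emph{Case \(\rho>|a|\).} Factor out \(\rho e^{\mathrm i\theta}\) and apply the same argument to \(\log|1-a\rho^{-1}e^{-\mathrm i\theta}|\).

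\emph{Case \(\rho=|a|\).} The integrand has only a logarithmic singularity, which is integrable. Continuity of \(\phi_a\) in \(\rho\) then follows by dominated convergence; alternatively, one can use the known value \(\int_0^{2\pi}\log|1-e^{\mathrm i\theta}|\,d\theta=0\).

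With this formula, each \(\phi_a\) is visibly nondecreasing, and summing the contributions gives the monotonicity of \(\rho\mapsto\frac1{2\pi}\int_0^{2\pi}\log|p(\rho e^{\mathrm i\theta})|\,d\theta\). In total, the average equals \(\log|c|+k\log\rho+\sum_j\max\{\log\rho,\log|w_j|\}\).

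For the second assertion, assume \(p(0)\ne0\), so that \(k=0\) and \(p(0)=c\prod_j(-w_j)\), giving \(\log|p(0)|=\log|c|+\sum_j\log|w_j|\). Letting \(\rho\downarrow0\) in the formula gives the limit \(\log|c|+\sum_j\log|w_j|=\log|p(0)|\). Monotonicity then yields the value at \(\rho=1\) being at least \(\log|p(0)|\). Equivalently, one can read this off directly: each term satisfies \(\max\{0,\log|w_j|\}\ge\log|w_j|\).

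The only delicate point is the boundary case \(\rho=|w_j|\), where the integrand is singular. This is harmless because \(\log|1-e^{\mathrm i\theta}|\) is integrable with mean zero. Monotonicity also needs only continuity of the average at such radii, so I expect no real obstacle. This is a standard subharmonicity/Jensen fact; one could also simply cite the monotonicity of circular means of the subharmonic function \(\log|p|\).
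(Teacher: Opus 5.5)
Your proposal is correct and follows the paper's proof. Both factor \(p\), use Jensen's formula in the form \(\log|c|+\sum_j\log(\rho\vee|\zeta_j|)\), and compare \(\rho\downarrow0\) with \(\rho=1\). The only difference is that you derive the per-factor identity \(\phi_a(\rho)=\max\{\log\rho,\log|a|\}\) from the mean value property, including the boundary case \(\rho=|a|\), whereas the paper simply cites Jensen's formula.
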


\begin{proof}
Write
$p(w)=c\prod_{j=1}^k(w-\zeta_j),c\ne 0$.  Jensen's formula gives
\[
 \frac1{2\pi}\int_0^{2\pi}\log|p(re^{\mathrm i\theta})|\,d\theta
 =\log|c|+\sum_{j=1}^k\log(r\vee|\zeta_j|),
\]
which is nondecreasing in \(r\).  Letting \(r\downarrow0\) and comparing
with \(r=1\) gives the second claim.
\end{proof}

The law of \(\widetilde Y_{R,N}\) is invariant under multiplication by a
common phase.  The Gaussian logarithmic estimate from
Lemma~\ref{lem:compact-Gaussian-concentration} gives the integrability
needed to exchange expectation and phase integration.  Hence
\begin{equation}\label{eq:phase-law}
 \E L_0(r\widetilde Y_{R,N}-zI_N)
 =\E\mathfrak M_{\widetilde Y_{R,N},z}(r).
\end{equation}
Choose \(r_-\) and \(r_+\) from a fixed countable dense subset of
\((0,\infty)\), with
\(r_-<r_R<r_+\).  Eventually
\(r_-<r_{R,N}<r_+\), and
Lemma~\ref{lem:radial-monotonicity} shows that
\(\mathfrak M_{A,z}(r)\) is nondecreasing in \(r\).  We may therefore
sandwich the
expected raw potential of $Y_{R,N}$ between the corresponding expected
potentials at $r_-$ and $r_+$.  Proposition~\ref{prop:compact-gaussian-core}
and then the countable limits $r_-\uparrow r_R$, $r_+\downarrow r_R$
give, for every fixed \(z\in\C\),
\begin{equation}\label{eq:core-raw-limit}
 \E L_0(Y_{R,N}-zI_N)\longrightarrow U_{v_R}(z).
\end{equation}

For a fixed cutoff $a>0$, the varying normalization is simpler.  The map
$s\mapsto\log(s\vee a)$ is $a^{-1}$-Lipschitz.  Mirsky's inequality and
Cauchy--Schwarz imply
\begin{align}
 &|L_a(r_{R,N}\widetilde Y_{R,N}-zI_N)
 -L_a(r_R\widetilde Y_{R,N}-zI_N)| \notag\\
 &\hspace{25mm}\le
 \frac{|r_{R,N}-r_R|}{a\sqrt N}\|\widetilde Y_{R,N}\|_{\HS}.
 \label{eq:varying-normalization-La}
\end{align}
The expected right-hand side tends to zero because
$N^{-1}\E\|\widetilde Y_{R,N}\|_{\HS}^2=1$.  The fixed-cutoff part of
Proposition~\ref{prop:compact-gaussian-core} therefore yields
\begin{equation}\label{eq:core-cut-limit}
 \E L_a(Y_{R,N}-zI_N)\longrightarrow\mathcal L_{v_R,z}(a),
 \qquad N\to\infty
 \quad(R,a\text{ fixed}).
\end{equation}

\subsection{Removing the tail in the variance-cutoff procedure}
We now bound the expected raw potential
\(\E L_0(X_N-zI_N)\) from above and below.
For the lower comparison, we use only phase invariance and no norm
estimate.  For \(w\in\C\), consider
\[
p_{R,N}(w)=\det(Y_{R,N}+wE_{R,N}-zI_N).
\]
The same diagonal-monomial argument gives
\(p_{R,N}(0)=\det(Y_{R,N}-zI_N)\ne0\) almost surely for every fixed
\(z\in\C\).  Thus \(p_{R,N}\) is almost surely a nonzero polynomial,
also when \(z=0\).
The conditional Gaussian logarithm estimate used in
Lemma~\ref{lem:compact-Gaussian-concentration} also shows that the positive
and negative logarithms below are integrable, uniformly for
$Y_{R,N}+e^{\mathrm i\theta}E_{R,N}$ and for $Y_{R,N}$.  Thus by Fubini's
theorem, we can exchange the expectation and phase integration.
Since
\(E_{R,N}\stackrel{\mathrm d}=e^{\mathrm i\theta}E_{R,N}\) and it is
independent of \(Y_{R,N}\),
Lemma~\ref{lem:radial-monotonicity} gives
\begin{align}
 \E L_0(X_N-zI_N)
 &=\E\frac1{2\pi N}\int_0^{2\pi}
 \log|p_{R,N}(e^{\mathrm i\theta})|\,d\theta \notag\\
 &\ge \E\frac1N\log|p_{R,N}(0)|
 =\E L_0(Y_{R,N}-zI_N).
 \label{eq:tail-Jensen-lower}
\end{align}

For the upper comparison, $L_0\le L_a$, and Mirsky's inequality gives, for
any two $N\times N$ matrices,
\begin{equation}\label{eq:Mirsky-La}
 |L_a(A)-L_a(B)|
 \le\frac1{aN}\sum_{j=1}^N|s_j(A)-s_j(B)|
 \le\frac{\|A-B\|_{\HS}}{a\sqrt N}.
\end{equation}
Consequently,
\begin{align}
 \E L_0(X_N-zI_N)
 &\le\E L_a(Y_{R,N}-zI_N)
 +\frac{\E\|E_{R,N}\|_{\HS}}{a\sqrt N} \notag\\
 &\le\E L_a(Y_{R,N}-zI_N)+\frac{\sqrt{t_{R,N}}}{a}.
 \label{eq:tail-Mirsky-upper}
\end{align}
Taking $N\to\infty$ with $R,a$ fixed in
\eqref{eq:tail-Jensen-lower}--\eqref{eq:tail-Mirsky-upper} gives
\begin{equation}\label{eq:mean-squeeze}
 U_{v_R}(z)
 \le\liminf_N\E L_0(X_N-zI_N)
 \le\limsup_N\E L_0(X_N-zI_N)
 \le\mathcal L_{v_R,z}(a)+\frac{\sqrt{t_R}}a.
\end{equation}

\subsection{The truncated logarithmic potential}
We now show that, for small \(a\) and large \(R\), both sides of
\eqref{eq:mean-squeeze} converge to \(U_{\cir}(z)\).
The cutoff error can be explicitly computed as follows.
Let
\[
 F_{v,z}(t):=\nu_{v,z}([0,t]).
\]
Then
\begin{equation}\label{eq:truncated-potential-identity}
 \mathcal L_{v,z}(a)-U_v(z)
 =\int_0^a\log\!\left(\frac as\right)\,d\nu_{v,z}(s)
 =\int_0^a\frac{F_{v,z}(t)}t\,dt.
\end{equation}
This cutoff measures exactly the singular-value mass below $a$, with logarithmic weight $t^{-1}$.

For fixed $z$, the shifted-Ginibre singular-value law obeys, uniformly for
$v$ in a neighborhood of one,
\begin{equation}\label{eq:hard-edge-linear-mass}
 F_{v,z}(t)\le C_z t\qquad(0<t<a_z).
\end{equation}
Indeed, the bounded-density estimate for the symmetrized limiting
Hermitization law in \cite[proof of Theorem~3.6, (3.15)]{Han2410} gives the
claim for $v=1$.  The scaling identity
$F_{v,z}(t)=F_{1,z/\sqrt v}(t/\sqrt v)$ gives the asserted local uniformity
in $v$.
Equations \eqref{eq:truncated-potential-identity} and
\eqref{eq:hard-edge-linear-mass} give
\begin{equation}\label{eq:truncated-potential-bound}
 0\le\mathcal L_{v,z}(a)-U_v(z)\le C_z a.
\end{equation}

Now take $a=t_R^{1/4}$ in \eqref{eq:mean-squeeze} and then let
$R\to\infty$.  Since
\[
 \frac{\sqrt{t_R}}a=t_R^{1/4}\to0,\qquad
 v_R\to1,\qquad U_{v_R}(z)\to U_{\cir}(z),
\]
we obtain, along the sparse subsequence,
\begin{equation}\label{eq:sparse-mean-limit}
 \E L_0(X_N-zI_N)\longrightarrow U_{\cir}(z),
 \qquad\text{for every fixed }z\in\C.
\end{equation}

We are only one step from the circular law for $X_N$.
Since
\[
 q_0^{(N,W)}=\frac{f(0)}{Z_{N,W}}
 \geq \frac{f(0)}{N\|f\|_\infty},
\]
the hypothesis of Lemma~\ref{lem:compact-Gaussian-concentration} holds
with a fixed constant depending only on \(f\).  Hence, by that lemma,
\begin{equation}\label{eq:Gaussian-logdet-variance}
 \operatorname{Var}\log|\det(X_N-zI_N)|\le C_{f,z}N\log^2(eN),
\end{equation}
and therefore
\begin{equation}\label{eq:Gaussian-logdet-concentration}
 L_0(X_N-zI_N)-\E L_0(X_N-zI_N)\xrightarrow{\Pp}0.
\end{equation}
Together with \eqref{eq:sparse-mean-limit}, this proves the required
logarithmic-potential convergence in the sparse branch.

\subsection{The dense branch and completion}
We have covered the regime \(W/N\to0\) and \(W\to\infty\).  Suppose
instead that, along a subsequence, \(W/N\geq c>0\).  For
\(s\in\mathcal D_N\), the points \(s/W\) lie in a fixed compact interval.
Positivity and continuity of \(f\) therefore give constants
\(0<c_{f,c}<C_{f,c}<\infty\) such that
\begin{equation}\label{eq:dense-profile-comparability}
 c_{f,c}N\le Z_{N,W}\le C_{f,c}N,\qquad
 \frac{c_{f,c}}N\le q_s^{(N,W)}\le\frac{C_{f,c}}N
 \quad(s\in\mathcal D_N).
\end{equation}
The profile is doubly stochastic and its effective bandwidth is comparable
with \(N\).  Thus the proof of
Proposition~\ref{prop:finite-moment-anchor} applies directly, with the
bandwidth parameter taken to be \(N\) and
\(\mathfrak b(X_N)\asymp N\): Theorem~\ref{thm:high-band-lsv},
Proposition~\ref{prop:mesoscopic-counting}, and
Lemma~\ref{lem:local-finite-moment-bulk} give, for every fixed
\(z\in\C\),
\begin{equation}\label{eq:dense-gaussian-logdet}
 L_0(X_N-zI_N)\xrightarrow{\Pp}U_{\cir}(z).
\end{equation}
The bulk inputs here are the fixed-\(z\) estimates
\cite[Proposition~3.4, Corollary~3.5, and Theorem~3.6]{Han2410}; no
spectral-parameter exceptional set is introduced.

Every subsequence contains a further subsequence on which either $W/N\to0$
or $W/N$ is bounded below.  In the sparse branch,
\[
 \frac1N\E\|X_N\|_{\HS}^2=1.
\]
This gives Hilbert--Schmidt tightness, while
\eqref{eq:sparse-mean-limit}--\eqref{eq:Gaussian-logdet-concentration}
identify the logarithmic potential for every fixed \(z\); normalized
Ginibre has the same Hilbert--Schmidt tightness.  Hence the replacement
principle \cite[Theorem~4.1]{Han2410}, which only requires this comparison
for almost every \(z\), completes that branch.  In the dense branch,
\eqref{eq:dense-profile-comparability} gives the same Hilbert--Schmidt
tightness and \eqref{eq:dense-gaussian-logdet} gives the fixed-\(z\)
limit directly, so the same replacement principle applies.  The
subsequence principle therefore proves both claims of
Theorem~\ref{thm:gaussian-profile} for the full sequence.
Taking \(z=0\), whose limiting potential is
\(U_{\cir}(0)=-1/2\), also gives
\(\Pp\{\det G_{N,f}=0\}\to0\), since the normalized log determinant is
\(-\infty\) on the singularity event.

\part{Cyclic full-block bands: discrete and bounded-density regimes}

This part is devoted exclusively to the periodic full-block band matrix
\(X_N\) defined in \eqref{eq:model}.  In block coordinates it is cyclic
block tridiagonal, and each of its three nonzero block diagonals consists
of independent full \(W\times W\) blocks.  We prove
Theorems~\ref{thm:main} and~\ref{thm:density-main}, respectively for the
possibly discrete subgaussian regime and the bounded-density regime.  This
geometry is distinct from the scalar indicator-profile band treated in
Part~I.

\section{Local three-block estimates for the discrete full-block model}
\label{sec:local-interface}

This section isolates the local probabilistic input for
Theorem~\ref{thm:main}.  We reserve three consecutive block sites and
condition on the complementary arc, leaving seven independent random
blocks in a three-block path.  The conditioned complement then appears
only as a deterministic deformation at the two outer sites, and the
local estimate is uniform in that deformation.

We use two descriptions of the same reserved packet.  The coordinate
description produces a determinant polynomial to which the local random
estimate applies.  The transfer description inserts that estimate into
the cyclic determinant, and the determinant identity below connects the
two descriptions.  The principal estimates are stated here and used in
the global argument of Section~\ref{sec:gb-global-proof}; their detailed
proofs are deferred to Section~\ref{supplementsection}.  The bounded-density
branch later reuses the deterministic packet identities with a different
local probabilistic input.  We state the short density-evaluation result
where the common packet polynomial is introduced, while keeping the two
global arguments separate.

\subsection{Setup and normalization}

Throughout this section we work under the hypotheses of
Theorem~\ref{thm:main}; in particular, the atom is real, centered,
variance one, and uniformly subgaussian, but it need not have a density
and may have an atom at zero.  Constants may depend on its subgaussian
constant and on the fixed spectral parameter \(z\).

Within this local section, we write \(A_j,B_j,C_j\) for the normalized
blocks \(\mathsf A_j,\mathsf B_j,\mathsf C_j\) in \eqref{eq:model}.  When
the three packet equations are temporarily multiplied by
\(\sqrt{3W}\), write
\begin{equation}
 \sigma_W:=\sqrt{3W},\qquad \zeta_W:=-\sigma_W z.
 \label{eq:local-zeta}
\end{equation}
In the model normalization, the equation at block site \(j\) is
\begin{equation}
 C_j\psi_{j-1}+(A_j-zI_W)\psi_j+B_j\psi_{j+1}=0.
 \label{eq:local-block-recurrence}
\end{equation}
Here \(A_j\) is the diagonal random block, \(B_j\) is the coefficient of the
vector at the right neighbour, and \(C_j\) is the coefficient of the vector
at the left neighbour.  Thus \(B_j\) and \(C_j\) are defined by the original
block equation; they are not constructed from a transfer matrix.

\subsection{The interface event}
We work on a high-probability event on which all off-diagonal interface
blocks are well behaved.  The precise bounds are as follows.
\begin{lemma}[Interface determinant and inverse bounds]
\label{lem:local-interface-control}
Let \(G=(\xi_{ij})_{i,j\leq W}\) have independent entries with law \(\xi\), and denote by
\(G_{\rm nor}=\sigma_W^{-1}G\).  Then there exist constants \(C_K,c_K>0\), depending only on the subgaussian constant \(K\), such
that
\begin{equation}
 \P\left\{
  \|G_{\rm nor}\|>C_K
  \ \text{or}\ 
  |\det G_{\rm nor}|\notin[e^{-C_KW},e^{C_KW}]
  \ \text{or}\ 
  \|G_{\rm nor}^{-1}\|>e^{C_KW}
 \right\}\leq e^{-c_KW}.
 \label{eq:local-interface-control}
\end{equation}
The inverse norm is \(+\infty\) when $G_{\rm nor}$ is singular.
\end{lemma}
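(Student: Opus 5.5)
We will control the four failure events in \eqref{eq:local-interface-control} separately and combine them with a union bound. Since $e^{-c_KW}$ absorbs fixed multiplicative constants after $c_K$ is decreased, each event only needs probability at most $C e^{-cW}$.

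\textbf{Operator norm.} We first use the standard subgaussian operator-norm tail \cite[Theorem~4.4.5]{vershynin2019high}. It gives $\|G\|\le C_K'(\sqrt W+\sqrt W+t)$ with probability at least $1-2e^{-t^2}$. Taking $t=\sqrt W$ and dividing by $\sigma_W=\sqrt{3W}$ yields $\|G_{\rm nor}\|\le C_K$ outside an event of probability $2e^{-W}$. On this event every singular value of $G_{\rm nor}$ is at most $C_K$. Hence $|\det G_{\rm nor}|\le C_K^W\le e^{C_KW}$ after enlarging $C_K$, so the upper half of the determinant window comes for free.

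\textbf{Lower bounds.} The determinant and inverse-norm lower bounds are handled together through the small singular values. With $s_1\ge\dots\ge s_W$ the singular values of $G_{\rm nor}$,
\[
 |\det G_{\rm nor}|=\prod_{j}s_j\ge s_W^{\,k}\prod_{j\le W-k}s_j,
\]
where $k$ is the number of singular values below a threshold $\eta$. We invoke Nguyen's overcrowding estimate \cite[Theorem~1.4]{Nguyen}, which applies to real subgaussian iid matrices without any density assumption. Its typical form is
\[
 \P\{s_{W-k+1}(G)\le \eta k/\sqrt W\}\le (C\eta)^{ck^2}+e^{-cW}.
\]
We choose $k\asymp\sqrt W$ and $\eta$ a small constant, so that the right side is $e^{-cW}$. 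This shows that, outside an $e^{-cW}$ event, at most $k$ singular values of $G_{\rm nor}$ lie below $c'k/W$, and the remaining ones contribute at least $(c'k/W)^{W}\ge e^{-CW\log W}$.

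This bound is too weak by a $\log W$ factor, so a multiscale version is needed. Apply the overcrowding bound at every $k$ from $\sqrt W$ to $W$ with $\eta$ constant, and sum the failure probabilities; the sum is $\sum_k (C\eta)^{ck^2}$ plus $W e^{-cW}$, still $e^{-c'W}$. On the resulting good event, $s_{W-k+1}\ge c k/W$ for all $k\ge k_0=\sqrt W$. Therefore
\[
 \sum_{k\ge k_0}\log s_{W-k+1}\ge \sum_{k}\log(ck/W)\ge -CW,
\]
by Stirling. It then remains to control the $k_0\asymp\sqrt W$ smallest singular values, each bounded below by $s_W$.

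For the least singular value we use the exponential Rudelson--Vershynin bound $\P\{s_W(G)\le \varepsilon W^{-1/2}\}\le C\varepsilon+e^{-cW}$ \cite{RV}. Taking $\varepsilon=e^{-W}$ gives $s_W(G_{\rm nor})\ge e^{-2W}$ outside an $e^{-cW}$ event. This is exactly the inverse-norm claim $\|G_{\rm nor}^{-1}\|\le e^{C_KW}$. Inserting it into the $k_0$ smallest factors costs only $\sqrt W\cdot 2W$, which is too large, so a finer treatment is needed here. The plan is to apply overcrowding down to $k=1$, with $\eta_k$ chosen so that $(C\eta_k)^{ck^2}\le e^{-W}$, i.e. $\log(1/\eta_k)\asymp W/k^2$. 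This gives $\log s_{W-k+1}\ge -CW/k^2-\log W$. Summing over $k\le\sqrt W$ yields $-CW\sum k^{-2}-\sqrt W\log W=-O(W)$.

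Together with the operator-norm event, this gives $|\det G_{\rm nor}|\ge e^{-C_KW}$, and a final union bound finishes the proof. The main obstacle is getting this optimal $e^{-O(W)}$ determinant lower bound, rather than $e^{-O(W\log W)}$, without a density assumption. It rests entirely on using the overcrowding estimate in its quantitative $\eta^{ck^2}$ form at every level $k$, and on checking that Nguyen's theorem is stated uniformly enough in $k$ and $\eta$ to allow this summation.
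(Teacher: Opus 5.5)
Your proposal is correct and follows the paper's proof essentially step for step. The subgaussian operator-norm tail gives $\|G_{\rm nor}\|\le C_K$ and the upper determinant bound. Nguyen's overcrowding estimate with $\varepsilon_k=e^{-aW/k^2}$ for $k\le\sqrt W$, and a fixed small $\varepsilon_0$ above $\sqrt W$, then gives $|\det G_{\rm nor}|\ge e^{-CW}$: the losses are summed through $W\sum_k k^{-2}=O(W)$, the $O(\sqrt W\log W)$ polynomial factors, and a Stirling/Riemann sum.

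The differences are minor. The paper reads off $s_W(G_{\rm nor})\ge e^{-CW}$ from the $k=1$ case of Nguyen's bound rather than from Rudelson--Vershynin; either works. The paper also applies the overcrowding bound only for $k\le\gamma_0W/2$, where Nguyen's estimate is valid, and uses monotonicity of the singular values to get a fixed lower bound for the rest. That is exactly the fix your ``from $\sqrt W$ to $W$'' step needs.
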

The proof, using subgaussian concentration and Nguyen's overcrowding
theorem \cite{Nguyen}, is given in
Section~\ref{supplementsection}.

\subsection{The reserved three-block packet}
\label{sec:local-terminal}
\subsubsection{Coordinate model for the reserved packet}
From the system of block equations in
\eqref{eq:local-block-recurrence}, we expose all but three consecutive
block sites and reserve these three sites.  By cyclic relabelling, label
the three sites
\begin{equation}
 L=1,\qquad C=2,\qquad R=3,
 \label{eq:local-LCR}
\end{equation}
and write $-:=m$ and $+:=4$ for their immediate outside left and right
neighbours.
The seven unexposed blocks are 
\begin{equation}
 A_L,\ B_L,\ C_C,\ A_C,\ B_C,\ C_R,\ A_R,
 \label{eq:local-seven-blocks}
\end{equation}
where, in terms of the block equation \eqref{eq:local-block-recurrence},
\begin{equation}
 B_L=B_1,\quad C_C=C_2,\quad B_C=B_2,\quad C_R=C_3.
 \label{eq:local-internal-identification}
\end{equation}
The two endpoint blocks, which are denoted by
\begin{equation}
 C_L=C_1,\qquad B_R=B_3
 \label{eq:local-endpoints}
\end{equation}
have already been exposed.  Thus the three equations on the rows $L,C,R$ are
\begin{align}
 C_L\psi_-+(A_L-zI_W)\psi_L+B_L\psi_C&=0,
 \label{eq:local-packet-left}\\
 C_C\psi_L+(A_C-zI_W)\psi_C+B_C\psi_R&=0,
 \label{eq:local-packet-centre}\\
 C_R\psi_C+(A_R-zI_W)\psi_R+B_R\psi_+&=0.
 \label{eq:local-packet-right}
\end{align}

We use the following notation to explicitly mark the location where randomness still remains:

\begin{definition}[Row--column mask]
A row--column mask is a deterministic \(0\)-\(1\) matrix \(M=(m_{ij})\)
which records where fresh random entries are present.  In a matrix
\(Q+M\odot X\), the entry \(x_{ij}\) is used when \(m_{ij}=1\), and is
absent when \(m_{ij}=0\).  Zeros in the mask \(M\) mean only that no
fresh random part is present; the deterministic matrix \(Q\) may still
be nonzero there.
\end{definition}

With both rows and columns ordered as \((L,C,R)\), the packet mask related to \eqref{eq:local-packet-left}-\eqref{eq:local-packet-right} is
\begin{equation}
 M_{\partial}=
 \begin{pmatrix}
  1&1&0\\
  1&1&1\\
  0&1&1
 \end{pmatrix},
 \label{eq:local-mask}
\end{equation}
where each \(1\) denotes a complete \(W\times W\) random block.  Equivalently,
the only missing rectangles are \(L_{\rm row}\times R_{\rm col}\) and
\(R_{\rm row}\times L_{\rm col}\).

Define the scalar support set
\begin{equation}
 \mathcal I_{\partial}:=
 \{(i,j):\text{the block containing }(i,j)
                 \text{ is marked }1\text{ in }M_\partial\}.
 \label{eq:local-mask-support}
\end{equation}
Let \(\mathbf x=(x_e)_{e\in\mathcal I_{\partial}}\) be the vector of all
\(7W^2\) formal scalar variables, and let
\(\Xi=(\xi_e)_{e\in\mathcal I_{\partial}}\) denote their random evaluation by
independent copies of \(\xi\).  For every marked block \(M\), the notation
\(M(\mathbf x)\) means the normalized \(W\times W\) matrix whose
corresponding entries are \(\sigma_W^{-1}x_e\).\footnote{In the model of
Theorem~\ref{thm:main} every weight is
one.  The same argument permits deterministic positive
weights \(a_e\) with \(0<c_*\leq a_e\leq C_*\); the corresponding
normalized block-matrix entry is \(\sigma_W^{-1}a_e x_e\).}  

We denote by $\Delta(\mathbf x)$ the matrix preserving only the fresh randomness from \eqref{eq:local-packet-left}-\eqref{eq:local-packet-right}: 
\begin{equation}
 \Delta(\mathbf x)=
 \begin{pmatrix}
  A_L(\mathbf x)&B_L(\mathbf x)&0\\
  C_C(\mathbf x)&A_C(\mathbf x)&B_C(\mathbf x)\\
  0&C_R(\mathbf x)&A_R(\mathbf x)
 \end{pmatrix}.
 \label{eq:local-Delta}
\end{equation}

We denote by \(O=L\oplus R\) (the outer space).  For
\(Q_O\in\C^{2W\times2W}\), written in \(W\times W\) blocks as
\begin{equation}
 Q_O=
 \begin{pmatrix}Q_{LL}&Q_{LR}\\Q_{RL}&Q_{RR}\end{pmatrix},
 \label{eq:local-QO-blocks}
\end{equation}
define the embedding into the full \((L,C,R)\) space by
\begin{equation}
 \operatorname{Emb}_O(Q_O):=
 \begin{pmatrix}
  Q_{LL}&0&Q_{LR}\\
  0&0&0\\
  Q_{RL}&0&Q_{RR}
 \end{pmatrix}.
 \label{eq:local-Emb}
\end{equation}
Thus \(\operatorname{Emb}_O(Q_O)(v_L,v_C,v_R)\) equals
\((Q_{LL}v_L+Q_{LR}v_R,0,Q_{RL}v_L+Q_{RR}v_R)\).

\subsubsection{Conditioning on the complementary arc}
We isolate the seven fresh packet blocks from the randomness on the
complementary arc.
For an arbitrary, possibly very large, matrix
\(Q_O\) of size $2W$ (encoding the effect of the complementary arc on the
outer coordinates), define the terminal matrix, determinant polynomial, and
coefficient norm by
\begin{align}
 H(Q_O;\mathbf x)
 &:=\Delta(\mathbf x)-zI_{3W}+\operatorname{Emb}_O(Q_O),
 \label{eq:local-terminal-matrix}\\
 p_{Q_O}(\mathbf x)&:=\det H(Q_O;\mathbf x),
 \label{eq:local-terminal-polynomial}\\
 \mathfrak C(Q_O)
 &:=\bigl\|\Coeff_{\mathbf x} p_{Q_O}\bigr\|_{\ell^2}.
 \label{eq:local-terminal-coefficients}
\end{align}
Here  $\mathfrak C(Q_O)$ records the $\ell^2$ norm of the coefficients of the polynomial $p_{Q_O}$.

\subsection{Small-ball estimate for the packet polynomial}
The main probabilistic estimate we use is the following small ball estimate for $p_{Q_O}$. The most striking feature is that it holds for any $Q_O$ without any specific operator norm bound on $Q_O$.
\begin{proposition}[Three-block terminal estimate]
\label{prop:local-terminal}
Assume \(|\zeta_W|=\sqrt{3W}|z|\leq W^{K_z}\) for a fixed \(K_z\).  Then uniformly in
\(Q_O\in\C^{2W\times2W}\), for every \(T>0\),
\begin{equation}
 \E\min\left\{T,
  \log_+\frac{\mathfrak C(Q_O)}
                   {|p_{Q_O}(\Xi)|}\right\}
 \leq CW\log W+p_WT,
 \qquad
 p_W\leq C\sqrt{\frac{\log W}{W}}+e^{-cW}.
 \label{eq:local-terminal-capped}
\end{equation}
The capped loss is \(T\) at a zero of the determinant.  In particular,
\begin{equation}
 \P\{p_{Q_O}(\Xi)=0\}\leq p_W.
 \label{eq:local-terminal-zero}
\end{equation}
On the event
\begin{equation}
 \mathcal E_{\max}:=\left\{\max_{e\in\mathcal I_{\partial}}|\xi_e|
                 \leq C_0\sqrt W\right\},
 \qquad \P(\mathcal E_{\max}^c)\leq e^{-cW},
 \label{eq:local-max-event}
\end{equation}
where \(C_0=C_0(K)\) is sufficiently large,
one also has
\begin{equation}
 \log_+\frac{|p_{Q_O}(\Xi)|}
                  {\mathfrak C(Q_O)}
 \leq CW\log W.
 \label{eq:local-terminal-reverse}
\end{equation}
All assertions remain valid conditionally when \(Q_O\) is measurable with
respect to a sigma-field independent of the seven packet blocks.
\end{proposition}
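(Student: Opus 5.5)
The plan is to reduce the capped small-ball bound to a statement about one well-conditioned square submatrix, and to get the reverse bound by a direct coefficient count. The key structural fact is that \(p_{Q_O}\) is multiaffine in the \(7W^2\) variables \(\mathbf x\), since each variable occurs in exactly one entry of \(H\). Its coefficient vector is therefore given by the Laplace/Cauchy--Binet expansion of \(\det(\Delta(\mathbf x)-zI+\operatorname{Emb}_O(Q_O))\). Every monomial selects a set of positions \(S\subset\mathcal I_\partial\) forming a partial permutation, and its coefficient is \(\pm\sigma_W^{-|S|}\) times a complementary minor of the deterministic matrix \(-zI+\operatorname{Emb}_O(Q_O)\).

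\textbf{Reverse bound (\ref{eq:local-terminal-reverse}).} Write \(p_{Q_O}(\Xi)=\sum_S c_S\Xi^S\) and apply Cauchy--Schwarz: \(|p(\Xi)|\le\mathfrak C(Q_O)\,(\sum_S|\Xi^S|^2)^{1/2}\). On \(\mathcal E_{\max}\) every \(|\Xi^S|\le (C_0\sqrt W)^{3W}\). The number of partial permutations inside a \(3W\times3W\) pattern is at most \((3W+1)!^{2}\le e^{CW\log W}\). Together these give \(CW\log W\). The bound \(\P(\mathcal E_{\max}^c)\le e^{-cW}\) is a union bound over \(7W^2\) subgaussian tails, using \(e^{-C_0^2W/K^2}\).

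\textbf{Capped lower bound.} First locate the \(\ell^2\) mass of the coefficients. The key observation is that the centre column block has no deterministic part other than \(-zI\), while the full centre row and column are random. Expand \(\det H\) along the centre block row and column. Because \(\Delta\) fills \(C_{\rm row}\times(L,C,R)\) and \((L,C,R)\times C_{\rm col}\), one can select a set of \(W\) variables from \(B_L\) or \(C_C\) pairing with the outer part. I will show that there is a square submatrix pattern of the \(2W\) outer rows and columns such that
\[
p_{Q_O}=\sum_{\text{choices}} \pm\det(\text{random centre pieces})\cdot\det(\text{outer Schur piece}).
\]
This expresses \(p\) as \(\det\) of a Schur complement \(\det(A_C-z)\det(\ldots)\), and more usefully as a polynomial in the fresh blocks \(C_C,B_L\) whose top coefficient in a suitable set of \(W\) or \(2W\) variables is a \(2W\times2W\) determinant of the form \(Q_{\rm eff}+\) (random \(W\times W\) squares). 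Concretely, I would pick the dominant Laplace term of \(Q_O\) (the minor of largest modulus, losing \(e^{CW\log W}\) in counting, which suffices since \(\mathfrak C\) is at most \(e^{CW\log W}\) times the largest coefficient). I then note that the polynomial restricted to the complementary fresh variables is \(\det(D+G)\), where \(G\) is a complete iid \(W\times W\) or \(2W\times 2W\) panel and \(D\) is conditioned and deterministic. For such deformed squares I invoke Lemma~\ref{lem:local-cook-input} (the deformed-square input from Cook), which gives \(\P\{s_{\min}(D+G)\le t\}\lesssim \ldots+\sqrt{\log W/W}\) uniformly in \(D\). I combine this with Lemma~\ref{lem:local-interface-control}, applied to the off-diagonal blocks \(B_L,C_R,B_C,C_C\) (probability \(e^{-cW}\)), to bound the remaining singular values. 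Then \(|\det|\ge s_{\min}^{W}\cdot\)(product of other singular values), and with the top-coefficient comparison this gives \(\log(\mathfrak C/|p|)\le CW\log W\) off an event of probability \(p_W\). Capping at \(T\) then yields (\ref{eq:local-terminal-capped}), and taking \(T\to\infty\) on \(\{p=0\}\) gives (\ref{eq:local-terminal-zero}).

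\textbf{Main obstacle.} The uniformity in \(Q_O\) with no norm bound is the hard step. The plan is to normalise by the largest coefficient rather than by \(\|Q_O\|\). I would identify a \(2W\)-variable transversal whose monomial coefficient is within \(e^{-CW\log W}\) of \(\mathfrak C\), and then argue through the fresh panel that is disjoint from the support of that transversal. Doing so requires a case split on which block pattern (\(LL\), \(RR\), or the \(LR/RL\) cross terms) carries the dominant minor of \(Q_O\), using the missing rectangles in \(M_\partial\) to route through the centre. I expect the combinatorics of this case split, and ensuring each case exposes a complete random square, to be the delicate part.
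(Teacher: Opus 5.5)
Your reverse bound \eqref{eq:local-terminal-reverse} is correct and is exactly the paper's argument: Cauchy--Schwarz over at most $e^{CW\log W}$ partial-permutation monomials, each of modulus at most $(C_0\sqrt W)^{3W}$ on $\mathcal E_{\max}$.

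The capped lower bound has a genuine gap, and it sits at the step you flag as the main obstacle. Normalizing by a dominant coefficient does not give anti-concentration of the value. For discrete atoms a multiaffine polynomial can vanish with positive probability even though its top coefficient carries a fixed fraction of the coefficient norm; for example, $1+x_1x_2$ vanishes with probability $1/2$ for Rademacher $x_1,x_2$. Likewise, a single Laplace term $\det H_{I,J}\det H_{I^c,J^c}$ can be cancelled by the others. So ``top coefficient times $\det(D+G)$'' is not a lower bound for $|p_{Q_O}(\Xi)|$. What you need is an exact multiplicative factorization of $\det H$. For that, the deformation fed into Lemma~\ref{lem:local-cook-input} must satisfy $\|D_n\|\le n^{L}$, since Cook's estimate is uniform only over such $D_n$, not over all $D$. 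For arbitrary $Q_O$, the Schur-complement deformation you would produce contains $Q_O$ and an inverse, and has no polynomial bound. Picking the minor of largest modulus does not cure this either. That minor may include singular directions that are only moderately large, not polynomially larger than $\|\Delta_z\|$. Those directions do not dominate the random blocks, so the pivot determinant is itself random on the non-complete mask $\operatorname{diag}(A_L,A_R)$.

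The missing idea is a split of $Q_O$ at a polynomial threshold $\tau_W=W^{K_0}$, with $K_0$ large.
\begin{itemize}
\item Set $r=\#\{j:s_j(Q_O)>\tau_W\}$ and take a two-sided strong RRQR skeleton (Lemma~\ref{lem:local-rrqr}). This gives a pivot $K_{\rm piv}=(Q_O)_{I,J}$ with $s_j(K_{\rm piv})\ge n^{-4}s_j(Q_O)$, skeleton factors of polynomial norm, and a residual with $\|E_0\|\le n^4\tau_W$.
\item On the event that the exposed random blocks have polynomial norm, $\|K_{\rm piv}^{-1}(\Delta_z)_{11}\|\le 1/2$. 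The block factorization \eqref{eq:local-CUR} then gives $\det H=\det K_\Delta\cdot\det((\Delta_z)_{22}+F)$. Here $|\det K_\Delta|\ge e^{-CW\log W}\prod_{j\le r}s_j(Q_O)$ on that event, and $\|F\|\le W^{K_1}$.
\item Only at this point does Cook apply. It is used on two complete iid squares inside the residual, one on $L\cup C$ coordinates and one on $R\cup C$. These avoid the forbidden rectangles of $M_\partial$ and together cover all residual rows and columns. The size choice \eqref{eq:local-n1} is what replaces your unresolved case split.
\item The second square is handled conditionally, after a truncated inverse of the first. Since $|\det B|\ge s_{\min}(B)^n$, you need no upper bounds on the other singular values and no interface-control lemma here.
\end{itemize}

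You must also prove $\mathfrak C(Q_O)=e^{\pm CW\log W}\prod_{j\le r}s_j(Q_O)$, as in \eqref{eq:local-coeff-upper}--\eqref{eq:local-coeff-lower}. Every coefficient is a weight times a minor of $\operatorname{Emb}_O(Q_O)$, which is bounded using $\prod_{j\le k}s_j$ and the threshold. The pivot monomial, completed by perfect matchings in the two squares, gives the matching lower bound. The shift $-z$ costs only a triangular change of variables. Without this comparison, the ``product of other singular values'' in your lower bound is never connected to $\mathfrak C(Q_O)$.
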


The proof, combining Lemma~\ref{lem:local-cook-input} with a
linear-algebraic extraction of the large part of $Q_O$, is given in
Section~\ref{supplementsection}.

\begin{proposition}[Conditional evaluation of the packet polynomial]
\label{prop:dens-packet-coeff-evaluation}
Assume the packet atoms satisfy the real-or-complex bounded-density
hypothesis of Theorem~\ref{thm:density-main}.
Fix \(\Theta\in\GL_{2W}(\C)\) and fixed invertible endpoint blocks
\[
 E_\partial=\operatorname{diag}(C_L,B_R).
\]
Let \(K_{\Theta,E_\partial}(\mathbf x)\) denote the \(5W\times5W\) matrix
\(K_\Theta(\mathbf x)\) in \eqref{eq:local-KTheta}, with the two endpoint
blocks fixed by \(E_\partial\), and set
\[
 P_{\Theta,E_\partial}(\mathbf x)
 :=\det K_{\Theta,E_\partial}(\mathbf x),
 \qquad
 \mathscr C(\Theta;E_\partial)
 :=\|\Coeff_{\mathbf x}P_{\Theta,E_\partial}\|_2.
\]
Identity~\eqref{eq:local-K-first-elimination} identifies this determinant
with the boundary polynomial \(\mathscr D_\Theta\) in
\eqref{eq:local-DTheta}, with the endpoints fixed by \(E_\partial\).
If \(\Xi_7\) denotes the random evaluation of the seven blocks
\[
 A_L,B_L,C_C,A_C,B_C,C_R,A_R,
\]
then
\begin{equation}
 \E_{\Xi_7}\left|
  \log|P_{\Theta,E_\partial}(\Xi_7)|
  -\log\mathscr C(\Theta;E_\partial)
 \right|
 \leq C_{\rm dens}W\log(eW),
 \label{eq:dens-pathwise-boundary-volume}
\end{equation}
where \(C_{\rm dens}\) depends only on the density bound and the fixed model
parameters.
\end{proposition}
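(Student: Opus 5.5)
The plan is to imitate the proof of Proposition~\ref{prop:fresh-closure}. We split the log-deviation into its negative part, $(\log\mathscr C-\log|P|)_+$, and its positive part, $(\log|P|-\log\mathscr C)_+$, and bound each in expectation by $CW\log(eW)$. The key structural fact is that $P_{\Theta,E_\partial}(\mathbf x)=\det K_{\Theta,E_\partial}(\mathbf x)$ is the determinant of a matrix in which every one of the $7W^2$ packet variables occupies exactly one entry. Hence $P$ is multiaffine in the $7W^2$ variables, and it is a polynomial of total degree at most $5W$ in them. In particular, Lemma~\ref{lem:multiaffine} applies once we have located a single large square-free coefficient.

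\textbf{Negative part.} Pick a multi-index $\alpha$ (a square-free monomial) maximizing $|c_\alpha|$, where $c_\alpha$ denotes the corresponding coefficient of $P$. The number of square-free monomials of degree at most $5W$ in $7W^2$ variables is at most $\binom{7W^2+5W}{5W}\le e^{CW\log(eW)}$. Therefore
\[
|c_\alpha|\ge \mathscr C(\Theta;E_\partial)\,e^{-CW\log(eW)}.
\]
Condition on every packet variable outside $\mathrm{supp}(\alpha)$. Conditionally, $P$ is multiaffine in the $k=|\alpha|\le5W$ remaining variables, and its top coefficient is exactly $c_\alpha$; the frozen variables only affect lower-order terms in those $k$ variables. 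Lemma~\ref{lem:multiaffine}, in the real, complex or directional form, then gives
\[
\E\bigl(\log(|c_\alpha|/|P|)\bigr)_+\le C_Lk\log(ek)\le CW\log(eW).
\]
Combined with the lower bound on $|c_\alpha|$, this controls the negative part. One subtlety: the monomial basis includes the deterministic normalization $\sigma_W^{-1}$ and weights. These only rescale coefficients by $e^{O(W\log W)}$ relative to the normalized variables, which is harmless.

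\textbf{Positive part.} Bound $|P(\Xi_7)|\le \sum_\alpha |c_\alpha||\Xi^\alpha|\le \mathscr C\,\bigl(\sum_\alpha|\Xi^\alpha|^2\bigr)^{1/2}$ by Cauchy--Schwarz. We then need $\E\log\bigl(\sum_\alpha|\Xi^\alpha|^2\bigr)^{1/2}\le CW\log(eW)$. By Jensen this is at most $\tfrac12\log\E\sum_\alpha|\Xi^\alpha|^2$. Since the variables are independent with $\E|\xi|^2=1$, each monomial has second moment exactly $1$, so the expectation equals the number of monomials, which is $e^{CW\log(eW)}$. This uses only the second moment, consistent with the density branch.

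\textbf{Main obstacle.} The crux is the claim that the coefficient vector of $P$ lives on at most $e^{CW\log W}$ square-free monomials of degree at most $5W$, i.e. that $P$ is genuinely multiaffine in the packet variables with no variable repeated within a product. This requires checking, via the definition of $K_\Theta$ in \eqref{eq:local-KTheta} and the first elimination \eqref{eq:local-K-first-elimination}, that each packet variable occupies a single entry of the $5W\times5W$ matrix $K_{\Theta,E_\partial}$. The fixed blocks $\Theta$ and $E_\partial$ contribute only deterministic entries. If instead some block entered in two places, I would fall back on expanding $\det$ by multilinearity in columns and bounding degrees in each variable by a constant. That still yields $e^{CW\log W}$ monomials, and a polynomial-degree analogue of Lemma~\ref{lem:multiaffine} (Carbery--Wright type with bounded densities) would then finish.
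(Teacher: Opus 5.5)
Your positive half is fine: Cauchy--Schwarz against the coefficient vector, Jensen, and $\E|\Xi^\alpha|^2=1$ summed over $e^{O(W\log W)}$ monomials. The multiaffinity you flag as the main obstacle is also not in doubt, since each packet atom occupies a single entry of $K_{\Theta,E_\partial}$. The gap is in the negative half.

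After you freeze every atom outside $\alpha$, the coefficient of $\prod_{e\in\alpha}x_e$ in the conditional polynomial is not $c_\alpha$ but
\[
 c_\alpha+\sum_{\delta\neq\emptyset,\ \delta\cap\alpha=\emptyset}c_{\alpha\cup\delta}\,\Xi^{\delta}.
\]
So the frozen atoms do enter the top coefficient whenever some monomial strictly containing $\alpha$ has a nonzero coefficient. This random quantity can be small or zero. Lemma~\ref{lem:multiaffine}, whose induction needs a fixed nonzero top coefficient, then gives nothing.

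Your argument would work only for an $\alpha$ that is inclusion-maximal in the support of $P$ and also has $|c_\alpha|\ge e^{-CW\log W}\mathscr C$. Nothing in the proposal produces such an $\alpha$, and the natural candidates fail:
\begin{itemize}
\item A monomial of $\det K_{\Theta,E_\partial}$ contains at most one atom from each of the $3W$ packet rows.
\item A monomial using all $3W$ rows exhausts the columns $\psi_L,\psi_C,\psi_R$. Its coefficient is therefore its weight times the minor of the boundary rows on the columns $(\psi_-,\psi_+)$, which is $\pm\det\Theta_{11}$.
\item This vanishes when $\Theta_{11}$ is singular. It is also arbitrarily small relative to $\mathscr C$ for, e.g., $\Theta_{11}=\epsilon I$, $\Theta_{12}=\Theta_{21}=I$, $\Theta_{22}=0$, while $\mathscr C$ stays bounded below.
\end{itemize}
This is exactly the difficulty that Lemma~\ref{lem:isolated-monomial} overcame in the scalar model by an explicit full-degree reset word. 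No analogue is supplied here.

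The paper never selects a monomial. It splits the $7W^2$ atoms into the $3W$ scalar packet-row groups, each of at most $3W$ atoms, and uses that $P$ is affine in each group. It then applies Corollary~\ref{cor:dens-coefficient-evaluation}, evaluating one group at a time. At each step the coefficient vector in the remaining groups is a vector-valued affine function of the current group. Lemma~\ref{lem:dens-affine-log} shows that the logarithm of its norm stays within $O(\log W)$, in $L^2$, of $\log\rho$, where $\rho$ is the current full coefficient norm. This holds whether the constant term or the slopes dominate, which is precisely the case your conditioning cannot handle.

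Telescoping over the $3W$ groups controls both halves at once with loss $CW\log(eW)$. The grouping matters: telescoping over single atoms would cost order $W^2$. You should also record that $\mathscr C>0$, which the paper obtains from \eqref{eq:dens-seam-pathwise}.
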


\begin{proof}
Divide the \(7W^2\) variables into the \(3W\) packet-row groups
\begin{align*}
 \mathbf x_{L,a}
 &:=
 \bigl((A_L)_{ab},(B_L)_{ab}\bigr)_{b=1}^W,\\
 \mathbf x_{C,a}
 &:=
 \bigl((C_C)_{ab},(A_C)_{ab},(B_C)_{ab}\bigr)_{b=1}^W,\\
 \mathbf x_{R,a}
 &:=
 \bigl((C_R)_{ab},(A_R)_{ab}\bigr)_{b=1}^W,
 \qquad 1\leq a\leq W.
\end{align*}
Each group contains at most \(3W\) independent atoms.

By definition,
\(P_{\Theta,E_\partial}=\det K_{\Theta,E_\partial}\).
Every group \(\mathbf x_{\alpha,a}\) occurs only in the \(a\)-th scalar
row of the corresponding packet block row of
\(K_{\Theta,E_\partial}\).  Since the determinant is affine-linear in
each matrix row, \(P_{\Theta,E_\partial}\) is affine separately in every
one of the \(3W\) groups.

The pointwise comparison \eqref{eq:dens-seam-pathwise} shows that
\(\mathscr C(\Theta;E_\partial)>0\), so the polynomial is nonzero.  Applying
Corollary~\ref{cor:dens-coefficient-evaluation} with
\(n_{\rm grp}=3W\) and \(p\leq3W\) gives
\[
 \E_{\Xi_7}\left|
  \log|P_{\Theta,E_\partial}(\Xi_7)|
  -\log\mathscr C(\Theta;E_\partial)
 \right|
 \leq C_{\rm dens}(3W)\log(3eW),
\]
which is \eqref{eq:dens-pathwise-boundary-volume} after changing the
constant.
\end{proof}

The following lemma makes the coefficient norm $\mathfrak C(Q)$ concrete.
\begin{lemma}[All-minor coefficient estimate]
\label{lem:local-all-minor}
For the normalized terminal polynomial and coefficient norm defined in
\eqref{eq:local-terminal-matrix}--\eqref{eq:local-terminal-coefficients}, uniformly in
\(Q\in\C^{2W\times2W}\),
\begin{equation}
 e^{-CW\log W}\det(I_{2W}+Q^*Q)^{1/2}
 \leq\mathfrak C(Q)\leq
 e^{CW\log W}\det(I_{2W}+Q^*Q)^{1/2}.
 \label{eq:local-all-minor}
\end{equation}
\end{lemma}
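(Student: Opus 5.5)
Throughout, $p_Q(\mathbf x)=\det\bigl(\Delta(\mathbf x)-zI_{3W}+\operatorname{Emb}_O(Q)\bigr)$, and the proof reduces everything to a Cauchy--Binet bookkeeping of minors.

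\emph{Step 1: expansion in minors.} Write
\[
H(Q;\mathbf x)=\mathcal D+\Delta(\mathbf x),\qquad \mathcal D:=\operatorname{Emb}_O(Q)-zI_{3W}.
\]
The determinant of a sum expands as
\[
p_Q(\mathbf x)=\sum_{S,T}\pm\det \mathcal D[S^c,T^c]\;\det\Delta(\mathbf x)[S,T],
\]
where the sum runs over row and column sets $S,T\subset[3W]$ with $|S|=|T|$. Each $\det\Delta(\mathbf x)[S,T]$ is a sum of monomials; every monomial is a product of distinct variables lying in the support $\mathcal I_\partial$, one per selected row and column. A given monomial determines the set of positions it uses, and hence determines $S$, $T$ and the bijection between them. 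So distinct pairs $(S,T)$ contribute disjoint sets of monomials, and the coefficient of each monomial is $\pm\sigma_W^{-|S|}\det\mathcal D[S^c,T^c]$. Consequently
\[
\mathfrak C(Q)^2=\sum_{S,T}N_{S,T}\,\sigma_W^{-2|S|}\,|\det\mathcal D[S^c,T^c]|^2,
\]
where $N_{S,T}\in\{0,\dots,(3W)!\}$ counts the bijections $S\to T$ supported in the mask.

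\emph{Step 2: upper bound.} Bound $N_{S,T}\le(3W)!$ and $\sigma_W^{-2|S|}\le1$. This reduces the problem to controlling $\sum_{S',T'}|\det\mathcal D[S',T']|^2$. The matrix $\mathcal D$ is block diagonal: $-zI_W$ on $C$, and $Q-zI_{2W}$ on $O$ after a permutation. A minor of $\mathcal D$ is therefore a product of a minor of $Q-zI$ and a minor of $-zI_W$. By Cauchy--Binet,
\[
\sum_{S',T'}|\det(Q-zI)[S',T']|^2=\det\bigl(I+(Q-zI)^*(Q-zI)\bigr).
\]
Comparing $Q-zI$ with $Q$ costs at most a factor $(1+|z|)^{O(W)}$. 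This holds because $\det(I+M^*M)=\prod(1+s_j(M)^2)$ and the singular values of $Q-zI$ and $Q$ interlace up to a shift of size $|z|$, via Weyl's inequality. Summing over the $C$-part contributes at most $2^{W}(1+|z|)^{2W}$. Since $(3W)!\le e^{CW\log W}$, the upper bound follows.

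\emph{Step 3: lower bound (the main obstacle).} Here we must find one pair $(S,T)$ with $N_{S,T}\ge1$ such that
\[
\sigma_W^{-|S|}\,|\det\mathcal D[S^c,T^c]|\ge e^{-CW\log W}\det(I+Q^*Q)^{1/2}.
\]
The plan has four parts.
\begin{itemize}
\item Pick outer index sets $I,J\subset O$ maximizing $|\det(Q-zI)[I,J]|$. Cauchy--Binet with the $\binom{4W}{2W}\le e^{CW}$ terms gives
\[
|\det(Q-zI)[I,J]|^2\ge e^{-CW}\det\bigl(I+(Q-zI)^*(Q-zI)\bigr)\ge e^{-CW}(1+|z|)^{-CW}\det(I+Q^*Q).
\]
\item Take $S^c=I$ and $T^c=J$, so that the whole center block row and column lie in $S$ and $T$. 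Then $\det\mathcal D[S^c,T^c]$ is exactly this minor; no $-z$ diagonal from the center enters, because the center sits entirely inside $S,T$.
\item It remains to check $N_{S,T}\ge1$: there must be a bijection from $S=(O\setminus I)\cup C$ to $T=(O\setminus J)\cup C$ avoiding the two forbidden rectangles $L_{\rm row}\times R_{\rm col}$ and $R_{\rm row}\times L_{\rm col}$. Hall's condition holds because every row of $L$ or $R$ may be matched into any column of $C$ or of its own block, and every row of $C$ reaches all columns. Concretely, set $|S\cap L|=a$, $|S\cap R|=b$ (with $a+b=|O\setminus I|$) and $|T\cap L|=a'$, $|T\cap R|=b'$ (with $a'+b'=a+b$). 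Send $\min(a,a')$ of the $L$-rows to $L$-columns and $\min(b,b')$ of the $R$-rows to $R$-columns. The leftover rows number $|a-a'|=|b-b'|$, and they go into $C$-columns. The $C$-rows then fill the remaining columns, since rows of $C$ are unrestricted. All counts fit because there are $W$ columns in $C$ and $|a-a'|\le W$.
\item Finally $\sigma_W^{-|S|}\ge(3W)^{-3W/2}=e^{-CW\log W}$, which completes the lower bound.
\end{itemize}
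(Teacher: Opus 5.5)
Your proof is correct; it differs from the paper's mainly in how the spectral shift is handled.

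\textbf{The paper's route.} It first sets $z=0$. Then $\operatorname{Emb}_O(Q)$ vanishes on the central rows and columns, so every nonzero atom monomial matches all of $C$ and leaves only outer rows $I$ and outer columns $J$ unmatched, with coefficient $w_\alpha\det Q_{I,J}$. It checks, by the same left-square/right-square matching you use, that every pair $(I,J)$ actually occurs. It counts at most $e^{CW\log W}$ matchings per pair and applies Cauchy--Binet to obtain $\mathfrak C_0(Q)^2=e^{\pm CW\log W}\det(I_{2W}+Q^*Q)$. The shift is restored afterwards by the affine change $x_e\mapsto x_e+\zeta_W/a_e$ on the $3W$ diagonal variables. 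This acts on each two-dimensional coefficient space by a triangular map (with triangular inverse), so it costs $e^{O(W\log W)}$ in coefficient norm.

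\textbf{Your route.} You put $-z$ into the deterministic part $\mathcal D=\operatorname{Emb}_O(Q)-zI_{3W}$ from the start. The determinant-of-a-sum expansion then gives the exact identity
\[
\mathfrak C(Q)^2=\sum_{S,T}N_{S,T}\,\sigma_W^{-2|S|}\,|\det\mathcal D[S^c,T^c]|^2
\]
at every $z$. The upper bound follows from Cauchy--Binet for the block-diagonal $\mathcal D$. The lower bound follows from a single extremal outer pair with $C\subset S\cap T$, together with Weyl's inequality $|s_j(Q-zI)-s_j(Q)|\le|z|$. Weyl gives a factor $(2+2|z|^2)^{\pm2W}$ between $\det(I+(Q-zI)^*(Q-zI))$ and $\det(I+Q^*Q)$. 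Your word ``interlace'' is a loose description of this perturbation bound, but the content is right.

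\textbf{Comparison.} Your version avoids the coefficient-space change of variables and produces an exact formula with an $e^{O_z(W)}$ shift cost. The paper's version keeps the matching combinatorics shift-free and reuses the same translation device that it needs anyway in the proof of the three-block terminal estimate. It also proves the termwise two-sided comparison for every pair $(I,J)$, not only for the maximizing one, though your matching argument would give that equally well.
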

The proof is given in Section~\ref{supplementsection}.

\subsection{Identifying the deformation via Schur complement of the outside chain}
\label{sec:local-open-complement}

We now derive the deformation matrix \(Q_O\) and its embedding
\eqref{eq:local-Emb} directly from the cyclic matrix.
Let the cyclic block indices be \(1,\dots,m\), with packet
\(\partial=\{1,2,3\}\), and let, recalling $m=\frac{N}{W}$,
\begin{equation}
 I_{\rm blk}:=\{4,5,\dots,m\},\qquad n_I:=(m-3)W.
 \label{eq:local-Iblk}
\end{equation}
Thus \(I_{\rm blk}\) is a set of block indices, while \(n_I\) is the number
of scalar coordinates in those blocks.

Let \(H_N(z):=X_N-zI_N\) denote the normalized cyclic matrix associated with
\eqref{eq:local-block-recurrence}.  After ordering the three packet coordinates
to the left of the other outside coordinates, we write
\begin{equation}
 H_N(z)=
 \begin{pmatrix}
  D_{\partial}(z)&E_{\partial I}\\
  E_{I\partial}&A_I(z)
 \end{pmatrix}.
 \label{eq:local-global-split}
\end{equation}
This formula defines every block in the display as the corresponding
submatrix of \(H_N(z)\).  Explicitly, the lower-right block is the
open chain
\begin{equation}
 A_I(z)=
 \begin{pmatrix}
  A_4-zI_W&B_4&&0\\
  C_5&A_5-zI_W&\ddots&\\
  &\ddots&\ddots&B_{m-1}\\
  0&&C_m&A_m-zI_W
 \end{pmatrix}
 \in\C^{n_I\times n_I}.
 \label{eq:local-open-chain}
\end{equation}
It is open because the two cyclic connections crossing the cuts
\(3\leftrightarrow4\) and \(m\leftrightarrow1\) occur in
\(E_{\partial I},E_{I\partial}\), not inside \(A_I(z)\).

Whenever \(A_I(z)\) is invertible, eliminating the outside coordinates
gives the Schur complement
\begin{equation}
 S_{\partial}(z)
 :=D_{\partial}(z)-E_{\partial I}A_I(z)^{-1}E_{I\partial}.
 \label{eq:local-packet-schur}
\end{equation}
The outside chain touches the retained packet only at positions \(L\) and
\(R\), not at the center $C$.  Therefore
the Schur correction in \eqref{eq:local-packet-schur} is supported on
\((L\oplus R)\times(L\oplus R)\), and the central block is untouched.
Thus there is an outside-measurable random matrix \(Q_O(z)\) such that
\begin{equation}
 S_{\partial}(z)=
 \Delta(\Xi)-zI_{3W}+\operatorname{Emb}_O(Q_O(z)).
 \label{eq:local-actual-terminal}
\end{equation}
The seven blocks in \(\Delta\) remain independent of the perturbation \(Q_O(z)\).

For $z$ outside a deterministic planar-null set, $A_I(z)$ is invertible
almost surely.  Indeed, for every realization of its entries,
\(\det A_I(z)\) is a polynomial in
the spectral parameter \(z\), of degree \(n_I\) and with leading
term \((-z)^{n_I}\).  It is not the zero polynomial and has only
finitely many roots.  Fubini's theorem therefore gives a
deterministic planar-null set \(\mathcal Z_I\) such that
\begin{equation}
 \P\{\det A_I(z)=0\}=0\qquad(z\notin\mathcal Z_I).
 \label{eq:local-open-ae}
\end{equation}
A countable union handles all matrix sizes and all retained packets used in
the proof.  No quantitative estimate for \(A_I(z)^{-1}\) is needed:
Proposition~\ref{prop:local-terminal} is uniform in the resulting
\(Q_O(z)\).

This Schur-complement paragraph only describes a convenient coordinate
chart; it does not restrict the spectral parameters in the main proof.
For every fixed \(z\), the proof below uses the cleared transfer identity
of Lemma~\ref{lem:local-block-floquet}, first on the open set of
invertible interface blocks and then everywhere by polynomial
continuation.  Thus neither \(A_I(z)^{-1}\) nor the exceptional set
\(\mathcal Z_I\) enters the fixed-\(z\) log-determinant argument.

\subsection{Packet transfer and the wedge expansion}
\label{sec:local-boundary}

We now formulate an exterior algebra version of the determinant, via the transfer matrices.

\subsubsection{Transfer through the retained three-block packet}

We denote by
\begin{equation}
 D_L:=A_L-zI_W,\qquad
 D_C:=A_C-zI_W,\qquad
 D_R:=A_R-zI_W.
 \label{eq:local-DLCR}
\end{equation}
On the algebraic set where \(B_L,B_C,B_R\) are invertible, define transfer operators
\begin{align}
 T_L&:=
 \begin{pmatrix}-B_L^{-1}D_L&-B_L^{-1}C_L\\I_W&0\end{pmatrix},
 &
 T_C&:=
 \begin{pmatrix}-B_C^{-1}D_C&-B_C^{-1}C_C\\I_W&0\end{pmatrix},
 \notag\\
 T_R&:=
 \begin{pmatrix}-B_R^{-1}D_R&-B_R^{-1}C_R\\I_W&0\end{pmatrix}.
 \label{eq:local-three-companions}
\end{align}
Recall the local coordinates
\(\psi_-,\psi_L,\psi_C,\psi_R,\psi_+\).  Solving
\eqref{eq:local-packet-left}--\eqref{eq:local-packet-right} in
order gives us
\begin{align*}
 \binom{\psi_C}{\psi_L}&=T_L\binom{\psi_L}{\psi_-},\\
 \binom{\psi_R}{\psi_C}&=T_C\binom{\psi_C}{\psi_L},\\
 \binom{\psi_+}{\psi_R}&=T_R\binom{\psi_R}{\psi_C}.
\end{align*}
Hence the three-step packet transfer is denoted as follows:
\begin{equation}
 \boxed{R_{\partial}:=T_RT_CT_L},\qquad
 \binom{\psi_+}{\psi_R}
 =R_{\partial}\binom{\psi_L}{\psi_-}.
 \label{eq:local-Rpartial}
\end{equation}
This formula also makes the earlier interface notation explicit:
the three right-interface matrices used in the transfer are
\(B_L=B_1\), \(B_C=B_2\), and \(B_R=B_3\).  They are the original
right-interface blocks entering the definition of \(R_{\partial}\), not
blocks extracted from \(R_{\partial}\).

\subsubsection{The actual outside relation and a general test relation}

We do the same for the outside arc.  Assume that its right-interface
blocks \(B_j\) are invertible.  Then define
\begin{equation}
 T_j:=
 \begin{pmatrix}
  -B_j^{-1}(A_j-zI_W)&-B_j^{-1}C_j\\I_W&0
 \end{pmatrix},\qquad 4\leq j\leq m.
\end{equation}
Starting at \((\psi_+,\psi_R)=(\psi_4,\psi_3)\) and propagating through
blocks \(4,\dots,m\) gives the closed form
\begin{equation}
 R_{\mathrm{out}}:=T_mT_{m-1}\cdots T_4,\qquad
 \binom{\psi_L}{\psi_-}
 =R_{\mathrm{out}}\binom{\psi_+}{\psi_R},
 \label{eq:local-Rout}
\end{equation}
because periodicity identifies \(\psi_{m+1}=\psi_1=\psi_L\) and
\(\psi_m=\psi_-\).  Thus \(R_{\mathrm{out}}\) is the boundary relation
arising from the actual complementary arc.  Conditional on the outside
blocks it is deterministic relative to the seven fresh packet blocks.  On the simultaneous
interface-regularity event, every \(B_j\) and \(C_j\) on this arc is invertible,
so each \(T_j\), and hence \(R_{\mathrm{out}}\), belongs to
\(\GL_{2W}(\C)\).  Outside that event we use the Schur-complement and
polynomial formulations, not companion coordinates.
\subsubsection{Local and global theorems, arbitrary frames and determinant computation}
We separate the outside randomness from the retained packet.  The
coordinate local theorem, Proposition~\ref{prop:local-terminal}, is uniform
in the deformation $Q_O$.  The boundary-polynomial conversion below yields
Corollary~\ref{corollary885}, uniformly in a deterministic matrix
\(\Theta\in\GL_{2W}(\C)\), representing the outside relation
\begin{equation}
 \boxed{
 \binom{\psi_L}{\psi_-}
 =\Theta\binom{\psi_+}{\psi_R}.}
 \label{eq:local-general-Theta}
\end{equation}
Only after this conversion do we substitute
\(\Theta=R_{\mathrm{out}}\) for the actual cyclic matrix.  A different,
artificial substitution will be used in Section~\ref{sec:local-penalty}.

Combining \eqref{eq:local-Rpartial} and
\eqref{eq:local-general-Theta}, a nonzero compatible state exists precisely
when
\begin{equation}
 (I_{2W}-\Theta R_{\partial})\binom{\psi_L}{\psi_-}=0.
 \label{eq:local-compatibility}
\end{equation}

The circular law proof further
needs the corresponding equality of determinants.  We use the following determinant computation result:

\begin{lemma}[Block Floquet determinant identity]
\label{lem:local-block-floquet}

For the packet \(\{1,2,3\}=\{L,C,R\}\), define
\begin{equation}
 b_{\partial}:=\det B_L\det B_C\det B_R,
 \qquad
 c_{\rm out}:=\prod_{j=4}^{m}\det B_j.
 \label{eq:local-floquet-clearing-factors}
\end{equation}
Then there is a deterministic sign
\(\varepsilon_{m,W}\in\{1,-1\}\), depending only on the chosen ordering
of block rows and columns, such that
\begin{equation}
 \boxed{\det H_{N}(z)
 =\varepsilon_{m,W}\,c_{\rm out}\,b_{\partial}
   \det(I_{2W}-R_{\rm out}R_{\partial}).}
 \label{eq:local-floquet-packet-split}
\end{equation}
The formula is interpreted by polynomial continuation when
a block $B_k$ is singular.
\end{lemma}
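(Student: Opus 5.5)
The plan is to mimic the proof of Lemma~\ref{lem:periodic-det}, now at block level with $W\times W$ blocks and $2W$-dimensional block states. First assume every right-interface block $B_1,\dots,B_m$ is invertible. This is an open dense condition in the entries, and both sides of \eqref{eq:local-floquet-packet-split} are polynomials in all block entries once the inverses are cleared, so this costs nothing; see the last step.

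On that set, introduce the cyclic block state system $\Psi_{j+1}=T_j\Psi_j$ with $\Psi_j=(\psi_j,\psi_{j-1})^{\mathsf T}$, and let $\mathscr L$ be the corresponding $2Wm\times 2Wm$ cyclic bidiagonal operator with diagonal blocks $-T_j$ and off-diagonal identities, exactly as in the scalar case. Evaluate $\det\mathscr L$ in two ways.
\begin{itemize}
\item[(a)] Eliminate $\Psi_2,\dots,\Psi_m$ successively. This gives $\det\mathscr L=\pm\det(I_{2W}-P)$, where $P=T_m\cdots T_1$. The cyclic ordering starting at site $4$ gives $P$ conjugate to $R_{\rm out}R_\partial$, namely $P=R_\partial R_{\rm out}$ up to the cyclic shift. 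Since $\det(I-XY)=\det(I-YX)$, this equals $\det(I_{2W}-R_{\rm out}R_\partial)$.
\item[(b)] Use the lower half of each state equation, $(\Psi_{j+1})_2=\psi_j=(\Psi_j)_1$, as identification rows, and apply a unimodular triangular change of variables so these become an identity block. The remaining $Wm$ rows read $\psi_{j+1}+B_j^{-1}(A_j-z)\psi_j+B_j^{-1}C_j\psi_{j-1}$, which is $B_j^{-1}$ times block row $j$ of $H_N(z)$. Row operations using the identity block remove any residual terms. Hence $\det\mathscr L=\pm\prod_{j}\det B_j^{-1}\,\det H_N(z)$, and the signs come only from fixed permutations depending on $(m,W)$.
\end{itemize}
Equating (a) and (b) gives the identity on the invertible set, with $\prod_j\det B_j=c_{\rm out}b_\partial$.

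For the continuation, $b_\partial R_\partial$ has no polynomial entries in general, and $\det(I-R_{\rm out}R_\partial)$ is rational. Write $\det(I_{2W}-R_{\rm out}R_\partial)=\sum_r(-1)^r\tr(\wedge^rR_{\rm out}\,\wedge^rR_\partial)$, or argue more directly from (b). The right-hand side $c_{\rm out}b_\partial\det(I-R_{\rm out}R_\partial)$ equals $\pm\det H_N(z)$ on a Zariski-dense open set, and $\det H_N(z)$ is a polynomial, so the right-hand side extends to a polynomial. This extension is exactly what ``interpreted by polynomial continuation'' means, and it agrees with $\pm\det H_N(z)$ everywhere.

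The main obstacle is bookkeeping rather than conceptual. The steps needing care are the ordering convention that makes $P$ cyclically equal to $R_{\rm out}R_\partial$, given the index shift between $\Psi_j$ and the packet start, and the check that the sign $\varepsilon_{m,W}$ is deterministic and independent of the entries. I would handle both by fixing explicit permutations, as in Lemma~\ref{lem:periodic-det}.
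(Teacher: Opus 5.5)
Your proof is correct and is essentially the paper's argument. The paper evaluates the same first-order transfer system in two ways: it writes the system as the cleared augmented system $\mathsf K_js_j+\mathsf L_js_{j+1}=0$ and factors out $\prod_j\det\mathsf L_j=\prod_j\det B_j$, which is equivalent to your direct elimination on $\mathscr L$. Also, with the packet at sites $1,2,3$ one has $T_m\cdots T_1=R_{\rm out}R_\partial$ exactly, so no cyclic shift is needed.

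The only difference is the continuation step. You use that the rational right-hand side agrees with the polynomial $\pm\det H_N(z)$ on a dense set, which suffices for the lemma as stated. By contrast, the paper exhibits the explicit polynomial representative $\sum_r(-1)^r\tr(\mathcal A_m^{(r)}\cdots\mathcal A_1^{(r)})$ via Cauchy--Binet and Jacobi, and this is the form reused later for the cut exterior sums. One small inaccuracy: your remark that $b_\partial R_\partial$ is not polynomial is wrong, since each $(\det B_j)T_j$ involves only the adjugate of $B_j$. The clearing problem arises only in higher exterior powers.
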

The proof is given in Section~\ref{supplementsection}.

Thus the actual cyclic closure is obtained by the substitution
\(\Theta=R_{\rm out}\).  In particular,
\(b_{\partial}\det(I_{2W}-R_{\rm out}R_{\partial})\) is the packet
boundary polynomial defined below, evaluated at the actual outside
relation, and \(c_{\rm out}\) supplies the remaining interface
determinants.  The arbitrary matrix \(\Theta\) is retained only so that the
local estimate can be proved uniformly before this substitution.

\subsubsection{From determinants to wedge powers: coefficient norms}
We write the clearing factor $b_\partial$ in its fresh variables; \(B_R\)
is exposed, whereas \(B_L,B_C\) remain fresh:
\begin{equation}
 b(\mathbf x):=\det B_L(\mathbf x)\det B_C(\mathbf x)\det B_R.
 \label{eq:local-b}
\end{equation}
The factor \(\det B_R\) is conditioned.  For \(0\leq k\leq 2W\), initially
on the set where \(B_L,B_C,B_R\) are invertible, define
\begin{equation}
 \mathcal Q^{(k)}(\mathbf x)
 :=b(\mathbf x)\,\bigwedge^kR_{\partial}(\mathbf x).
 \label{eq:local-cleared-compound}
\end{equation}
Clearing the denominators in the compound matrices gives a unique polynomial
extension of this expression through singular \(B_L,B_C,B_R\).

For a general boundary relation \(\Theta\in\GL_{2W}(\C)\), we define the
full-exterior boundary polynomial and its coefficient norm via the following notations:
\begin{align}
 \mathscr D_{\Theta}(\mathbf x)
 &:=\sum_{k=0}^{2W}(-1)^k
 \tr\!\left(\mathcal Q^{(k)}(\mathbf x)\bigwedge^k\Theta\right),
 \label{eq:local-DTheta}\\
 \mathscr C(\Theta)
 &:=\bigl\|\Coeff_{\mathbf x}\mathscr D_{\Theta}\bigr\|_{\ell^2}.
 \label{eq:local-CTheta}
\end{align}
The exposed endpoint blocks \(E_\partial=\operatorname{diag}(C_L,B_R)\)
are fixed parameters here; the notation \(\mathscr D_\Theta\) and
\(\mathscr C(\Theta)\) suppresses this dependence.
On the set where \(B_L,B_C,B_R\) are invertible, the standard exterior
identity gives
\begin{equation}
 \mathscr D_{\Theta}(\mathbf x)
 =b(\mathbf x)\det(I_{2W}-\Theta R_{\partial}(\mathbf x))
 =b(\mathbf x)\det(I_{2W}-R_{\partial}(\mathbf x)\Theta).
 \label{eq:local-DTheta-det}
\end{equation}
The second equality is Sylvester's determinant identity.

\subsubsection{Polynomial coefficient bound for general profiles}
We prove a two-sided estimate for $\mathscr C(\Theta)$ in terms of
$\Theta$.  For \(A>0\), use the convention
\begin{center}
\textbf{write \(X=e^{\pm A}Y\) for the pair of inequalities
\(e^{-A}Y\leq X\leq e^AY\).}
\end{center}

Let \(\mathcal E_{\partial}\) be the event that the two exposed endpoint
matrices \(C_L\) and \(B_R\) satisfy
Lemma~\ref{lem:local-interface-control}.  Since
\(E=\operatorname{diag}(C_L,B_R)\), on \(\mathcal E_{\partial}\),
\begin{equation}
 \|E\|\leq W^{O(1)},\qquad |\det E|^{-1}\leq e^{CW\log W}.
 \label{eq:local-E-control}
\end{equation}
\begin{lemma}
\label{lem:local-boundary-volume}
On the event \(\mathcal E_{\partial}\), there is \(C>0\), depending only
on \(K,K_z\) and, in the weighted variant, on \(c_*,C_*\), such that,
uniformly for every
\(\Theta\in\GL_{2W}(\C)\),
\begin{equation}
 \boxed{\;
 \mathscr C(\Theta)
 =e^{\pm CW\log W}\,
   \det(I_{2W}+\Theta^*\Theta)^{1/2}
 =e^{\pm CW\log W}
   \prod_{j=1}^{2W}\sqrt{1+s_j(\Theta)^2}.}
 \label{eq:local-coefficient-main}
\end{equation}
\end{lemma}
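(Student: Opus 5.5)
The plan is to reduce Lemma~\ref{lem:local-boundary-volume} to the all-minor estimate of Lemma~\ref{lem:local-all-minor}. We show that $\mathscr D_\Theta$ equals the terminal polynomial $p_{Q_O}$ for a suitable deformation $Q_O=Q_O(\Theta)$, up to a change of polynomial that costs only $e^{\pm CW\log W}$ in coefficient norm. We then compare $\det(I+Q_O^*Q_O)$ with $\det(I+\Theta^*\Theta)$ on $\mathcal E_\partial$.

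\textbf{Step 1: a linear realization of $\mathscr D_\Theta$.} We realize $\mathscr D_\Theta$ as a determinant that is linear in the packet rows. Add the two outer unknowns $\psi_-,\psi_+$ to the three packet equations \eqref{eq:local-packet-left}--\eqref{eq:local-packet-right}. Close the system with the $2W$ relations \eqref{eq:local-general-Theta}, written as $\binom{\psi_L}{\psi_-}-\Theta\binom{\psi_+}{\psi_R}=0$. This gives a $5W\times5W$ matrix $K_\Theta(\mathbf x)$. Its packet rows are affine in the fresh variables, and the fixed blocks $C_L,B_R,\Theta$ appear in them.

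The same block elimination as in the Floquet identity (Lemma~\ref{lem:local-block-floquet}) is done first on the set where $B_L,B_C$ are invertible and then extended by polynomial continuation. It gives
\[
 \det K_\Theta=\pm\,\mathscr D_\Theta .
\]
Coefficient norms are therefore unchanged.

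\textbf{Step 2: eliminating the outer unknowns.} Next we eliminate $\psi_\pm$ using only deterministic operations. Write $\Theta=\begin{pmatrix}\Theta_{11}&\Theta_{12}\\ \Theta_{21}&\Theta_{22}\end{pmatrix}$. The closure rows express $\psi_L$ and $\psi_-$ in terms of $(\psi_+,\psi_R)$. Since $\Theta$ is invertible, the same rows can be solved for $(\psi_-,\psi_+)$ in terms of $(\psi_L,\psi_R)$ through some block inverse.

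Substituting into the rows $L$ and $R$, via $C_L\psi_-$ and $B_R\psi_+$, produces a deformation
\[
 \operatorname{Emb}_O(Q_O),\qquad Q_O=E\,\Phi(\Theta),
\]
where $\Phi(\Theta)$ is a fixed linear-fractional function of $\Theta$. The determinant changes by a deterministic factor of the form $\det(\text{a }2W\times 2W\text{ block of }\Theta\text{ or of }\Theta^{-1})$.

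Rather than invert a possibly singular block, I would handle the generic case directly. We use the Grassmannian viewpoint: the graph of $\Theta$, a $2W$-dimensional subspace of $\C^{4W}$, is paired against the packet subspace. The quantity $\det(I+\Theta^*\Theta)^{1/2}$ is exactly the Plücker-norm normalizer of $\operatorname{span}\binom{\Theta}{I}$. Lemma~\ref{lem:local-all-minor} says the same for $Q_O$ in the chart $\binom{I}{Q_O}$. Changing charts multiplies both the polynomial and the Plücker normalizer by the same $|\det(\text{block})|$, so the ratio $\mathscr C/\det(I+\cdot^*\cdot)^{1/2}$ is chart-independent. This is what lets the argument bypass singular blocks: choose the chart in which the relevant block is well conditioned, and use density of $\GL_{2W}$ and continuity of both sides.

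\textbf{Step 3: accounting for the endpoint blocks.} The exposed blocks $E=\operatorname{diag}(C_L,B_R)$ enter as a fixed invertible linear change of the outer coordinates. Singular values obey $s_j(E\Phi)\in[s_{\min}(E),\|E\|]\,s_j(\Phi)$, and $\prod_j\sqrt{1+\lambda^2s_j^2}$ lies within $\max(\lambda,\lambda^{-1})^{2W}$ of $\prod_j\sqrt{1+s_j^2}$. On $\mathcal E_\partial$, \eqref{eq:local-E-control} gives $\|E\|,\|E^{-1}\|\le e^{CW}$ (via the determinant bound and $\|E\|\le W^{O(1)}$), so these factors contribute only $e^{\pm CW\log W}$. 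The fresh factors $\det B_L,\det B_C$ inside $b(\mathbf x)$ are already part of the linear determinant $\det K_\Theta$ and need no separate control.

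\textbf{Main obstacle.} I expect the hard step to be making the chart change uniform for arbitrary $\Theta$ with very disparate singular values. The plan there is to take SVD bases $\Theta=U\Sigma V^*$. Unitary changes in the outer coordinates preserve coefficient norms up to $e^{\pm CW\log W}$, by the $\ell^2$ coefficient-norm invariance under row and column unitaries in Lemma~\ref{lem:local-all-minor}'s proof. We then split each coordinate direction according to $s_j\le1$ or $s_j>1$, which reduces $\Theta$ to a diagonal with entries handled one by one in the favourable chart.
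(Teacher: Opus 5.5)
Your skeleton matches the paper's proof:
\begin{itemize}
\item realize $\mathscr D_\Theta=\det K_\Theta$;
\item take the Schur complement of the endpoint block to get $\mathscr D_\Theta=\det\Theta_{11}\,p_{ES(\Theta)}$;
\item apply Lemma~\ref{lem:local-all-minor};
\item use the graph--Gram identity $|\det\Theta_{11}|^2\det(I_{2W}+S^*S)=\det(I_{2W}+\Theta^*\Theta)$, which is exactly your ``chart independence'';
\item extend to all of $\GL_{2W}(\C)$ by continuity.
\end{itemize}
One small correction to Step~2: invertibility of $\Theta$ does not let you solve the closure rows for $(\psi_-,\psi_+)$. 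You need $\det\Theta_{11}\neq0$, which is precisely why the density step is required, and the factor produced is $\det\Theta_{11}$.

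The genuine gap is Step~3. Comparing singular values one at a time, $s_j(ES)\ge s_{\min}(E)\,s_j(S)$, charges a factor $\|E^{-1}\|$ to each of the $2W$ directions. Your bound is therefore $\max(\lambda,\lambda^{-1})^{2W}$ with $\lambda$ as large as $\|E^{-1}\|$. On $\mathcal E_\partial$ the only inverse control is $\|E^{-1}\|\le e^{CW}$, so your argument yields $e^{\pm CW^2}$, not $e^{\pm CW\log W}$. This loss is not an artifact of constants. Suppose $E$ has a single singular value of size $e^{-cW}$ and $S$ has all singular values large. Then your lower bound is off by $e^{-2cW^2}$, while the true ratio of the two volumes is only about $|\det E|\ge e^{-CW}$.

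The missing idea is that $\mathcal E_\partial$ controls only the \emph{product} of the small singular values of $E$, through $|\det E|$. The comparison must therefore be made degree by degree in the exterior algebra.
\begin{itemize}
\item By Hodge duality, $\|\wedge^kE^{-1}\|=|\det E|^{-1}\|\wedge^{2W-k}E\|\le e^{CW\log W}$ for every $k$, using $\|E\|\le W^{O(1)}$ and $|\det E|^{-1}\le e^{CW\log W}$.
\item Hence $\|\wedge^k(ES)\|_{\HS}=e^{\pm CW\log W}\|\wedge^kS\|_{\HS}$.
\item Summing squares with the compound identity $\det(I_{2W}+A^*A)=\sum_k\|\wedge^kA\|_{\HS}^2$ gives \eqref{eq:local-remove-E}.
\end{itemize}
Equivalently, use Horn's product inequalities $\prod_{j\le k}s_j(ES)\ge\prod_{j\le k}s_{2W-j+1}(E)\,s_j(S)$ together with $\prod_{j\le k}s_{2W-j+1}(E)\ge|\det E|/\|E\|^{2W-k}$.

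Finally, the ``main obstacle'' you single out is not one. On $\{\det\Theta_{11}\neq0\}$ the Schur identity and the graph--Gram identity are exact, so $|\det\Theta_{11}|$ cancels exactly and its conditioning never enters. Uniformity over $\GL_{2W}(\C)$ then follows from continuity of $\mathscr C(\Theta)$ and $\det(I_{2W}+\Theta^*\Theta)$ in $\Theta$.

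The SVD reduction should be dropped rather than repaired. It relies on an approximate invariance of $\mathscr C$ under $\Theta\mapsto U\Theta V^*$ for unitaries mixing the $(\psi_L,\psi_-)$ and $(\psi_+,\psi_R)$ coordinates. The proof of Lemma~\ref{lem:local-all-minor} does not provide this, because the packet structure singles out those coordinates. That invariance is a consequence of the lemma you are proving, not an input to it.
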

The proof is given in Section~\ref{supplementsection}.

Finally, we use the following simple linear algebra comparison:
\begin{lemma}
    Let $R$ be a \(2W\times2W\) complex matrix.  Then
    \begin{equation}\label{eq:local-boundary-volume-vs-exterior}
\max_{0\le r\le 2W}\|\wedge^rR\|\le \det(I_{2W}+R^*R)^{1/2}\le 2^W\max_{0\le r\le 2W}\|\wedge^rR\|.
    \end{equation}
\end{lemma}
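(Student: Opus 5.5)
We argue through the singular values of \(R\).  Write \(s_1\ge s_2\ge\cdots\ge s_{2W}\ge0\) for the singular values of \(R\).  Two standard facts carry the proof.  First, the singular values of \(\wedge^rR\) are the products \(\prod_{j\in S}s_j\) over \(r\)-subsets \(S\subset[2W]\).  Hence \(\|\wedge^rR\|=\prod_{j\le r}s_j\), with the empty product equal to \(1\) when \(r=0\).  Second, \(\det(I_{2W}+R^*R)^{1/2}=\prod_{j=1}^{2W}(1+s_j^2)^{1/2}\), since the eigenvalues of \(R^*R\) are the \(s_j^2\).  Both sides of \eqref{eq:local-boundary-volume-vs-exterior} therefore reduce to scalar inequalities between \(\prod_j(1+s_j^2)^{1/2}\) and \(M:=\max_r\prod_{j\le r}s_j\).

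Because the \(s_j\) are ordered decreasingly, the maximum over \(r\) of \(\prod_{j\le r}s_j\) is attained at \(r_*:=\#\{j:s_j\ge1\}\).  Indeed, each factor \(s_j\ge1\) does not decrease the product, and each factor \(s_j<1\) does not increase it.  Thus \(M=\prod_{j:s_j\ge1}s_j\).

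For the lower bound, use \((1+s^2)^{1/2}\ge\max\{1,s\}\) for every \(s\ge0\).  This gives
\[
\prod_{j=1}^{2W}(1+s_j^2)^{1/2}\ge\prod_{j=1}^{2W}\max\{1,s_j\}=M.
\]

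For the upper bound, use \(1+s^2\le2\max\{1,s\}^2\), so that \((1+s^2)^{1/2}\le\sqrt2\max\{1,s\}\).  Multiplying over the \(2W\) indices yields
\[
\prod_{j=1}^{2W}(1+s_j^2)^{1/2}\le(\sqrt2)^{2W}M=2^WM.
\]

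No step is genuinely hard.  The only point to record carefully is the identification \(\|\wedge^rR\|=\prod_{j\le r}s_j\).  It follows from the singular value decomposition \(R=U\Sigma V^*\) and the functoriality \(\wedge^rR=\wedge^rU\,\wedge^r\Sigma\,\wedge^rV^*\).  Here \(\wedge^rU\) and \(\wedge^rV\) are unitary on \(\wedge^r\C^{2W}\) with its induced inner product.  Also \(\wedge^r\Sigma\) is diagonal in the basis \(e_I\), with entries \(\prod_{i\in I}s_i\).  The argument does not use invertibility of \(R\), so it holds for any complex \(2W\times2W\) matrix.
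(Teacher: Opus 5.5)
Your proposal is correct and follows the same route as the paper: reduce to the singular values of \(R\), identify \(\max_r\|\wedge^rR\|=\prod_{j}\max\{1,s_j\}\) and \(\det(I_{2W}+R^*R)^{1/2}=\prod_j\sqrt{1+s_j^2}\), and compare factorwise using \(\max\{1,s\}\le\sqrt{1+s^2}\le\sqrt2\max\{1,s\}\). You additionally spell out the justification of \(\|\wedge^rR\|=\prod_{j\le r}s_j\) via the SVD and why the maximum over \(r\) is attained at \(r_*=\#\{j:s_j\ge1\}\), which the paper leaves implicit.
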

\begin{proof}
Let \(s_1\geq s_2\geq\cdots\geq s_{2W}\geq0\) be the singular values
of \(R\).  The maximum on the left is
\(\prod_{i=1}^{2W}\max\{1,s_i\}\), while the middle term is
\(\prod_{i=1}^{2W}\sqrt{1+s_i^2}\).  The claim follows from
\(\sqrt{1+s^2}\leq\sqrt2\max\{1,s\}\).
\end{proof}

Thus the coefficient norm is, up to \(e^{O(W\log W)}\), exactly the
largest exterior growth of the boundary relation $\Theta$.  This is the link used
later in the proof of Theorem \ref{thm:local-complex-frame}.

\subsubsection{Boundary determinant estimates for every invertible
boundary relation}
We also need a small-ball estimate for the evaluation of this polynomial.
We work on the event \(\mathcal E_{\partial}\), where both endpoint
blocks \(C_L,B_R\) obey the required bounds.  The expectation is over the seven fresh
packet blocks.

Define, for any $T>0$, the following bounded function
\begin{equation}
 \mathcal L_T(c,w):=\min\{T,\log_+(c/|w|)\},
 \qquad \mathcal L_T(c,0):=T,
 \label{eq:local-LT}
\end{equation}
which is bounded and continuous on \((0,\infty)\times\C\).

\begin{corollary}\label{corollary885}
On \(\mathcal E_\partial\), uniformly over all
\(\Theta\in\GL_{2W}(\C)\), with $p_W$ defined in \eqref{eq:local-terminal-capped},
\begin{align}
 \E_\Xi\mathcal L_T\bigl(\mathscr C(\Theta),
        \mathscr D_{\Theta}(\Xi)\bigr)
 &\leq CW\log W+p_WT,
 \label{eq:local-boundary-capped}\\
 \boldsymbol 1_{\mathcal E_{\max}}
 \log_+\frac{|\mathscr D_{\Theta}(\Xi)|}
                  {\mathscr C(\Theta)}
 &\leq CW\log W.
 \label{eq:local-boundary-reverse}
\end{align}
\end{corollary}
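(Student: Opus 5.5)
The plan is to transport the coordinate estimate of Proposition~\ref{prop:local-terminal} to the boundary polynomial $\mathscr D_\Theta$. This requires two ingredients: an exact polynomial identity between $\mathscr D_\Theta$ and a terminal determinant $p_{Q_O}$, and a comparison of the two coefficient norms. The identity comes from a Schur-complement elimination. Given $\Theta\in\GL_{2W}(\C)$, I will build an artificial open complement whose boundary relation is $\Theta$. Concretely, I take the $5W\times5W$ matrix $K_{\Theta}(\mathbf x)$ (the matrix in \eqref{eq:local-KTheta}): it consists of the three packet rows together with $2W$ additional rows encoding $\binom{\psi_L}{\psi_-}=\Theta\binom{\psi_+}{\psi_R}$, with unknowns $\psi_-,\psi_+$. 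Expanding $\det K_\Theta$ in two ways should give the identity \eqref{eq:local-K-first-elimination}, namely $\det K_\Theta=\pm\det(\Theta)^{a}\,(\text{endpoint factors})\,\mathscr D_\Theta(\mathbf x)$.

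The first expansion transfers through the packet, as in the proof of Lemma~\ref{lem:local-block-floquet}, and yields $b(\mathbf x)\det(I-\Theta R_\partial)$ up to a $\mathbf x$-independent factor. The second expansion eliminates $\psi_\pm$ using the invertible endpoint blocks $C_L,B_R$. This produces $\det E_\partial$ times $p_{Q_O}(\mathbf x)$, where $Q_O=Q_O(\Theta,E_\partial)$ is an explicit linear-fractional function of $\Theta$ in the outer coordinates. Both expansions hold on the open set where the interface blocks are invertible, so they extend everywhere by polynomial continuation. The result is a pathwise identity
\[
\mathscr D_\Theta(\mathbf x)=\kappa(\Theta,E_\partial)\,p_{Q_O}(\mathbf x),
\]
where $\kappa$ does not depend on $\mathbf x$. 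Consequently $\mathscr C(\Theta)=|\kappa|\,\mathfrak C(Q_O)$ exactly. The ratio $\mathscr C(\Theta)/|\mathscr D_\Theta(\Xi)|$ is therefore equal to $\mathfrak C(Q_O)/|p_{Q_O}(\Xi)|$, and the scalar $\kappa$ cancels. This cancellation is the key point, because it means no quantitative control of $\kappa$ is needed.

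Given the identity, \eqref{eq:local-boundary-capped} follows directly from \eqref{eq:local-terminal-capped}. This uses the uniformity of Proposition~\ref{prop:local-terminal} over all $Q_O\in\C^{2W\times2W}$: the particular $Q_O$ produced from $\Theta$ is deterministic given the exposed data, and the proposition holds conditionally. Similarly, \eqref{eq:local-boundary-reverse} follows from \eqref{eq:local-terminal-reverse} on $\mathcal E_{\max}$. The event $\mathcal E_\partial$ enters only to guarantee that $C_L,B_R$ are invertible, so that the elimination of $\psi_\pm$ is legitimate. Alternatively, $\mathcal E_\partial$ supplies the two-sided bound of Lemma~\ref{lem:local-boundary-volume}, if I prefer to compare norms through $\det(I+\Theta^*\Theta)^{1/2}$ and Lemma~\ref{lem:local-all-minor} rather than exactly. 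That alternative route loses $e^{\pm CW\log W}$ in the norm ratio, which is absorbed into $CW\log W$ in both displays.

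The main obstacle is the algebra behind the identity itself. Some care is needed to verify that the Schur complement after eliminating $\psi_\pm$ really has the shape $\Delta-zI+\operatorname{Emb}_O(Q_O)$, with the center block untouched and all seven fresh blocks entering linearly. It also has to be checked that the resulting proportionality constant is free of $\mathbf x$ and nonzero; this is where invertibility of $\Theta$ and of $E_\partial$ is used. If an exact identity fails, for instance because a $\det B_L\det B_C$ factor survives in the elimination, I would fall back to the two-sided route. That route compares $\mathscr C(\Theta)$ via \eqref{eq:local-coefficient-main} against $\mathfrak C(Q_O)$ via \eqref{eq:local-all-minor}, and it requires showing that $\det(I+Q_O^*Q_O)$ and $\det(I+\Theta^*\Theta)$ agree up to $e^{O(W\log W)}$ times $|\kappa|^2$ on $\mathcal E_\partial$.
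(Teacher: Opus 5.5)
There is a genuine gap. The proportionality $\mathscr D_\Theta=\kappa\,p_{Q_O}$ with $\kappa\neq0$ does not hold for every $\Theta\in\GL_{2W}(\C)$, and invertibility of $\Theta$ and $E_\partial$ is not what makes it hold.

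To eliminate $\psi_\pm$ from $K_\Theta$ and keep the fresh blocks in the form $\Delta-zI_{3W}+\operatorname{Emb}_O(Q_O)$, you must pivot on the two boundary rows. That is, you pivot on the block $\bigl(\begin{smallmatrix}0&-\Theta_{11}\\ I_W&-\Theta_{21}\end{smallmatrix}\bigr)$ in the $(\psi_-,\psi_+)$ columns, whose determinant is $\pm\det\Theta_{11}$. This gives \eqref{eq:local-double-elimination}: $\mathscr D_\Theta=\det\Theta_{11}\,p_{Q_\Theta}$ with $Q_\Theta=E_\partial S(\Theta)$. It is valid only on $\mathcal U=\{\det\Theta_{11}\neq0\}$, and it needs no invertibility of $E_\partial$. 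On that chart your cancellation argument is exactly the paper's.

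Pivoting on $C_L,B_R$ instead leaves fresh blocks multiplied by $C_L^{-1}$, $\Theta_{11}B_R^{-1}$, $\Theta_{21}B_R^{-1}$ and mixed across rows. Proposition~\ref{prop:local-terminal} does not cover that shape.

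Off $\mathcal U$, no identity of your form with $\kappa\neq0$ can exist. A degree-$3W$ monomial in the fresh variables forces the $(\psi_-,\psi_+)$ block in $\det K_\Theta$. So the top-degree coefficients of $\mathscr D_\Theta$ are $\pm\det\Theta_{11}$ times the entry weights, whereas those of every $p_Q$ are $\pm$ the weights themselves. Hence $\kappa=\pm\det\Theta_{11}=0$ there.

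This case is not vacuous. The corollary is applied to the artificial relations $\Theta_\lambda^{(r;U,V)}$ in Theorem~\ref{thm:local-complex-frame}, and $\Theta_{11}=0$ for suitable frames. Polynomial continuation in $\mathbf x$ cannot repair this, since the obstruction lies in $\Theta$. Your norm-comparison fallback cannot either, because transferring a small-ball bound requires a pointwise relation between $\mathscr D_\Theta(\Xi)$ and some $p_Q(\Xi)$, not merely comparable coefficient norms.

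The missing step is an approximation in $\Theta$:
\begin{enumerate}
\item For arbitrary $\Theta\in\GL_{2W}(\C)$, choose $\Theta^{(n)}=\Theta+\varepsilon_n\operatorname{diag}(I_W,0)\in\mathcal U$ with $\varepsilon_n\to0$. This is possible because $\det(\Theta_{11}+\varepsilon I_W)$ is monic in $\varepsilon$ and $\GL_{2W}(\C)$ is open.
\item Your cancellation argument gives both estimates for $\Theta^{(n)}$, with constants independent of $n$.
\item Every coefficient of $\mathscr D_\Theta$ is polynomial in the entries of $\Theta$. Hence $\mathscr D_{\Theta^{(n)}}(\mathbf x)\to\mathscr D_\Theta(\mathbf x)$ and $\mathscr C(\Theta^{(n)})\to\mathscr C(\Theta)$.
\item To pass to the limit you need $\mathscr C(\Theta)>0$. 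This, not the elimination, is where $\mathcal E_\partial$ genuinely enters, through the lower bound in Lemma~\ref{lem:local-boundary-volume}.
\item Bounded convergence for $\mathcal L_T$, which is bounded and continuous on $(0,\infty)\times\C$, then yields \eqref{eq:local-boundary-capped}. Pointwise continuity of $(c,w)\mapsto\log_+(|w|/c)$ yields \eqref{eq:local-boundary-reverse}.
\end{enumerate}
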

The proof, including the approximation from the initial block-invertibility
domain to arbitrary
\(\Theta\in\GL_{2W}(\C)\), is given in
Section~\ref{supplementsection}.

\subsection{Small ball estimates for arbitrary frames}
For further use, we need the following arbitrary-frame comparison
between a coefficient norm and a polynomial evaluation.
Throughout this subsection, Hermitian inner products are linear in their
second argument.
Fix \(0\leq r\leq 2W\), and let
\(U,V:\C^r\to\C^{2W}\) be complex isometries.  Define
\begin{equation}
 \widehat U:=Ue_1\wedge\cdots\wedge Ue_r,\qquad
 \widehat V:=Ve_1\wedge\cdots\wedge Ve_r,
 \label{eq:local-UhatVhat}
\end{equation}
with the usual scalar convention when \(r=0\).  Define the polynomial
and its coefficient norm by
\begin{align}
 Z_{U,V}^{(r)}(\mathbf x)
 &:=\left\langle
      \widehat U,\mathcal Q^{(r)}(\mathbf x)\widehat V
    \right\rangle,
 \label{eq:local-ZUV}\\
 \Gamma_{U,V}^{(r)}
 &:=\left\|
      \Coeff_{\mathbf x}Z_{U,V}^{(r)}
    \right\|_{\ell^2}.
 \label{eq:local-GammaUV}
\end{align}
Here \(\mathbf x=(x_e)_{e\in\mathcal I_\partial}\) consists of the
\(7W^2\) formal scalar variables in the seven fresh packet blocks, and
\(\Coeff_{\mathbf x}\) denotes the coefficient vector in the monomial
basis in these variables.
\begin{theorem}[Subgaussian complex-frame extraction]
\label{thm:local-complex-frame}
On \(\mathcal E_{\partial}\), uniformly over
\(0\leq r\leq 2W\) and all complex isometries
\(U,V:\C^r\to\C^{2W}\),
\begin{equation}
 e^{-CW\log W}\leq\Gamma_{U,V}^{(r)}\leq e^{CW\log W}.
 \label{eq:local-frame-coefficients}
\end{equation}
For every \(T>0\),
\begin{equation}
 \E\min\left\{T,
  \log_+\frac{\Gamma_{U,V}^{(r)}}{|Z_{U,V}^{(r)}(\Xi)|}\right\}
 \leq CW\log W+p_WT.
 \label{eq:local-frame-capped}
\end{equation}
The expectation is over the seven fresh random packet blocks in $\Xi$. 
\begin{comment}
Moreover,
\begin{equation}
 \P\{Z_{U,V}^{(r)}(\Xi)=0\}\leq p_W,
 \label{eq:local-frame-zero}
\end{equation}
and, on \(\mathcal E_{\max}\),
\begin{equation}
 \log_+\frac{|Z_{U,V}^{(r)}(\Xi)|}{\Gamma_{U,V}^{(r)}}
 \leq CW\log W.
 \label{eq:local-frame-reverse}
\end{equation}
\end{comment}
\end{theorem}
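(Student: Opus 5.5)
The plan is to reduce both assertions to the boundary-polynomial estimates of Lemma~\ref{lem:local-boundary-volume} and Corollary~\ref{corollary885} by choosing an artificial boundary relation \(\Theta\) whose determinant polynomial \(\mathscr D_\Theta\) isolates the frame pairing \(Z_{U,V}^{(r)}\). By \eqref{eq:local-DTheta}, \(\mathscr D_\Theta=\sum_k(-1)^k\tr(\mathcal Q^{(k)}\wedge^k\Theta)\). For a parameter \(\lambda\ge1\), I take \(\Theta_\lambda:=\lambda VU^*+\Pi\), where \(\Pi\) is a fixed invertible operator on the complement: \(\Pi\) maps \((\operatorname{ran}U)^\perp\) onto \((\operatorname{ran}V)^\perp\) isometrically and vanishes on \(\operatorname{ran}U\), so \(\Theta_\lambda\in\GL_{2W}(\C)\). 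Then \(\wedge^k\Theta_\lambda\) is a polynomial in \(\lambda\) of degree \(\le r\), and the coefficient of \(\lambda^r\) is exactly \(|\widehat V\rangle\langle\widehat U|\wedge(\text{complementary part})\). After a fixed unitary change of frame (the complementary isometry contributes bounded factors), the top-\(\lambda\) part of \(\mathscr D_{\Theta_\lambda}\) should be a signed sum of \(Z^{(k)}\)-type pairings in which the \(k=r\) term involving only the \(U,V\) frame dominates. To avoid contamination from higher \(k\), I would instead use \(\Theta_{\lambda,\eta}=\lambda VU^*+\eta\Pi\) with \(\eta\to0\). Letting \(\eta\to0\) kills all terms except \(k\le r\), and the \(\lambda^r\eta^0\) coefficient is \((-1)^r\langle\widehat U,\mathcal Q^{(r)}\widehat V\rangle=(-1)^rZ^{(r)}_{U,V}\). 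Since \(\Theta\) must stay invertible, \(\eta>0\) is kept small and the limit is handled by continuity. Concretely, \(\lim_{\eta\to0}\lambda^{-r}\mathscr D_{\Theta_{\lambda,\eta}}\) as \(\lambda\to\infty\) recovers \(\pm Z^{(r)}_{U,V}\) coefficientwise in \(\mathbf x\).

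\textbf{Coefficient bounds \eqref{eq:local-frame-coefficients}.} Lemma~\ref{lem:local-boundary-volume} applied to \(\Theta_{\lambda,\eta}\) gives \(\mathscr C(\Theta_{\lambda,\eta})=e^{\pm CW\log W}\prod_j\sqrt{1+s_j^2}\). The singular values are \(\lambda\) (\(r\) times) and \(\eta\) (\(2W-r\) times), so the product is \(e^{\pm W}\lambda^r\) for \(\lambda\ge1\) and \(\eta\le1\). Dividing by \(\lambda^r\) and passing to the limit, since coefficient vectors depend polynomially and hence continuously on \((\lambda^{-1},\eta)\), gives \(\Gamma^{(r)}_{U,V}=e^{\pm CW\log W}\), uniformly in \(r,U,V\), because the constant in Lemma~\ref{lem:local-boundary-volume} is uniform in \(\Theta\). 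The lower bound is the substantive part: I must check that the limit coefficient vector equals exactly that of \(Z^{(r)}_{U,V}\) and not a sum with other degrees. This holds because for \(k\ne r\) the contribution scales like \(\lambda^{\min(k,r)}\eta^{(k-r)_+}\), which is \(o(\lambda^r)\) or vanishes as \(\eta\to0\).

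\textbf{Small-ball bound \eqref{eq:local-frame-capped}.} Apply \eqref{eq:local-boundary-capped} to \(\Theta_{\lambda,\eta}\) and rewrite it in terms of the normalized pair \((\lambda^{-r}\mathscr C,\lambda^{-r}\mathscr D)\); since \(\mathcal L_T\) is scale invariant in its two arguments, the bound \(CW\log W+p_WT\) is unchanged. As \(\lambda\to\infty\) and \(\eta\to0\), \(\lambda^{-r}\mathscr D_{\Theta}(\Xi)\to\pm Z^{(r)}_{U,V}(\Xi)\) pointwise, and \(\lambda^{-r}\mathscr C\) converges to a quantity comparable to \(\Gamma^{(r)}_{U,V}\) (more precisely to \(\Gamma\) itself, by coefficient convergence). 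Because \(\mathcal L_T\) is bounded and continuous on \((0,\infty)\times\C\), dominated convergence transfers the bound to the limit. At zeros of \(Z\), lower semicontinuity suffices, since \(\mathcal L_T(c,w)\to T\) as \(w\to0\).

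\textbf{Main obstacle.} The hardest step is the exact exterior-algebra bookkeeping showing that the double limit isolates precisely \(\langle\widehat U,\mathcal Q^{(r)}\widehat V\rangle\) with no cross terms, and that \(\Theta_{\lambda,\eta}\) lies in the domain where Corollary~\ref{corollary885} is stated (invertible, and uniform). I would first verify it in the model case \(U=V=\) coordinate frames, where \(\wedge^k\Theta\) is diagonal and the expansion is explicit, and then reduce to it by the unitary invariance of \(\ell^2\) coefficient norms under the frame rotation, which is absorbed into \(\Pi\).
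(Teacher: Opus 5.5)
Your argument is correct and is essentially the paper's proof: the paper uses the same artificial relation $\Theta_\lambda^{(r;U,V)}=\widetilde V\operatorname{diag}(\lambda I_r,\lambda^{-1}I_{2W-r})\widetilde U^*$ (your $\Theta_{\lambda,\eta}$ with $\eta=\lambda^{-1}$, so a single limit suffices). It applies Lemma~\ref{lem:local-boundary-volume} and Corollary~\ref{corollary885} uniformly in $\Theta$, divides by $\lambda^r$, and passes to the limit via coefficientwise convergence and the boundedness and continuity of $\mathcal L_T$. Your proposed reduction to coordinate frames via ``unitary invariance of coefficient norms'' is unnecessary and not clearly valid, since $\mathscr C(\Theta)$ is only comparable, up to $e^{\pm CW\log W}$, to the unitarily invariant $\det(I+\Theta^*\Theta)^{1/2}$. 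Functoriality, $\wedge^k(\widetilde VD\widetilde U^*)=\wedge^k\widetilde V\,\wedge^kD\,\wedge^k\widetilde U^*$, already gives the expansion $\sum_{|I|=k}\lambda^{a(I)}\eta^{k-a(I)}|v_I\rangle\langle u_I|$ directly for arbitrary frames.
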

The proof is given in Section~\ref{supplementsection}.

\section{Global identification of log-determinant limit}
\label{sec:gb-global-proof}

The main purpose of this section is to prove the subgaussian periodic
full-block circular-law result, Theorem~\ref{thm:main}, by combining the
local estimates from Section~\ref{sec:local-interface} with the global
pressure-lifting argument.  The bounded-density branch will later reuse
the same deterministic identification with different probabilistic input.

We use different symbols for the transfers that play different logical
roles:
\begin{center}
\begin{tabular}{c|p{0.73\textwidth}}
symbol & meaning \\ \hline
\(T_j(z)\) & the one-site companion transfer at site \(j\),\\
\(M_{\rm cyc}(z)\) & the full cyclic monodromy \(T_m(z)\cdots T_1(z)\),\\
\(R_{\partial}(z)\) & the transfer through one specified three-site packet,\\
\(R_{\rm out}(z)\) & the actual transfer through the complementary part (outside the three-site packet)\\
\(\Theta\) & an arbitrary deterministic boundary relation in the local theorem,\\
\(\Theta_{\lambda}^{(r;U,V)}\) &
an artificial boundary relation used only for exterior-degree extraction.
\end{tabular}
\end{center}

Suppose that the retained packet consists of sites \(a,a+1,a+2\).  With
\[
 v_{\rm in}:=\binom{\psi_a}{\psi_{a-1}},
 \qquad
 v_{\rm out}:=\binom{\psi_{a+3}}{\psi_{a+2}},
\]
define
\begin{equation}
 R_{\partial}:=T_{a+2}T_{a+1}T_a,
 \qquad
 v_{\rm out}=R_{\partial}v_{\rm in}.
 \label{eq:gb-packet-transfer-definition}
\end{equation}
Starting from \(v_{\rm out}\), propagation along the complementary arc
returns to the packet entrance.  The corresponding product is denoted by
\(R_{\rm out}\), so
\begin{equation}
 v_{\rm in}=R_{\rm out}v_{\rm out}.
 \label{eq:gb-outside-transfer-definition}
\end{equation}
Thus the actual cyclic closure condition at this cut is
\begin{equation}
 \det(I_{2W}-R_{\rm out}R_{\partial})=0.
 \label{eq:gb-cut-closure}
\end{equation}

The local terminal theorem is stated for the abstract relation
\begin{equation}
 v_{\rm in}=\Theta v_{\rm out},
 \qquad \Theta\in\GL_{2W}(\C).
 \label{eq:gb-abstract-boundary}
\end{equation}
To apply it to the cyclic matrix, we first expose and condition on the
complementary arc.  Relative to the still-fresh packet, \(R_{\rm out}\)
is then deterministic, and only at that point do we substitute
\begin{equation}
 \boxed{\Theta=R_{\rm out}.}
 \label{eq:gb-actual-theta-substitution}
\end{equation}

\subsection{Imported inputs}

We retain the model and notation of Section~\ref{sec:model}.  In
particular, \(\mathsf A_j=A_j/\sqrt{3W}\),
\(\mathsf B_j=B_j/\sqrt{3W}\), and
\(\mathsf C_j=C_j/\sqrt{3W}\).

We use the following results proved in the local section.

\begin{enumerate}
\item[(L1)] \emph{Interface determinant and inverse control.}
Lemma~\ref{lem:local-interface-control}, together with the standard
subgaussian norm bound for a diagonal block, supplies an event
\(\mathcal G_j\) such that
\begin{equation}
 \Prob(\mathcal G_j^c)\leq e^{-c_KW}.
 \label{eq:gb-one-site-failure}
\end{equation}
On \(\mathcal G_j\), the normalized blocks satisfy
\begin{equation}
 \|\mathsf A_j\|+\|\mathsf B_j\|+\|\mathsf C_j\|\leq C_K,
 \qquad
 e^{-C_KW}\leq|\det\mathsf B_j|,|\det\mathsf C_j|
 \leq e^{C_KW},
 \label{eq:gb-interface-det-control}
\end{equation}
and
\begin{equation}
 \|\mathsf B_j^{-1}\|+\|\mathsf C_j^{-1}\|
 \leq e^{C_KW}.
 \label{eq:gb-interface-inverse-control}
\end{equation}
The harmless factor \(3^{-1/2}\) between \(W^{-1/2}A_j\) and
\(\mathsf A_j\) is absorbed into the constants.

\item[(L2)] \emph{Three-block terminal and fixed-degree estimates.}
Proposition~\ref{prop:local-terminal},
Lemma~\ref{lem:local-boundary-volume}, and
Theorem~\ref{thm:local-complex-frame} give
\begin{equation}
 p_W\leq C_K\sqrt{\frac{\log W}{W}}+e^{-c_KW}
 \label{eq:gb-pW}
\end{equation}
and a coefficient-relative capped logarithm with base loss
\(C_KW\log W\).  These results are uniform in the predictable complex
outside deformation and in the two complex Grassmann frames.  Their
reverse evaluation estimate is valid on
\begin{equation}
 \mathcal M_{\rm fresh}:=
 \left\{\max_{\text{fresh scalar entries}}|\xi|\leq C_0\sqrt W\right\},
 \qquad
 \Prob(\mathcal M_{\rm fresh}^c)\leq e^{-c_KW},
 \label{eq:gb-fresh-max}
\end{equation}
for a sufficiently large \(C_0=C_0(K)\).  Thus the reverse estimate is
high probability, not deterministic, for an unbounded atom.

\item[(L3)] \emph{Self-contained high-band anchor.}
Proposition~\ref{prop:subgaussian-block-high-band} proves
\eqref{eq:gb-high-band-logdet} for the cyclic full-block model whenever
\(W\ge N^{8/9+\omega}\), with any fixed \(\omega\in(0,1/9)\), and for
every fixed \(z\in\C\).  Its hard-edge step uses the repaired
subgaussian least-singular-value input
Proposition~\ref{prop:jjlo-block-lsv} exactly once; its
upper-edge step uses only a Hilbert--Schmidt tail estimate, not an
operator-norm bound.  Thus (L3) is independent of the circular-law proof
in that paper and of any earlier high-band circular-law theorem.  It is
used only for the subgaussian law in Theorem~\ref{thm:main}.  The
bounded-density branch below instead uses
Proposition~\ref{prop:density-block-high-band}.
\end{enumerate}

\subsection{Cleared transfer and the full-Fock identity}

Fix \(z\in\C\).  On the event \(\det\mathsf B_j\neq0\), the block
recurrence \((X_N-zI_N)\psi=0\) is
\begin{equation}
 \binom{\psi_{j+1}}{\psi_j}
 =T_j(z)\binom{\psi_j}{\psi_{j-1}},\qquad
 T_j(z):=
 \begin{pmatrix}
  -\mathsf B_j^{-1}(\mathsf A_j-zI_W)&
  -\mathsf B_j^{-1}\mathsf C_j\\
  I_W&0
 \end{pmatrix}.
 \label{eq:gb-transfer}
\end{equation}
For \(0\leq r\leq2W\), recall that we have defined
\begin{equation}
 \mathcal A_j^{(r)}(z):=
 (\det\mathsf B_j)\,\bigwedge\nolimits^rT_j(z).
 \label{eq:gb-cleared-transfer}
\end{equation}
Exterior functoriality gives the relation
\begin{equation}
 \mathcal A_b^{(r)}\cdots\mathcal A_a^{(r)}
 =\left(\prod_{j=a}^b\det\mathsf B_j\right)
  \bigwedge\nolimits^r(T_b\cdots T_a).
 \label{eq:gb-wedge-product}
\end{equation}

For $m=N/W$, denote the cyclic monodromy by
\begin{equation}
 M_{\rm cyc}(z):=T_m(z)\cdots T_1(z).
 \label{eq:gb-cyclic-monodromy}
\end{equation}
Periodicity is \(M_{\rm cyc}(z)v=v\).
Lemma~\ref{lem:local-block-floquet} and the standard exterior identity
give
\begin{align}
 \det(X_N-zI_N)
 &=\varepsilon_{m,W}
   \left(\prod_{j=1}^m\det\mathsf B_j\right)
   \det(I_{2W}-M_{\rm cyc}(z))
 \notag\\
 &=\varepsilon_{m,W}\sum_{r=0}^{2W}(-1)^r
   \operatorname{tr}\!\left(
    \mathcal A_m^{(r)}\cdots\mathcal A_1^{(r)}
   \right),
 \qquad |\varepsilon_{m,W}|=1,
 \label{eq:gb-full-fock}
\end{align}
where the last display follows from the following exterior identity:
\begin{equation}
 \det(I_{2W}-M_{\rm cyc})=\sum_{r=0}^{2W}(-1)^r
   \operatorname{tr}(\bigwedge\nolimits^rM_{\rm cyc}),
 \qquad
 \bigwedge\nolimits^r(ST)=
 (\bigwedge\nolimits^rS)(\bigwedge\nolimits^rT).
 \label{eq:gb-exterior-identities}
\end{equation}
Both sides of \eqref{eq:gb-full-fock} are polynomial in the normalized
block-matrix entries, so the identity holds without assuming interface
invertibility.

It is useful to record now what happens after a three-site packet is cut
from the ring.  Write the monodromy as
\(M_{\rm cyc}=R_{\rm out}R_{\partial}\), let \(b_{\partial}\) and
\(c_{\rm out}\) be the products of the right-interface determinants in
the packet and in its complement, and define
\(\mathcal Q^{(r)}:=b_{\partial}\bigwedge^rR_{\partial}\).  Then
\begin{equation}
 \det(X_N-zI_N)
 =\varepsilon_{m,W}c_{\rm out}
  \sum_{r=0}^{2W}(-1)^r\operatorname{tr}\!\left(
   \mathcal Q^{(r)}\bigwedge\nolimits^rR_{\rm out}\right).
 \label{eq:gb-cut-exterior-sum}
\end{equation}
This is an alternating sum and cannot be bounded below by its largest
degree because cancellation is possible.  The local theorem instead
compares its random value with the norm of its complete coefficient
vector.  The all-minor identity identifies that norm, up to
\(e^{O(W\log W)}\), with
\(\det(I_{2W}+R_{\rm out}^*R_{\rm out})^{1/2}\); this exterior volume is
within a factor \(2^W\) of
\(\max_r\|\bigwedge^rR_{\rm out}\|\).  This is the precise bridge from
the local packet estimate to the global exterior pressure.

\subsubsection{The block size estimates for the interface events}

The companion-minor expansion of \eqref{eq:gb-cleared-transfer}, followed
by \eqref{eq:gb-interface-det-control}, gives on \(\mathcal G_j\),
uniformly in \(0\leq r\leq2W\),
\begin{equation}
 \|\mathcal A_j^{(r)}\|\leq e^{C_{K,z}W\log W}.
 \label{eq:gb-one-step-upper}
\end{equation}
Also
\begin{equation}
 \det T_j=\pm\frac{\det\mathsf C_j}{\det\mathsf B_j}.
 \label{eq:gb-transfer-det}
\end{equation}
Hodge duality,
\begin{equation}
 \|\bigwedge\nolimits^rT_j^{-1}\|
 =|\det T_j|^{-1}
  \|\bigwedge\nolimits^{2W-r}T_j\|,
 \label{eq:gb-hodge}
\end{equation}
therefore yields the exact comparison
\begin{equation}
 \|(\mathcal A_j^{(r)})^{-1}\|
 \leq
 \frac{\|\mathcal A_j^{(2W-r)}\|}
 {|\det\mathsf B_j\,\det\mathsf C_j|}
 \leq e^{C_{K,z}W\log W}.
 \label{eq:gb-one-step-inverse}
\end{equation}
As a result, if \(J\) contains \(s\) consecutive block sites, so that
the corresponding scalar length is \(L=sW\), then on the good event
\(\mathcal G_J:=\bigcap_{j\in J}\mathcal G_j\),
\begin{equation}
 \left|\log\left\|
 \prod_{j\in J}^{\leftarrow}\mathcal A_j^{(r)}
 \right\|\right|
 \leq C_{K,z}L\log W.
 \label{eq:gb-product-control}
\end{equation}
Here the arrow denotes chronological multiplication, with later sites on
the left.  This cell-level bound is the only boundedness input used for
concentration.

\subsection{Mesoscopic cells and clipped pressure}

Now we use the idea of computing the log determinant of a band matrix-like system where the size of the matrix is $M_W\sim W^{1.01}$, so that the circular law result for large bandwidth can be used.

To specify the size of the smaller subsystem, we fix the following integer scales
\begin{equation}
 s_W:=\lceil W^{1/200}\rceil,\qquad
 c_W:=s_W+3,\qquad
 \ell_W:=c_WW,
 \label{eq:gb-cell-scales}
\end{equation}
and
\begin{equation}
 K_W:=\lceil W^{1/200}\rceil,\qquad
 M_W:=W(K_Wc_W+3).
 \label{eq:gb-anchor-scales}
\end{equation}
Thus \(\ell_W=W^{201/200+o(1)}\) and
\(M_W=W^{101/100+o(1)}\).

A \textbf{complete cell} consists of \(c_W=s_W+3\) consecutive block
sites and therefore has scalar size \(\ell_W=c_WW\).  It begins with
three reset sites and ends with \(s_W\) core sites.  The labeling is
circular, so that if its first site is denoted by \(a_k\), then we set
\begin{align}
 \mathscr R_{{\rm reset},k}^{(r)}
 &:=\mathcal A_{a_k+2}^{(r)}
    \mathcal A_{a_k+1}^{(r)}
    \mathcal A_{a_k}^{(r)},
 \label{eq:gb-cell-reset}\\
 \mathscr R_{{\rm core},k}^{(r)}
 &:=\mathcal A_{a_k+c_W-1}^{(r)}\cdots
    \mathcal A_{a_k+3}^{(r)},
 \label{eq:gb-cell-core}\\
 \mathscr M_{{\rm cell},k}^{(r)}
 &:=\mathscr R_{{\rm core},k}^{(r)}\mathscr R_{{\rm reset},k}^{(r)}.
 \label{eq:gb-cell-product}
\end{align}
On the interface-invertibility event where all $B_j,C_j$ invertible, we also write
\begin{equation}
 R_{{\rm core},k}:=
 T_{a_k+c_W-1}\cdots T_{a_k+3},
 \qquad
 c_{{\rm core},k}:=
 \prod_{j=a_k+3}^{a_k+c_W-1}\det\mathsf B_j.
 \label{eq:gb-core-transfer-and-scalar}
\end{equation}
Then we will use the following exact identity
\begin{equation}
 \mathscr R_{{\rm core},k}^{(r)}
 =c_{{\rm core},k}\bigwedge\nolimits^rR_{{\rm core},k}.
 \label{eq:gb-core-cleared-form}
\end{equation}
Under the original
unconditioned product law, we first introduce a cutoff function
\begin{equation}
 \clip_{[a,b]}(t):=\min\{b,\max\{a,t\}\},
 \qquad \clip_{[a,b]}(-\infty):=a,
 \label{eq:gb-clip-definition}
\end{equation}
and then define the following truncated properties, where $C_{K,z}$ was defined in \eqref{eq:gb-product-control},
\begin{align}
 Y_{k,r}&:=\log\|\mathscr R_{{\rm core},k}^{(r)}\|,
 \label{eq:gb-core-log}\\
 \widetilde Y_{k,r}
 &:=\clip_{[-C_{K,z}\ell_W\log W,\,C_{K,z}\ell_W\log W]}Y_{k,r},
 \label{eq:gb-clipped-log}\\
 \Phi_{W,r}(z)&:=\E\widetilde Y_{k,r},
 \label{eq:gb-pressure}\\
 r_*(W,z)&:=\min\arg\max_{0\leq r\leq2W}\Phi_{W,r}(z).
 \label{eq:gb-deterministic-degree}
\end{align}
  On the cell
interface event where all $\mathcal G_j$ holds, clipping is inactive by \eqref{eq:gb-product-control} so clipping returns $Y_{k,r}$ itself.
The degree \(r_*\) is deterministic and is chosen before any anchor or
target matrix is sampled.

\begin{lemma}[Independent-cell concentration]
\label{lem:gb-cell-concentration}
Let \(K_c\geq1\) be an integer.  For any \(K_c\) disjoint complete cells
and every \(u>0\),
\begin{align}
 &\Prob\left\{
 \max_{0\leq r\leq2W}
 \left|\sum_{k=1}^{K_c}
   (\widetilde Y_{k,r}-\Phi_{W,r})\right|
 >
 C_{K,z}\ell_W\log W\sqrt{K_c(\log W+u)}
 \right\}
 \leq2e^{-u}.
 \label{eq:gb-hoeffding}
\end{align}
\end{lemma}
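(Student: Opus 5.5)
This is a direct application of Hoeffding's inequality followed by a union bound over the exterior degrees \(r\).

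\textbf{Independence and boundedness.} Fix \(r\). The core product \(\mathscr R_{{\rm core},k}^{(r)}\) of a complete cell uses only the blocks \(\mathsf A_j,\mathsf B_j,\mathsf C_j\) at its own sites \(a_k+3,\dots,a_k+c_W-1\). Disjoint cells involve disjoint families of independent blocks. Hence \(Y_{k,r}\), and therefore \(\widetilde Y_{k,r}\), are independent across \(k=1,\dots,K_c\).

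Each \(\widetilde Y_{k,r}\) lies deterministically in the interval \([-C_{K,z}\ell_W\log W,\,C_{K,z}\ell_W\log W]\). This follows from the clip \eqref{eq:gb-clipped-log} alone; the convention \(\clip(-\infty)=a\) handles a singular product, so no interface event is needed here. All cells share the same law, because every block at every site has the same distribution. Thus \(\E\widetilde Y_{k,r}=\Phi_{W,r}\) for every \(k\), independently of the cell's position on the ring.

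\textbf{Hoeffding for fixed \(r\).} The summands are independent and bounded, with range width \(2C_{K,z}\ell_W\log W\). Hoeffding's inequality gives, for \(t>0\),
\[
\Prob\Big\{\Big|\sum_{k=1}^{K_c}(\widetilde Y_{k,r}-\Phi_{W,r})\Big|>t\Big\}\le 2\exp\Big(-\frac{t^2}{2K_c(C_{K,z}\ell_W\log W)^2}\Big).
\]

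\textbf{Union bound over \(r\).} There are \(2W+1\) degrees, so the total probability is at most
\[
2(2W+1)\exp\big(-t^2/(2K_cC_{K,z}^2\ell_W^2\log^2W)\big).
\]
Choose \(t=C'\ell_W\log W\sqrt{K_c(\log W+u)}\), with \(C'\) a sufficiently large multiple of \(C_{K,z}\), so that \(t^2/(2K_cC_{K,z}^2\ell_W^2\log^2W)\ge 2(\log W+u)\). Then the bound is at most
\[
2(2W+1)W^{-2}e^{-2u}\le 2e^{-u}
\]
for \(W\ge 3\); smaller \(W\) is absorbed into the constant. Renaming \(C'\) as the constant \(C_{K,z}\) in \eqref{eq:gb-hoeffding} gives the claim, since that constant is only specified up to the usual \(K,z\)-dependence.

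\textbf{Main point to check.} The only delicate step is independence and identical distribution. The cells must be genuinely disjoint, so that no block is shared at the cut between consecutive cells. This holds because each site carries its own blocks \(\mathsf A_j,\mathsf B_j,\mathsf C_j\), and the clip makes the argument insensitive to singular interfaces.
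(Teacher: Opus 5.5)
Your proposal is correct and follows the paper's own argument: for each fixed $r$, the clipped core logarithms are independent across disjoint cells and confined to an interval of length $2C_{K,z}\ell_W\log W$; Hoeffding's inequality then applies, followed by a union bound over the $2W+1$ degrees, with no interface event needed. You also note explicitly that the threshold constant must be a fixed multiple of the clipping constant, so that the factor $W^{-2}$ absorbs the $2W+1$ degrees; the paper's one-line proof leaves this implicit.
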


\begin{proof}
For fixed \(r\), the variables \(\widetilde Y_{k,r}\) are independent.
Each lies in an interval of length \(2C_{K,z}\ell_W\log W\).  Hoeffding's inequality
\cite[Theorem~2.2.6]{vershynin2019high}, followed by a union bound over
only \(2W+1\) exterior degrees, proves
\eqref{eq:gb-hoeffding}.  No interface event is conditioned upon.
\end{proof}

\subsubsection{Roadmap of the global argument}
\label{subsec:gb-roadmap}

All model-specific anti-concentration estimates have already been proved.
The purpose of this section is only to prove the normalized log determinant converges to the desired limit, via comparing the log determinant to a deterministic normalized pressure and then showing the pressure arises in another system with much larger bandwidth, where circular law is already proven.

\begin{proposition}[Global pressure lifting]
\label{prop:gb-roadmap}
For every fixed \(z\in\C\),
\[
 \frac1N\log|\det(X_N-zI_N)|
 \xrightarrow{\Prob}U_{\cir}(z).
\]
\end{proposition}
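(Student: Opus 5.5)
The plan is to copy the scalar-indicator architecture of Section~5 (calibration on a short ring, lifting, and one final closure), with three changes. Clipped cell pressures $\Phi_{W,r}$ replace the mean pressures $F_r$. Corollary~\ref{corollary885} and Theorem~\ref{thm:local-complex-frame} replace the density-based fresh-closure and projective estimates. Union bounds over the $O(N/W)$ interface events $\mathcal G_j$ are controlled by \eqref{eq:gb-one-site-failure} together with $W/\log N\to\infty$. If $N\le W^{9/8}$, say, then $W\ge N^{8/9}$, but (L3) needs $W\ge N^{8/9+\omega}$. So I would treat $N\le W^{1+\gamma'}$ for a small $\gamma'$ directly by (L3), and assume from now on that $N/W\ge W^{\gamma'}$.

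\textbf{Step 1 (closure comparison).} Reserve one three-site packet, condition on the complementary arc, and substitute $\Theta=R_{\rm out}$ in \eqref{eq:gb-cut-exterior-sum}. On $\mathcal E_\partial\cap\mathcal M_{\rm fresh}$, Corollary~\ref{corollary885} and Lemma~\ref{lem:local-boundary-volume}, combined with \eqref{eq:local-boundary-volume-vs-exterior}, give
\[
\log|\det(X_N-zI_N)|=\log|c_{\rm out}|+\max_r\log\|\wedge^rR_{\rm out}\|+O(W\log W)
\]
outside an event of probability $O(p_W)+e^{-cW}$. The bound is in the capped sense: choose $T=\epsilon N$, so that the capped term contributes $p_WT/N\to0$ and the deviation is $o(N)$ in probability. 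The left side then equals $\max_r\log\|\mathcal A_{\rm out}^{(r)}\|$, the cleared exterior product over the outside arc, up to $O(W\log W)$.

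\textbf{Step 2 (cell decomposition).} Split the outside arc into complete cells of length $\ell_W$ plus a remainder of fewer than $c_W$ sites. By \eqref{eq:gb-product-control}, on $\bigcap_j\mathcal G_j$ (probability $1-CNe^{-cW}/W=1-o(1)$) the remainder costs $O(\ell_W\log W)=o(N)$. The key estimate is
\[
\max_r\log\Big\|\prod_k\mathscr M^{(r)}_{{\rm cell},k}\Big\|=\max_r\sum_k Y_{k,r}+O(K_cW\log W),
\]
where $K_c\approx N/\ell_W$ is the number of cells. The upper bound follows from submultiplicativity together with the reset bound \eqref{eq:gb-one-step-upper}. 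The lower bound is the main obstacle. I would attack it with a projective chain as in \eqref{eq:cell-lower}. Fix a top singular frame (via $U,V$) of each core product. In each reset packet, Theorem~\ref{thm:local-complex-frame} with the artificial boundary relation $\Theta_\lambda^{(r;U,V)}$ shows that the reset transports the incoming $r$-frame onto the next core's top frame with loss $O(W\log W)$ in capped expectation. The capped failures of probability $p_W$ are not union bounded; they are averaged into the sum, as the discussion after Theorem~\ref{thm:density-main} anticipates. This contributes $K_c(W\log W+p_W\ell_W\log W)$. The second term is $o(N)$ only if $p_W\ell_W\log W/\ell_W\to0$, which holds, but the fixed $r$ must then be taken as the deterministic $r_*$, and I expect the bookkeeping of the frames to be delicate.

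\textbf{Step 3 (concentration, calibration, lifting).} By Lemma~\ref{lem:gb-cell-concentration}, $\sum_k Y_{k,r}=K_c\Phi_{W,r}+O(\ell_W\log W\sqrt{K_c\log W})$, which is $o(N)$ because $\ell_W\ll W^{1+\gamma'}$. Combining with Steps 1--2,
\[
\tfrac1N\log|\det|=\tfrac1N\log|c_{\rm out}|+\tfrac{K_c}{N}\max_r\Phi_{W,r}+o_{\Prob}(1).
\]
Also $\frac1N\log|c_{\rm out}|$ concentrates around $\frac1W\E\log|\det\mathsf B|$ on the interface events, by the same Hoeffding clipping. Applying the identical decomposition to the anchor ring of size $M_W=W(K_Wc_W+3)$, for which $W\ge M_W^{8/9+\omega}$ holds, and using (L3) gives
\[
\frac{1}{\ell_W}\max_r\Phi_{W,r}+\frac1W\E\log|\det\mathsf B|\to U_{\cir}(z).
\]
The deterministic limit is forced by the same two-limits argument as \eqref{eq:mesoscopic-calibration}. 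Because the pressure is per cell and the cells are identical in law, substituting this calibration into the target expression yields $U_{\cir}(z)$.
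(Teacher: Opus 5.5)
Your route is the paper's. You compare one reserved terminal packet with the cleared outside product through Corollary~\ref{corollary885}, Lemma~\ref{lem:local-boundary-volume} and the cap $T=\varepsilon N$. You use complete cells (three reset sites, then a core), with clipped core pressures and Hoeffding over independent cells. The upper bound is by submultiplicativity. The lower bound runs along recursive wedges at the single deterministic degree $r_*$, with the capped losses of Theorem~\ref{thm:local-complex-frame} averaged rather than union bounded. The remainder is absorbed on $\bigcap_j\mathcal G_j$, calibration is done on the independent ring of size $M_W$ via (L3), and (L3) is applied directly in the short regime. Your $\gamma'$ must lie in $(1/200,1/8)$, which your requirement $\ell_W\ll W^{1+\gamma'}$ implicitly imposes. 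Steps~1 and~2 agree with the paper, with the lower bound holding only at $r_*$ and in probability, as you note.

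Step~3, however, is wrong as written, and the error is double counting.
\begin{itemize}
\item The cell products $\mathscr M^{(r)}_{{\rm cell},k}$, the core norms $Y_{k,r}=\log\|\mathscr R^{(r)}_{{\rm core},k}\|$, and hence $\Phi_{W,r}$, are built from the cleared transfers $\mathcal A_j^{(r)}=(\det\mathsf B_j)\wedge^rT_j$. So they already contain $\log|c_{{\rm core},k}|$. Indeed, in Step~1 you identified $\log|\det(X_N-zI_N)|$ with the maximal log norm of the \emph{cleared} outside product.
\item Adding $\frac1N\log|c_{\rm out}|$ again counts every interface determinant twice. Your identity $\frac1N\log|\det(X_N-zI_N)|=\frac1N\log|c_{\rm out}|+\frac{K_c}N\max_r\Phi_{W,r}+o_{\Prob}(1)$ is therefore false: its right side is off by $\frac1N\log|c_{\rm out}|$, which is not $o(1)$.
\item Worse, your calibration contains the unclipped mean $\frac1W\E\log|\det\mathsf B|$. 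This is $-\infty$ for every discrete atom; for Rademacher entries $\P\{\det\mathsf B=0\}\ge 2^{-W}$, for example from two equal rows. So the displayed limit is meaningless, and avoiding exactly this is why the paper clips.
\end{itemize}
Because the same spurious term enters both the anchor and the target, your final answer would come out right, but the argument passes through false statements.

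The fix is to drop the extra term. On the anchor, $c^{\rm anc}_{\rm out}\wedge^rR^{\rm anc}_{\rm out}=\mathscr M^{(r)}_{1:K_W}$, so the terminal comparison and the pressure sandwich give $\frac1{\ell_W}\max_r\Phi_{W,r}\to U_{\cir}(z)$ directly. The identical computation on the target then finishes the proof, with no separate concentration of $\log|c_{\rm out}|$ needed.
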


\begin{proof}
We only outline the argument, and the detailed verification is scattered in later subsections.

First, for \(K_c\) complete mesoscopic cells where $K_c\to\infty$,
\eqref{eq:gb-pressure-sandwich} proves that
\[
 \frac1{K_c\ell_W}
 \max_{0\le r\le2W}
 \log\left\|
   \mathscr M_{{\rm cell},K_c}^{(r)}\cdots
   \mathscr M_{{\rm cell},1}^{(r)}
 \right\|
 =
 \frac1{\ell_W}\max_{0\le r\le2W}\Phi_{W,r}(z)
 +o_{\Prob}(1).
\]

Second, we apply the same formula to another block band matrix with bandwidth $W$ but matrix size roughly $W^{1.01}$.  The terminal
comparison \eqref{eq:gb-terminal-pressure} identifies its logarithmic
determinant with its maximal exterior growth.  Hence
\eqref{eq:gb-anchor-pressure} holds.  Since the size of this auxiliary
matrix is $\sim W^{1.01}$,
Proposition~\ref{prop:subgaussian-block-high-band} applies and gives
\eqref{eq:gb-high-band-logdet}.  We use it to obtain the deterministic calibration
\[
 \frac1{\ell_W}\max_{0\le r\le2W}\Phi_{W,r}(z)
 \longrightarrow U_{\cir}(z),
\]
which is \eqref{eq:gb-pressure-calibration}.

Finally, suppose \(N\ge M_W\).  The already-derived anti-concentration yields \eqref{eq:gb-target-terminal-pressure}:
$$
\log|\det(X_N-zI_N)|
 =
 \max_{0\leq r\leq2W}
 \log\left\|
 c_{\rm out}^{\rm tar}\bigwedge\nolimits^rR_{\rm out}^{\rm tar}
 \right\|
 +o_{\Prob}(N).
$$

Combining these, we get
\[
 \frac1N\log|\det(X_N-zI_N)|
 \xrightarrow{\Prob}U_{\cir}(z).
\]
If
\(N<M_W\), then \(W/N^{0.99}\to\infty\), so
Proposition~\ref{prop:subgaussian-block-high-band} applies directly to
the target ring and gives the same displayed target limit.
The probability estimates needed in both branches are collected in the
final probability ledger.

The rest of the section justifies each claim in this sketched proof.

\end{proof}

\subsection{Removing frame dependence across cells by throwing three packets}

We now consider how different complete cells can be concatenated, by specifying an $r$-frame of the recursion that carries the $r$ largest singular values of each step.  Expose
all cell cores and the two outer endpoint interfaces used by each
three-block reset.  For a fixed degree \(r\), choose a deterministic
decomposable unit wedge \(\widehat v_{0,r}\) (i.e., it is a decomposable vector in the wedge space with unit norm).  Having exposed the first
\(k-1\) complete cells, set
\begin{equation}
 \widehat v_{k,r}:=
 \frac{\mathscr M_{{\rm cell},k}^{(r)}
       \widehat v_{k-1,r}}
 {\|\mathscr M_{{\rm cell},k}^{(r)}
       \widehat v_{k-1,r}\|}
 \label{eq:gb-recursive-wedge}
\end{equation}
when the denominator is nonzero, and set it equal to a fixed reference
wedge otherwise.  Let \(\mathcal F_{k-1}^{(r)}\) be the sigma-field
generated by all cores, all exposed endpoint interfaces, and the seven
internal blocks of resets \(1,\ldots,k-1\).  Then
\(\widehat v_{k-1,r}\) is
\(\mathcal F_{k-1}^{(r)}\)-measurable, while the seven internal blocks of
reset \(k\) are independent of \(\mathcal F_{k-1}^{(r)}\).  On the
all-interface-good event every factor is invertible, so the denominator
in \eqref{eq:gb-recursive-wedge} is nonzero and all these wedges remain
decomposable.

\textbf{Forming a frame.} For each complete cell $k$, we choose singular pairs of the core transfer
\[
 R_{{\rm core},k}w_j=s_ju_j,\qquad R_{{\rm core},k}^*u_j=s_jw_j,\qquad 1\leq j\leq r,
\]
corresponding to the largest \(r\) singular values, and, for each cell
\(k\), define
\[
 \widehat w=w_1\wedge\cdots\wedge w_r,\qquad
 \widehat u_0=u_1\wedge\cdots\wedge u_r,\qquad
 \widehat u=\frac{c_{{\rm core},k}}
 {|c_{{\rm core},k}|}\widehat u_0.
\]
Then \(\widehat w,\widehat u\) form a top singular pair for the cleared
core operator \(\mathscr R_{{\rm core},k}^{(r)}\) and its adjoint:
\begin{equation}
 \mathscr R_{{\rm core},k}^{(r)}\widehat w
 =\|\mathscr R_{{\rm core},k}^{(r)}\|\widehat u,\qquad
 (\mathscr R_{{\rm core},k}^{(r)})^*\widehat u
 =\|\mathscr R_{{\rm core},k}^{(r)}\|\widehat w.
 \label{eq:gb-top-singular-pair}
\end{equation}
In particular, for any other vector \(y\),
\begin{equation}
 |\langle\widehat u,\mathscr R_{{\rm core},k}^{(r)}y\rangle|
 =\|\mathscr R_{{\rm core},k}^{(r)}\|\,|\langle\widehat w,y\rangle|.
 \label{eq:gb-adjoint-test}
\end{equation}

The local complex-frame theorem and
\(\mathscr R_{{\rm reset},k}^{(r)}\) now use the same normalized blocks.
Thus the local cleared packet operator \(\mathcal Q^{(r)}\) is exactly
\(\mathscr R_{{\rm reset},k}^{(r)}\), with no conversion of notation.

\textbf{Getting rid of the frame dependence.}
We wish to apply Theorem~\ref{thm:local-complex-frame} conditionally with the
incoming frame \(\widehat v_{k-1,r}\), to show that the dependence on the frame $\widehat{v}_{k-1,r}$ is negligible in the large sum. To fix notations (the reader can ignore these notations for a first reading), we define the pairing and its
coefficient norm by
\begin{equation}
 Z_{k,r}:=
 \langle\widehat w,\mathscr R_{{\rm reset},k}^{(r)}
       \widehat v_{k-1,r}\rangle,
 \qquad
 \Gamma_{k,r}:=
 \|\operatorname{Coeff}_\mathbf{x}Z_{k,r}\|_2.
 \label{eq:gb-reset-pairing}
\end{equation}
To make the event used here explicit, set
\begin{align}
 \mathcal E_{\partial,k}:=\bigl\{\,
 &\|\mathsf C_{a_k}\|,\|\mathsf B_{a_k+2}\|\leq C_K,\notag\\
 &e^{-C_KW}\leq
   |\det\mathsf C_{a_k}|,|\det\mathsf B_{a_k+2}|
   \leq e^{C_KW},\notag\\
 &\|\mathsf C_{a_k}^{-1}\|,
   \|\mathsf B_{a_k+2}^{-1}\|\leq e^{C_KW}
 \,\bigr\}.
 \label{eq:gb-endpoint-regularity}
\end{align}
These are precisely the two endpoint interfaces \(C_L\) and \(B_R\) of
the three-site reset packet.  They are exposed before the seven internal
blocks, and \(\Prob(\mathcal E_{\partial,k}^c)\leq2e^{-c_KW}\).
On \(\mathcal E_{\partial,k}\),
\begin{equation}
 e^{-CW\log W}\leq\Gamma_{k,r}\leq e^{CW\log W},
 \label{eq:gb-reset-coefficients}
\end{equation}
and \eqref{eq:gb-adjoint-test} gives the exact inequality
\begin{equation}
 \|\mathscr R_{{\rm core},k}^{(r)}
      \mathscr R_{{\rm reset},k}^{(r)}
      \widehat v_{k-1,r}\|
 \geq
 \|\mathscr R_{{\rm core},k}^{(r)}\|\,|Z_{k,r}|.
 \label{eq:gb-reset-test}
\end{equation}

Set \(T_W:=C_2\ell_W\log W\), with \(C_2\) larger than the constant in
\eqref{eq:gb-product-control}, and define the clipped actual loss
\begin{equation}
 L_{k,r}:=
 \min\left\{T_W,
 \left[
  \log\|\mathscr R_{{\rm core},k}^{(r)}\|
  -\log\|\mathscr R_{{\rm core},k}^{(r)}
           \mathscr R_{{\rm reset},k}^{(r)}
           \widehat v_{k-1,r}\|
 \right]_+\right\}.
 \label{eq:gb-actual-loss}
\end{equation}
The value is \(T_W\) if a logarithm on the second line is
\(-\infty\).  Define
\begin{equation}
 \mathcal E_{{\rm core},k}:=
 \bigcap_{j=a_k+3}^{a_k+c_W-1}\mathcal G_j,
 \qquad
 H_k:=\mathcal E_{{\rm core},k}\cap\mathcal E_{\partial,k}.
 \label{eq:gb-Hk}
\end{equation}
Thus \(H_k\) says exactly that the core sites and the two exposed reset
interfaces satisfy the determinant, inverse, and norm bounds in (L1).
It is measurable before the seven fresh internal blocks are revealed.
Combining
\eqref{eq:gb-reset-coefficients}--\eqref{eq:gb-reset-test} with the capped
local estimate in Theorem~\ref{thm:local-complex-frame} gives the corrected conditional inequality
\begin{align}
 \mathbf1_{H_k}\,
 \E\!\left[
  L_{k,r}\mid
  \mathcal F_{k-1}^{(r)}
 \right]
 \leq
 \mathbf1_{H_k}\bigl(CW\log W+p_WT_W\bigr).
 \label{eq:gb-conditional-loss}
\end{align}
Finally we average over the core.  Since
\(\Prob(H_k^c)\leq e^{-cW}\) and \(L_{k,r}\leq T_W\),
\begin{equation}
 \E L_{k,r}
 \leq CW\log W+(p_W+e^{-cW})T_W.
 \label{eq:gb-unconditional-loss}
\end{equation}
On the eventual all-interface-good event where all $\mathcal{G}_j$ holds, we can verify that the two terms with the log in the definition of $L_{k,r}$ are bounded in absolute value by  \(T_W/2\); hence the clipping upper bound at $T_W$ in
\eqref{eq:gb-actual-loss} is inactive and \(L_{k,r}\) is the true reset loss, including when \(Z_{k,r}=0\).

For a deterministic degree \(r\), tower expectation and Markov's
inequality now imply
\begin{equation}
 \frac1{K_c\ell_W}\sum_{k=1}^{K_c}L_{k,r}
 =
 O_{\Prob}\left(
  \frac{W\log W}{\ell_W}
  +p_W\log W+e^{-cW}\log W
 \right)=o_{\Prob}(1).
 \label{eq:gb-total-reset-loss}
\end{equation}
The failure probability \(p_W\) in
Theorem~\ref{thm:local-complex-frame} has been integrated to give
\(o_{\Prob}(1)\), rather than summed over all cells.
\subsubsection{The deterministic pressure: comparison from many cells to one cell}
We combine the previous estimates to show that, for a transfer through
\(K_c\) complete cells of scalar length \(\ell_W\), the normalized
pressure equals the deterministic quantity
\(\max_r\Phi_{W,r}(z)\) up to a negligible error.  Thus the precise
number \(K_c\) of cells is irrelevant once it is sufficiently large.
For any positive integer $K_c$, form the following product of $K_c$ complete cells:
\begin{equation}\label{productcellus1}
 \mathscr M_{1:K_c}^{(r)}
 :=\mathscr M_{{\rm cell},K_c}^{(r)}\cdots\mathscr M_{{\rm cell},1}^{(r)}.
\end{equation}
We then apply \eqref{eq:gb-reset-test} successively to vectors in the recursive wedges
\eqref{eq:gb-recursive-wedge}.  Telescoping their norms gives, on the
global interface event where every \(\mathcal G_j\) holds,
\begin{equation}
 \log\|\mathscr M_{1:K_c}^{(r_*)}\|
 \geq
 \sum_{k=1}^{K_c}\widetilde Y_{k,r_*}
 -\sum_{k=1}^{K_c}L_{k,r_*}.
 \label{eq:gb-splice-lower}
\end{equation}
Only the one deterministic degree \(r_*\) is used here.
Submultiplicativity, \eqref{eq:gb-product-control}, and the three reset sites
per cell give, simultaneously for every \(0\le r\le 2W\),
\begin{equation}
 \log\|\mathscr M_{1:K_c}^{(r)}\|
 \leq
 \sum_{k=1}^{K_c}\widetilde Y_{k,r}
 +CK_cW\log W.
 \label{eq:gb-splice-upper}
\end{equation}
Combining Lemma~\ref{lem:gb-cell-concentration},
\eqref{eq:gb-total-reset-loss}, and
\eqref{eq:gb-deterministic-degree}, we find
\begin{align}
 \frac1{K_c\ell_W}
 \max_{0\leq r\leq2W}\log\|\mathscr M_{1:K_c}^{(r)}\|
 &=
 \frac1{\ell_W}\max_{0\leq r\leq2W}\Phi_{W,r}(z)
 \notag\\
 &\quad+
 O_{\Prob}\left(
  \frac{W\log W}{\ell_W}
  +p_W\log W
  +\frac{\log^{3/2}W}{\sqrt{K_c}}
 \right).
 \label{eq:gb-pressure-sandwich}
\end{align}
This estimate is sufficient when a short cycle has length \(K_c\ell_W\).
More generally, a block-site interval may be decomposed into complete
\(c_W\)-site cells and a final incomplete interval of fewer than \(c_W\)
block sites.  This incomplete interval changes either side of the
unnormalized sandwich by at most
\begin{equation}
 C\ell_W\log W
 \label{eq:gb-remainder-cost}
\end{equation}
on the simultaneous interface-regularity event $\cap_j\mathcal G_j$.  Indeed, if its cleared
exterior product is
\(R_{\rm rem}^{(r)}\), then both
\(\|R_{\rm rem}^{(r)}\|\) and
\(\|(R_{\rm rem}^{(r)})^{-1}\|\) are at most
\(e^{C\ell_W\log W}\) by \eqref{eq:gb-product-control}; applying these
two bounds gives the upper and lower comparisons, respectively.

\subsection{From pressure to circular law: calibration on the independent short ring}
\subsubsection{Determining the limit via a short cycle system}\label{todetermine5.5.1}
To determine the limit $\Phi_{W,r}(z)$, we need to consider a slightly larger system that contains a growing number of complete cells of size $\ell_W$.
For this, we take an independent periodic ring with
\begin{equation}
 m_M:=K_Wc_W+3,\qquad M_W=Wm_M.
 \label{eq:gb-anchor-sites}
\end{equation}
It consists of exactly \(K_W\) cells followed by three fresh terminal
sites.  For this cut, we again denote the packet and complementary transfers by
\begin{equation}
 R_{\partial}^{\rm anc}:=
 T_{m_M}T_{m_M-1}T_{m_M-2},
 \qquad
 R_{\rm out}^{\rm anc}:=T_{m_M-3}\cdots T_1.
 \label{eq:gb-anchor-two-transfers}
\end{equation}

The boundary determinant estimate and coefficient-norm comparison in (L2)
now give the following comparison, directly in the normalization of
\(X_{M_W}\). Denote by
\[
 c_{\rm out}^{\rm anc}:=
 \prod_{j=1}^{m_M-3}\det\mathsf B_j.
\]
Then the outside cleared operators are, recalling the product cell definition in \eqref{productcellus1},
\begin{equation}
 c_{\rm out}^{\rm anc}\bigwedge^rR_{\rm out}^{\rm anc}
 :=\mathscr M_{1:K_W}^{(r)}.
 \label{eq:gb-anchor-cleared-outside}
\end{equation}
For a deterministic sign \(\varepsilon_{\rm anc}\), the block-matrix
determinant and its packet coefficient vector are exactly
\[
 \varepsilon_{\rm anc}c_{\rm out}^{\rm anc}
 \mathscr D_{R_{\rm out}^{\rm anc}}
 \quad\hbox{and}\quad
 \varepsilon_{\rm anc}c_{\rm out}^{\rm anc}
 \operatorname{Coeff}_\mathbf{x}\mathscr D_{R_{\rm out}^{\rm anc}}.
\]
In
particular, the norm of the corresponding block-matrix coefficient vector is
\[
 |c_{\rm out}^{\rm anc}|\,
 \mathscr C(R_{\rm out}^{\rm anc})
 =e^{O(W\log W)}
   \max_{0\le r\le2W}
   \left\|c_{\rm out}^{\rm anc}
   \bigwedge^rR_{\rm out}^{\rm anc}\right\|.
\]
This is exactly the normalization of \(\mathscr M_{1:K_W}^{(r)}\), not an
additional sum of interface log-determinants.  
\begin{lemma}
For the periodic full-block band matrix with \(m_M\) block sites and
scalar size \(M_W=Wm_M\), one has
\begin{equation}
 \log|\det(X_{M_W}-zI_{M_W})|
 =
 \max_{0\leq r\leq2W}
 \log\|\mathscr M_{1:K_W}^{(r)}\|
 +o_{\Prob}(M_W).
 \label{eq:gb-terminal-pressure}
\end{equation}
\end{lemma}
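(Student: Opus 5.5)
The plan is to condition on everything outside the three fresh terminal sites \(m_M-2,m_M-1,m_M\) of the anchor ring, compare \(\log|\det|\) with the coefficient norm of the packet polynomial via Corollary~\ref{corollary885}, and then identify that norm with the maximal exterior growth of \(\mathscr M_{1:K_W}^{(r)}\) via Lemma~\ref{lem:local-boundary-volume} and \eqref{eq:local-boundary-volume-vs-exterior}. Concretely, by Lemma~\ref{lem:local-block-floquet} with the packet \(\{m_M-2,m_M-1,m_M\}\) (after cyclic relabelling), together with \eqref{eq:local-DTheta-det} and polynomial continuation, we have exactly
\[
 |\det(X_{M_W}-zI_{M_W})|=|c_{\rm out}^{\rm anc}|\,|\mathscr D_{R_{\rm out}^{\rm anc}}(\Xi)|,
\]
where \(\Xi\) is the evaluation of the seven fresh packet blocks, which are independent of \(R_{\rm out}^{\rm anc}\), of \(c_{\rm out}^{\rm anc}\), and of the two exposed endpoint interfaces.

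The key steps are as follows. Work on the event \(\mathcal G_{\rm all}:=\bigcap_{j=1}^{m_M}\mathcal G_j\cap\mathcal M_{\rm fresh}\). Its complement has probability at most \(Cm_Me^{-c_KW}=o(1)\), because \(m_M\le W^{1/100+o(1)}\). On this event \(R_{\rm out}^{\rm anc}\in\GL_{2W}(\C)\) and \(\mathcal E_\partial\) holds. Applying Lemma~\ref{lem:local-boundary-volume} and then \eqref{eq:local-boundary-volume-vs-exterior}, and multiplying by \(|c_{\rm out}^{\rm anc}|\) using \eqref{eq:gb-anchor-cleared-outside}, gives
\[
 \log\bigl(|c_{\rm out}^{\rm anc}|\,\mathscr C(R_{\rm out}^{\rm anc})\bigr)
 =\max_{0\le r\le 2W}\log\|\mathscr M_{1:K_W}^{(r)}\|+O(W\log W).
\]
It therefore remains to show that \(\log|\mathscr D_\Theta(\Xi)|-\log\mathscr C(\Theta)=o_\Prob(M_W)\) when \(\Theta=R_{\rm out}^{\rm anc}\). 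The upper direction follows at once from \eqref{eq:local-boundary-reverse} on \(\mathcal M_{\rm fresh}\), with error \(CW\log W=o(M_W)\). For the lower direction, apply \eqref{eq:local-boundary-capped} conditionally on the outside sigma-field; this is legitimate since \(\Theta\) is measurable with respect to it and the bound is uniform in \(\Theta\). Taking the cap \(T=\varepsilon M_W\) yields
\[
 \Prob\Bigl\{\log_+\tfrac{\mathscr C(\Theta)}{|\mathscr D_\Theta(\Xi)|}\ge \varepsilon M_W\Bigr\}\le \frac{CW\log W+p_W\varepsilon M_W}{\varepsilon M_W}\to0,
\]
because \(W\log W/M_W\to0\) and \(p_W\to0\) by \eqref{eq:gb-pW}. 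The event \(\mathscr D_\Theta(\Xi)=0\) is already counted in this bound, since the cap equals \(T\) there.

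The combination is then immediate. We get \(\log|\det|=\log(|c_{\rm out}^{\rm anc}|\mathscr C)+o_\Prob(M_W)\), and this equals \(\max_r\log\|\mathscr M^{(r)}_{1:K_W}\|+o_\Prob(M_W)\), with all exceptional events of probability \(o(1)\).

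I expect the main obstacle to be bookkeeping rather than an estimate. First, the determinant identity, the packet polynomial \(\mathscr D_\Theta\), and \(\mathscr M_{1:K_W}^{(r)}\) must use the same cleared normalization. In particular the factor \(\det\mathsf B_{R}\) of the exposed endpoint should appear only in \(b(\mathbf x)\) and not also in \(c_{\rm out}^{\rm anc}\), so that \eqref{eq:gb-anchor-cleared-outside} holds exactly and no extra interface determinants enter. Second, Corollary~\ref{corollary885} requires \(\Theta\in\GL_{2W}\), which holds only on the interface event. I would handle this by restricting to \(\bigcap_j\mathcal G_j\), which is measurable with respect to the outside sigma-field together with the endpoints, before invoking the conditional estimate, and then absorb its complement into the \(o(1)\) probability. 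Nothing beyond a union bound over \(m_M=W^{o(1)}\) sites is needed, since \(e^{-cW}\) is much smaller than \(W^{-1/100}\).
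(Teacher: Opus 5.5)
Your proposal is correct and follows the paper's proof essentially step for step. The steps are the Floquet identity \(\det(X_{M_W}-zI_{M_W})=\varepsilon_{\rm anc}c_{\rm out}^{\rm anc}\mathscr D_{\Theta}(\Xi)\) with \(\Theta=R_{\rm out}^{\rm anc}\), then the capped estimate \eqref{eq:local-boundary-capped} at \(T=\varepsilon M_W\) together with the reverse estimate on \(\mathcal M_{\rm fresh}\), and finally Lemma~\ref{lem:local-boundary-volume} with \eqref{eq:local-boundary-volume-vs-exterior}.

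One small correction: \(\bigcap_{j=1}^{m_M}\mathcal G_j\) involves the fresh packet blocks, so it is not measurable with respect to the outside data and endpoints. What you should condition on is \(\bigcap_{j\le m_M-3}\mathcal G_j\cap\mathcal E_\partial\), which is what the paper's conditioning on \(\mathcal F_{\rm anc}\) amounts to; any packet-site events you keep are then absorbed by the same union bound.
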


\begin{proof}[Proof of \eqref{eq:gb-terminal-pressure}]
Denote by
\[
 \Theta:=R_{\rm out}^{\rm anc},
 \qquad
 c:=c_{\rm out}^{\rm anc}.
\]
On the simultaneous interface-regularity event, the linear algebraic wedge expansion and
the definition \eqref{eq:gb-anchor-cleared-outside} give, for some deterministic
\(|\varepsilon_{\rm anc}|=1\), where we recall the definition $\mathscr D_\Theta(\Xi)$ \eqref{eq:local-DTheta}:
\begin{equation}
 \det(X_{M_W}-zI_{M_W})
 =\varepsilon_{\rm anc}c\,\mathscr D_\Theta(\Xi),
 \qquad
 \mathscr M_{1:K_W}^{(r)}
 =c\bigwedge\nolimits^r\Theta.
 \label{eq:gb-anchor-two-identities}
\end{equation}

Let \(\mathcal F_{\rm anc}\) be the sigma-field generated by the outside
cells and the two exposed endpoint matrices of the terminal packet, and
condition on \(\mathcal F_{\rm anc}\).  The remaining seven packet blocks
are fresh.  Thus, for every fixed \(\varepsilon>0\), we apply
\eqref{eq:local-boundary-capped} with \(T=\varepsilon M_W\) and obtain
\[
 \Prob\left\{
  \log\frac{\mathscr C(\Theta)}
            {|\mathscr D_\Theta(\Xi)|}
  >\varepsilon M_W
  \,\middle|\,\mathcal F_{\rm anc}
 \right\}
 \le
 p_W+\frac{CW\log W}{\varepsilon M_W}.
\]
The reverse estimate \eqref{eq:local-boundary-reverse} gives, conditionally
on \(\mathcal F_{\rm anc}\), outside an event of probability at most
\(e^{-cW}\),
\[
 \log_+\frac{|\mathscr D_\Theta(\Xi)|}{\mathscr C(\Theta)}
 \le CW\log W.
\]
Since \(p_W=o(1)\) and \(W\log W=o(M_W)\), uniformly in the conditioned
outside data,
\begin{equation}
 \log|\mathscr D_\Theta(\Xi)|
 =\log\mathscr C(\Theta)+o_{\Prob}(M_W).
 \label{eq:gb-anchor-value-to-coeff}
\end{equation}

By Lemma~\ref{lem:local-boundary-volume} and
\eqref{eq:local-boundary-volume-vs-exterior}, we can measure the size of the coefficient by
\[
 \log\mathscr C(\Theta)
 =
 \max_{0\le r\le2W}\log\|\bigwedge\nolimits^r\Theta\|
 +O(W\log W).
\]
Combining this with \eqref{eq:gb-anchor-two-identities} and
\eqref{eq:gb-anchor-value-to-coeff} yields
\[
 \log|\det(X_{M_W}-zI_{M_W})|
 =
 \max_{0\le r\le2W}
 \log\left\|c\bigwedge\nolimits^r\Theta\right\|
 +o_{\Prob}(M_W)
 =
 \max_{0\le r\le2W}
 \log\|\mathscr M_{1:K_W}^{(r)}\|
 +o_{\Prob}(M_W).
\]
Finally, the complement of the simultaneous interface-regularity event $\cap_j\mathcal G_j$
has probability \(o(1)\), which completes the proof.\end{proof}

Then we compare to the stable coefficient $\Phi_{W,r}(z)$.
We apply \eqref{eq:gb-pressure-sandwich} with \(K_c=K_W\) to get
\begin{equation}
 \frac1{M_W}\log|\det(X_{M_W}-zI_{M_W})|
 =
 \frac{K_W}{M_W}\max_r\Phi_{W,r}(z)
 +o_{\Prob}(1).
 \label{eq:gb-anchor-pressure}
\end{equation}
This is a law-of-large-numbers step where we need $K_W\to\infty $ to replace the random sum by its mean $\Phi_{W,r}(z)$. Here, the explicit normalized errors are
\begin{equation}
 O_{\Prob}\left(
  W^{-1/200}\log W
  +\frac{\log^{3/2}W}{\sqrt W}
  +W^{-1/400}\log^{3/2}W
 \right)+o_{\Prob}(1).
 \label{eq:gb-anchor-errors}
\end{equation}
Since \(W=M_W^{100/101+o(1)}\), in particular
\(W/M_W^{0.99}\to\infty\).  Thus the high-band log-determinant
Proposition~\ref{prop:subgaussian-block-high-band} is used only in the
range \(W\gg M_W^{0.99}\).
\begin{corollary}Comparing
\eqref{eq:gb-high-band-logdet} and \eqref{eq:gb-anchor-pressure} yields the
deterministic pressure calibration
\begin{equation}
 \frac1{\ell_W}\max_{0\leq r\leq2W}\Phi_{W,r}(z)
 \longrightarrow U_{\cir}(z).
 \label{eq:gb-pressure-calibration}
\end{equation}
\end{corollary}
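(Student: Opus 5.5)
The calibration follows by comparing two expressions for the same random variable. Let \(X_{M_W}\) be the independent short ring with \(m_M=K_Wc_W+3\) block sites. Since \(M_W=W^{101/100+o(1)}\), for any fixed \(\omega\in(0,1/9)\) with \(100/101>8/9+\omega\) we have \(W\ge M_W^{8/9+\omega}\) for all large \(W\). Proposition~\ref{prop:subgaussian-block-high-band} then gives
\[
\frac1{M_W}\log|\det(X_{M_W}-zI_{M_W})|\xrightarrow{\Prob}U_{\cir}(z).
\]
On the other hand, \eqref{eq:gb-anchor-pressure} says that the same quantity equals \(\frac{K_W}{M_W}\max_r\Phi_{W,r}(z)+o_{\Prob}(1)\), with the explicit errors listed in \eqref{eq:gb-anchor-errors}, each of which tends to zero.

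The next step is to subtract the two statements. The difference
\[
\frac{K_W}{M_W}\max_r\Phi_{W,r}(z)-U_{\cir}(z)
\]
is deterministic, and it is \(o_{\Prob}(1)\). A deterministic sequence that tends to zero in probability tends to zero. For a cleaner writeup I would argue by contradiction: along a subsequence where \(|\cdots|\ge\eta\), the two random variables would have limits in probability differing by at least \(\eta\), which is impossible. This is the same argument used for \eqref{eq:mesoscopic-calibration}.

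It remains to convert the normalization \(K_W/M_W\) into \(1/\ell_W\). We have
\[
\frac{M_W}{K_W\ell_W}=\frac{W(K_Wc_W+3)}{K_Wc_WW}=1+\frac{3}{K_Wc_W}\to1 .
\]
Thus \(\frac1{\ell_W}\max_r\Phi_{W,r}=\frac{M_W}{K_W\ell_W}\cdot\frac{K_W}{M_W}\max_r\Phi_{W,r}\), and it suffices that \(\frac{K_W}{M_W}\max_r\Phi_{W,r}\) is bounded. That boundedness follows from what we just proved, since this quantity converges to the finite number \(U_{\cir}(z)\). Alternatively, the clipping gives \(|\Phi_{W,r}|\le C\ell_W\log W\), but that bound is too weak, so the convergence argument is the one to use.

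The step most in need of care is that \eqref{eq:gb-anchor-pressure} genuinely holds with \(o_{\Prob}(1)\). This requires \eqref{eq:gb-pressure-sandwich} with \(K_c=K_W\to\infty\), which is fine since \(K_W\asymp W^{1/200}\). It also requires the terminal comparison \eqref{eq:gb-terminal-pressure} and the probability \(o(1)\) of the complement of the interface event on the short ring. That probability is bounded by \(m_M e^{-cW}\to0\), so the argument closes.
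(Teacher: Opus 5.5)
Your argument is correct and is the paper's: the deterministic sequence $\frac{K_W}{M_W}\max_r\Phi_{W,r}(z)$ is forced to $U_{\cir}(z)$ by comparing \eqref{eq:gb-high-band-logdet} with \eqref{eq:gb-anchor-pressure}, and your conversion using $M_W/(K_W\ell_W)=1+3/(K_Wc_W)\to1$ is valid. Your aside that the clipping bound is too weak is mistaken, however: it is exactly what the paper uses, since $\bigl|\frac{K_W}{M_W}-\frac1{\ell_W}\bigr|\max_r|\Phi_{W,r}|\le\frac{3W}{M_W\ell_W}\cdot C\ell_W\log W=O(W^{-1/100}\log W)\to0$.
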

Here
$\frac{K_W\ell_W}{M_W}=1-\frac{3W}{M_W},
$
and the clipping bound gives the rounding error
\begin{equation}
 \left|
  \frac{K_W}{M_W}-\frac1{\ell_W}
 \right|\max_r|\Phi_{W,r}|
 \leq C\frac{W\log W}{M_W}=o(1).
 \label{eq:gb-anchor-rounding}
\end{equation}

\subsubsection{Uplifting to the original long cycle matrix}

Suppose first that \(N\geq M_W\).  Reserve the final three block sites for
the terminal seam and define
\begin{equation}
 K_N:=\left\lfloor\frac{m-3}{c_W}\right\rfloor.
 \label{eq:gb-target-cell-count}
\end{equation}
The first \(K_Nc_W\) sites form complete cells; the remaining open prefix
has fewer than \(c_W\) sites.  Since \(N\geq M_W\),
\begin{equation}
 K_N\geq K_W-1\longrightarrow\infty,\qquad
 \frac{K_N\ell_W}{N}=1+O\left(\frac{\ell_W}{N}\right).
 \label{eq:gb-target-rounding}
\end{equation}
For the final three block sites, define
\[
 R_{\partial}^{\rm tar}:=T_mT_{m-1}T_{m-2},
 \qquad
 R_{\rm out}^{\rm tar}:=T_{m-3}\cdots T_1.
\]
The first operator is the transfer through the terminal packet, while
the second is the chronologically ordered transfer through all remaining
sites, including the incomplete interval.  Condition on
those outside sites and also on the two packet endpoint blocks
\(\mathsf C_{m-2}\) and \(\mathsf B_m\).  On their
endpoint-regularity event, the seven
internal packet blocks remain fresh.  The extended boundary determinant
estimate applies with the actual outside boundary map
\begin{equation}
 \Theta=R_{\rm out}^{\rm tar}.
 \label{eq:gb-target-actual-substitution}
\end{equation}
Denote by
\[
 c_{\rm out}^{\rm tar}:=
 \prod_{j=1}^{m-3}\det\mathsf B_j.
\]
The same two-sided terminal comparison used in
\eqref{eq:gb-terminal-pressure} gives
\begin{equation}
 \log|\det(X_N-zI_N)|
 =
 \max_{0\leq r\leq2W}
 \log\left\|
 c_{\rm out}^{\rm tar}\bigwedge\nolimits^rR_{\rm out}^{\rm tar}
 \right\|
 +o_{\Prob}(N).
 \label{eq:gb-target-terminal-pressure}
\end{equation}
The cleared outside product in this display is the product of the
\(K_N\) complete cells and the final incomplete interval.  Apply
\eqref{eq:gb-pressure-sandwich} to the complete cells, absorb the
incomplete interval using \eqref{eq:gb-remainder-cost}, use the one
terminal packet, and invoke \eqref{eq:gb-pressure-calibration}.  This proves the claimed log-determinant limit
\begin{equation}
 \frac1N\log|\det(X_N-zI_N)|
 \xrightarrow{\Prob}U_{\cir}(z).
 \label{eq:gb-global-logdet}
\end{equation}
The normalized error before taking the limit is
$o_{\Prob}(1)$
for the scales \eqref{eq:gb-cell-scales}--\eqref{eq:gb-anchor-scales}.

If \(N<M_W\), then \(N\leq W^{101/100+o(1)}\), and consequently
\(W/N^{0.99}\to\infty\).  With, for example, $\omega_*=1/20$,
$0.99>8/9+\omega_*$; hence
Proposition~\ref{prop:subgaussian-block-high-band}, applied directly to
the target ring, gives \eqref{eq:gb-global-logdet} without a cell
decomposition.
An arbitrary sequence of dimensions may alternate between the two
branches.  Since the same limit holds on each of the two resulting
subsequences, it holds on the original sequence.

\subsection{Replacement and completion of the proof}

For every fixed \(z\in\C\), the preceding argument proves
\eqref{eq:gb-global-logdet}.
Taking \(z=0\) gives
\(\Prob\{\det X_N=0\}\to0\), since \(U_{\cir}(0)=-1/2\) is finite
whereas the normalized log determinant equals \(-\infty\) when \(X_N\)
is singular.

The Hilbert--Schmidt condition is immediate from the normalization:
\begin{equation}
 \frac1N\|X_N\|_{\HS}^2
 =
 \frac1{3WN}
 \sum_{\text{all }3NW\text{ displayed variance-one atoms}}\xi_{ij}^2
 \xrightarrow{\Prob}1.
 \label{eq:gb-HS}
\end{equation}
In particular, it is bounded in probability.  Let \(G_N\) be normalized
complex Ginibre.  By the Gaussian case of
\cite[Theorem~1.5]{Han2410}, its normalized log determinant converges to
\(U_{\cir}(z)\) on a full-measure set, and
\(N^{-1}\|G_N\|_{\HS}^2\) is bounded in probability.  Therefore
\begin{equation}
 \frac1N\log|\det(X_N-zI_N)|
 -\frac1N\log|\det(G_N-zI_N)|
 \xrightarrow{\Prob}0
 \label{eq:gb-replacement-logdet}
\end{equation}
for almost every \(z\).  The in-probability Tao--Vu replacement principle
\cite[Theorem 2.1]{TaoVuKrishnapur2010}, applied using
\eqref{eq:gb-HS}--\eqref{eq:gb-replacement-logdet}, gives
\[
 \mu_{X_N}-\mu_{G_N}\Longrightarrow0
 \quad\text{in probability}.
\]
Since \(\mu_{G_N}\Longrightarrow\mu_{\cir}\), this proves
Theorem~\ref{thm:main}.

\section{Proofs of technical results for the discrete periodic full-block case}
\label{supplementsection}
This section gives all proofs deferred from
Section~\ref{sec:local-interface}.
As permitted in the footnote introducing the formal packet variables,
we retain optional deterministic entry weights $a_e$ satisfying
$0<c_*\leq a_e\leq C_*<\infty$.  In the model stated in
Theorem~\ref{thm:main}, all these weights equal one.

\begin{proof}[\proofname\ of Lemma \ref{lem:local-interface-control}]
The subgaussian norm bound (cf. \cite{vershynin2019high}) gives
\(\P\{\|G\|>C_0\sqrt W\}\leq2e^{-c_0W}\) for two $K$-dependent constants $c_0,C_0$. This also gives the upper bound on the determinant.  For the lower bounds we use the following fixed-index and overcrowding
estimates for the bottom singular values of an iid matrix.  Denote the
singular values of $G$ in decreasing order by
\(s_1(G)\geq\cdots\geq s_W(G)\).  Nguyen's overcrowding theorem
\cite[Theorem~1.4, equations~(3)--(4)]{Nguyen} gives, for each fixed
\(0<\vartheta<1\), constants \(c,C,\gamma_0>0\) and the following
estimates.  Set \(k_0:=\lceil\gamma_0^{-1}\rceil+1\).  Equation~(4), with
\(u=k\varepsilon\), gives
\begin{equation}
 \P\left\{s_{W-k+1}(G)\leq \frac{k\varepsilon}{\sqrt W}\right\}
 \leq (C\varepsilon)^{(1-\vartheta)k^2}+e^{-cW}
 \label{eq:local-overcrowding}
\end{equation}
for \(k_0<k<\gamma_0W\), while for each fixed \(1\leq k\leq k_0\),
\begin{equation}
 \P\left\{s_{W-k+1}(G)\leq \frac{\varepsilon}{\sqrt W}\right\}
 \leq C_k\varepsilon^{k^2}+e^{-cW}.
 \label{eq:local-fixed-bottom}
\end{equation}
Choose \(a\) large and set \(\varepsilon_k=e^{-aW/k^2}\) up to
\(k=\lfloor\sqrt W\rfloor\); above \(\sqrt W\), use a sufficiently small
fixed \(\varepsilon_0\) up to \(\lfloor\gamma_0W/2\rfloor\).  A union bound
shows that, outside probability \(e^{-cW}\),
\begin{align*}
 \frac{s_{W-k+1}(G)}{\sqrt W}
 &\geq \frac{e^{-aW/k^2}}{W},&&1\leq k\leq k_0,\\
 \frac{s_{W-k+1}(G)}{\sqrt W}
 &\geq \frac{k e^{-aW/k^2}}{W},&&k_0<k\leq\sqrt W,\\
 \frac{s_{W-k+1}(G)}{\sqrt W}
 &\geq \frac{\varepsilon_0k}{W},&&
 \sqrt W<k\leq\gamma_0W/2.
\end{align*}
Monotonicity supplies a fixed lower bound for the remaining singular values.
Multiplication loses only \(O(W)\) in the logarithm, since
\(W\sum_{k\geq1}k^{-2}=O(W)\), the contribution below \(\sqrt W\) from
\(\log(k/W)\) is \(o(W)\), and the remaining sum is a Riemann sum of order
\(W\).  Since \(G_{\rm nor}=3^{-1/2}W^{-1/2}G\), the harmless factor
\(3^{-W/2}\) is absorbed in the constants, and
\(|\det G_{\rm nor}|\geq e^{-CW}\).  The case \(k=1\) likewise gives
\(s_W(G_{\rm nor})\geq e^{-CW}\), proving the inverse bound.
\end{proof}

\subsection{Discrete polynomial small-ball estimate: proof of
Proposition~\ref{prop:local-terminal}}

\subsubsection{Factorizing large singular values by strong RRQR, and polynomial LSV}

We recall the coordinate form of two-sided strong rank-revealing QR used in
the proof.

\begin{lemma}[Two-sided coordinate skeleton]
\label{lem:local-rrqr}
Let \(Q\in\C^{n\times n}\), let \(\tau\geq1\), and write
\(r:=\#\{j:s_j(Q)>\tau\}\).  There are row and column sets
\(I,J\subset[n]\), each of cardinality \(r\), such that
\begin{equation}
 K_{\mathrm{piv}}:=Q_{I,J}
 \label{eq:local-Kpiv}
\end{equation}
is invertible when \(r>0\).  Define
\begin{equation}
 X_{\mathrm{skel}}:=K_{\mathrm{piv}}^{-1}Q_{I,J^c},\quad
 Y_{\mathrm{skel}}:=Q_{I^c,J}K_{\mathrm{piv}}^{-1},\quad
 E_0:=Q_{I^c,J^c}-Y_{\mathrm{skel}}K_{\mathrm{piv}}X_{\mathrm{skel}}.
 \label{eq:local-skeleton-definitions}
\end{equation}
After a row-column permutation applied to $Q$, we can write
\begin{equation}
 Q=
 \begin{pmatrix}
  K_{\mathrm{piv}}&K_{\mathrm{piv}}X_{\mathrm{skel}}\\
  Y_{\mathrm{skel}}K_{\mathrm{piv}}&
  Y_{\mathrm{skel}}K_{\mathrm{piv}}X_{\mathrm{skel}}+E_0
 \end{pmatrix},
 \label{eq:local-skeleton-identity}
\end{equation}
and, with the universal choice \(C_{\rm RRQR}=4\), for all large $n$,
\begin{equation}
 s_j(K_{\mathrm{piv}})\geq n^{-C_{\rm RRQR}}s_j(Q),\quad
 \|X_{\mathrm{skel}}\|+\|Y_{\mathrm{skel}}\|
 \leq n^{C_{\rm RRQR}},\quad
 \|E_0\|\leq n^{C_{\rm RRQR}}\tau.
 \label{eq:local-skeleton-bounds}
\end{equation}
The first bound holds for \(j\leq r\).  For \(r=0\), the pivot is empty
and \eqref{eq:local-skeleton-identity} means \(Q=E_0\).
\end{lemma}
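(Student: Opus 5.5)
We will obtain the skeleton by applying Gu–Eisenstat strong rank-revealing QR twice, first to columns and then to rows, with a fixed parameter $f=2$. The identity \eqref{eq:local-skeleton-identity} is purely algebraic once $K_{\mathrm{piv}}$ is invertible. Writing $Q$ in the permuted $(I,I^c)\times(J,J^c)$ block form, the definitions \eqref{eq:local-skeleton-definitions} give $Q_{I,J^c}=K_{\mathrm{piv}}X_{\mathrm{skel}}$ and $Q_{I^c,J}=Y_{\mathrm{skel}}K_{\mathrm{piv}}$. The lower-right block is $Y_{\mathrm{skel}}K_{\mathrm{piv}}X_{\mathrm{skel}}+E_0$ by the definition of $E_0$, which is the Schur complement $Q_{I^c,J^c}-Q_{I^c,J}K_{\mathrm{piv}}^{-1}Q_{I,J^c}$. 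So all the content is in choosing $I,J$ and proving \eqref{eq:local-skeleton-bounds}. The case $r=0$ is trivial: $\|E_0\|=\|Q\|=s_1(Q)\le\tau$.

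\textbf{Column step.} Apply strong RRQR with rank $r$ and parameter $f=2$ to $Q$. This gives a column set $J$ and a factorization $Q\Pi=\mathcal Q\begin{pmatrix}R_{11}&R_{12}\\0&R_{22}\end{pmatrix}$ with the following properties:
\begin{itemize}
\item $s_j(R_{11})\ge s_j(Q)/\sqrt{1+f^2r(n-r)}$ for $j\le r$;
\item $s_1(R_{22})\le s_{r+1}(Q)\sqrt{1+f^2r(n-r)}\le 2n\tau$;
\item $|(R_{11}^{-1}R_{12})_{ab}|\le f$.
\end{itemize}
Then $Q_{:,J}=\mathcal Q_{:,[r]}R_{11}$ has the same singular values as $R_{11}$. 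Moreover $X_{\mathrm{skel}}=R_{11}^{-1}R_{12}$ is determined by columns alone: if $Q_{:,J}$ has full column rank, then $Q_{:,J^c}=Q_{:,J}X+\mathcal Q_{:,r+1:}R_{22}$, and restricting to any row set $I$ with $Q_{I,J}$ invertible gives $K_{\mathrm{piv}}^{-1}Q_{I,J^c}=X+K_{\mathrm{piv}}^{-1}(\mathcal Q_{I,r+1:}R_{22})$. To avoid this correction term, I would instead define $X$ through the row choice below and bound it directly.

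\textbf{Row step.} Apply strong RRQR to the rows of the $n\times r$ matrix $Q_{:,J}$, i.e.\ to $Q_{:,J}^{*}$, which has exact rank $r$. This gives $I$ with
\[
s_j(Q_{I,J})\ge s_j(Q_{:,J})/\sqrt{1+4r(n-r)}
\]
and $|(Y_{\mathrm{skel}})_{ab}|\le 2$. Combining the two steps,
\[
s_j(K_{\mathrm{piv}})\ge s_j(Q)/(1+4n^2)\ge n^{-4}s_j(Q)
\]
for large $n$, and $\|Y_{\mathrm{skel}}\|\le 2n$.

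\textbf{The hard part.} What remains is to bound $X_{\mathrm{skel}}=K_{\mathrm{piv}}^{-1}Q_{I,J^c}$ and the Schur complement $E_0$ with the row set already fixed. For $E_0$ I would use the column factorization: $Q=Q_{:,J}[\,I\ \ X'\,]+[\,0\ \ N\,]$ with $X'=R_{11}^{-1}R_{12}$ and $\|N\|=\|R_{22}\|\le 2n\tau$. Substituting gives
\[
X_{\mathrm{skel}}=X'+K_{\mathrm{piv}}^{-1}N_I,\qquad E_0=N_{I^c}-Y_{\mathrm{skel}}N_I,
\]
where $N_I,N_{I^c}$ are the $I$- and $I^c$-rows of $N$. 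Hence $\|E_0\|\le(1+2n)2n\tau\le n^4\tau$. The delicate term is $K_{\mathrm{piv}}^{-1}N_I$, since $\|K_{\mathrm{piv}}^{-1}\|\le n^4/s_r(Q)<n^4/\tau$ only uses $s_r(Q)>\tau$. This yields $\|K_{\mathrm{piv}}^{-1}N_I\|\le 2n^5$, which falls just short of $n^{C_{\rm RRQR}}$ with $C_{\rm RRQR}=4$.

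I expect this to be the main obstacle. My first remedy would be to run the row selection jointly, as a maximal-volume (or $2$-locally-maximal-volume) $r\times r$ submatrix of $Q$ itself. Cramer's rule then gives entrywise $|(X_{\mathrm{skel}})_{ab}|,|(Y_{\mathrm{skel}})_{ab}|\le 2$ directly, hence norms at most $2n$. The Goreinov–Tyrtyshnikov estimates give $|(E_0)_{ab}|\le 2(r+1)s_{r+1}(Q)$ and $s_j(K_{\mathrm{piv}})\ge s_j(Q)/(2(1+r(n-r)))$. Each of these is at most $n^4$ for large $n$, which fits the universal constant $C_{\rm RRQR}=4$.
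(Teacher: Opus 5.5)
Your column-then-row strong RRQR construction is exactly the paper's proof: the paper's $T,H$ are your $X',N$, and it derives the same identities
$X_{\mathrm{skel}}=X'+K_{\mathrm{piv}}^{-1}N_I$ and $E_0=N_{I^c}-Y_{\mathrm{skel}}N_I$. The obstacle you identify is not real. It comes from weakening the Gu--Eisenstat constants too early.

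Since $r(n-r)\le n^2/4$, set $a:=\sqrt{1+4r(n-r)}\le\sqrt{1+n^2}$. Then:
\begin{itemize}
\item The two RRQR steps give $s_j(K_{\mathrm{piv}})\ge a^{-2}s_j(Q)$, hence $\|K_{\mathrm{piv}}^{-1}\|\le a^2/s_r(Q)<a^2/\tau$.
\item You also have $\|N\|\le a\,s_{r+1}(Q)\le a\tau$.
\item Therefore $\|K_{\mathrm{piv}}^{-1}N_I\|\le a^3\le(1+n^2)^{3/2}$, not $2n^5$.
\item The entrywise bound $2$ gives $\|X'\|,\|Y_{\mathrm{skel}}\|\le 2\sqrt{r(n-r)}\le n$.
\end{itemize}
Combining these, $\|X_{\mathrm{skel}}\|+\|Y_{\mathrm{skel}}\|\le 2n+(1+n^2)^{3/2}\le n^4$ and $\|E_0\|\le(1+n)\sqrt{1+n^2}\,\tau\le n^4\tau$ for $n\ge2$. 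Also $s_j(K_{\mathrm{piv}})\ge s_j(Q)/(1+n^2)\ge n^{-4}s_j(Q)$. So your first route already proves the lemma with $C_{\rm RRQR}=4$, which is precisely how the paper closes it.

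The maximal-volume fallback is therefore unnecessary, and as written it is the weak point of your proposal.
\begin{itemize}
\item Goreinov--Tyrtyshnikov give the entrywise bound $|(E_0)_{ab}|\le(r+1)s_{r+1}(Q)$ for a globally maximal-volume pivot. The bound $s_j(K_{\mathrm{piv}})\gtrsim s_j(Q)$ for every $j\le r$, with no gap between $s_r(Q)$ and $s_{r+1}(Q)$, is not one of their standard conclusions. You would have to prove it.
\item One way to prove it: a simultaneous exchange of one row and one column gives $|(E_0)_{ab}(K_{\mathrm{piv}}^{-1})_{ji}+X_{jb}Y_{ai}|\le1$. Hence $\|E_0\|\,\|K_{\mathrm{piv}}^{-1}\|\le\|E_0\|_{\HS}\|K_{\mathrm{piv}}^{-1}\|_{\HS}\le 2r(n-r)$. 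The Gu--Eisenstat rescaling of the two diagonal blocks then yields the singular-value bound.
\item "$2$-locally maximal" must include such simultaneous exchanges. With single exchanges only, the claim fails. Take $Q=\operatorname{diag}(1,M,0,\dots,0)$ with $M>n^4$, $\tau=1$, $r=1$. The pivot $K_{\mathrm{piv}}=Q_{11}=1$ is locally maximal, since every other entry in row $1$ and column $1$ vanishes. Yet $s_1(K_{\mathrm{piv}})=s_1(Q)/M<n^{-4}s_1(Q)$, and $\|E_0\|=M>n^4\tau$.
\end{itemize}
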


\begin{proof}
The case \(r=0\) is trivial, so assume \(r\geq1\).  We use the rank-revealing QR decomposition (RRQR) in
Gu--Eisenstat \cite[Theorem~3.2]{GuEisenstat} with the fixed parameter
\(f=2\), and denote by
\[
 a:=\sqrt{1+4r(n-r)},
 \qquad
 b:=2\sqrt{r(n-r)}.
\]
The first RRQR, applied to the columns of \(Q\), selects columns \(J\) and gives
matrices \(T,H\) such that
\begin{equation}
 Q_{:,J^c}=Q_{:,J}T+H,
 \qquad
 \|T\|\leq b,
 \qquad
 \|H\|\leq a\tau,
 \qquad
 s_j(Q_{:,J})\geq a^{-1}s_j(Q).
 \label{eq:first-rrqr}
\end{equation}
Here \(\|H\|\leq a\tau\) follows from
\(s_{r+1}(Q)\leq\tau\); when \(r=n\), the matrices \(T,H\) are empty.

The second RRQR is applied not independently to \(Q^*\), but to
\(Q_{:,J}^*\).  Thus it selects a row set \(I\) compatible with the
columns already selected in the first step.  Writing
\[
 K_{\mathrm{piv}}:=Q_{I,J},
\]
we obtain, denoting $Y_{\mathrm{skel}}$ as the matrix in the lemma statement,
\begin{equation}
 Q_{I^c,J}=Y_{\mathrm{skel}}K_{\mathrm{piv}},
 \qquad
 \|Y_{\mathrm{skel}}\|\leq b,
 \qquad
 s_j(K_{\mathrm{piv}})
 \geq a^{-1}s_j(Q_{:,J})
 \geq a^{-2}s_j(Q).
 \label{eq:second-rrqr}
\end{equation}
Equivalently, one may append \(n-r\) zero rows to \(Q_{:,J}^*\) before
this second application, thereby reducing it to the square formulation.

Restricting \eqref{eq:first-rrqr} to the rows in \(I\) gives
\[
 Q_{I,J^c}=K_{\mathrm{piv}}T+H_I,
\]
and hence
\begin{equation}
 X_{\mathrm{skel}}
 :=K_{\mathrm{piv}}^{-1}Q_{I,J^c}
 =T+K_{\mathrm{piv}}^{-1}H_I.
 \label{eq:X-from-two-rrqr}
\end{equation}
Restricting \eqref{eq:first-rrqr} to \(I^c\), and using
\eqref{eq:second-rrqr}, gives
\[
 Q_{I^c,J^c}
 =Y_{\mathrm{skel}}K_{\mathrm{piv}}T+H_{I^c}.
\]
Therefore
\begin{equation}
 E_0
 :=Q_{I^c,J^c}
   -Y_{\mathrm{skel}}K_{\mathrm{piv}}X_{\mathrm{skel}}
 =H_{I^c}-Y_{\mathrm{skel}}H_I.
 \label{eq:E-from-two-rrqr}
\end{equation}
These identities prove the asserted block decomposition.

Since \(s_r(Q)>\tau\), \eqref{eq:second-rrqr} implies
\[
 \|K_{\mathrm{piv}}^{-1}\|
 \leq \frac{a^2}{s_r(Q)}
 <\frac{a^2}{\tau}.
\]
Consequently,
\[
 \|X_{\mathrm{skel}}\|+\|Y_{\mathrm{skel}}\|
 \leq 2b+a^3,
 \qquad
 \|E_0\|\leq(1+b)a\tau.
\]
Finally, \(r(n-r)\leq n^2/4\), so
\[
 a\leq\sqrt{1+n^2},
 \qquad
 b\leq n.
\]
For \(n\geq2\), these estimates imply
\[
 s_j(K_{\mathrm{piv}})\geq n^{-4}s_j(Q),
 \qquad
 \|X_{\mathrm{skel}}\|+\|Y_{\mathrm{skel}}\|\leq n^4,
 \qquad
 \|E_0\|\leq n^4\tau.
\]
The case \(n=1\) is immediate.  Although Gu--Eisenstat state their
result for real matrices, the determinant-swap proof is unchanged over
\(\mathbb C\), with unitary matrices and adjoints replacing orthogonal
matrices and transposes.  Choosing the lexicographically first
admissible pair \((I,J)\) makes the selection measurable in \(Q\).
\end{proof}

We use the following consequence of Cook's super-regular-profile theorem
\cite[Theorem~1.24, equation~(1.28)]{Cook}.
\begin{lemma}[The deformed-square input used below]
\label{lem:local-cook-input}
Fix constants \(L<\infty\) and \(0<c_*\leq C_*<\infty\).  Let
\[
 Z_n=(a_{ij}\xi_{ij})_{i,j\leq n},
 \qquad c_*\leq a_{ij}\leq C_*,
\]
where the \(\xi_{ij}\) are independent copies of the atom in
Theorem~\ref{thm:main}.  If \(D_n\in\C^{n\times n}\) is deterministic
and \(\|D_n\|\leq n^L\), then there are constants
\(\beta_L,C_L,c_L>0\), depending only on
$L,K,c_*,C_*$, such that
\begin{equation}
 \P\{s_{\min}(Z_n+D_n)\leq n^{-\beta_L}\}
 \leq C_L\sqrt{\frac{\log n}{n}}+e^{-c_Ln}.
 \label{eq:local-cook-input}
\end{equation}
The same bound holds conditionally when \(D_n\) is measurable with
respect to an independent sigma-field and satisfies
$\|D_n\|\leq n^L$ almost surely.
\end{lemma}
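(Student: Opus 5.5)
The plan is to obtain \eqref{eq:local-cook-input} by specializing Cook's super-regular-profile bound \cite[Theorem~1.24, (1.28)]{Cook} to the completely filled profile. Since this is an imported estimate, the work is mostly in checking hypotheses and tracking normalizations.

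First, put $Z_n$ in Cook's form. Write $Z_n=C_*\,(\sigma_{ij}\xi_{ij})$ with $\sigma_{ij}:=a_{ij}/C_*\in[c_*/C_*,1]$. The standardized profile $\sigma$ takes values in $[0,1]$ and is bounded below by the fixed constant $\delta_0:=c_*/C_*$ at every entry. It is therefore $(\delta,\varepsilon)$-super-regular for any admissible parameters with $\delta\le\delta_0$: every pair of row and column sets of proportional size sees a proportion one of entries of size at least $\delta_0$, so the density and degree conditions in Cook's definition hold trivially. The atom is real, centered, of unit variance, and $\|\xi\|_{\psi_2}\le K$, so all moment hypotheses in \cite{Cook} hold with constants depending only on $K$.

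Second, move the deterministic shift into the form Cook allows. Divide by $C_*$ and set $D_n':=D_n/C_*$, so $\|D_n'\|\le n^{L}/c_*\le n^{L+1}$ for large $n$. Cook's bound (1.28) is stated for arbitrary deterministic shifts of polynomially bounded norm. It yields exponents $\beta'=\beta'(L,\delta_0,K)$ such that
\[
 \P\{s_{\min}(\sigma\odot\Xi+D_n')\le n^{-\beta'},\ \mathcal B\}\le C\sqrt{\log n/n},
\]
where $\mathcal B$ is the boundedness event in Cook's statement, namely $\|\sigma\odot\Xi\|\le C'\sqrt n$ or an analogous operator-norm condition. If Cook's normalization instead has entries of variance $1/n$, I rescale by $\sqrt n$ and absorb the resulting factor into the exponent. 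Multiplying back by $C_*$ changes $n^{-\beta'}$ only by a constant factor, which is absorbed by taking $\beta_L=\beta'+1$.

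Third, remove the boundedness event. The standard subgaussian norm tail \cite[Theorem~4.4.5]{vershynin2019high} gives $\P(\mathcal B^c)\le 2e^{-cn}$ for a suitable $C'=C'(K,C_*)$, which produces the $e^{-c_Ln}$ term. For the conditional version, note that the bound holds uniformly over all deterministic $D_n$ with $\|D_n\|\le n^L$. Conditioning on the independent sigma-field, applying the deterministic bound pointwise, and integrating (Fubini) gives the same estimate.

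The main obstacle is the second step: matching the exact statement of \cite[Theorem~1.24]{Cook}, including its scaling convention, the allowed growth of $\|D_n\|$ relative to the normalized matrix, and whether the $\sqrt{\log n/n}$ rate appears directly or only after optimizing a free parameter such as Cook's choice of $t$ against his concentration term. I would first reread (1.28) to confirm that the stated rate holds uniformly in shifts of norm $n^{L}$. If the theorem only allows shifts of size $O(\sqrt n)$ times a constant, I would fall back on the general polynomial-shift version in the same paper, at the cost of a worse (still fixed) $\beta_L$.
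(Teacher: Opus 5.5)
Your proposal is correct and follows the paper's proof essentially step for step. You rescale by $C_*$ so that the profile is bounded below by $c_*/C_*$; its threshold graph is then complete bipartite, so it is super-regular. You note that the subgaussian unit-variance atom meets Cook's hypotheses (the paper names this as being spread with a fixed parameter), apply Cook's super-regular-profile theorem, and remove the norm event with the subgaussian operator-norm tail.

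On the point you flagged: the boundedness event in Cook's estimate is on the full shifted matrix, $\|Z_n+D_n\|\le n^\gamma$, and $\beta$ depends on $\gamma$. The paper fixes $\gamma>\max\{L,1/2\}$. Then, for large $n$, the event can fail only if $\|Z_n\|\ge n^\gamma-n^L\gg\sqrt n$, which has probability at most $e^{-cn}$.
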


\begin{proof}
After division by \(C_*\), the deterministic profile has entries in
\([0,1]\), and its threshold graph contains the complete bipartite graph
at a fixed threshold.  It is therefore uniformly super-regular in the
sense of \cite[Definition~1.23]{Cook}.  A centered variance-one fixed
subgaussian atom is spread with a fixed parameter.  Choose a fixed
\(\gamma>\max\{L,1/2\}\).  The cited theorem gives
\[
 \P\{s_{\min}(Z_n+D_n)\leq n^{-\beta},
       \ \|Z_n+D_n\|\leq n^\gamma\}
 \leq C\sqrt{\frac{\log n}{n}}.
\]
The standard subgaussian operator-norm tail
\cite[Theorem~4.4.5]{vershynin2019high}, together with
\(\|D_n\|\leq n^L\), bounds the omitted norm event by \(e^{-cn}\)
after increasing \(\gamma\) by a fixed amount.  This proves
\eqref{eq:local-cook-input}.
Here the numbering refers to arXiv:1608.07347v5; the theorem is titled
\emph{Matrix with super-regular profile}.
\end{proof}

\subsubsection{The full proof of Proposition~\ref{prop:local-terminal}}
For this proof only, we denote by
\(\widetilde\Delta:=\sigma_W\Delta\),
\(\widetilde Q_O:=\sigma_WQ_O\), and we define
\begin{equation}
 \widetilde H(\widetilde Q_O;\mathbf x)
 :=\widetilde\Delta(\mathbf x)+\zeta_WI_{3W}
   +\operatorname{Emb}_O(\widetilde Q_O),\quad
 \widetilde p_{\widetilde Q_O}:=\det\widetilde H(\widetilde Q_O),\quad
 \widetilde{\mathfrak C}(\widetilde Q_O)
 :=\|\Coeff\widetilde p_{\widetilde Q_O}\|_2.
\end{equation}
Since \(\widetilde H(\widetilde Q_O)=\sigma_WH(Q_O)\),
\begin{equation}
 p_{Q_O}=\sigma_W^{-3W}\widetilde p_{\widetilde Q_O},\qquad
 \mathfrak C(Q_O)
 =\sigma_W^{-3W}\widetilde{\mathfrak C}(\widetilde Q_O).
 \label{eq:local-terminal-scaling}
\end{equation}
Thus
every coefficient-to-value ratio, and every zero event, is unchanged.
\begin{proof}[Proof of Proposition~\ref{prop:local-terminal}]
By \eqref{eq:local-terminal-scaling}, the two logarithmic ratios and the
zero event are invariant under the scaling by $\sigma_W$.  Since
\(Q_O\mapsto\sigma_WQ_O\) is a bijection of
\(\C^{2W\times2W}\), it therefore suffices to prove the asserted bounds
for \(\widetilde p_{Q_O}\) and
\(\widetilde{\mathfrak C}(Q_O)\), with \(Q_O\) now denoting the
row-scaled deformation for the duration of this proof.

\textbf{Step 1: removal of $Q_O$ via linear algebra.}
Choose a large fixed \(K_0>K_z+C_{\rm RRQR}+10\), define
\(\tau_W:=W^{K_0}\), and
set
\begin{equation}
 r:=\#\{j:s_j(Q_O)>\tau_W\},\qquad n_{\rm res}:=3W-r.
 \label{eq:local-r-nres}
\end{equation}
Thus \(r\leq2W\) and \(n_{\rm res}\geq W\).  Apply
Lemma~\ref{lem:local-rrqr} to \(Q_O\), and extend its coordinate
decomposition by a zero block on the central coordinates.  We obtain
sets \(I,J\subset O\), \(|I|=|J|=r\), and an exact decomposition of the
deterministic matrix \(\operatorname{Emb}_O(Q_O)\) of the form
\eqref{eq:local-skeleton-identity}.  The rows of \(Y_{\mathrm{skel}}\) and
the columns of \(X_{\mathrm{skel}}\) indexed by \(C\) are zero.
When \(r=0\), the residual below is simply the
full matrix with a polynomial-norm deformation and the pivot matrix does not exist.  We write the remaining
formulas for \(r>0\), with this empty-pivot convention understood.

We absorb the deterministic shift into the row-scaled packet matrix:
\begin{equation}
 \Delta_z:=\widetilde\Delta+\zeta_WI_{3W}.
 \label{eq:local-Delta-tilde}
\end{equation}
Then we split \(\Delta_z\) in the same coordinates as $\Delta$:
\begin{equation}
 \Delta_z=
 \begin{pmatrix}(\Delta_z)_{11}&(\Delta_z)_{12}\\
                 (\Delta_z)_{21}&(\Delta_z)_{22}\end{pmatrix},
 \qquad
 K_\Delta:=K_{\mathrm{piv}}+(\Delta_z)_{11}.
 \label{eq:local-Delta-split}
\end{equation}
This is the pivot--residual split: \((\Delta_z)_{11}\) is the
\(r\times r\) pivot perturbation, whereas \((\Delta_z)_{22}\) is the
entire \(n_{\rm res}\times n_{\rm res}\) residual.  They are not the two
complete iid squares used below; those squares, denoted by \(Z_1,Z_2\),
will be selected inside the residual in \eqref{eq:local-two-cook-blocks}.

Let \(\mathcal F_{\rm piv}\) be generated by the packet entries occurring
in \((\Delta_z)_{11},(\Delta_z)_{12},(\Delta_z)_{21}\), and define
\begin{equation}
 \mathcal E_{\rm piv}:=
 \left\{\max_{(i,j)\in\{(1,1),(1,2),(2,1)\}}
 \|(\Delta_z)_{ij}\|\leq W^{C_\Delta}\right\},
 \qquad C_\Delta:=\max\{K_z,3\}+1.
 \label{eq:local-pivot-exposure-event}
\end{equation}
The standard subgaussian norm bound gives
\(\P(\mathcal E_{\rm piv}^c)\leq e^{-cW}\).  These variables lie outside
the residual, so \(\mathcal E_{\rm piv}\) is independent of the two
complete residual squares chosen below.  We use it only as a good event;
we do not condition those squares on it.  On \(\mathcal E_{\rm piv}\),
the choice of \(K_0\) gives
\(\|K_{\mathrm{piv}}^{-1}(\Delta_z)_{11}\|\leq1/2\), and hence
\begin{equation}
 |\det K_\Delta|
 \geq e^{-CW\log W}\prod_{j=1}^r s_j(Q_O).
 \label{eq:local-pivot-det}
\end{equation}
Indeed,
 \(\det K_\Delta=\det K_{\mathrm{piv}}
 \det(I+K_{\mathrm{piv}}^{-1}(\Delta_z)_{11})\);
the second determinant has modulus at least \(2^{-r}\), while strong
RRQR bounds the first by
\(W^{-C_{\rm RRQR}r}\prod_{j\leq r}s_j(Q_O)\).

To change the matrix to a further diagonal form, we denote by
\begin{align}
 G_{21}&:=(\Delta_z)_{21}
          -Y_{\mathrm{skel}}(\Delta_z)_{11},
 &G_{12}&:=(\Delta_z)_{12}
          -(\Delta_z)_{11}X_{\mathrm{skel}},
 \label{eq:local-Gs}\\
 F&:=E_0-Y_{\mathrm{skel}}(\Delta_z)_{12}
       -(\Delta_z)_{21}X_{\mathrm{skel}}\notag\\
  &\quad+Y_{\mathrm{skel}}(\Delta_z)_{11}X_{\mathrm{skel}}
       -G_{21}K_\Delta^{-1}G_{12}.
 \label{eq:local-F}
\end{align}
Here and below, an inverse is extended by zero on its singular set; this
is a Borel matrix-valued map.  Identities involving that inverse are used
only on the good event on which it is a genuine inverse.  In particular,
on \(\mathcal E_{\rm piv}\), direct multiplication gives the exact
factorization
\begin{align}
 \operatorname{Emb}_O(Q_O)+\Delta_z
 &={}
 \begin{pmatrix}
  I&0\\
  (Y_{\mathrm{skel}}K_{\mathrm{piv}}
    +(\Delta_z)_{21})K_\Delta^{-1}&I
 \end{pmatrix}
 \begin{pmatrix}K_\Delta&0\\0&(\Delta_z)_{22}+F\end{pmatrix}
 \notag\\
 &\quad\times
 \begin{pmatrix}
  I&K_\Delta^{-1}(K_{\mathrm{piv}}X_{\mathrm{skel}}
                    +(\Delta_z)_{12})\\
  0&I
 \end{pmatrix}.
 \label{eq:local-CUR}
\end{align}
The first
and third factors are determinant-one shears.  More importantly, every
unbounded occurrence of \(K_{\mathrm{piv}}\) cancels from \(F\).
Here is the promised norm check.  On \(\mathcal E_{\rm piv}\), the three
blocks of \(\Delta_z\) occurring in \eqref{eq:local-F} have operator norm
at most \(W^{C_\Delta}\).  The RRQR bounds (Lemma \ref{lem:local-rrqr}) and the
choice of \(K_0\) give, for fixed exponents independent of \(W\),
\begin{align}
 \|X_{\mathrm{skel}}\|+\|Y_{\mathrm{skel}}\|
 &\leq W^{C_{\rm RRQR}},&
 \|E_0\|&\leq W^{K_0+C_{\rm RRQR}},\notag\\
 \|K_\Delta^{-1}\|
 &\leq2\|K_{\mathrm{piv}}^{-1}\|
 \leq W^{C_{\rm RRQR}-K_0},&
 \|G_{12}\|+\|G_{21}\|
 &\leq W^{C_{\rm RRQR}+C_\Delta+1}.
 \label{eq:local-CUR-term-bounds}
\end{align}
Substituting these four estimates into the five terms of
\eqref{eq:local-F} shows, term by term, that on
\(\mathcal E_{\rm piv}\)
\begin{equation}
 \|F\|\leq W^{K_1}
 \label{eq:local-F-bound}
\end{equation}
for a fixed \(K_1=K_1(K_0,C_{\rm RRQR})\).  This eventwise bound will
show below that the global norm truncations are
inactive on \(\mathcal E_{\rm exp}\).

\textbf{Step 2: analyzing the remaining block via polynomial LSV.}
We first describe the structure of the matrix $(\Delta_z)_{22}+F$ in the original coordinates.
Let \(L_{\rm r},R_{\rm r}\) be the remaining outer row sets and
\(L_{\rm c},R_{\rm c}\) the remaining outer column sets, after removal of labels $I$ and $J$.  Denote by
\begin{equation}
 a=|L_{\rm r}|,\quad b=|R_{\rm r}|,\quad
 c=|L_{\rm c}|,\quad e=|R_{\rm c}|.
\end{equation}
Since equally many outer rows and columns were removed (no central rows
or columns are removed),
\begin{equation}
 a+b=c+e=:s,\qquad n_{\rm res}=W+s.
 \label{eq:local-counts}
\end{equation}
Choose an integer
\begin{equation}
 n_1\in[\max(a,c),W+\min(a,c)]
       \cap[n_{\rm res}/3-1,2n_{\rm res}/3+1].
 \label{eq:local-n1}
\end{equation}
The intersection is nonempty: its left endpoint is at most
\(s\leq2(W+s)/3=2n_{\rm res}/3\), while its right endpoint is at least
\(W\geq(W+s)/3=n_{\rm res}/3\), and integer rounding costs at most one.
Set \(n_2:=n_{\rm res}-n_1\), \(x:=n_1-a\), and \(y:=n_1-c\).  Select \(x\) central rows and \(y\) central
columns.  Together with the remaining left outer coordinates they form a
complete \(n_1\times n_1\) square.  Their complements, together with the
remaining right outer coordinates, form a complete
\(n_2\times n_2\) square.  Neither square meets a forbidden
rectangle of \eqref{eq:local-mask}; the first uses only \(L,C\) coordinates
and the second only \(R,C\) coordinates.  They are row-disjoint,
column-disjoint, and together cover every residual row and column.

Let \(\mathcal F_0\) be generated by all packet entries outside these two
squares.  Conditional on \(\mathcal F_0\), the two retained matrices
\(Z_1,Z_2\) are independent complete iid squares of respective dimensions
\(n_1,n_2\).  On \(\mathcal E_{\rm piv}\), write
\begin{equation}
 (\Delta_z)_{22}+F=
 \begin{pmatrix}
  Z_1+D_{11}&D_{12}\\D_{21}&Z_2+D_{22}
 \end{pmatrix},
 \label{eq:local-two-cook-blocks}
\end{equation}
where every \(D_{ij}\) is \(\mathcal F_0\)-measurable.  Extend the
\(D_{ij}\) measurably off \(\mathcal E_{\rm piv}\), for instance by zero.
By \eqref{eq:local-F-bound} and the subgaussian norm bound for the exposed
cross blocks, there is a fixed \(L_0\) such that the
\(\mathcal F_0\)-measurable event
\begin{equation}
 \mathcal E_{\rm exp}:=\mathcal E_{\rm piv}\cap
 \left\{\max_{i,j\leq2}\|D_{ij}\|\leq W^{L_0}\right\}
 \label{eq:local-full-exposure-event}
\end{equation}
satisfies \(\P(\mathcal E_{\rm exp}^c)\leq e^{-cW}\).

We now make the two conditional applications without conditioning on a
norm event involving \(Z_1\) or \(Z_2\).  For a matrix \(A\), let
\begin{equation}
 \mathcal T_R(A):=A\min\{1,R/\|A\|\},
 \qquad \mathcal T_R(0):=0,
 \label{eq:local-measurable-norm-truncation}
\end{equation}
and set \(\widehat D_{ij}:=\mathcal T_{W^{L_0}}(D_{ij})\).  These
truncated deformations are globally \(\mathcal F_0\)-measurable, have
polynomial norm, and agree with \(D_{ij}\) on
\(\mathcal E_{\rm exp}\).  Lemma~\ref{lem:local-cook-input}, applied
conditionally first to \(Z_1+\widehat D_{11}\), gives, for a fixed
\(\beta_1\), after defining
\[
 B_1:=Z_1+\widehat D_{11},
 \qquad \mathcal G_1:=\{s_{\min}(B_1)\geq W^{-\beta_1}\},
\]
\begin{equation}
 \P(\mathcal G_1^c\mid\mathcal F_0)
 \leq C\sqrt{\frac{\log W}{W}}+e^{-cW}.
 \label{eq:local-cook-one}
\end{equation}
Here both \(n_i\geq n_{\rm res}/3-1\geq W/3-1\), so, for all large
\(W\), \(W\leq4n_i\), while \(n_i\leq3W\).  Thus the estimates furnished
by Lemma~\ref{lem:local-cook-input} at size \(n_i\) imply the displayed
\(W\)-scale estimate: in particular, \(W^{L_0}\leq n_i^{L_0+1}\) for
large \(W\), and
\(e^{-cn_i}\leq e^{-cW/4}\), after which constants are renamed.

To make the second conditioning global, define the Borel truncated inverse
\begin{equation}
 \mathcal R_W(B):=
 \begin{cases}
  B^{-1},&s_{\min}(B)\geq W^{-\beta_1},\\
  0,&s_{\min}(B)<W^{-\beta_1}.
 \end{cases}
 \label{eq:local-truncated-inverse}
\end{equation}
It has norm at most \(W^{\beta_1}\).  After revealing \(Z_1\), define
the globally measurable Schur deformation and its corresponding square by
\begin{equation}
 \widehat S_2:=\widehat D_{22}
 -\widehat D_{21}\mathcal R_W(B_1)\widehat D_{12},
 \qquad \widehat B_2:=Z_2+\widehat S_2.
 \label{eq:local-second-schur}
\end{equation}
Conditional on \(\mathcal F_0\vee\sigma(Z_1)\), its deformation is
globally measurable, has fixed-polynomial norm, and is independent of the
untouched square \(Z_2\).  A second application of
Lemma~\ref{lem:local-cook-input} gives a fixed \(\beta_2\) such that, for
\(\mathcal G_2:=\{s_{\min}(\widehat B_2)\geq W^{-\beta_2}\}\),
\begin{equation}
 \P(\mathcal G_2^c\mid\mathcal F_0\vee\sigma(Z_1))
 \leq C\sqrt{\frac{\log W}{W}}+e^{-cW}.
 \label{eq:local-cook-two}
\end{equation}
To record the determinant identity without defining a Schur complement
only on \(\mathcal G_1\), introduce the artificial bottom block
\begin{equation}
 B_{22}^{\rm art}:=\widehat B_2+\widehat D_{21}
        \mathcal R_W(B_1)\widehat D_{12},
 \qquad
 M_{\rm art}:=
 \begin{pmatrix}B_1&\widehat D_{12}\\
                 \widehat D_{21}&B_{22}^{\rm art}\end{pmatrix}.
 \label{eq:local-artificial-bottom-block}
\end{equation}
On \(\mathcal G_1\), \(\mathcal R_W(B_1)=B_1^{-1}\), and Schur
complementation gives
\[
 \det M_{\rm art}=\det B_1\det\widehat B_2.
\]
On \(\mathcal E_{\rm exp}\), the four norm truncations are inactive;
cancellation in \eqref{eq:local-artificial-bottom-block} then shows that
\(M_{\rm art}\) equals the residual matrix in
\eqref{eq:local-two-cook-blocks}.  Therefore, on
\(\mathcal E_{\rm exp}\cap\mathcal G_1\cap\mathcal G_2\),
\[
 |\det((\Delta_z)_{22}+F)|
 \geq W^{-\beta_1n_1-\beta_2n_2}
 \geq e^{-CW\log W}.
\]
The union bound, \(\P(\mathcal E_{\rm exp}^c)\leq e^{-cW}\), and the two
Cook estimates consequently give, outside probability \(p_W\),
\begin{equation}
 |\det((\Delta_z)_{22}+F)|\geq e^{-CW\log W}.
 \label{eq:local-residual-det}
\end{equation}
Combining \eqref{eq:local-pivot-det}, \eqref{eq:local-CUR}, and
\eqref{eq:local-residual-det} gives
\begin{equation}
 |\widetilde p_{Q_O}(\Xi)|
 \geq e^{-CW\log W}\prod_{j=1}^r s_j(Q_O)
 \label{eq:local-value-lower}
\end{equation}
outside probability \(p_W\).

\textbf{Step 3: from product singular values to the coefficient norm.}
It remains to compare the right side with the coefficient norm.  First
set \(\zeta_W=0\), and denote the corresponding coefficient norm by
\(\widetilde{\mathfrak C}_0(Q_O)\).  An atom monomial is indexed by a
partial permutation \(\pi:I_0\to J_0\).  Its coefficient is
\begin{equation}
 \pm\left(\prod_{i\in I_0}a_{i,\pi(i)}\right)
 \det\bigl(\operatorname{Emb}_O(Q_O)\bigr)_{I_0^c,J_0^c}.
 \label{eq:local-partial-permutation}
\end{equation}
Every nonzero determinant here is a minor of \(Q_O\).  A
\(k\times k\) minor is at most \(\prod_{j\leq k}s_j(Q_O)\).
If \(k<r\), the omitted selected singular values are all larger than
\(\tau_W>1\).  If \(k>r\), the extra singular values are at most
\(\tau_W\) by \eqref{eq:local-r-nres}.  Thus this upper bound extracts only
the very large singular values of $Q_O$; those below
$\tau_W=W^{K_0}$ are absorbed into the polynomial factors.
Thus every coefficient in \eqref{eq:local-partial-permutation} is at
most
\[
 e^{CW\log W}\prod_{j=1}^r s_j(Q_O).
\]
The number of partial permutations of \(3W\) rows and columns is at most
\begin{equation}
 \sum_{t=0}^{3W}\binom{3W}{t}^2t!
 \leq 4^{3W}(3W)!
 \leq e^{CW\log W}.
 \label{eq:local-partial-permutation-count}
\end{equation}
Thus
\begin{equation}
 \widetilde{\mathfrak C}_0(Q_O)
 \leq e^{CW\log W}\prod_{j=1}^r s_j(Q_O).
 \label{eq:local-coeff-upper-zero}
\end{equation}
Conversely, leave precisely the pivot rows (I) and pivot columns (J)
unmatched, and choose a perfect matching in each of the two complete
residual squares constructed in Step~2.  The union of these matchings is
an atom monomial whose coefficient is the product of its deterministic
weights times
\(\det(Q_O)_{I,J}=\det K_{\mathrm{piv}}\).
Strong RRQR in Lemma \ref{lem:local-rrqr} therefore gives
\begin{equation}
 \widetilde{\mathfrak C}_0(Q_O)
 \geq e^{-CW\log W}\prod_{j=1}^r s_j(Q_O).
 \label{eq:local-coeff-lower-zero}
\end{equation}

Now we restore the shift into $\widetilde{\mathfrak C}_0(Q_O)$.  Translating each of the \(3W\) fresh diagonal
variables by \(\zeta_W/a_e\) sends the zero-shift polynomial to the
shifted polynomial.  On the two-dimensional coefficient space of one
multiaffine variable this is a triangular map; its inverse is the
opposite translation.  The norms of the product map and its inverse, generated by this scalar shift, are
at most
\[
 \prod_{e:\,e\text{ is diagonal}}(1+|\zeta_W|/a_e)
 \leq e^{CW\log W}.
\]
Consequently \(\widetilde{\mathfrak C}(Q_O)
=e^{\pm CW\log W}\widetilde{\mathfrak C}_0(Q_O)\), and
\begin{align}
 \widetilde{\mathfrak C}(Q_O)
 &\leq e^{CW\log W}\prod_{j=1}^r s_j(Q_O),
 \label{eq:local-coeff-upper}\\
 \widetilde{\mathfrak C}(Q_O)
 &\geq e^{-CW\log W}\prod_{j=1}^r s_j(Q_O).
 \label{eq:local-coeff-lower}
\end{align}
Equations \eqref{eq:local-value-lower} and
\eqref{eq:local-coeff-upper}, with the bad event charged at height \(T\),
prove \eqref{eq:local-terminal-capped}.  Dividing by \(T\) and letting
\(T\to\infty\) proves \eqref{eq:local-terminal-zero}.

Finally, there are at most \(e^{CW\log W}\) partial-permutation monomials.
On \(\mathcal E_{\max}\), every monomial has modulus at most
\((C_0\sqrt W)^{3W}\).
Cauchy--Schwarz over the coefficient vector gives
\eqref{eq:local-terminal-reverse}.  Centeredness and unit variance also
give the exact Parseval identity
\begin{equation}
 \bigl(\widetilde{\mathfrak C}(Q_O)\bigr)^2
 =\E|\widetilde p_{Q_O}(\Xi)|^2,
 \label{eq:local-parseval}
\end{equation}
since distinct multiaffine monomials are orthonormal in \(L^2\) norm.

By \eqref{eq:local-terminal-scaling}, we may now return to the original
notation $p_q$ and $\mathfrak C(q)$.  The preceding argument treated
deterministic \(Q_O\).  Let \(\mathcal G\)
be a sigma-field such that \(Q_O\) is \(\mathcal G\)-measurable and
\(\Xi\) is independent of \(\mathcal G\), and let \(\mu\) be the law of
\(\Xi\).  The maps \((q,x)\mapsto p_q(x)\) and
\(q\mapsto\mathfrak C(q)\) are Borel.  Thus, with
\[
 \Phi_T(q,x):=\min\left\{T,
   \log_+\frac{\mathfrak C(q)}{|p_q(x)|}\right\},
\]
interpreted as \(T\) when \(p_q(x)=0\), independence and Fubini give
\[
 \E[\Phi_T(Q_O,\Xi)\mid\mathcal G]
 =\int\Phi_T(Q_O,x)\,d\mu(x)
 \leq CW\log W+p_WT
 \quad\text{a.s.}
\]
The same argument with \(\mathbf1_{\{p_q(x)=0\}}\) proves the conditional
zero bound.  The reverse estimate is pointwise in \((q,x)\) on
\(\mathcal E_{\max}\), which is independent of \(\mathcal G\); this
proves the remaining conditional assertions.
\end{proof}

\subsubsection{The coefficient norm estimate}
\begin{proof}[\proofname\ of Lemma \ref{lem:local-all-minor}]
Set \(z=0\), and write $\mathfrak C_0(Q)$ for the resulting coefficient
norm.  A choice of atom entries in the determinant expansion of the
zero-shift polynomial $p_Q$ is a partial row--column matching in the mask
\eqref{eq:local-mask}.  Suppose that it leaves exactly the outer rows
\(I\subset O\) and outer columns \(J\subset O\) unmatched, where
\(|I|=|J|=k\).  Summing over the remaining \(Q\)-entries gives the minor
\(\det Q_{I,J}\).  Hence the coefficient of the corresponding atom
monomial is
\begin{equation}
 w_{\alpha}\det Q_{I,J},
 \label{eq:local-monomial-minor}
\end{equation}
where \(w_\alpha\) is the product of its deterministic entry weights, which are multiples of $\sigma_W^{-1}$.

Every pair \((I,J)\) occurs.  To see this, set
\[
 a:=|L\setminus I|,\quad b:=|R\setminus I|,\quad
 c:=|L\setminus J|,\quad e:=|R\setminus J|.
\]
These are the residual left/right row and column counts, and
$a+b=c+e$.  If
\(c\geq a\), add \(c-a\) central rows and no central columns to the left
subproblem; if \(c<a\), add no central rows and \(a-c\) central columns.
The resulting left-plus-central subproblem is square, and its
complementary right-plus-central subproblem is square as well.  Both squares lie
in the complete parts of the mask, so we can choose a perfect matching in
each.  Together they produce an atom monomial of the form
\eqref{eq:local-monomial-minor}.

For each pair \((I,J)\), the construction above gives at least one and at
most \(e^{CW\log W}\) matching monomials.  Each corresponding weight
product contains at most \(3W\) factors of size comparable to
\(\sigma_W^{-1}\), and hence lies between
\(e^{-CW\log W}\) and \(e^{CW\log W}\).  Squaring the coefficients and
summing first over the matching monomials and then over \((I,J)\) gives
\begin{equation}
 \mathfrak C_0(Q)^2
 =e^{\pm CW\log W}
  \sum_{k=0}^{2W}\ \sum_{|I|=|J|=k}|\det Q_{I,J}|^2,
 \label{eq:local-coefficients-to-minors}
\end{equation}
where \(\mathfrak C_0\) denotes the coefficient norm at zero shift.

We now use Cauchy--Binet.  Form the \(4W\times2W\) matrix
\[
 G(Q):=\begin{pmatrix}I_{2W}\\Q\end{pmatrix},
 \qquad G(Q)^*G(Q)=I_{2W}+Q^*Q.
\]
A \(2W\)-row minor of \(G(Q)\) selects identity rows indexed by
\(O\setminus J\) and \(Q\)-rows indexed by \(I\); its determinant is
\(\pm\det Q_{I,J}\).  Therefore
\begin{equation}
 \det(I_{2W}+Q^*Q)
 =\sum_{k=0}^{2W}\ \sum_{|I|=|J|=k}|\det Q_{I,J}|^2.
 \label{eq:local-all-minor-CB}
\end{equation}
Combining \eqref{eq:local-coefficients-to-minors} and
\eqref{eq:local-all-minor-CB} proves the lemma when \(z=0\).

Finally we take into account the nonzero shift.  A normalized diagonal entry is
\(\sigma_W^{-1}a_e x_e-z\), and
\[
 \sigma_W^{-1}a_e x_e-z
 =\sigma_W^{-1}a_e\bigl(x_e+\zeta_W/a_e\bigr).
\]
Thus translating the variable means composing the zero-shift polynomial
with the affine change \(x_e\mapsto x_e+\zeta_W/a_e\); it does not mean
altering a realized random entry.  On the two-dimensional coefficient
space spanned by \(1,x_e\), this change is a triangular \(2\times2\)
matrix, and the inverse change uses the opposite translation.  Taking the
tensor product over the \(3W\) diagonal variables costs at most
\(e^{O(W\log W)}\), because
\(|\zeta_W|\leq W^{O(1)}\) and \(|a_e|\geq c_*\).  This proves
\eqref{eq:local-all-minor} for the original shift.
\end{proof}

\subsection{Arbitrary-frame small ball probability estimate}\label{sec:local-penalty}
We will convert Theorem \ref{thm:local-complex-frame} as a corollary of Proposition \ref{prop:local-terminal}. The following linear algebra conversion procedure makes the deduction of Theorem \ref{thm:local-complex-frame} fairly precise.

We complete the $r$-frames $U,V:\C^r\to\C^{2W}$ to unitary matrices in the
form
\[
\widetilde U=(U,U_\perp),\quad \widetilde V=(V,V_\perp).
\]
Then we define the parameterized frame 
\begin{equation}\label{definetheframe}
\Theta_\lambda^{(r;U,V)}=\widetilde V\begin{pmatrix}
    \lambda I_r&0\\0&\lambda^{-1}I_{2W-r}
\end{pmatrix}\widetilde U^*,\quad \lambda>1.
\end{equation}
Suppress the parameters \((r;U,V)\), which are fixed for the moment.  We
introduce the frame vectors
\begin{equation}
 u_i:=\widetilde Ue_i,\qquad v_i:=\widetilde Ve_i,\qquad1\leq i\leq 2W.
\end{equation}
For a \(k\)-element set \(I=\{i_1<\cdots<i_k\}\), we write the wedge vector as
\begin{equation}
 u_I:=u_{i_1}\wedge\cdots\wedge u_{i_k},\quad
 v_I:=v_{i_1}\wedge\cdots\wedge v_{i_k},\quad
 a(I):=|I\cap\{1,\dots,r\}|.
\end{equation}
Since by definition we have \(\Theta_{\lambda}u_i=\lambda v_i\) for \(i\leq r\) and
\(\Theta_{\lambda}u_i=\lambda^{-1}v_i\) for \(i>r\), we have
\begin{equation}
 \boxed{
 \bigwedge^k\Theta_{\lambda}^{(r;U,V)}
 =\sum_{|I|=k}\lambda^{2a(I)-k}
   |v_I\rangle\langle u_I|.}
 \label{eq:local-wedge-expansion}
\end{equation}
The two families \((u_I)_{|I|=k}\) and \((v_I)_{|I|=k}\) are orthonormal
bases.  Therefore, the operator norm of this wedge product is the largest coefficient in
\eqref{eq:local-wedge-expansion}.

If \(k<r\), then \(a(I)\leq k\), so after multiplication by
\(\lambda^{-r}\) every exponent is at most \(k-r<0\).  If \(k>r\), then
\(a(I)\leq r\), so every exponent is at most \(r-k<0\).  Therefore
\begin{equation}
 \left\|\lambda^{-r}\bigwedge^k
 \Theta_{\lambda}^{(r;U,V)}\right\|_{\mathrm{op}}
 =\lambda^{-|k-r|}\longrightarrow0,\qquad k\neq r.
 \label{eq:local-other-k-limit}
\end{equation}
For \(k=r\), the exponent after division by \(\lambda^r\) is zero only for
\(I_0=\{1,\dots,r\}\); all other coefficients are at most \(\lambda^{-2}\).
Since \(u_{I_0}=\widehat U\) and \(v_{I_0}=\widehat V\),
\begin{equation}
 \left\|
  \lambda^{-r}\bigwedge^r\Theta_{\lambda}^{(r;U,V)}
  -|\widehat V\rangle\langle\widehat U|
 \right\|_{\mathrm{op}}\leq\lambda^{-2}.
 \label{eq:local-right-r-limit}
\end{equation}
Recall the definition of $\mathscr D_\Theta$ in
\eqref{eq:local-DTheta}.  Substituting \eqref{eq:local-other-k-limit}--
\eqref{eq:local-right-r-limit} into the finite sum
\eqref{eq:local-DTheta} and using
\(\tr(A|v\rangle\langle u|)=\langle u,Av\rangle\) gives coefficientwise
convergence (which means convergence of the coefficient of each monomial)
\begin{equation}
 \lambda^{-r}\mathscr D_{
    \Theta_{\lambda}^{(r;U,V)}}(\mathbf x)
 \longrightarrow(-1)^r Z_{U,V}^{(r)}(\mathbf x),\quad \lambda\to \infty.
 \label{eq:local-polynomial-limit}
\end{equation}

The limit is only taken with respect to $\lambda\to\infty$ but with $N$ and $W_N$ fixed.  Specifically, for each fixed \(N\) we fix
\(W_N,r_N,U_N,V_N\) and first let
\(\lambda\to\infty\) in the fixed finite-dimensional spaces
\(\bigwedge^k\C^{2W_N}\).  Only after the parameter \(\lambda\) has
disappeared do we take a large $N$.  

The same order of limit is enforced in the graph volume computations.  The singular values of
\(\Theta_{\lambda}^{(r;U,V)}\) are \(r\) copies of \(\lambda\) and
\(2W-r\) copies of \(\lambda^{-1}\), so exactly
\begin{equation}
 \lambda^{-r}\prod_{j=1}^{2W}
 \sqrt{1+s_j(\Theta_{\lambda}^{(r;U,V)})^2}
 =(1+\lambda^{-2})^{W}\longrightarrow1
 \label{eq:local-graph-limit}
\end{equation}
with \(2W\) fixed. 

\begin{proof}[\proofname\ of Theorem \ref{thm:local-complex-frame}]
For the coefficient norm $\Gamma_{U,V}^{(r)}$, we apply
Lemma~\ref{lem:local-boundary-volume} (which compares the coefficient norm
$\mathscr C(\Theta)$ with
$\prod_{j=1}^{2W}\sqrt{1+s_j(\Theta)^2}$) to the artificial relation
\(\Theta_{\lambda}^{(r;U,V)}\), divide by \(\lambda^r\), and use
\eqref{eq:local-polynomial-limit} to get $Z_{U,V}^{(r)}$ and \eqref{eq:local-graph-limit} for deterministic limit.  Since
the normalized polynomial space is finite-dimensional for fixed \(W\),
our coefficientwise convergence then implies convergence of coefficient norms:
\begin{equation}
 \lambda^{-r}\mathscr C
   (\Theta_{\lambda}^{(r;U,V)})
 \longrightarrow\Gamma_{U,V}^{(r)}.
 \label{eq:local-coeff-limit}
\end{equation}
This proves the coefficient two-sided estimate for $\Gamma_{U,V}^{(r)}$ in \eqref{eq:local-frame-coefficients}.

To prove the small ball probability estimate, we apply the uniform-in-\(\Theta\) estimate
\eqref{eq:local-boundary-capped} to
\(\Theta_{\lambda}^{(r;U,V)}\).  Then we divide both its polynomial and its
coefficient norm by \(\lambda^r\).  The bounded continuous function
\(\mathcal L_T\) from \eqref{eq:local-LT}, together with
the convergence in \eqref{eq:local-polynomial-limit} and \eqref{eq:local-coeff-limit}, permits
taking the \(\lambda\to\infty\) limit.  This proves the capped
logarithmic-loss estimate \eqref{eq:local-frame-capped}.
\end{proof}

\subsection{Linear algebra: the determinant of periodic block matrices}

\begin{proof}[\proofname\ of Lemma \ref{lem:local-block-floquet}]

Let \(H_{\rm cyc}\in\C^{mW\times mW}\) be the cyclic block matrix whose
equations (with all indices modulo \(m\)) are defined as
\begin{equation}
 C_j\psi_{j-1}+D_j\psi_j+B_j\psi_{j+1}=0,
 \qquad 1\leq j\leq m.
 \label{eq:local-floquet-recurrence}
\end{equation}

On the set where $B_j$ is invertible, define
\[
 T_j:=\begin{pmatrix}-B_j^{-1}D_j&-B_j^{-1}C_j\\I_W&0\end{pmatrix}.
\]
We also define the product matrix
\[
 M_{\rm cyc}:=T_mT_{m-1}\cdots T_1.
\]

We prove that, for some $\varepsilon_{m,W}\in\{\pm 1\}$,
\begin{equation}
 \det H_{\rm cyc}
=\varepsilon_{m,W}
\left(\prod_{j=1}^{m}\det B_j\right)
  \det(I_{2W}-M_{\rm cyc}).
\label{eq:local-block-floquet}
\end{equation}

\textbf{1.} We introduce separate auxiliary \(W\)-vector variables \(x_j,y_j\) and the
\(2W\)-state
\begin{equation}
 s_j:=\binom{x_j}{y_j},
 \qquad s_{m+1}:=s_1.
\end{equation}
Consider the \(2mW\times2mW\) cyclic first-order system
\begin{equation}
 \mathsf K_js_j+\mathsf L_js_{j+1}=0,
 \qquad
 \mathsf K_j:=
 \begin{pmatrix}D_j&C_j\\-I_W&0\end{pmatrix},
 \qquad
 \mathsf L_j:=
 \begin{pmatrix}B_j&0\\0&I_W\end{pmatrix}.
 \label{eq:local-floquet-augmented-system}
\end{equation}
Let \(\mathbb H_{\rm aug}\) denote its coefficient matrix.  Written in
components, its equations are
\begin{align}
 D_jx_j+C_jy_j+B_jx_{j+1}&=0,
 \label{eq:local-floquet-augmented-top}\\
 -x_j+y_{j+1}&=0.
 \label{eq:local-floquet-augmented-bottom}
\end{align}
The bottom equations give \(y_j=x_{j-1}\).  After permuting block rows and
columns so that the bottom equations and the \(y\)-variables are placed
last, their coefficient block is a permutation matrix.  Its determinant
has modulus one, and its Schur complement is exactly the original cyclic
system \eqref{eq:local-floquet-recurrence} in the variables
\(x_1,\dots,x_m\).  Consequently, we can compute
\begin{equation}
 \det\mathbb H_{\rm aug}=\varepsilon^{(1)}_{m,W}\det H_{\rm cyc},
 \qquad \varepsilon^{(1)}_{m,W}\in\{1,-1\}.
 \label{eq:local-floquet-first-elimination}
\end{equation}

\textbf{2.} We now eliminate the same augmented system in transfer order.  On the
invertible-\(B_j\) set,
\begin{equation}
 T_j=-\mathsf L_j^{-1}\mathsf K_j,
 \qquad
 \mathsf K_js_j+\mathsf L_js_{j+1}
 =\mathsf L_j(s_{j+1}-T_js_j).
\end{equation}
Factoring \(\mathsf L_j\) from the \(j\)-th \(2W\)-row block gives
\begin{equation}
 \det\mathbb H_{\rm aug}
 =\left(\prod_{j=1}^{m}\det\mathsf L_j\right)
   \det\mathbb D_T
 =\left(\prod_{j=1}^{m}\det B_j\right)
   \det\mathbb D_T,
 \label{eq:local-floquet-factor-L}
\end{equation}
where \(\mathbb D_T\) is the coefficient matrix of
\begin{equation}
 s_{j+1}-T_js_j=0,
 \qquad 1\leq j\leq m,\qquad s_{m+1}=s_1.
 \label{eq:local-floquet-transfer-system}
\end{equation}
Block Gaussian elimination with the identity coefficients in the first
\(m-1\) equations gives
\begin{equation}
 s_2=T_1s_1,\quad s_3=T_2T_1s_1,\quad\dots,\quad
 s_m=T_{m-1}\cdots T_1s_1.
\end{equation}
The last equation is therefore
\begin{equation}
 (I_{2W}-T_mT_{m-1}\cdots T_1)s_1=0.
\end{equation}
Equivalently, taking the Schur complement of those identity pivots,
\begin{equation}
 \det\mathbb D_T
 =\varepsilon^{(2)}_{m,W}\det(I_{2W}-M_{\rm cyc}),
 \qquad \varepsilon^{(2)}_{m,W}\in\{1,-1\}.
 \label{eq:local-floquet-second-elimination}
\end{equation}
Combining \eqref{eq:local-floquet-first-elimination},
\eqref{eq:local-floquet-factor-L}, and
\eqref{eq:local-floquet-second-elimination} proves
\eqref{eq:local-block-floquet}, after absorbing the two ordering signs
into \(\varepsilon_{m,W}\).

\textbf{3.} It remains to consider the case where some $B_i$ is singular.  For
\(0\leq r\leq2W\), we denote on the invertible-\(B_j\) set the wedge product
\begin{equation}
 \mathcal A_j^{(r)}:=(\det B_j)\bigwedge\nolimits^rT_j.
 \label{eq:local-floquet-cleared-one-step}
\end{equation}
Every entry of \(\mathcal A_j^{(r)}\) is a polynomial in
\(B_j,C_j,D_j\).  Indeed, after the identity rows of a minor of \(T_j\)
are removed, the remaining part is a minor of
\(B_j^{-1}(-D_j,-C_j)\); Cauchy--Binet followed by Jacobi's
complementary-minor identity shows that multiplication by \(\det B_j\)
clears it.  Exterior functoriality and
\(\det(I-A)=\sum_{r=0}^{2W}(-1)^r\operatorname{tr}(\bigwedge^rA)\)
now give
\begin{align}
 &\left(\prod_{j=1}^{m}\det B_j\right)
   \det(I_{2W}-M_{\rm cyc})
 \notag\\
 &\hspace{12mm}=
 \sum_{r=0}^{2W}(-1)^r
 \operatorname{tr}\left(
  \mathcal A_m^{(r)}\mathcal A_{m-1}^{(r)}\cdots
  \mathcal A_1^{(r)}
 \right).
 \label{eq:local-floquet-polynomial-extension}
\end{align}
The last expression is polynomial in all block entries.  It agrees with
\(\varepsilon_{m,W}^{-1}\det H_{\rm cyc}\) on the dense set on which all
\(B_j\) are invertible, and hence agrees everywhere.  Finally,
\(T_3T_2T_1=R_{\partial}\) and
\(T_m\cdots T_4=R_{\rm out}\), which proves
\eqref{eq:local-floquet-packet-split}.
\end{proof}

\subsection{From global determinant to local determinant}
\label{sec:local-double-elimination}

This section proves one linear algebra identity without probability
estimate.  The transfer construction of Section~\ref{sec:local-boundary}
produces the polynomial \(\mathscr D_{\Theta}\), whereas
Proposition~\ref{prop:local-terminal} controls coordinate determinants of
the form \(\det(\Delta-zI_{3W}+\operatorname{Emb}_O(Q))\).  The
proposition below identifies these two objects exactly.  Throughout this
section, \(\mathbf x\) denotes the full list of \(7W^2\) scalar variables
in the seven fresh packet blocks, with the normalization factors
\(\sigma_W^{-1}\) included in their deterministic coefficients.

Write the boundary map in the orders used in
\eqref{eq:local-general-Theta}: the source is \((\psi_+,\psi_R)\) and the
target is \((\psi_L,\psi_-)\).  Thus
\begin{equation}
 \Theta=
 \begin{pmatrix}\Theta_{11}&\Theta_{12}\\
                 \Theta_{21}&\Theta_{22}\end{pmatrix}.
 \label{eq:local-Theta-blocks}
\end{equation}
In components, this orientation means
\[
 \psi_L=\Theta_{11}\psi_++\Theta_{12}\psi_R,\qquad
 \psi_-=\Theta_{21}\psi_++\Theta_{22}\psi_R.
\]
Consider the following coordinate chart
\begin{equation}
 \mathcal U:=\{\Theta\in\GL_{2W}(\C):\det\Theta_{11}\neq0\}.
 \label{eq:local-U-domain}
\end{equation}

\begin{proposition}[Exact transfer--coordinate identity]
\label{prop:local-double-elimination}
Suppose first that \(\Theta\in\mathcal U\), and define
\begin{equation}
 S(\Theta):=
 \begin{pmatrix}
  \Theta_{21}\Theta_{11}^{-1}&
  \Theta_{22}-\Theta_{21}\Theta_{11}^{-1}\Theta_{12}\\
  \Theta_{11}^{-1}&-\Theta_{11}^{-1}\Theta_{12}
 \end{pmatrix}.
 \label{eq:local-STheta}
\end{equation}
We also define the diagonal matrix
\begin{equation}
 E:=\operatorname{diag}(C_L,B_R),
 \label{eq:local-E}
\end{equation}
and set
\begin{equation}
 Q_\Theta:=ES(\Theta).
\end{equation}
Then the coordinate terminal from
\eqref{eq:local-terminal-matrix} is
\begin{equation}
 H_{\Theta}(\mathbf x)
 :=H(Q_\Theta;\mathbf x)
 =\Delta(\mathbf x)-zI_{3W}+\operatorname{Emb}_O(Q_\Theta),
 \label{eq:local-HTheta}
\end{equation}
and the main identity is
\begin{equation}
 \boxed{
 \mathscr D_{\Theta}(\mathbf x)
 =\det\Theta_{11}\,
   p_{Q_\Theta}(\mathbf x)
 =\det\Theta_{11}\,\det H_{\Theta}(\mathbf x).}
 \label{eq:local-double-elimination}
\end{equation}

For every \(\Theta\in\GL_{2W}(\C)\), including those outside \(\mathcal U\),
define the following \(5W\times5W\) auxiliary matrix, with block columns
ordered as
\((\psi_L,\psi_C,\psi_R\mid\psi_-,\psi_+)\):
\begin{equation}
 K_{\Theta}(\mathbf x):=
 \left(
 \begin{array}{ccc|cc}
 D_L&B_L&0&C_L&0\\
 C_C&D_C&B_C&0&0\\
 0&C_R&D_R&0&B_R\\ \hline
 I_W&0&-\Theta_{12}&0&-\Theta_{11}\\
 0&0&-\Theta_{22}&I_W&-\Theta_{21}
 \end{array}
 \right).
 \label{eq:local-KTheta}
\end{equation}
Then
\begin{equation}
 \boxed{
 \det K_{\Theta}(\mathbf x)
 =\mathscr D_{\Theta}(\mathbf x).}
 \label{eq:local-K-first-elimination}
\end{equation}
\end{proposition}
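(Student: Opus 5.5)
We prove \eqref{eq:local-K-first-elimination} first, because it is valid for every \(\Theta\in\GL_{2W}(\C)\), and then deduce \eqref{eq:local-double-elimination} on \(\mathcal U\) by a second Schur elimination inside \(K_\Theta\). Both identities are polynomial in \(\mathbf x\) (and in \(\Theta\) and the fixed endpoints \(E\)). It therefore suffices to verify them on the dense open set where \(B_L,B_C,B_R\) are invertible, and to invoke polynomial continuation for the rest, exactly as in Step~3 of the proof of Lemma~\ref{lem:local-block-floquet}.

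\textbf{Step 1: \(\det K_\Theta=\mathscr D_\Theta\).} On the invertible set, \(\mathscr D_\Theta=b\det(I_{2W}-\Theta R_\partial)\) by \eqref{eq:local-DTheta-det}. The top three block rows of \(K_\Theta\) are the packet equations \eqref{eq:local-packet-left}--\eqref{eq:local-packet-right}. The bottom two rows encode \(\psi_L=\Theta_{11}\psi_++\Theta_{12}\psi_R\) and \(\psi_-=\Theta_{21}\psi_++\Theta_{22}\psi_R\), that is, \(v_{\rm in}=\Theta v_{\rm out}\).

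I will run the same two eliminations as in the proof of Lemma~\ref{lem:local-block-floquet}. First, treat the packet rows as transfer rows. Factoring \(B_L,B_C,B_R\) out of the three packet rows contributes \(b\). With the auxiliary identity coordinates, the packet rows then become \(v_{\rm out}-R_\partial v_{\rm in}=0\), up to determinant-one shears and fixed permutations. The bottom rows read \(v_{\rm in}-\Theta v_{\rm out}=0\).

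Second, eliminate \(v_{\rm out}\) through its identity pivot. This leaves \((I-\Theta R_\partial)v_{\rm in}=0\), so
\[
\det K_\Theta=\pm\,b\det(I-\Theta R_\partial).
\]
Concretely, one can avoid the augmented system by a direct column operation. On the invertible set, replace the column variables \((\psi_C,\psi_R,\psi_+)\) by their expressions through \(T_L,T_C,T_R\) in terms of \((\psi_L,\psi_-)\); this is a triangular substitution whose pivots are \(B_L,B_C,B_R\). The sign must then be checked to be \(+1\) with the chosen column ordering \((\psi_L,\psi_C,\psi_R\mid\psi_-,\psi_+)\). I will do this by evaluating both sides at one convenient point, for example \(\Theta=I\), \(R_\partial\) with \(I-R_\partial\) invertible, and the blocks chosen as identities where possible.

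\textbf{Step 2: on \(\mathcal U\).} Take the Schur complement of \(K_\Theta\) with respect to the bottom-right \(2W\times2W\) block
\[
\begin{pmatrix}0&-\Theta_{11}\\ I_W&-\Theta_{21}\end{pmatrix},
\]
whose determinant is \(\pm\det\Theta_{11}\). Its inverse is explicit:
\[
\begin{pmatrix}-\Theta_{21}\Theta_{11}^{-1}&I\\-\Theta_{11}^{-1}&0\end{pmatrix}.
\]
Solving the bottom rows for \(\psi_-\) and \(\psi_+\) in terms of \(\psi_L,\psi_R\) gives
\[
\psi_+=\Theta_{11}^{-1}(\psi_L-\Theta_{12}\psi_R),\qquad
\psi_-=\Theta_{21}\Theta_{11}^{-1}\psi_L+(\Theta_{22}-\Theta_{21}\Theta_{11}^{-1}\Theta_{12})\psi_R.
\]
This is exactly \((\psi_-,\psi_+)=S(\Theta)(\psi_L,\psi_R)\). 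Substituting into the top rows, the boundary terms \(C_L\psi_-\) and \(B_R\psi_+\) become \(\operatorname{Emb}_O(ES(\Theta))\). The resulting Schur complement is therefore \(\Delta-zI_{3W}+\operatorname{Emb}_O(Q_\Theta)=H_\Theta\), and
\[
\det K_\Theta=\pm\det\Theta_{11}\,\det H_\Theta .
\]

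\textbf{Main obstacle.} The substantive content is simply two block eliminations. The main work is the sign and orientation bookkeeping: making sure that the orientation \eqref{eq:local-general-Theta} and the block orderings produce exactly \(+\det\Theta_{11}\) in \eqref{eq:local-double-elimination} and no stray sign in \eqref{eq:local-K-first-elimination}. I would settle both signs by computing the permutation sign of the bottom-right block and of the triangular substitution. As a cross-check, I would verify the \(\lambda^{0}\) coefficient at a simple test point, namely \(\Theta=\operatorname{diag}\) with \(\Theta_{11}=I\) and all random blocks set to zero except their diagonals.
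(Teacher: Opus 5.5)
Your proposal follows the paper's proof: both compute $\det K_\Theta$ once by moving the block columns $(\psi_C,\psi_R,\psi_+)$ to the front, where the packet rows become block lower triangular with diagonal $B_L,B_C,B_R$ and the Schur complement is $I_{2W}-\Theta R_\partial$ (followed by polynomial continuation), and once by the Schur complement of the endpoint block $\begin{pmatrix}0&-\Theta_{11}\\ I_W&-\Theta_{21}\end{pmatrix}$, which produces $H_\Theta$. The signs you left open need no test-point evaluation: the column move has four inversions of width-$W$ blocks, giving $(-1)^{4W^2}=1$, and the endpoint block has determinant exactly $(-1)^{W^2}(-1)^W\det\Theta_{11}=\det\Theta_{11}$.
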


Identity \eqref{eq:local-double-elimination} is the only output of this
section used in the local probability argument: it places
\(\mathscr D_\Theta\) directly within the scope of
Proposition~\ref{prop:local-terminal}.  The larger matrix
\(K_\Theta\) is only a proof device.  Its first three
block rows are the normalized packet equations and its last two are the boundary
relation.  Thus it keeps the three packet unknowns and two endpoint
unknowns long enough to eliminate them in either of two orders; no
probabilistic theorem is applied to this \(5W\times5W\) matrix.

\begin{proof}
We compute the determinant of \(K_{\Theta}\) in two
orders.

For the first computation, we initially suppose that \(B_L,B_C,B_R\) are
invertible.  Move the block columns
\((\psi_C,\psi_R,\psi_+)\) in front of
\((\psi_L,\psi_-)\).  There are four inversions of blocks of width \(W\),
so the permutation contributes \((-1)^{4W^2}=1\).  The first three pivot
columns are block lower triangular with diagonal blocks
\(B_L,B_C,B_R\).  Solving the corresponding three equations is exactly the
successive transfer calculation
\eqref{eq:local-Rpartial}.  The two remaining equations are written as
\[
 (I_{2W}-\Theta R_{\partial})
 \binom{\psi_L}{\psi_-}=0.
\]
Consequently, we can compute
\[
 \det K_{\Theta}
 =\det B_L\det B_C\det B_R\,
   \det(I_{2W}-\Theta R_{\partial})
 =\mathscr D_{\Theta},
\]
where the last equality is \eqref{eq:local-DTheta-det}.  Both extreme
expressions are polynomials in all packet and endpoint block entries (and,
after the endpoints are fixed, in the seven fresh blocks).  Since they
agree whenever the three right interfaces are invertible, polynomial
continuation makes them agree for all values of those interfaces.  This
proves \eqref{eq:local-K-first-elimination} without an invertibility
assumption.

For the second computation, suppose \(\Theta\in\mathcal U\).  The last two
block rows of \eqref{eq:local-KTheta} satisfy
\begin{align}
 \psi_+
 &=\Theta_{11}^{-1}(\psi_L-\Theta_{12}\psi_R),
 \label{eq:local-solve-plus}\\
 \psi_-
 &=\Theta_{21}\Theta_{11}^{-1}\psi_L
   +(\Theta_{22}-\Theta_{21}\Theta_{11}^{-1}\Theta_{12})\psi_R.
 \label{eq:local-solve-minus}
\end{align}
Equivalently, we can write
\begin{equation}
 \binom{\psi_-}{\psi_+}=S(\Theta)\binom{\psi_L}{\psi_R}.
 \label{eq:local-S-action}
\end{equation}
Substitute this relation into the first and third packet equations.  The
endpoint terms \((C_L\psi_-,B_R\psi_+)\) become
\(ES(\Theta)(\psi_L,\psi_R)^{\mathsf T}\); hence the remaining three
equations have coefficient matrix \(H_{\Theta}\).

Formally, this is the Schur complement of the endpoint block
\[
 G_{\Theta}:=
 \begin{pmatrix}0&-\Theta_{11}\\I_W&-\Theta_{21}\end{pmatrix}
\]
in \(K_{\Theta}\).  Since
\(\det G_{\Theta}=\det\Theta_{11}\), the Schur-complement determinant
formula gives
\[
 \det K_{\Theta}
 =\det\Theta_{11}\,\det H_{\Theta}.
\]
Combining this with \eqref{eq:local-K-first-elimination} proves
\eqref{eq:local-double-elimination}.
\end{proof}

\begin{corollary}[Temporary row scaling]
Let
\(\widetilde K_\Theta
 :=\operatorname{diag}(\sigma_WI_{3W},I_{2W})K_\Theta\).
Then the original normalized boundary polynomial and its coefficient norm
satisfy
\begin{equation}
 \mathscr D_{\Theta}
 =\sigma_W^{-3W}\det\widetilde K_\Theta,\qquad
 \mathscr C(\Theta)
 =\sigma_W^{-3W}
   \bigl\|\Coeff_{\mathbf x}\det\widetilde K_\Theta\bigr\|_{\ell^2}.
 \label{eq:local-boundary-scaling}
\end{equation}
\end{corollary}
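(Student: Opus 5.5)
This is a direct consequence of the row-scaling structure of the packet rows in \(K_\Theta\), combined with Proposition~\ref{prop:local-double-elimination} (equation \eqref{eq:local-K-first-elimination}). The plan is to compare \(\det\widetilde K_\Theta\) and \(\det K_\Theta\) as polynomials in \(\mathbf x\), and then pass to coefficient vectors.

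\textbf{Step 1: the determinant identity.} By construction, \(\widetilde K_\Theta\) is obtained from \(K_\Theta\) by multiplying each of the first \(3W\) scalar rows by \(\sigma_W\). These are the three packet block rows \(L,C,R\). The last \(2W\) rows, which encode the boundary relation \(\Theta\), are left unchanged. Multilinearity of the determinant in its rows then gives the polynomial identity
\[
 \det\widetilde K_\Theta(\mathbf x)=\sigma_W^{3W}\det K_\Theta(\mathbf x).
\]
This holds identically in \(\mathbf x\), and equivalently as an identity between the determinant of \(\operatorname{diag}(\sigma_WI_{3W},I_{2W})\) and the prefactor. Substituting \(\det K_\Theta=\mathscr D_\Theta\) from \eqref{eq:local-K-first-elimination} yields the first assertion,
\[
 \mathscr D_\Theta=\sigma_W^{-3W}\det\widetilde K_\Theta.
\]

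\textbf{Step 2: the coefficient norm.} The identity in Step 1 holds for the formal polynomial in \(\mathbf x\), not merely for its values at a random evaluation. Hence the two coefficient vectors in the monomial basis satisfy
\[
 \Coeff_{\mathbf x}\mathscr D_\Theta=\sigma_W^{-3W}\Coeff_{\mathbf x}\det\widetilde K_\Theta .
\]
The scalar \(\sigma_W^{-3W}\) is deterministic and independent of \(\mathbf x\), so taking \(\ell^2\) norms and recalling the definition \eqref{eq:local-CTheta} of \(\mathscr C(\Theta)\) gives the second identity. One should also note what the scaling does entry by entry: in the packet rows it replaces \(\sigma_W^{-1}a_ex_e\) by \(a_ex_e\), the shift \(-z\) by \(\zeta_W\), and the fixed endpoint blocks \(C_L,B_R\) by \(\sigma_WC_L,\sigma_WB_R\). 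This is exactly the row-scaled normalization of Proposition~\ref{prop:local-terminal}, whose proof uses the matrix \(\widetilde H\).

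\textbf{Main obstacle.} There is no genuine difficulty. The one point requiring care is that the scaling multiplies rows only, not columns, so the factor is \(\sigma_W^{3W}\) and not \(\sigma_W^{5W}\) or \(\sigma_W^{6W}\). One must also check that the diagonal prefactor \(\operatorname{diag}(\sigma_WI_{3W},I_{2W})\) does not depend on \(\mathbf x\), so that the identity can be applied at the level of coefficients.
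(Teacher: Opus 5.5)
Your proposal is correct and matches the paper's own argument. Multilinearity in the $3W$ rescaled packet rows gives $\det\widetilde K_\Theta=\sigma_W^{3W}\det K_\Theta$ identically in $\mathbf x$. Combined with \eqref{eq:local-K-first-elimination}, this rescales both the polynomial $\mathscr D_\Theta$ and its full coefficient vector by the same deterministic factor $\sigma_W^{-3W}$.
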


\begin{proof}
The first \(3W\) rows acquire a factor \(\sigma_W\), while the last
\(2W\) boundary rows are unchanged.  Determinants and their complete
coefficient vectors therefore acquire the common factor
\(\sigma_W^{3W}\).
\end{proof}

The exponent is \(3W\), rather than the augmented dimension \(5W\),
because only the three packet equations are row-scaled.

\subsubsection{Removal of the endpoint factor \(E\)}
The coordinate polynomial contains the deformation
$Q_\Theta=ES(\Theta)$.  We next compare its exterior volume with that of
$S(\Theta)$ by proving \eqref{eq:local-remove-E}.

For every \(2W\times2W\) matrix \(A\), the compound-matrix identity is
\begin{equation}
 \boxed{\;
 \det(I_{2W}+A^*A)
 =\sum_{k=0}^{2W}\|\bigwedge\nolimits^kA\|_{\mathrm{HS}}^2.\;}
 \label{eq:local-compound-sum}
\end{equation}
Here is a direct derivation.  In the standard wedge bases, the entries of
\(\bigwedge^kA\) are the minors \(\det A_{I,J}\).  Therefore
\begin{equation}
 \|\bigwedge\nolimits^kA\|_{\mathrm{HS}}^2
 =\sum_{|I|=|J|=k}|\det A_{I,J}|^2.
 \label{eq:local-compound-minors}
\end{equation}
For each fixed \(J\), Cauchy--Binet identifies the sum over \(I\) with
\(\det((A^*A)_{J,J})\).  Summing over \(J\) and then over \(k\) gives the
principal-minor expansion of \(\det(I_{2W}+A^*A)\), proving
\eqref{eq:local-compound-sum}.  The identity is useful because one scalar
determinant records the squared Hilbert--Schmidt norms of all exterior
degrees at once.

From \eqref{eq:local-E-control}, the ordinary exterior-power bound, and
Hodge duality, on \(\mathcal E_{\partial}\) we get
\begin{equation}
 \|\bigwedge^kE\|+\|\bigwedge^kE^{-1}\|
 \leq e^{CW\log W},\qquad 0\leq k\leq 2W.
 \label{eq:local-E-compounds}
\end{equation}
Indeed,
\(\|\bigwedge^kE^{-1}\|
 =|\det E|^{-1}\|\bigwedge^{2W-k}E\|\).
Consequently, on \(\mathcal E_{\partial}\), for every \(S\),
\begin{align}
 \|\bigwedge\nolimits^k(ES)\|_{\mathrm{HS}}
 &\leq e^{CW\log W}
       \|\bigwedge\nolimits^kS\|_{\mathrm{HS}},\notag\\
 \|\bigwedge\nolimits^kS\|_{\mathrm{HS}}
 &\leq e^{CW\log W}
       \|\bigwedge\nolimits^k(ES)\|_{\mathrm{HS}}.
 \label{eq:local-ES-two-directions}
\end{align}
Square these inequalities, sum over \(k\), and use
\eqref{eq:local-compound-sum}.  This gives, on \(\mathcal E_{\partial}\),
\begin{equation}
 \det(I_{2W}+(ES)^*ES)^{1/2}
 =e^{\pm CW\log W}\det(I_{2W}+S^*S)^{1/2}.
 \label{eq:local-remove-E}
\end{equation}

\subsection{The coefficient norm for a boundary relation}

\begin{proof}[\proofname\ of Lemma \ref{lem:local-boundary-volume}]
First suppose \(\Theta\in\mathcal U\).  Parameterize its graph by the
source vector \((\psi_+,\psi_R)\).  In the reordered coordinates
\((\psi_L,\psi_R\mid\psi_-,\psi_+)\), the upper and lower graph frames are
\begin{equation}
 M(\Theta):=
 \begin{pmatrix}\Theta_{11}&\Theta_{12}\\0&I_W\end{pmatrix},
 \qquad
 N(\Theta):=
 \begin{pmatrix}\Theta_{21}&\Theta_{22}\\I_W&0\end{pmatrix}.
 \label{eq:local-MN}
\end{equation}
Direct multiplication gives
\(S(\Theta)=N(\Theta)M(\Theta)^{-1}\), and
\begin{equation}
 M(\Theta)^*\bigl(I_{2W}+S(\Theta)^*S(\Theta)\bigr)M(\Theta)
 =M(\Theta)^*M(\Theta)+N(\Theta)^*N(\Theta)
 =I_{2W}+\Theta^*\Theta.
 \label{eq:local-graph-Gram}
\end{equation}
Since \(\det M(\Theta)=\det\Theta_{11}\), taking determinants yields
\begin{equation}
 |\det\Theta_{11}|^2\det(I_{2W}+S(\Theta)^*S(\Theta))
 =\det(I_{2W}+\Theta^*\Theta).
 \label{eq:local-graph-identity}
\end{equation}
The exact identity \eqref{eq:local-double-elimination} implies
\[
 \mathscr C(\Theta)
 =|\det\Theta_{11}|\,\mathfrak C(ES(\Theta)).
\]
Apply Lemma~\ref{lem:local-all-minor},
\eqref{eq:local-remove-E}, and
\eqref{eq:local-graph-identity} in this order.  The factor
\(|\det\Theta_{11}|\) cancels exactly, and
\eqref{eq:local-coefficient-main} follows on \(\mathcal U\).

Now fix an arbitrary \(\Theta\in\GL_{2W}(\C)\), possibly with singular
\(\Theta_{11}\).  Let
\begin{equation}
 J_0:=\begin{pmatrix}I_W&0\\0&0\end{pmatrix},
 \qquad \Theta^{(n)}:=\Theta+\varepsilon_nJ_0,
 \label{eq:local-Theta-approx}
\end{equation}
where \(\varepsilon_n\to0\) is chosen so that
\(\det(\Theta_{11}+\varepsilon_nI_W)\neq0\).  Such a sequence exists
because this determinant is a monic polynomial in \(\varepsilon\) and has
only finitely many roots.  Since \(\GL_{2W}(\C)\) is open and
\(\Theta\in\GL_{2W}(\C)\), the sequence may also be chosen so that every
\(\Theta^{(n)}\) is invertible.  Thus \(\Theta^{(n)}\in\mathcal U\).

Each coefficient of \(\mathscr D_{\Theta}\) is polynomial in
the entries of \(\Theta\), because the entries of
\(\bigwedge^k\Theta\) are minors of \(\Theta\).  Hence
\(\mathscr C(\Theta^{(n)})\to
\mathscr C(\Theta)\).  The right-hand side of
\eqref{eq:local-coefficient-main} is continuous as well, so the estimate
passes to the limit.  Its lower bound is positive, and therefore the
limiting coefficient vector is never zero.  Notice that only
\(\mathscr D_\Theta\), its coefficient vector, and the estimate are
extended this way; \(S(\Theta)\) and \(H_\Theta\) remain defined only on
\(\mathcal U\).
\end{proof}

\begin{proof}[\proofname\ of Corollary \ref{corollary885}]
If \(\Theta\in\mathcal U\), set \(Q_\Theta=ES(\Theta)\).  By
\eqref{eq:local-double-elimination},
\[
 \mathscr D_\Theta=\det\Theta_{11}\,p_{Q_\Theta},
 \qquad
 \mathscr C(\Theta)=|\det\Theta_{11}|\,\mathfrak C(Q_\Theta).
\]
The common factor cancels from both logarithmic ratios, so
Proposition~\ref{prop:local-terminal} proves both assertions on
\(\mathcal U\), uniformly in \(\Theta\).

For arbitrary \(\Theta\in\GL_{2W}(\C)\), choose
\[
 \Theta^{(n)}=\Theta+\varepsilon_n
 \begin{pmatrix}I_W&0\\0&0\end{pmatrix}\in\mathcal U,
 \qquad \varepsilon_n\to0.
\]
Such a sequence exists because
\(\det(\Theta_{11}+\varepsilon I_W)\) has only finitely many zeros and
\(\GL_{2W}(\C)\) is open.  Since every coefficient of
\(\mathscr D_\Theta\) is polynomial in \(\Theta\),
\[
 \mathscr D_{\Theta^{(n)}}(\mathbf x)\to\mathscr D_\Theta(\mathbf x),
 \qquad
 \mathscr C(\Theta^{(n)})\to\mathscr C(\Theta).
\]
Lemma~\ref{lem:local-boundary-volume} gives
\(\mathscr C(\Theta)>0\).  Hence bounded convergence, applied to the
bounded continuous function \(\mathcal L_T\), proves the capped estimate;
continuity of
\((c,w)\mapsto\log_+(|w|/c)\) proves the reverse estimate.  Uniformity is
preserved because all preceding constants are independent of \(\Theta\).
\end{proof}

\section{The bounded-density case: circular law for growing bandwidth}
\label{sec:density-extension}

This section proves Theorem~\ref{thm:density-main}, which is a continuous-entry companion to
Theorem~\ref{thm:main}.  It reuses many linear algebra computations from the previous Bernoulli case. Only the probabilistic modules are replaced. Since the entries have a density, we can achieve any $W\to\infty$ without $W\gg\log N$.

The proof is therefore much shorter, and does not depend on the heavy anti-concentration estimates in the no-density setting.

\begin{proposition}[Finite-third-moment high-band input for full-block rings]
\label{prop:density-block-high-band}
Let $X_N$ be the cyclic full-block matrix \eqref{eq:model}, with $N=mW$,
and assume that $\xi$ is centered and normalized by $\E|\xi|^2=1$.
Assume either that $\xi$ is real with density bounded by $L$, or that it is
complex with a density, bounded by $L$, with respect to planar Lebesgue
measure on $\C\simeq\R^2$.  Alternative~\textnormal{(iii)} of
Assumption~\ref{ass:indicator-density} is also admissible.  Suppose that
$\E|\xi|^3<\infty$.  For every fixed
$\omega\in(0,1/9)$, if $N,W\to\infty$ and
\begin{equation}
 W\ge N^{8/9+\omega},
 \label{eq:density-block-high-band}
\end{equation}
then, for every fixed $z\in\C$,
\begin{equation}
 \frac1N\log|\det(X_N-zI_N)|
 \xrightarrow{\Prob}U_{\cir}(z).
 \label{eq:density-block-high-band-logdet}
\end{equation}
\end{proposition}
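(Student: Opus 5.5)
The plan is to repeat the proof of Proposition~\ref{prop:finite-moment-anchor} essentially verbatim, with the full-block profile in place of the scalar indicator profile. The only genuinely new point is checking that the hypotheses of Theorem~\ref{thm:high-band-lsv} hold for the full-block mask. Write $\beta=8/9+\omega$ and choose $\chi,\kappa,\tau>0$ as in \eqref{eq:hard-edge-parameter-choice}. The scalar variance profile of \eqref{eq:model} is doubly stochastic, since every row and column contains exactly $3W$ entries of variance $(3W)^{-1}$. It also satisfies $\max\sigma_{ij}^2=(3W)^{-1}$, so $\mathfrak b(X_N)=3W\ge N^{\beta}$.

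\textbf{Ellipticity check.} For the lower ellipticity bound, I would observe that if $|i-j|_N\le W'$ with $W':=\lfloor W/2\rfloor$, then $i$ and $j$ lie in the same or in cyclically adjacent blocks. Here I use $m\ge4$ and the fact that block adjacency is cyclic. Hence $\sigma_{ij}^2=(3W)^{-1}\ge c/W'$. Thus Theorem~\ref{thm:high-band-lsv} applies with bandwidth parameter $W'$. Indeed, $W'\ge N^{1/2+\chi}$ for large $N$, the upper bound $\max\sigma_{ij}^2\le C/W'$ holds, and the density alternatives (i)--(iii) transfer directly.

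\textbf{Hard edge.} With $t=N^{-2}$ and the Hilbert--Schmidt cutoff removed via \eqref{eq:lsv-remove-HS-cutoff}, this gives $s_{\min}(X_N-zI_N)\ge e^{-L_N}$ with probability $1-o(1)$, where $L_N=C N^{1+3\kappa}/W+2\log N$. Proposition~\ref{prop:mesoscopic-counting}, applied with $a_N=(3W)^{-1/8}N^\tau$, bounds the number of singular values below $a_N$ by $C_zNa_N$. The hard-edge contribution is then at most $Ca_NL_N=o(1)$, exactly as in \eqref{eq:hard-edge-sum}; this uses $\tau+3\kappa<9\beta/8-1$.

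\textbf{Bulk and upper tail.} For the bulk, Lemma~\ref{lem:local-finite-moment-bulk} requires only a doubly stochastic profile, the third moment, and $\mathfrak b\ge N^{\varepsilon}$. It gives the Kolmogorov comparison of squared singular value laws with shifted Ginibre, and integration by parts on $[a_N^2,R^2]$ controls the truncated logarithms. The upper tail above $R$ is handled by the second-moment bound \eqref{eq:upper-log-tail-second-moment}. The Ginibre hard edge is controlled by the negative moment estimate from \cite{BordenaveChafai2012}, giving \eqref{eq:ginibre-hard-edge}. Letting $N\to\infty$ and then $R\to\infty$ yields \eqref{eq:density-block-high-band-logdet}.

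\textbf{Main obstacle.} The main obstacle is the least-singular-value step. The full-block mask is not a symmetric scalar band, so the claim that Theorem~\ref{thm:high-band-lsv} accepts it through the inner band of half-width $W/2$ must be checked carefully, including when the $m$ blocks are few. The remaining steps are profile-agnostic. If the inner-band reduction failed for the wrap-around geometry, I would instead embed the argument of Appendix~B directly, since only double stochasticity and a comparable-variance band of width $\asymp W$ are used there.
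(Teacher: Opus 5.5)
Your proposal is correct and follows the paper's proof essentially step for step: the same parameter choice, Theorem~\ref{thm:high-band-lsv} with $t=N^{-2}$ and the Hilbert--Schmidt cutoff removed, mesoscopic counting at $a_N=(3W)^{-1/8}N^\tau$, the local bulk comparison with integration by parts, the second-moment upper tail, and the Ginibre hard edge. The only difference is that the paper applies the least-singular-value theorem with bandwidth $W$ itself, since a scalar cyclic distance at most $W$ already forces the same or adjacent blocks. Your reduction to $\lfloor W/2\rfloor$ is therefore harmless, and the ``main obstacle'' and fallback you describe are unnecessary.
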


\begin{proof}
Index scalar coordinates by $(j,a)\in\mathbb T_m\times[W]$.  The variance
profile is
\begin{equation}
 \sigma_{(j,a),(k,b)}^2
 =\frac1{3W}\,\1_{\{k-j\in\{-1,0,1\}\}}.
 \label{eq:density-block-profile}
\end{equation}
It is doubly stochastic, $\max_{u,v}\sigma_{uv}^2=(3W)^{-1}$, and
$\sigma_{uv}^2=(3W)^{-1}$ whenever the scalar cyclic distance between
$u$ and $v$ is at most $W$.  Hence
Theorem~\ref{thm:high-band-lsv} applies, and
$\mathfrak b(X_N)=3W$.

Set $\beta=8/9+\omega$ and choose $\chi,\kappa,\tau>0$ so that
\begin{equation}
 \frac12+\chi<\beta,\qquad \kappa<\frac\chi4,\qquad
 \tau<\frac\beta8,\qquad
 \tau+3\kappa<\frac{9\beta}{8}-1.
 \label{eq:density-block-parameters}
\end{equation}
Apply Theorem~\ref{thm:high-band-lsv} with $t=N^{-2}$, remove its
Hilbert--Schmidt cutoff by \eqref{eq:lsv-remove-HS-cutoff}, and apply
Proposition~\ref{prop:mesoscopic-counting} at
\[
 a_N:=(3W)^{-1/8}N^\tau=o(1).
\]
With probability $1-o(1)$,
\begin{align*}
 s_{\min}(X_N-zI_N)&\ge e^{-L_N},\\
 \#\{j:s_j(X_N-zI_N)\le a_N\}&\le C_zN a_N,\\
 L_N&:=\frac{N^{1+3\kappa}}W+2\log N.
\end{align*}
Consequently,
\begin{equation}
 \begin{aligned}
 \frac1N\sum_{s_j(X_N-zI_N)\le a_N}|\log s_j(X_N-zI_N)|
 &\le C_za_NL_N\\
 &\le C_z\left(N^{1+\tau+3\kappa}W^{-9/8}
       +N^\tau W^{-1/8}\log N\right)=o(1).
 \end{aligned}
 \label{eq:density-block-hard-edge}
\end{equation}
where \eqref{eq:density-block-parameters} and $W\ge N^\beta$ were used.

Above $a_N$, Lemma~\ref{lem:local-finite-moment-bulk} and integration by
parts give the same truncated-log comparison with normalized Ginibre as
\eqref{eq:truncated-log-bulk-comparison}.  The upper tail is controlled by
the Hilbert--Schmidt estimate \eqref{eq:upper-log-tail-second-moment}, and
the Ginibre hard edge by \eqref{eq:ginibre-hard-edge}.  Letting first
$N\to\infty$ and then the upper cutoff tend to infinity proves
\eqref{eq:density-block-high-band-logdet}.
\end{proof}

\subsection{Local analytical smoothness estimates}

\subsubsection{Small-scale analysis for multi-affine logarithms}
\label{subsec:dens-affine-log}

The next lemma is the elementary continuous input used both for
row-resampling and for evaluating a terminal coefficient tensor.  Its
proof treats the whole affine coefficient vector.  In particular, it
does not select a largest monomial of a high-degree determinant.

\begin{lemma}[Scale-free concentration of random affine logarithms]
\label{lem:dens-affine-log}
Let \(E\) be a finite-dimensional real or complex normed space and let
\(x_1,\ldots,x_p\) be independent centered, variance-one random
variables.  Assume either that they are real and have densities bounded by
\(L\), or that they are complex, \(E\) is a complex normed space, and
their densities with respect to planar Lebesgue measure are bounded by
\(L\).  For fixed vectors
\(v_0,v_1,\ldots,v_p\in E\), consider
\[
 G(\mathbf{x})=v_0+\sum_{s=1}^p x_sv_s.
\]
Let \(\mathbf{x}'\) be an independent copy of \(\mathbf{x}\).  If \(G\) is not the zero function, then
\begin{equation}
 \E\left|\log\|G(\mathbf{x})\|-\log\|G(\mathbf{x}')\|\right|^2
 \leq C_L\log^2(ep).
 \label{eq:dens-affine-resampling}
\end{equation}
The constant $C_L$ is independent of \(\dim E\), of the norm on \(E\), and of
the deterministic center \(v_0\).

If we denote by
\begin{equation}
 \rho^2:=\|v_0\|^2+\sum_{s=1}^p\|v_s\|^2,
 \label{eq:dens-affine-rho}
\end{equation}
then, provided \(\rho>0\), we have a more specific estimate
\begin{equation}
 \E\left|\log\|G(\mathbf{x})\|-\log\rho\right|^2
 \leq C_L\log^2(ep).
 \label{eq:dens-affine-Hilbert}
\end{equation}
The same conclusions hold under alternative~\textnormal{(iii)} of
Assumption~\ref{ass:indicator-density}.
\end{lemma}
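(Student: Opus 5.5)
We prove \eqref{eq:dens-affine-Hilbert} first; \eqref{eq:dens-affine-resampling} then follows from $(a-b)^2\le2(a-\log\rho)^2+2(b-\log\rho)^2$ applied to $\mathbf x$ and $\mathbf x'$. If $\rho=0$ then $G\equiv0$, so the hypothesis that $G$ is not the zero function gives $\rho>0$. By homogeneity we normalize $\rho=1$. We then bound the positive and negative parts of $\log\|G\|$ separately.

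\textbf{Upper tail.} By the triangle inequality and Cauchy--Schwarz,
\[
 \|G\|\le\|v_0\|+\sum_s|x_s|\|v_s\|\le\Bigl(1+\sum_s|x_s|^2\Bigr)^{1/2}.
\]
Hence
\[
 \log_+^2\|G\|\le\tfrac14\log^2\Bigl(1+\sum_s|x_s|^2\Bigr).
\]
Since $\E\sum_s|x_s|^2=p$, concavity of $t\mapsto\log^2(1+t)$ for $t\ge e-1$ (or the bound $\log^2(1+t)\le C\log^2(ep)+C t/p$) gives $\E\log_+^2\|G\|\le C\log^2(ep)$. This step uses only second moments.

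\textbf{Lower tail.} This is the main step. We aim for a small-ball bound of the form
\[
 \Pp\{\|G\|\le t\}\le C_L\,p^{C}\,t^{c}, \qquad 0<t<1,
\]
with absolute exponents $c,C>0$. Integrating this against $2\log(1/t)\,dt/t$ then yields $\E\log_-^2\|G\|\le C\log^2(ep)$.

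To obtain the small-ball bound, pick an index $s_*$ maximizing $\|v_s\|$ over $0\le s\le p$. Then $\|v_{s_*}\|\ge(p+1)^{-1/2}$.

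\emph{Case $s_*\ge1$.} By Hahn--Banach choose a norm-one functional $\phi$ with $\phi(v_{s_*})=\|v_{s_*}\|$; it is real-linear in the real case and complex-linear in the complex case. Then $|\phi(G)|\le\|G\|$, and $\phi(G)=x_{s_*}\|v_{s_*}\|+(\text{terms independent of }x_{s_*})$. Conditioning on all other variables and integrating the bounded density over an interval or disk of radius $t/\|v_{s_*}\|$ gives
\[
 \Pp\{\|G\|\le t\}\le C_L\sqrt{p+1}\,t\quad\text{(real)},\qquad \min\{1,C_L(p+1)t^2\}\le C'_L\sqrt{p+1}\,t\quad\text{(complex)}.
\]

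\emph{Case $s_*=0$.} If the dominant vector is the deterministic center, a single variable cannot be used directly, because $\|v_0\|$ could be large while all $\|v_s\|$ are small; then the small ball simply does not occur. Split according to whether $\sum_{s\ge1}\|v_s\|^2\ge\tfrac12$.
\begin{itemize}
 \item If $\sum_{s\ge1}\|v_s\|^2\ge\tfrac12$, some $s\ge1$ has $\|v_s\|\ge(2p)^{-1/2}$, and the previous argument applies.
 \item Otherwise $\|v_0\|\ge1/\sqrt2$, and $\|G\|\le t<\tfrac14$ forces $\|\sum_s x_sv_s\|\ge1/\sqrt2-t\ge\tfrac14$. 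By Chebyshev,
 \[
  \Pp\Bigl\{\bigl\|\textstyle\sum_s x_sv_s\bigr\|\ge\tfrac14\Bigr\}\le16\,\E\Bigl(\sum_s|x_s|\|v_s\|\Bigr)^2\le16\,p\sum_{s\ge1}\|v_s\|^2.
 \]
\end{itemize}
The second sub-case is weak when $\sum_{s\ge1}\|v_s\|^2$ is only moderately small, so it needs refinement. We handle it by a dyadic choice instead of a fixed threshold: for each $t$, ask whether some $s\ge1$ has $\|v_s\|\ge t^{1/2}$. If so, the density argument gives probability $\le C_Lt^{1/2}$. If not, the Chebyshev bound above with $\|v_s\|<t^{1/2}$ for every $s\ge1$ gives probability $\le16\,p^2t$. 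Together these give the desired $C_Lp^{C}t^{c}$ small-ball bound with $c=1/2$.

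The resulting constant is independent of $\dim E$, the norm on $E$, and $v_0$, since only one functional and scalar densities were used. Under alternative (iii), condition additionally on all the $V$-coordinates of the variables and apply the real argument to the $U$-coordinates, exactly as in Lemma~\ref{lem:multiaffine}; the affine center absorbs the conditioned parts.
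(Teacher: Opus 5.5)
Your proof is correct, but it is organized differently from the paper's.

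\textbf{The paper's route.} The paper does not center at $\rho$ directly. It sets $b=\|v_0\|$, $\nu=\max_{s\ge1}\|v_s\|$ and $\lambda=\max\{b,p\nu\}$, proves $\E|\log\|G\|-\log\lambda|^2\le C_L\log^2(ep)$, and only at the end passes to $\rho$ via $2^{-1/2}\rho\le\lambda\le p\rho$. Its upper tail is the first-moment Markov bound $\Pp\{\|G\|>e^u\lambda\}\le2e^{-u}$. Its lower tail separates two regimes:
\begin{itemize}
\item if $b\le2p\nu$, a one-coordinate density bound gives $\min\{1,C_L(pu)^d\}$;
\item if $b>2p\nu$, set $\delta=p\nu/b$; Markov and the density bound combine into $\min\{4\delta^2,C_L(pu/\delta)^d\}$, and the logarithmic integral is split at $t_0=\log(C_Lp\,\delta^{-1-2/d})$.
\end{itemize}

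\textbf{Your route.} You normalize $\rho=1$ and control the upper tail by Cauchy--Schwarz, $\|G\|\le(1+\sum_s|x_s|^2)^{1/2}$. You replace the two-regime analysis by a scale-dependent dichotomy: either some $\|v_s\|\ge t^{1/2}$, or all are smaller and Chebyshev applies. This yields the single uniform small-ball bound $\Pp\{\|G\|\le t\}\le C_Lp^2t^{1/2}$ for $t<1/4$. It is shorter, and it produces the $\rho$-centered estimate directly, which is the form used in Corollary~\ref{cor:dens-coefficient-evaluation}.

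\textbf{Trade-off.} Centering at $\lambda$ gives a finer, regime-dependent picture. The upper-tail cost relative to $\lambda$ is bounded. When the center dominates, the lower-tail cost is of order $\delta^2t_0^2$, which vanishes as $\delta\to0$. Your uniform $t^{1/2}$ bound does not capture this, but that precision is not needed for the stated $O(\log^2(ep))$ conclusion.

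\textbf{Two small presentation fixes.}
\begin{itemize}
\item In the upper tail, the concavity remark should use $t\mapsto\log^2(e+t)$. This function is concave on $[0,\infty)$ and dominates $\log^2(1+t)$, whereas $\log^2(1+t)$ is not concave near $0$. Your alternative inequality $\log^2(1+t)\le C\log^2(ep)+Ct/p$ is already fine.
\item Under alternative (iii), condition on the orthogonal coordinates only in the one-variable density step. The Chebyshev step and the upper tail must use the unconditional moments $\E|x_s|^2=1$. Conditionally on the $V$-coordinates, the $U$-coordinates need not be centered or have unit variance. This is how the paper arranges it.
\end{itemize}
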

The important part of this lemma is that the estimate does not depend on the size of $v_0,v_1,\cdots,v_p$. The proof of Lemma \ref{lem:dens-affine-log} is deferred to Section \ref{remainingsection3841}.

\begin{corollary}[Evaluation of a row-multiaffine coefficient tensor]
\label{cor:dens-coefficient-evaluation}
Let \(P\) be a nonzero complex polynomial which is affine separately in
\(n_{\rm grp}\) independent groups, each containing at most \(p\) atoms satisfying
the assumptions of Lemma~\ref{lem:dens-affine-log}.  More explicitly,
write
\(\mathbf{x}_j=(x_{j,1},\ldots,x_{j,p_j})\), where the groups are mutually
independent and $p_j\le p$, and assume that, for every $j$,
\[
 P=Q_{j,0}+\sum_{s=1}^{p_j}x_{j,s}Q_{j,s}.
\]
Here the coefficients $Q_{j,s}$ are independent of $\mathbf{x}_j$ but may
depend on all the other groups.
Denote by
\(\|\Coeff P\|_2\) the Euclidean norm of its full coefficient tensor in
the monomial basis.  For complex atoms this means the monomial basis in the
formal complex atom variables.  Then
\begin{equation}
 \E\left|\log|P|-\log\|\Coeff P\|_2\right|
 \leq C_Ln_{\rm grp}\log(ep).
 \label{eq:dens-tensor-evaluation}
\end{equation}
In particular, \(P\neq0\) almost surely.
\end{corollary}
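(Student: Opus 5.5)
The plan is to build up one group at a time and compare $\log|P|$ with a chain of partial coefficient norms. Order the groups $\mathbf x_1,\ldots,\mathbf x_{n_{\rm grp}}$. For $0\le k\le n_{\rm grp}$, let $P_k$ denote the polynomial in the formal variables of groups $1,\ldots,k$ whose coefficients are the actual evaluated values $\mathbf x_{k+1},\ldots,\mathbf x_{n_{\rm grp}}$. Thus $P_0=P(\Xi)$ is a scalar and $P_{n_{\rm grp}}=P$ is the formal polynomial. Set $N_k:=\|\Coeff_{\mathbf x_1,\ldots,\mathbf x_k}P_k\|_2$. Then $N_0=|P(\Xi)|$ and $N_{n_{\rm grp}}=\|\Coeff P\|_2$, and telescoping gives
\[
 \log|P|-\log\|\Coeff P\|_2=\sum_{k=1}^{n_{\rm grp}}\bigl(\log N_{k-1}-\log N_k\bigr).
\]
It therefore suffices to show $\E|\log N_{k-1}-\log N_k|\le C_L\log(ep)$ for each $k$, and then sum.

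For the $k$-th increment, condition on the groups $k+1,\ldots,n_{\rm grp}$. These are independent of $\mathbf x_k$. Since $P$ is affine in $\mathbf x_k$, the coefficient tensor of $P_{k-1}$ in the variables of groups $1,\ldots,k-1$ has the affine form $G(\mathbf x_k)=v_0+\sum_{s\le p_k}x_{k,s}v_s$. Here $E$ is the finite-dimensional coefficient space with its Euclidean norm, and $v_0,v_s$ are the coefficient tensors of $Q_{k,0},Q_{k,s}$ with the later groups evaluated. Thus $N_{k-1}=\|G(\mathbf x_k)\|$. The monomials in groups $1,\ldots,k$ split orthogonally according to which variable of group $k$ appears, if any. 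Hence $N_k^2=\|v_0\|^2+\sum_s\|v_s\|^2=\rho^2$, exactly the quantity in \eqref{eq:dens-affine-rho}. The conditional bound \eqref{eq:dens-affine-Hilbert} gives $\E[|\log N_{k-1}-\log N_k|^2\mid\cdot]\le C_L\log^2(ep)$, and Cauchy--Schwarz converts this into the required first-moment bound.

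\textbf{The main obstacle is positivity.} Lemma~\ref{lem:dens-affine-log} needs $\rho>0$, so I must show $N_k>0$ almost surely for every $k$. I would argue by downward induction on $k$. First, $N_{n_{\rm grp}}>0$ because $P\ne0$. Next, suppose $N_k>0$ almost surely, and condition on groups $k+1,\ldots,n_{\rm grp}$. Then $G$ is a nonzero affine map. The small-ball bound underlying \eqref{eq:dens-affine-Hilbert} (densities bounded by $L$, one variable at a time) makes $\{G(\mathbf x_k)=0\}$ null. For this I use that a nonzero affine function of independent variables with bounded densities vanishes with probability zero: either some $v_s\neq0$, or $G\equiv v_0\neq0$. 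Hence $N_{k-1}>0$ almost surely, and at $k=0$ this also gives $P\neq0$ almost surely.

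Alternative~(iii) needs separate care because of the complex case with real conditional densities. There I would condition on the orthogonal coordinates $V$ and treat $P$ as affine in the real variables $U$. The coefficient norm with respect to the complex atoms then differs from the one with respect to the $U$'s. I would handle this by applying the directional form of Lemma~\ref{lem:dens-affine-log}, which is stated to hold under (iii), directly to the complex affine map in $x_{k,s}$. Since the lemma is scale-free, the bound remains $C_L\log^2(ep)$ per group.
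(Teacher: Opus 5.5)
Your proposal is correct and is essentially the paper's proof. You evaluate one group at a time, identify the lemma's $\rho$ with the coefficient norm before that group is evaluated via the orthogonal splitting of monomials, apply \eqref{eq:dens-affine-Hilbert} conditionally, and telescope with the tower property, using Lemma~\ref{lem:dens-affine-log} for almost-sure nonvanishing at each stage. The differences are cosmetic (reverse evaluation order, an explicit Cauchy--Schwarz step), and under alternative~(iii) the detour through the real $U$-coordinates is unnecessary: applying the lemma directly to the complex affine map, as you finally suggest, is exactly what the paper does.
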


\subsubsection{Row-group multiaffinity}
\label{subsec:dens-row-affinity}

Recall the block transfer matrix $T_j$. We
write, for each $1\le j\le m$,
\[
 \mathsf R_j:=\bigl[\,\mathsf A_j-zI_W\ \ \mathsf C_j\,\bigr].
\]
For \(1\leq a\leq W\), define the \(a\)-th equation-row group at site
\(j\) by
\begin{equation}
 \mathfrak g_{j,a}:=\bigl(
  (\mathsf B_j)_{a,*},(\mathsf A_j-zI_W)_{a,*},
  (\mathsf C_j)_{a,*}\bigr),
 \label{eq:dens-row-group}
\end{equation}
where \((\cdot)_{a,*}\) denotes the entire \(a\)-th row.  The symbol
\(\mathfrak g_{j,a}\) denotes the \(3W\) underlying variance-one atom
coordinates generating these normalized entries; the diagonal coordinate
of \(\mathsf A_j-zI_W\) also contains the deterministic shift \(-z\).
There are \(W\) such row groups at each block site in both the real and
complex cases.

\begin{lemma}[Separate affinity in every $\mathfrak g_{j,a}$]
\label{lem:dens-row-affinity}
For every \(0\leq r\leq2W\), every entry of
\[
 \mathcal A_j^{(r)}=(\det\mathsf B_j)\bigwedge^rT_j
\]
is affine-linear separately in each group \(\mathfrak g_{j,a}\) (see Corollary \ref{cor:dens-coefficient-evaluation}).  Hence,
after all other rows are fixed, every product containing
\(\mathcal A_j^{(r)}\) is an operator-valued affine function of the
\(3W\) atoms in \(\mathfrak g_{j,a}\); that is, its coefficients are
matrices depending on \(\mathfrak g_{k,b}\) for
\((k,b)\neq(j,a)\).
\end{lemma}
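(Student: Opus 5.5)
The plan is to reduce the claim to the exact polynomial structure of the cleared exterior power. I would first compute $\mathcal A_j^{(r)}$ on the dense set where $\mathsf B_j$ is invertible, and then extend by polynomial continuation.

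Write $T_j=\begin{pmatrix}-\mathsf B_j^{-1}\mathsf R_j\\ \Pi\end{pmatrix}$. Here $\Pi=(I_W\ 0)$ is the deterministic identity part, placed in the lower $W$ rows. An entry of $\bigwedge^rT_j$ is an $r\times r$ minor of $T_j$ with row set $I=I_{\rm top}\sqcup I_{\rm bot}$ and column set $J$. Laplace expansion along the deterministic bottom rows writes it as a signed sum, with deterministic coefficients, of minors of $\mathsf B_j^{-1}\mathsf R_j$ with rows $I_{\rm top}$ and columns $J'\subset J$. Jacobi's complementary-minor identity, or equivalently Cramer's rule applied to $\mathsf B_j Y=\mathsf R_j$, gives
\[
\det(\mathsf B_j)\,\det\bigl((\mathsf B_j^{-1}\mathsf R_j)_{I_{\rm top},J'}\bigr)=\pm\det \mathsf B_j^{[I_{\rm top}\leftarrow \mathsf R_{j,J'}]}.
\]
The right side is the determinant of the $W\times W$ matrix obtained from $\mathsf B_j$ by replacing the columns indexed by $I_{\rm top}$ with the columns $J'$ of $\mathsf R_j$. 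This is the generalized Cramer rule, and the paper already uses it in Step 3 of the proof of Lemma~\ref{lem:local-block-floquet}.

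The key observation is that row $a$ of this replaced matrix consists entirely of entries from row $a$ of $\mathsf B_j$ and row $a$ of $\mathsf R_j=[\mathsf A_j-zI_W\ \ \mathsf C_j]$, which are exactly the coordinates of $\mathfrak g_{j,a}$. No other row of the replaced matrix involves $\mathfrak g_{j,a}$. A determinant is linear in each row, so every such determinant is affine in the atoms of $\mathfrak g_{j,a}$; the $-z$ on the diagonal only contributes a constant. The entry of $\mathcal A_j^{(r)}$ is a deterministic linear combination of these determinants, so it is affine in $\mathfrak g_{j,a}$.

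Both sides are polynomials in the block entries, so the identity extends to singular $\mathsf B_j$. Affinity is a polynomial identity, so it persists on the whole space.

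For the product statement, all factors $\mathcal A_k^{(r)}$ with $k\neq j$ are independent of $\mathfrak g_{j,a}$. The product $L\,\mathcal A_j^{(r)}R$ is then linear in the entries of $\mathcal A_j^{(r)}$ with coefficients built from $L,R$. Writing $\mathcal A_j^{(r)}=M_0+\sum_s x_sM_s$ gives the claimed operator-valued affine form, with coefficients depending only on the other groups $\mathfrak g_{k,b}$; these include groups $(j,b)$ with $b\neq a$, which enter through $M_0,M_s$.

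The main obstacle is the bookkeeping in the Laplace/Jacobi step: checking that each cleared minor is genuinely a single $W\times W$ determinant whose row $a$ is built only from $\mathfrak g_{j,a}$. The tempting but wrong picture is a product of $\det\mathsf B_j$ with a rational function, which would hide the row structure. I would handle this by proving the Cramer identity $\det(B)\,\det((B^{-1}R)_{I,J'})=\pm\det B^{[I\leftarrow R_{J'}]}$ directly. Multiply $B^{-1}[B\mid R]$ and take the $W\times W$ minor that uses columns $[W]\setminus I$ of $B$ together with columns $J'$ of $R$. This reduces the whole lemma to row-linearity of the determinant.
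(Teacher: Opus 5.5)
Your proof is correct and follows essentially the paper's route. Both arguments expand each minor of $T_j$ along the deterministic rows $[I_W\ 0]$, clear $\det\mathsf B_j$ with a complementary-minor identity, and conclude by row-linearity of the determinant. The difference is only in packaging. The paper writes $(\det\mathsf B_j)\det((\mathsf B_j^{-1}\mathsf R_j)_{I,K})$ as the Cauchy--Binet/Jacobi sum $\sum_J\pm\det(\mathsf B_j)_{J^c,I^c}\det(\mathsf R_j)_{J,K}$ and checks that row $a$ never enters both factors of a summand. You keep it as the single column-replaced determinant, of which that sum is just the Laplace expansion along the replaced columns, so the row-$a$ affinity becomes immediate.
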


\begin{proof}
An entry of \(\bigwedge^rT_j\) is a minor of \(T_j\).  Write
\[
 T_j=
 \begin{bmatrix}
  -\mathsf B_j^{-1}(\mathsf A_j-zI_W)&
  -\mathsf B_j^{-1}\mathsf C_j\\
  I_W&0
 \end{bmatrix}
 =\begin{bmatrix}-\mathsf B_j^{-1}\mathsf R_j\\ I_W\quad0\end{bmatrix}.
\]
We expand this minor of \(T_j\)
according to the rows taken from the deterministic lower block
\([I_W\ 0]\).  A nonconstant term left in the upper block is, up to a
sign, a minor of \(\mathsf B_j^{-1}\mathsf R_j\).  Thus each entry of the
cleared exterior matrix \(\mathcal A_j^{(r)}\) is either \(0\),
\(\pm\det\mathsf B_j\), or
\(\pm\det\mathsf B_j\det((\mathsf B_j^{-1}\mathsf R_j)_{I,K})\).
For subsets
\(I,K\) of the same size, Cauchy--Binet gives
\begin{align}
 &(\det\mathsf B_j)
 \det\bigl((\mathsf B_j^{-1}\mathsf R_j)_{I,K}\bigr)
 \notag\\
 &\qquad=
 \sum_{\substack{J\subset[W]\\|J|=|I|}}
  \pm\det(\mathsf B_j)_{J^c,I^c}
       \det(\mathsf R_j)_{J,K}.
 \label{eq:dens-row-CB-Jacobi}
\end{align}
Indeed, Jacobi's complementary-minor identity changes
\((\det\mathsf B_j)\det((\mathsf B_j^{-1})_{I,J})\) into the displayed
complementary minor of \(\mathsf B_j\).  In every summand of
\eqref{eq:dens-row-CB-Jacobi}, if $a\in J$ then the $R_j$-minor appears while the $B_j$ minor does not appear. If $a\notin J$ then instead the $B_j$-minor appears while the $R_j$ minor does not appear, and thus a row $a$ never appears in both factors of the same summand.  Since the determinant is linear, each summand is affine in the full row group $\mathfrak{g}_{j,a}$. Expanding \((\mathsf A_j-zI_W)_{aa}\) can replace its random
part by the constant \(-z\), but cannot create a second occurrence of the
row.  This proves separate affinity.  Multiplication on the left and
right by matrices independent of \(\mathfrak g_{j,a}\) preserves this separate affinity.
\end{proof}

\subsubsection{Row-group Efron--Stein concentration}
\label{subsec:dens-Efron-Stein}

For an interval \(I\) of block sites, let
\begin{equation}
 Y_r(I):=\log\left\|
   \prod_{j\in I}^{\leftarrow}\mathcal A_j^{(r)}
 \right\|,
 \qquad 0\leq r\leq2W.
 \label{eq:dens-Yr-I}
\end{equation}
Every interface block \(\mathsf B_j\) and \(\mathsf C_j\) is invertible
almost surely under the density hypothesis.  For a planar density this
follows from joint absolute
continuity; under the directional alternative, condition on the orthogonal
coordinates and use that the determinant is a nonzero polynomial in the
remaining continuously distributed coordinates.  Thus the cleared
exterior operators and their products are invertible almost surely, and
their logarithms are finite.

\begin{lemma}[Concentration over row groups]
\label{lem:dens-row-concentration}
If an interval \(I\) contains \(s\) consecutive block sites, then, uniformly in
\(0\leq r\leq2W\), we have the variance estimate
\begin{equation}
 \operatorname{Var}Y_r(I)
 \leq C_LsW\log^2(eW).
 \label{eq:dens-row-variance}
\end{equation}
Consequently, we have the maximal deviation estimate
\begin{equation}
 \E\max_{0\leq r\leq2W}
 |Y_r(I)-\E Y_r(I)|
 \leq C_L\sqrt{W(sW)}\log(eW).
 \label{eq:dens-all-degree-concentration}
\end{equation}
\end{lemma}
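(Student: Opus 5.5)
The plan is to run the Efron--Stein argument of Proposition~\ref{prop:pressure-concentration}, but with the row groups \(\mathfrak g_{j,a}\) as the independent coordinates. The interval \(I\) contains \(s\) block sites, and each site carries \(W\) row groups, so there are \(sW\) independent groups in total. Each group has at most \(3W\) atoms. The groups are mutually independent because distinct rows of \(\mathsf A_j,\mathsf B_j,\mathsf C_j\) involve disjoint atoms.

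First I fix \(r\) and one group \((j,a)\), and condition on all other groups. By Lemma~\ref{lem:dens-row-affinity}, the full cleared product \(\prod^{\leftarrow}_{k\in I}\mathcal A_k^{(r)}\) is an operator-valued affine function
\[
 G(\mathbf x)=v_0+\sum_{s=1}^{p}x_sv_s,\qquad p\le 3W,
\]
of the atoms \(\mathbf x\) of \(\mathfrak g_{j,a}\). Here \(v_0,\dots,v_p\) are matrices built from the other groups, and \(E\) is the matrix space with the operator norm. The deterministic shift \(-z\) is absorbed into \(v_0\).

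The product is almost surely invertible, as recorded just before the lemma. Hence \(G\) is not the zero function, and the resampled copy has the same coefficient vectors. Lemma~\ref{lem:dens-affine-log}, estimate \eqref{eq:dens-affine-resampling}, then gives
\[
 \E\bigl[(Y_r(I)-Y_r^{(j,a)}(I))^2\mid\text{other groups}\bigr]\le C_L\log^2(e\cdot3W).
\]
This bound does not depend on the sizes of the \(v_s\). That scale-freeness is exactly what lets me avoid any norm control on the history.

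Before applying Efron--Stein I must check that \(Y_r(I)\in L^2\). I would use the bound
\[
 |Y_r(I)|\le\sum_{k}\bigl(\log_+\|\mathcal A_k^{(r)}\|+\log_+\|(\mathcal A_k^{(r)})^{-1}\|\bigr).
\]
The inverse is controlled by Hodge duality \eqref{eq:gb-one-step-inverse} through \(|\det\mathsf B_k\det\mathsf C_k|^{-1}\). So it suffices to show that \(\log|\det\mathsf B_k|\) and \(\log|\det\mathsf C_k|\) are in \(L^2\). This follows from Corollary~\ref{cor:dens-coefficient-evaluation}, since the determinant is row-multiaffine and has a finite coefficient norm. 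The forward \(\log_+\) terms are controlled by second moments.

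Summing the conditional bound over the \(sW\) groups, Efron--Stein gives \eqref{eq:dens-row-variance}. Finally, Cauchy--Schwarz over the \(2W+1\) degrees gives
\[
 \E\max_r|Y_r-\E Y_r|\le\Bigl(\sum_{r=0}^{2W}\operatorname{Var}Y_r(I)\Bigr)^{1/2}\le C\sqrt{W\cdot sW}\,\log(eW),
\]
which is \eqref{eq:dens-all-degree-concentration}.

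I expect the main obstacle to be the application of the affine lemma, and specifically the separate affinity of the whole product in one row group. That point is supplied by Lemma~\ref{lem:dens-row-affinity}, so it is already in hand. The only remaining subtlety is the \(L^2\)-integrability verification above.
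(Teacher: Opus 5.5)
Your proposal is correct and follows essentially the paper's argument: Efron--Stein over the $sW$ independent row groups, separate affinity from Lemma~\ref{lem:dens-row-affinity}, the scale-free bound of Lemma~\ref{lem:dens-affine-log}, and then Cauchy--Schwarz over the $2W+1$ degrees. The one small slip is in the integrability step: Corollary~\ref{cor:dens-coefficient-evaluation} as stated is only an $L^1$ estimate, so to get $Y_r(I)\in L^2$ you should either rerun its telescoping with \eqref{eq:dens-affine-Hilbert} and Minkowski, or, as the paper does, use the second-moment bounds of Lemma~\ref{lem:dens-integrated-Hodge}.
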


\begin{proof} The idea is to apply Efron--Stein to $Y_r(I)$ viewed as a
function of the $sW$ independent row groups $\mathfrak{g}_{j,a}$, and use
Lemma~\ref{lem:dens-affine-log} after each resampling.
Resample one group \(\mathfrak g_{j,a}\) and condition on all other row
groups.  By Lemma~\ref{lem:dens-row-affinity}, the full exterior
product is an operator-valued affine function of at most \(3W\) atoms.
For almost every realization of the conditioned rows this affine map is
not the constant-zero map.  Indeed, the joint event on which every
\(\mathsf B_i\) and \(\mathsf C_i\) is invertible has probability one.
Fubini therefore implies that, for almost every conditioning, the same
event has conditional probability one with respect to the resampled row.
On it every cleared exterior factor, and hence the full product in every
degree, is invertible.  Thus the conditioned affine map is nonzero (if
all slopes vanish, it is a fixed nonzero operator).  The exceptional
conditioned configurations may be discarded.
Apply Lemma~\ref{lem:dens-affine-log} in the finite-dimensional operator
space equipped with its operator norm.  Let \(\mu_{j,a}\) be the
coefficient scale \(\rho\) from \eqref{eq:dens-affine-rho} for this
conditioned affine map.  The old and resampled row groups share the same
\(\mu_{j,a}\).  Thus, if \(Y\) and \(Y'\) are the corresponding
logarithmic norms,
\[
 |Y-Y'|^2
 \leq
 2|Y-\log\mu_{j,a}|^2
 2|Y'-\log\mu_{j,a}|^2,
\]
and \eqref{eq:dens-affine-Hilbert} bounds the conditional expectation by
\(C_L\log^2(eW)\), uniformly in all conditioned left and right products.
There are \(sW\) row groups.
Efron--Stein therefore gives \eqref{eq:dens-row-variance}.

For complete formal justification, first apply Efron--Stein to the
one-Lipschitz truncation \((-T)\vee(Y_r\wedge T)\).  The integrated Hodge
bound proved below implies \(Y_r(I)\in L^2\) for every finite \(I,W\), so
one may let \(T\to\infty\).

Finally, by Cauchy--Schwarz over \(0\leq r\leq2W\),
\[
 \max_r|Y_r-\E Y_r|
 \leq\left(\sum_{r=0}^{2W}|Y_r-\E Y_r|^2\right)^{1/2}.
\]
Take expectations, use Jensen's inequality and
\eqref{eq:dens-row-variance}, and obtain
\eqref{eq:dens-all-degree-concentration}.
\end{proof}

\subsection{Integrated interface and Hodge bounds}
\label{subsec:dens-integrated-Hodge}

The next result replaces the event \(\mathcal G_j\) used in the discrete
proof.

\begin{lemma}[One-site integrated Hodge control]
\label{lem:dens-integrated-Hodge}
For every fixed \(z\), there is an integrable random variable
\(\mathfrak h_j=\mathfrak h_j(z)\), depending only on the three
normalized random blocks at site \(j\), such that
\begin{equation}
 \max_{0\leq r\leq2W}
 \left\{
  \log_+\|\mathcal A_j^{(r)}\|+
  \log_+\|(\mathcal A_j^{(r)})^{-1}\|
 \right\}
 \leq\mathfrak h_j,
 \qquad
 \E\mathfrak h_j\leq C_{L,z}W\log(eW).
 \label{eq:dens-one-site-Hodge-L1}
\end{equation}
The same variables have finite second moments.  Hence, for any interval
\(J\) consisting of \(s\) consecutive block sites (and therefore having
scalar length \(sW\)) and every integer \(0\leq r\leq2W\),
\begin{equation}
 \E\log_+\left\|
  \prod_{j\in J}^{\leftarrow}\mathcal A_j^{(r)}
 \right\|
 +
 \E\log_+\left\|
  \left(\prod_{j\in J}^{\leftarrow}\mathcal A_j^{(r)}
  \right)^{-1}\right\|
 \leq C_{L,z}sW\log(eW).
 \label{eq:dens-interval-Hodge-L1}
\end{equation}
Throughout, \(C_{L,z}\) depends only on the density bound \(L\) and the
complex shift \(z\in\C\).
\end{lemma}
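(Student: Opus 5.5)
We bound the forward and inverse cleared exterior transfers at one site by explicit scalar quantities built from the three normalized blocks, then integrate.

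\textbf{Forward bound.} By \eqref{eq:dens-row-CB-Jacobi}, every entry of \(\mathcal A_j^{(r)}\) is \(0\), \(\pm\det\mathsf B_j\), or a signed sum of products \(\det(\mathsf B_j)_{J^c,I^c}\det(\mathsf R_j)_{J,K}\). Hadamard's inequality bounds each minor by a product of row norms, so each such product is at most
\[
 \prod_{a=1}^W\max\{1,\|(\mathsf B_j)_{a,*}\|\}\max\{1,\|(\mathsf R_j)_{a,*}\|\}.
\]
The number of summands and entries is at most \(4^{2W}\binom{W}{\cdot}\le e^{CW}\). Passing from entries to operator norm costs only a further \(e^{CW}\) factor, through the Hilbert--Schmidt norm. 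We therefore set
\[
 \mathfrak f_j:=CW+\sum_{a=1}^W\Big(\log_+\|(\mathsf B_j)_{a,*}\|+\log_+\|(\mathsf R_j)_{a,*}\|\Big),
\]
so that \(\log_+\|\mathcal A_j^{(r)}\|\le\mathfrak f_j\) uniformly in \(r\). Each normalized row has \(\E\|\cdot\|^2\le 2+|z|^2\), so Jensen gives \(\E\mathfrak f_j\le C_zW\). Using \(\log_+^2 x\le x\), the second moment is also finite, of order \(W^2\).

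\textbf{Inverse bound.} As in Lemma~\ref{lem:inverse-row} and \eqref{eq:gb-one-step-inverse}, Hodge duality together with \eqref{eq:gb-transfer-det} gives
\[
 \|(\mathcal A_j^{(r)})^{-1}\|=\frac{\|\mathcal A_j^{(2W-r)}\|}{|\det\mathsf B_j\det\mathsf C_j|}.
\]
Hence we take
\[
 \mathfrak h_j:=2\mathfrak f_j+\log_+|\det\mathsf B_j|^{-1}+\log_+|\det\mathsf C_j|^{-1}.
\]
This is the main obstacle: we must show that \(\E\log_+|\det\mathsf B|^{-1}\le CW\log(eW)\), with a finite second moment, for a square matrix of independent bounded-density atoms. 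We will prove it by applying Lemma~\ref{lem:multiaffine}, or equivalently Corollary~\ref{cor:dens-coefficient-evaluation} with the \(W\) row groups of \(\det\mathsf B\), to \(P=\det B\) viewed as a multiaffine polynomial in its \(W^2\) atoms. The diagonal monomial has coefficient \(1\), so \eqref{eq:multiaffine-log} yields \(\E(\log(1/|\det B|))_+\le CW\log(eW)\). Rescaling by \(\sigma_W^{-W}\) changes this only by \(O(W\log W)\).

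For the second moment, the tail bound \eqref{eq:multiaffine-smallball} with \(k=W\) and \(\rho=e^{-t/W}\) gives \(\Pp\{\log(1/|\det B|)>t\}\le CLWe^{-t/W}\). This yields \(\E\log_+^2\le C(W\log W)^2<\infty\). Alternative~(iii) is handled by conditioning on the orthogonal coordinates, exactly as in Lemma~\ref{lem:multiaffine}. This proves \eqref{eq:dens-one-site-Hodge-L1}.

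\textbf{Interval bound.} For \eqref{eq:dens-interval-Hodge-L1}, submultiplicativity gives
\[
 \log_+\Big\|\prod_{j\in J}^{\leftarrow}\mathcal A_j^{(r)}\Big\|\le\sum_{j\in J}\log_+\|\mathcal A_j^{(r)}\|\le\sum_{j\in J}\mathfrak h_j.
\]
The inverse of the product is the reversed product of the inverses, so the same bound holds for it. Summing the one-site expectations over the \(s\) sites gives \(C_{L,z}sW\log(eW)\). The one-site second-moment bounds also make each \(Y_r(I)\) square integrable, which is what Lemma~\ref{lem:dens-row-concentration} requires.
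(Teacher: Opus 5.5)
Your proof is correct. It differs from the paper's mainly in how you obtain the integrated lower bound on $\log|\det\mathsf B_j|$ and $\log|\det\mathsf C_j|$.

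\textbf{The paper's route.} The paper writes $|\det G|$, via the Gram--Schmidt identity, as a product of successive row-to-span distances. It bounds each distance conditionally by a small-ball estimate for a fresh row paired against an arbitrary unit normal. In the real case this uses a phase rotation followed by a one-coordinate density argument. In the complex case it uses the planar linear-form bound of Lemma~\ref{lem:lsv-planar-linear-form}, which rests on Bobkov--Chistyakov. The second moment then comes from per-row $L^2$ bounds and Cauchy--Schwarz.

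\textbf{Your route.} You reuse Lemma~\ref{lem:multiaffine}: condition on the off-diagonal atoms and note that the full diagonal monomial of $\det B$ has coefficient one. This needs only the densities of individual atoms and treats all three density alternatives uniformly. Its explicit tail $\Pp\{\log(1/|\det B|)>t\}\le CLWe^{-t/W}$ gives the second moment directly. The alternative via Corollary~\ref{cor:dens-coefficient-evaluation} also works, using $\|\Coeff\det B\|_2=\sqrt{W!}\ge1$.

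\textbf{One wording fix.} Lemma~\ref{lem:multiaffine} hypothesizes a nonzero coefficient for the product of \emph{all} $k$ variables. For the $W^2$ atoms of $B$ that coefficient vanishes, so the lemma must be applied conditionally with $k=W$ diagonal variables. Your later use of $k=W$ shows this is what you intend.

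\textbf{Forward bound.} Your row-wise Hadamard bound gives expected forward cost $O_z(W)$. The paper's Hilbert--Schmidt version gives $O(W\log(eW))$. Yours is slightly sharper, but this makes no difference to the lemma.
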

The proof of Lemma \ref{lem:dens-integrated-Hodge} is deferred to Section \ref{remainingsection3841}.
\subsection{From the log determinant to maximal exterior growth}
\label{subsec:dens-reset-seam}
We use the same notation of in and out cells in Section \ref{sec:local-terminal}.
After a cyclic relabelling, reserve the three consecutive sites
\[
L=1,\qquad C=2,\qquad R=3,
\]
write \(-=m\) and \(+=4\) for their outside neighbours, and call
\(\{4,\ldots,m\}\) the outside arc.  The two transfers have opposite
orientations:
\begin{align}
R_{\rm in}:=R_\partial=T_RT_CT_L,
&\qquad
\binom{\psi_+}{\psi_R}
=R_{\rm in}\binom{\psi_L}{\psi_-},
\label{eq:dens-Rin-direction}\\
R_{\rm out}:=T_mT_{m-1}\cdots T_4,
&\qquad
\binom{\psi_L}{\psi_-}
=R_{\rm out}\binom{\psi_+}{\psi_R}.
\label{eq:dens-Rout-direction}
\end{align}
Thus the cyclic compatibility equation is
\[
(I_{2W}-R_{\rm out}R_{\rm in})
\binom{\psi_L}{\psi_-}=0.
\]

The test matrix \(\Theta\) in the local theorem has the same orientation
as \(R_{\rm out}\):
\begin{equation}
\binom{\psi_L}{\psi_-}
=\Theta\binom{\psi_+}{\psi_R}.
\label{eq:dens-Theta-direction}
\end{equation}
During the local argument \(\Theta\) is deterministic.  The actual
periodic closure is recovered by substituting
\(\Theta=R_{\rm out}\).

In the packet notation below, as in
Section~\ref{sec:local-terminal}, we suppress the sans-serif font and
write \(A_j,B_j,C_j\) for the normalized blocks
\(\mathsf A_j,\mathsf B_j,\mathsf C_j\).
We denote the two boundary matrices by
\[
E_\partial:=\operatorname{diag}(C_L,B_R),
\]
and let \(\mathbf x\) denote the atom coordinates in the remaining seven
packet blocks.
For fixed \((\Theta,E_\partial)\), define
\begin{align}
P_{\Theta,E_\partial}(\mathbf x)
&:=
\sum_{r=0}^{2W}(-1)^r
\operatorname{tr}\!\left(
\mathcal Q^{(r)}(E_\partial;\mathbf x)
\bigwedge\nolimits^r\Theta\right),
\label{eq:dens-explicit-boundary-polynomial}\\
\mathscr C(\Theta;E_\partial)
&:=
\bigl\|\operatorname{Coeff}_{\mathbf x}
P_{\Theta,E_\partial}\bigr\|_2,
\label{eq:dens-explicit-boundary-coefficients}
\end{align}
where
\[
\mathcal Q^{(r)}(E_\partial;\mathbf x)
:=
b_{\rm in}(E_\partial;\mathbf x)
\bigwedge\nolimits^rR_{\rm in}(E_\partial;\mathbf x),
\]
and
\[
b_{\rm in}(E_\partial;\mathbf x)
:=
\det B_L(\mathbf x)\,
\det B_C(\mathbf x)\,
\det B_R.
\]
On the companion-invertibility set,
\begin{equation}
P_{\Theta,E_\partial}(\mathbf x)
=
b_{\rm in}(E_\partial;\mathbf x)
\det\!\left(
I_{2W}-\Theta R_{\rm in}(E_\partial;\mathbf x)
\right).
\label{eq:dens-explicit-boundary-determinant}
\end{equation}
Both sides are polynomials in \(\mathbf x\), so this identity extends
everywhere by polynomial continuation.

Define the following filtration,
\[
\mathcal F_{\rm out}
:=
\sigma\{(\mathsf C_j,\mathsf A_j,\mathsf B_j):4\leq j\leq m\},
\qquad
c_{\rm out}:=\prod_{j=4}^{m}\det\mathsf B_j.
\]
Then \(R_{\rm out}\) and \(c_{\rm out}\) are
\(\mathcal F_{\rm out}\)-measurable, whereas the nine packet blocks are
independent of \(\mathcal F_{\rm out}\).  If \(\Xi_7\) denotes the random
evaluation of the seven-block family \(\mathbf x\), containing \(7W^2\)
scalar variables, the
block Floquet identity gives
\begin{equation}
\det(X_N-zI_N)
=
\varepsilon\,c_{\rm out}\,
P_{R_{\rm out},E_\partial}(\Xi_7),
\label{eq:dens-cut-exterior-sum}
\end{equation}
where \(|\varepsilon|=1\).  Equivalently,
\begin{align*}
P_{R_{\rm out},E_\partial}(\Xi_7)
&=
\sum_{r=0}^{2W}(-1)^r
\operatorname{tr}\!\left(
\mathcal Q^{(r)}(E_\partial;\Xi_7)
\bigwedge\nolimits^rR_{\rm out}\right)\\
&=
b_{\rm in}(E_\partial;\Xi_7)
\det\!\left(
I_{2W}-R_{\rm out}
R_{\rm in}(E_\partial;\Xi_7)\right).
\end{align*}

For every integrable function \(F\) of the nine packet blocks, define
\begin{equation}
 \E_{\rm pkt}F
 :=
 \E_{E_\partial}\!\left[
  \E_{\Xi_7}(F\mid E_\partial)
 \right].
 \label{eq:dens-packet-expectation}
\end{equation}
Thus \(\E_{\rm pkt}\) first integrates the seven internal blocks
conditionally on the two endpoint blocks and then integrates the
endpoints.

The main result of this section is the following:

\begin{proposition}
Define the outside exterior pressure by
\begin{equation}
\Pi_{\rm out}
:=
\max_{0\leq r\leq2W}
\log\left\|
c_{\rm out}\bigwedge\nolimits^rR_{\rm out}
\right\|
=
\log|c_{\rm out}|
+
\max_{0\leq r\leq2W}
\log\left\|\bigwedge\nolimits^rR_{\rm out}\right\|.
\label{eq:dens-outside-pressure}
\end{equation}
Then, conditionally on the outside arc,
\begin{equation}
\boxed{
\E_{\rm pkt}\left[
\left|
\log|\det(X_N-zI_N)|-\Pi_{\rm out}
\right|
\,\middle|\,
\mathcal F_{\rm out}
\right]
\leq C_{L,z}W\log(eW),
}
\label{eq:dens-periodic-seam-comparison}
\end{equation}
where $C_{L,z}$ is a constant depending only on $L$ and $z$.
\end{proposition}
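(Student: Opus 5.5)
By the cut identity \eqref{eq:dens-cut-exterior-sum}, $\log|\det(X_N-zI_N)|=\log|c_{\rm out}|+\log|P_{R_{\rm out},E_\partial}(\Xi_7)|$. The plan is to insert the coefficient norm $\mathscr C(R_{\rm out};E_\partial)$ between the two sides and split
\[
 \bigl|\log|\det(X_N-zI_N)|-\Pi_{\rm out}\bigr|
 \le \bigl|\log|P_{R_{\rm out},E_\partial}(\Xi_7)|-\log\mathscr C(R_{\rm out};E_\partial)\bigr|
 +\Bigl|\log\mathscr C(R_{\rm out};E_\partial)-\max_r\log\|\textstyle\bigwedge^rR_{\rm out}\|\Bigr|.
\]
We then estimate the two terms separately, conditionally on $\mathcal F_{\rm out}$. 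Conditioning makes $\Theta=R_{\rm out}\in\GL_{2W}(\C)$ deterministic (it is almost surely invertible under the density hypothesis). The nine packet blocks stay independent of $\mathcal F_{\rm out}$.

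\textbf{First term (evaluation).} Condition also on $E_\partial$. Almost surely $C_L$ and $B_R$ are invertible, so Proposition~\ref{prop:dens-packet-coeff-evaluation} applies with $\Theta=R_{\rm out}$. It bounds $\E_{\Xi_7}$ of the first term by $C_{\rm dens}W\log(eW)$, uniformly in $\Theta$ and $E_\partial$. Integrating over $E_\partial$ gives the same bound for $\E_{\rm pkt}$ of this term.

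\textbf{Second term (coefficient norm versus exterior growth).} The natural route is the density analogue of Lemma~\ref{lem:local-boundary-volume}:
\[
 \mathscr C(\Theta;E_\partial)=e^{\pm CW\log W}\,\mathfrak e(E_\partial)^{\pm1}\det(I_{2W}+\Theta^*\Theta)^{1/2},
\]
followed by \eqref{eq:local-boundary-volume-vs-exterior}, which costs at most $2^W$. The proof of that lemma is purely algebraic in its first stage: it uses Proposition~\ref{prop:local-double-elimination}, the all-minor estimate of Lemma~\ref{lem:local-all-minor}, and the graph identity \eqref{eq:local-graph-identity}. These steps do not use subgaussianity, except through the bound on $|\zeta_W|$, and $|\zeta_W|\le W^{O(1)}$ holds here since $z$ is fixed.

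The only probabilistic input in that lemma is the endpoint removal \eqref{eq:local-remove-E}. There the event $\mathcal E_\partial$ has to be replaced by an integrated bound. The removal costs at most
\[
 \max_k\log\bigl(\|\textstyle\bigwedge^kE\|\,\|\bigwedge^kE^{-1}\|\bigr)\le 2\max_k\log_+\|\textstyle\bigwedge^kE\|+\log_+|\det E|^{-1}.
\]
I would bound its $\E_{E_\partial}$ expectation by $CW\log(eW)$, using the same estimates behind Lemma~\ref{lem:dens-integrated-Hodge}. For the norm part, $\log_+\|C_L\|$ and $\log_+\|B_R\|$ are $O(\log W)$ in expectation by the Hilbert--Schmidt bound, and $\|\bigwedge^kE\|\le\|E\|^k$, which gives $O(W\log W)$. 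For the determinant part, $\E\log_+|\det B_R|^{-1}\le CW\log(eW)$ follows from the row-by-row bounded-density small-ball estimate: each row's distance to the span of the others has $\E\log_+(1/\cdot)=O(\log W)$. Combining the two pieces gives the second term's expectation $\le C_{L,z}W\log(eW)$.

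\textbf{Main obstacle.} The main obstacle is making this endpoint step rigorous without the high-probability event $\mathcal E_\partial$. Two things are needed: an $L^1$ version of \eqref{eq:local-E-compounds}, and a check that every other constant in the all-minor comparison is deterministic and uniform in $\Theta$. The extension to $\Theta\notin\mathcal U$ goes through by the same polynomial approximation as before. Summing the two terms proves \eqref{eq:dens-periodic-seam-comparison}.
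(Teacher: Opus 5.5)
Your proposal is correct and is essentially the paper's own proof. The cut identity \eqref{eq:dens-cut-exterior-sum} cancels $\log|c_{\rm out}|$, and Proposition~\ref{prop:dens-packet-coeff-evaluation} handles the evaluation term. Your ``density analogue'' of Lemma~\ref{lem:local-boundary-volume} is exactly Proposition~\ref{prop:dens-reset-seam}: a pointwise comparison with endpoint loss $\Lambda_\partial$ from \eqref{eq:dens-endpoint-loss}, integrated via the row-distance bounds behind Lemma~\ref{lem:dens-integrated-Hodge}, and then combined with \eqref{eq:local-boundary-volume-vs-exterior}. One small slip, which does not affect the argument: the intermediate quantity $\max_k\log(\|\bigwedge^kE\|\,\|\bigwedge^kE^{-1}\|)$ does not dominate the endpoint-removal cost. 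For $E=\lambda I_{2W}$ it vanishes, while the two graph volumes differ by a factor of order $\lambda^{2W}$. Your final bound $2\max_k\log_+\|\bigwedge^kE\|+\log_+|\det E|^{-1}$ does dominate that cost, by Hodge duality.
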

\begin{proof}
This follows by combining Proposition~\ref{prop:dens-reset-seam},
Proposition~\ref{prop:dens-packet-coeff-evaluation},
\eqref{eq:local-boundary-volume-vs-exterior}, and
\eqref{eq:dens-cut-exterior-sum}, with \(\Theta=R_{\rm out}\).
\end{proof}
We then list all the technical estimates used in this section.
\begin{proposition}[From polynomial coefficient to plane wedge]
\label{prop:dens-reset-seam}
Let the three packet sites be independent of a possibly random boundary
relation \(\Theta\in\GL_{2W}(\C)\).  Then
\begin{equation}
 \E_{E_\partial}\left[
  \left|
   \log|\mathscr C(\Theta;E_\partial)|
   -\frac12\log\det(I_{2W}+\Theta^*\Theta)
  \right|\ \middle|\ \Theta
 \right]
 \leq C_{L,z}W\log(eW).
 \label{eq:dens-seam-L1}
\end{equation}
\end{proposition}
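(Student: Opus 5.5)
The plan is to reuse the deterministic skeleton of the discrete case, namely Proposition~\ref{prop:local-double-elimination}, Lemma~\ref{lem:local-all-minor}, the graph identity \eqref{eq:local-graph-identity} and the compound identity \eqref{eq:local-compound-sum}, and to replace the event-based control of the endpoint matrix \(E_\partial=\operatorname{diag}(C_L,B_R)\) by an integrated one. First take \(\Theta\in\mathcal U\) deterministic. The exact identity \eqref{eq:local-double-elimination} gives \(\mathscr C(\Theta;E_\partial)=|\det\Theta_{11}|\,\mathfrak C(E_\partial S(\Theta))\). The proof of Lemma~\ref{lem:local-all-minor} is purely combinatorial: it only uses the mask, the Cauchy--Binet identity, bounded weights and \(|z|\le W^{O(1)}\). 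Therefore it yields, pathwise in \(E_\partial\), \(\log\mathfrak C(Q)=\tfrac12\log\det(I+Q^*Q)+O(W\log W)\) with a deterministic error. Combining this with \eqref{eq:local-graph-identity} reduces the claim to showing that
\[
 \E_{E_\partial}\Bigl|\tfrac12\log\det(I+(E_\partial S)^*E_\partial S)-\tfrac12\log\det(I+S^*S)\Bigr|\le C_{L,z}W\log(eW),
\]
uniformly in the deterministic matrix \(S\).

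To prove this, use \eqref{eq:local-compound-sum} together with the two-sided bounds \eqref{eq:local-ES-two-directions}. Pathwise, the difference in absolute value is at most \(\max_k\log_+\|\wedge^kE_\partial\|+\max_k\log_+\|\wedge^kE_\partial^{-1}\|\). Since \(\|\wedge^k E\|\le\prod_{j\le k}\max(1,s_j(E))\), this is bounded by
\[
 \sum_j\log_+ s_j(E_\partial)+\sum_j\log_+ s_j(E_\partial)^{-1}\le 2\sum_j\log_+s_j(E_\partial)+\log_+|\det E_\partial|^{-1}.
\]
It remains to bound the expectation of each term by \(O(W\log W)\) for a \(W\times W\) block with density entries. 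The sum \(\sum_j\log_+ s_j\) is at most \(W\log_+\|E\|_{\HS}\), whose expectation is \(O(W\log W)\) by Jensen and the second moment. For the determinant, expand \(\det C_L\) row by row: by Lemma~\ref{lem:multiaffine}, or more directly by Corollary~\ref{cor:dens-coefficient-evaluation} applied with \(W\) groups, we get \(\E\log_+(\|\Coeff\det C_L\|_2/|\det C_L|)\le C_LW\log(eW)\). The coefficient norm of \(\det\) of a normalized iid block is at least \((3W)^{-W/2}\), since it contains a single permutation monomial. This gives \(\E\log_+|\det C_L|^{-1}\le CW\log W\), and the same argument applies to \(B_R\). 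This lemma is essentially the content of Lemma~\ref{lem:dens-integrated-Hodge}, so I would prove or invoke that here.

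For general \(\Theta\in\GL_{2W}(\C)\), including random \(\Theta\) independent of the packet, first pass from \(\mathcal U\) to \(\GL_{2W}\) by the perturbation \(\Theta^{(n)}=\Theta+\varepsilon_nJ_0\) exactly as in the proof of Lemma~\ref{lem:local-boundary-volume}. The pathwise inequality has constants independent of \(\Theta\), and both sides are continuous in \(\Theta\), so it passes to the limit for each fixed \(E_\partial\) with \(\det E_\partial\ne0\), which holds almost surely. Then take \(\E_{E_\partial}\), and handle random \(\Theta\) by independence and Fubini.

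The main obstacle is making sure the all-minor comparison is genuinely deterministic, with no norm event on \(E_\partial\) hidden inside the argument. In the discrete proof, \(\mathcal E_\partial\) was used only to remove \(E\) through \eqref{eq:local-remove-E}. Here that step is replaced by the integrated singular-value bound above, and the key point to verify is that this bound scales as \(W\log W\) rather than as \(W^2\).
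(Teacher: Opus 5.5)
Your proposal is correct and is essentially the paper's proof. The paper also combines the exact identity \eqref{eq:local-double-elimination}, the deterministic Lemma~\ref{lem:local-all-minor} and the graph identity \eqref{eq:local-graph-identity}. It removes $E_\partial$ pathwise through exterior powers and Hodge duality, with loss $2W\log(1+\|E_\partial\|)+\log_+|\det E_\partial|^{-1}$, extends from $\mathcal U$ to $\GL_{2W}(\C)$ by perturbation, and only then integrates over $E_\partial$.

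The one cosmetic difference is how you bound $\E\log_+|\det C_L|^{-1}$. You use Corollary~\ref{cor:dens-coefficient-evaluation} with the explicit coefficient norm $\sqrt{W!}\,(3W)^{-W/2}$, while the paper cites the Gram--Schmidt row-distance estimate in Lemma~\ref{lem:dens-integrated-Hodge}. Both give $O(W\log W)$.
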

The proof of Proposition \ref{prop:dens-reset-seam} is deferred to Section \ref{remainingsection3841}.

\begin{proposition}[Conditional evaluation of the packet polynomial]
\label{prop:dens-packet-coeff-evaluation2}
Fix \(\Theta\in\GL_{2W}(\C)\), and condition on arbitrary invertible
endpoint blocks
\[
 E_\partial=\operatorname{diag}(C_L,B_R).
\]
Let
\[
 P_{\Theta,E_\partial}(\mathbf x)
 :=\mathscr D_\Theta(E_\partial,\mathbf x),
 \qquad
 \mathscr C(\Theta;E_\partial)
 :=\|\Coeff_{\mathbf x}P_{\Theta,E_\partial}\|_2.
\]
If \(\Xi_7\) denotes the random evaluation of the seven blocks
\[
 A_L,B_L,C_C,A_C,B_C,C_R,A_R,
\]
then, uniformly over the conditioned endpoint values, one has
\begin{equation}
 \E_{\Xi_7}\left|
  \log|P_{\Theta,E_\partial}(\Xi_7)|
  -\log\mathscr C(\Theta;E_\partial)
 \right|
 \leq C_LW\log(eW).
 \label{eq:dens-pathwise-boundary-volume2}
\end{equation}
\end{proposition}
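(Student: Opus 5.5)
This statement restates Proposition~\ref{prop:dens-packet-coeff-evaluation} with endpoints conditioned on, so I would reduce it to Corollary~\ref{cor:dens-coefficient-evaluation}. The argument is made for fixed invertible \(E_\partial\) and fixed \(\Theta\), and it is uniform in both because Lemma~\ref{lem:dens-affine-log} is scale-free.

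\textbf{Step 1: identification.} By Proposition~\ref{prop:local-double-elimination}, identity \eqref{eq:local-K-first-elimination}, we have \(P_{\Theta,E_\partial}=\det K_{\Theta,E_\partial}(\mathbf x)\). In this \(5W\times5W\) matrix the seven fresh blocks occupy only the first three block rows, and the endpoint blocks \(C_L,B_R\) are fixed constants.

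\textbf{Step 2: grouping and affinity.} I would partition the \(7W^2\) atoms into the \(3W\) packet-row groups \(\mathbf x_{L,a},\mathbf x_{C,a},\mathbf x_{R,a}\), exactly as in the proof of Proposition~\ref{prop:dens-packet-coeff-evaluation}. Each group has at most \(3W\) atoms and sits in a single scalar row of \(K_{\Theta,E_\partial}\). Since the determinant is linear in each row, \(P\) is affine separately in every group. The coefficients \(Q_{j,s}\) are cofactor combinations, so they involve only the other groups. The deterministic \(-z\) on the diagonal enters only the constant term \(Q_{j,0}\).

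\textbf{Step 3: nondegeneracy, the main obstacle.} Corollary~\ref{cor:dens-coefficient-evaluation} requires \(P\not\equiv0\), and this must hold for arbitrary invertible endpoint values, not only on a good event. My first attempt would be the explicit monomial from the proof of Lemma~\ref{lem:local-all-minor}. Leave no outer rows or columns unmatched, i.e.\ take \(I=J=\emptyset\). Match the complete squares \(L\cup\) part of \(C\) and \(R\cup\) the rest of \(C\). The coefficient of this monomial is a nonzero weight times \(\det\) of the corresponding deterministic block.

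I would carry this out on the chart \(\Theta\in\mathcal U\), where \(P=\det\Theta_{11}\,\det H_\Theta\) by \eqref{eq:local-double-elimination}. The coefficient of the \(k=0\) monomial there is \(\det\Theta_{11}\neq0\) times a product of atom weights. Hence \(\mathscr C>0\) on \(\mathcal U\). For general \(\Theta\in\GL_{2W}(\C)\), I would use a different monomial choice: take \(I,J\) attaining a nonzero minor of \(Q\). Alternatively, I would use the Cauchy--Binet argument of Lemma~\ref{lem:local-all-minor} applied directly to \(K_\Theta\). Its coefficient vector contains all maximal minors of the \(2W\times4W\) boundary rows \([\,I\ \ -\Theta_{12}\mid 0\ \ -\Theta_{11};\ 0\ \ -\Theta_{22}\ \ I\ \ -\Theta_{21}]\), suitably multiplied by \(\det C_L\det B_R\neq0\). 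Those rows have full rank, so some coefficient is nonzero.

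\textbf{Step 4: apply the corollary.} With \(n_{\rm grp}=3W\) and \(p\le3W\), Corollary~\ref{cor:dens-coefficient-evaluation} gives
\[
 \E_{\Xi_7}\bigl|\log|P_{\Theta,E_\partial}(\Xi_7)|-\log\mathscr C(\Theta;E_\partial)\bigr|\le C_L(3W)\log(3eW).
\]
This bound is independent of \(\Theta\), of \(E_\partial\), and of \(z\), because the corollary's constant does not depend on the coefficient sizes or on the center. This yields \eqref{eq:dens-pathwise-boundary-volume2}. Alternative~(iii) is handled inside Lemma~\ref{lem:dens-affine-log} by first conditioning on the orthogonal coordinates.
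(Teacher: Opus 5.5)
Your Steps 1, 2 and 4 match the paper's proof exactly. You identify \(P_{\Theta,E_\partial}=\det K_{\Theta,E_\partial}\) via \eqref{eq:local-K-first-elimination}, use the same \(3W\) packet-row groups and row-affinity, and apply Corollary~\ref{cor:dens-coefficient-evaluation} with \(n_{\rm grp}=3W\), \(p\le 3W\). You deviate only in the nonvanishing step, and there your argument off the chart \(\mathcal U\) does not work as written.

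\textbf{How the paper handles nonvanishing.} The paper does not exhibit a monomial. It reads \(\mathscr C(\Theta;E_\partial)>0\) off the pathwise two-sided bound \eqref{eq:dens-seam-pathwise}. That bound holds for every invertible \(E_\partial\), since \(\Lambda_\partial<\infty\), and for every \(\Theta\in\GL_{2W}(\C)\). It is extended from \(\mathcal U\) by continuity of a lower bound that does not degenerate.

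\textbf{What fails in your Step 3.} Your computation on \(\mathcal U\) is correct: the top coefficient is \(\pm\sigma_W^{-3W}\det\Theta_{11}\). It cannot be continued to \(\det\Theta_{11}=0\), because that coefficient tends to zero.
\begin{itemize}
\item Your first fallback uses \(Q_\Theta=ES(\Theta)\), which is undefined off \(\mathcal U\). Off \(\mathcal U\) there is also no factorization \(P=\det\Theta_{11}\,p_{Q_\Theta}\) to which Lemma~\ref{lem:local-all-minor} could be applied.
\item Your second fallback misdescribes the coefficient vector. Take \(z=0\) and a monomial that leaves packet rows \(I_L\subset L\), \(I_R\subset R\) unmatched. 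Its coefficient is a sum over \(K_\pm\) of \(\det(C_L)_{I_L,K_-}\det(B_R)_{I_R,K_+}\) times maximal minors of the boundary rows.
\item The clean factor \(\det C_L\det B_R\) appears only when \(I_L=L\) and \(I_R=R\). In that case the \(\psi_C\)-columns of the boundary rows vanish, so the only surviving minor is the one on \((\psi_L,\psi_R)\), namely \(\pm\det\Theta_{22}\).
\item For the invertible \(\Theta=\left(\begin{smallmatrix}0&I_W\\ I_W&0\end{smallmatrix}\right)\), both \(\det\Theta_{11}\) and \(\det\Theta_{22}\) vanish. So none of the coefficients you point to certifies \(P\not\equiv0\).
\end{itemize}

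\textbf{An easy repair.} Take the monomial \(\prod_{a}(B_L)_{aa}(B_C)_{aa}\).
\begin{itemize}
\item It uses every \(L\)-row and every \(C\)-row, occupying the columns \(\psi_C\) and \(\psi_R\).
\item Among the free columns \(\psi_L,\psi_-,\psi_+\), the \(R\)-rows have nonzero entries only in \(B_R\), which sits in \(\psi_+\).
\item The boundary rows restricted to \((\psi_L,\psi_-)\) form \(I_{2W}\).
\end{itemize}
Its coefficient is therefore \(\pm\sigma_W^{-2W}\det B_R\neq0\), for every \(\Theta\) and every \(z\), with no chart or limiting argument. Alternatively, cite \eqref{eq:dens-seam-pathwise} as the paper does.
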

The proof of Proposition \ref{prop:dens-packet-coeff-evaluation2} is deferred to Section \ref{remainingsection3841}.

\begin{proposition}\label{propositions6.8}
Let \(E_\partial=\operatorname{diag}(C_L,B_R)\) consist of the two random
endpoint blocks, and let \(\mathbf x\) denote the formal seven-block
variables.  Recall the three transfer matrices defined in
\eqref{eq:local-three-companions}, and write
\[
 R_\partial(E_\partial;\mathbf x;z):=T_RT_CT_L,
 \qquad
 b_\partial(E_\partial;\mathbf x)
 :=\det B_L\det B_C\det B_R,
\]
and, for \(0\leq r\leq2W\), define
\[
 \mathcal Q^{(r)}(E_\partial;\mathbf x;z)
 :=
 b_\partial(E_\partial;\mathbf x)
 \bigwedge\nolimits^r
 R_\partial(E_\partial;\mathbf x;z).
\]
We use the packet expectation \(\E_{\rm pkt}\) defined in
\eqref{eq:dens-packet-expectation}.
Then for every
\(0\leq r\leq2W\) and every deterministic or outside-measurable
decomposable unit wedges \(u,v\),
\begin{equation}
 \E_{\rm pkt}\left[
  \log_+\frac1{|\langle u,\mathcal Q^{(r)}v\rangle|}
  \ \middle|\ u,v
 \right]
 \leq C_{L,z}W\log(eW).
 \label{eq:dens-reset-L1}
\end{equation}
\end{proposition}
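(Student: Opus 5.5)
The plan is to split \(\log_+\bigl(1/|\langle u,\mathcal Q^{(r)}v\rangle|\bigr)\) into a coefficient-versus-value term and a coefficient-norm term:
\[
 \log_+\frac1{|Z|}\le\Bigl|\log|Z|-\log\|\Coeff_{\mathbf x}Z\|_2\Bigr|+\log_+\frac1{\|\Coeff_{\mathbf x}Z\|_2},
 \qquad Z(\mathbf x):=\langle u,\mathcal Q^{(r)}(E_\partial;\mathbf x;z)v\rangle .
\]
The first term will be handled by the multiaffine evaluation corollary. The second term needs a lower bound on the coefficient norm of the scalar polynomial \(Z\) in terms of the endpoint blocks, and that lower bound then has to be integrated over \(E_\partial\).

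\textbf{Step 1 (evaluation).} Condition on \(E_\partial\), which is almost surely invertible under the density hypothesis, and on \(u,v\). These are independent of the seven fresh blocks. The entries of \(\mathcal Q^{(r)}\) are products of the cleared one-site factors \(\mathcal A_L^{(r)},\mathcal A_C^{(r)},\mathcal A_R^{(r)}\). Lemma~\ref{lem:dens-row-affinity} shows that each factor is separately affine in every row group of its own site. Each packet row group lives in exactly one site, so \(Z\) is affine in each of the \(3W\) packet row groups, and each group contains at most \(3W\) atoms. Provided \(Z\not\equiv0\), Corollary~\ref{cor:dens-coefficient-evaluation} with \(n_{\rm grp}=3W\) and \(p\le3W\) gives
\[
 \E_{\Xi_7}\Bigl|\log|Z|-\log\|\Coeff Z\|_2\Bigr|\le C_LW\log(eW),
\]
uniformly in \(E_\partial,u,v\).

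\textbf{Step 2 (coefficient lower bound).} This is the main obstacle: bounding \(\|\Coeff Z\|_2\) from below uniformly over decomposable unit wedges, without the subgaussian interface event. I would repeat the frame-extraction device of Section~\ref{sec:local-penalty}. Writing \(u=\widehat U\) and \(v=\widehat V\), the artificial relations \(\Theta_\lambda^{(r;U,V)}\) satisfy
\[
 \lambda^{-r}P_{\Theta_\lambda,E_\partial}\to(-1)^rZ
 \quad\text{coefficientwise},\qquad
 \lambda^{-r}\det(I+\Theta_\lambda^*\Theta_\lambda)^{1/2}\to1 .
\]
Everything therefore reduces to a deterministic two-sided comparison, valid for each fixed \(E_\partial\):
\[
 \mathscr C(\Theta;E_\partial)\ge e^{-CW\log(eW)}\,\Psi(E_\partial)\,\det(I+\Theta^*\Theta)^{1/2}.
\]
I expect \(\Psi(E_\partial)\) to be comparable to \(\prod_{j}\min\{1,s_j(C_L)\}\cdot\prod_{j}\min\{1,s_j(B_R)\}\) divided by polynomial factors in \(\|E_\partial\|\). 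To get it, I would redo the proof of Lemma~\ref{lem:local-boundary-volume}. On the chart \(\mathcal U\), Proposition~\ref{prop:local-double-elimination} gives \(\mathscr C(\Theta;E_\partial)=|\det\Theta_{11}|\,\mathfrak C(ES(\Theta))\). The all-minor Lemma~\ref{lem:local-all-minor} is a purely combinatorial mask statement and needs no probability. After that, the graph identity \eqref{eq:local-graph-identity} applies, together with the compound comparison \eqref{eq:local-ES-two-directions}. In that comparison, I would keep the explicit losses \(\|\bigwedge^kE\|\) and \(\|\bigwedge^kE^{-1}\|=|\det E|^{-1}\|\bigwedge^{2W-k}E\|\) instead of bounding them on an event. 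Continuity then removes the chart restriction. This yields
\[
 \log_+\frac1{\|\Coeff Z\|_2}\le CW\log(eW)+\log_+|\det E_\partial|^{-1}+C\sum\log_+\|\bigwedge^kE_\partial\|,
\]
and the quantity \(\Psi(E_\partial)\) in Step~3 is exactly this right-hand side.

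\textbf{Step 3 (integrating the endpoints).} Finally, integrate over \(E_\partial\) using \(\E_{\rm pkt}\). The terms \(\log_+|\det C_L|^{-1}\), \(\log_+|\det B_R|^{-1}\) and \(\log_+\|E_\partial\|\) are the same integrated Hodge quantities controlled in Lemma~\ref{lem:dens-integrated-Hodge}, and equivalently in Proposition~\ref{prop:dens-reset-seam}. Their expectations are \(O(W\log(eW))\) under the bounded-density and second-moment hypotheses. The determinant log-moment comes from row-by-row conditioning: \(\log|\det B|\) is a sum of log-distances of rows to spans, and each such distance is handled by a one-dimensional density small-ball bound. Adding Steps~1--3 gives \eqref{eq:dens-reset-L1}.
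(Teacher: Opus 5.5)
This is essentially the paper's proof. It uses the same splitting $\log_+|Z|^{-1}\le|\log|Z|-\log\Gamma|+\log_+\Gamma^{-1}$, then Corollary~\ref{cor:dens-coefficient-evaluation} over the $3W$ packet-row groups, then the $\Theta_\lambda^{(r;U,V)}$ extraction applied to the endpoint-explicit pathwise comparison, and finally integration of the endpoint loss via Lemma~\ref{lem:dens-integrated-Hodge}. The paper already records that pathwise comparison as \eqref{eq:dens-seam-pathwise} in the proof of Proposition~\ref{prop:dens-reset-seam}, so you can cite it rather than redo it.

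One bookkeeping fix is needed in Step~2. When you square \eqref{eq:local-ES-two-directions} and sum over $k$, the constant that comes out is $\max_k\|\bigwedge^kE_\partial^{-1}\|$, so the loss is
\[
 \sum_j\log_+ s_j(E_\partial)^{-1}\le\log_+|\det E_\partial|^{-1}+2W\log_+\|E_\partial\|.
\]
This is at most the paper's $\Lambda_\partial$, and it is exactly your first guess $\Psi=\prod_j\min\{1,s_j(E_\partial)\}$; no extra $\|E_\partial\|$ factors are needed for the lower bound. By contrast, the sum $\sum_k\log_+\|\bigwedge^kE_\partial\|$ you wrote is typically of order $W^2$, because a positive fraction of the singular values of $E_\partial$ exceed $1$. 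It would therefore not integrate to $O(W\log(eW))$.
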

The proof of Proposition \ref{propositions6.8} is deferred to Section \ref{remainingsection3841}.

\subsection{The deterministic limit of the transfer operator via a short-cycle circular law}
\label{subsec:dens-anchor}
The aim is to determine the expected logarithmic core pressures
\(F_{W,r}\) from the already-proved short-cycle circular law.  This
subsection serves the same purpose as
Section~\ref{todetermine5.5.1}, where no density was assumed.

Here we only need to use a single
short ring as the anchor:
\begin{equation}
 s_W:=\lceil W^{1/200}\rceil,\qquad
 c_W:=s_W+3,\qquad
 \ell_W:=c_WW=W^{201/200+o(1)}.
 \label{eq:dens-cell-scale}
\end{equation}
For a core of \(s_W\) consecutive block sites, we define the following short-cycle products
\begin{equation}
 Y_{W,r}:=\log\left\|
  \prod_{1\le j\le s_W}^{\leftarrow}\mathcal A_j^{(r)}
 \right\|,\qquad
 F_{W,r}(z):=\E Y_{W,r},
 \label{eq:dens-core-pressure}
\end{equation}
where the product inside the norm is
\(\mathcal A_{s_W}^{(r)}\mathcal A_{s_W-1}^{(r)}
\cdots\mathcal A_1^{(r)}\).
These expectations are finite by
Lemma~\ref{lem:dens-integrated-Hodge}.

Take an independent cyclic ring of total matrix dimension \(\ell_W\), reserve its
last three block sites (of size $3W$) as a reset packet \eqref{eq:gb-cell-reset} (that give rise to $R_\partial$ and $b_\partial$), and use the other \(s_W\)
sites as the core \eqref{eq:gb-cell-core} (that give $R_{\text{out}}$ and $c_\text{out}$).  The proof-level unregularized logarithmic-determinant
input is now supplied by
Proposition~\ref{prop:density-block-high-band}.  Indeed, fix
$\omega_*=1/20$.  Since
$W=\ell_W^{200/201+o(1)}$ and
$200/201>8/9+\omega_*$, that proposition gives, under the assumptions of
Theorem~\ref{thm:density-main},
\begin{equation}
 \frac1{\ell_W}\log|\det(X_{\ell_W}-zI_{\ell_W})|
 \xrightarrow{\Prob}U_{\cir}(z)
 \label{eq:dens-anchor-input}
\end{equation}
for every fixed \(z\).

We remove the terminal packet by
\eqref{eq:dens-periodic-seam-comparison} and apply
Lemma~\ref{lem:dens-row-concentration} to obtain
\begin{align}
 \log|\det(X_{\ell_W}-zI_{\ell_W})|
 &=\max_{0\leq r\leq2W}F_{W,r}(z)
 \notag\\
 &\quad+O_{L^1}\!\left(
  W\log(eW)+\sqrt{W\ell_W}\log(eW)
 \right).
 \label{eq:dens-anchor-pressure-comparison}
\end{align}
The error divided by \(\ell_W\) is
\begin{equation}
 O\bigl(W^{-1/200}\log(eW)+W^{-1/400}\log(eW)\bigr)=o(1).
 \label{eq:dens-anchor-errors}
\end{equation}
Since the middle quantity in
\eqref{eq:dens-anchor-pressure-comparison} is deterministic,
\eqref{eq:dens-anchor-input} identifies the pressure:
\begin{equation}
 \boxed{
 \frac1{\ell_W}\max_{0\leq r\leq2W}F_{W,r}(z)
 \longrightarrow U_{\cir}(z).}
 \label{eq:dens-pressure-calibration}
\end{equation}
For further use, we choose the deterministic degree
\begin{equation}
 r_*(W,z):=\min\arg\max_{0\leq r\leq2W}F_{W,r}(z).
 \label{eq:dens-pressure-degree}
\end{equation}

\subsection{Mean stitching by an integrated reset}
\label{subsec:dens-mean-stitching}

A complete cell begins with three reset sites and ends with an independent
\(s_W\)-site core.  Use the definitions
\eqref{eq:gb-cell-reset}--\eqref{eq:gb-cell-product}, but no clipping and
no event \(H_k\).  Expose a core and choose a top singular pair
\(\widehat w_k,\widehat u_k\) for its degree-\(r\) cleared exterior
operator, exactly as in \eqref{eq:gb-top-singular-pair}.  If
\(\widehat v_{k-1,r}\) is the past-measurable incoming decomposable unit
wedge from \eqref{eq:gb-recursive-wedge}, then
\begin{align}
 \|\mathscr R_{{\rm core},k}^{(r)}
       \mathscr R_{{\rm reset},k}^{(r)}\widehat v_{k-1,r}\|
 &\geq
 \|\mathscr R_{{\rm core},k}^{(r)}\|
 \left|\left\langle
  \widehat w_k,
  \mathscr R_{{\rm reset},k}^{(r)}\widehat v_{k-1,r}
 \right\rangle\right|.
 \label{eq:dens-reset-test}
\end{align}
Proposition~\ref{propositions6.8} with $\mathcal Q_k^{(r)}=\mathscr R_{\text{reset},k}^{(r)}$
therefore gives
\[
 \E\left[
  \left.
  \log_+\frac{1}{
   |\langle\widehat w_k,\mathcal Q_k^{(r)}
   \widehat v_{k-1,r}\rangle|}
  \,\right|\,\text{past and core }k
 \right]
 \leq C_{L,z}W\log(eW).
\]
Here both vectors are decomposable unit wedges: \(\widehat w_k\) is the
top right singular wedge of the core, and
\(\widehat v_{k-1,r}\) is the incoming wedge from the preceding cells.
This implies the conditional
lower bound
\begin{equation}
 \E\left[
  \log\|\mathscr R_{{\rm core},k}^{(r)}
       \mathscr R_{{\rm reset},k}^{(r)}\widehat v_{k-1,r}\|
  \ \middle|\ \text{past and core }k
 \right]
 \geq
 \log\|\mathscr R_{{\rm core},k}^{(r)}\|
 -C_{L,z}W\log(eW).
 \label{eq:dens-conditional-reset}
\end{equation}
The complementary upper estimate follows from submultiplicativity and
Lemma~\ref{lem:dens-integrated-Hodge}:
\begin{equation}
 \E\log\|\mathscr M_{{\rm cell},k}^{(r)}\|
 \leq F_{W,r}(z)+C_{L,z}W\log(eW).
 \label{eq:dens-cell-upper-mean}
\end{equation}

  For any integer $K_c\in\mathbb{N}$ and \(K_c\) complete cells,
write \(\mathscr M_{1:K_c}^{(r)}\) as in \eqref{productcellus1} for their chronological product.  Then taking the expectation over the past and core $k$ for the previous two bounds,
\begin{align}
 \E\log\|\mathscr M_{1:K_c}^{(r_*)}\|
 &\geq K_c F_{W,r_*}-CK_cW\log(eW),
 \label{eq:dens-mean-lower}\\
 \E\log\|\mathscr M_{1:K_c}^{(r)}\|
 &\leq K_c F_{W,r}+CK_cW\log(eW),
 \qquad0\leq r\leq2W.
 \label{eq:dens-mean-upper}
\end{align}
The first line uses a fixed initial decomposable wedge and the iteratively defined wedge vectors as in \eqref{eq:gb-recursive-wedge} and the fact that
the operator norm dominates its image.  The same deterministic degree
\(r_*\) is used in every cell; stationarity is what makes this possible.

Now apply Lemma~\ref{lem:dens-row-concentration} once to the entire
\(K_c c_W\)-site product.  Write
\[
 F_*:=\max_{0\le r\le 2W}F_{W,r},\qquad
 X_r:=\log\|\mathscr M_{1:K_c}^{(r)}\|,\qquad
 X:=\max_{0\le r\le 2W}X_r.
\]
The fixed-degree concentration bound and
\eqref{eq:dens-mean-lower} give
\begin{align*}
 X&\geq X_{r_*}
 \geq\E X_{r_*}
      -O_{\Prob}\!\left(\sqrt{W(K_c\ell_W)}\log(eW)\right)\\
 &\geq K_cF_*-O_{\Prob}\!\left(
  K_cW\log(eW)+\sqrt{W(K_c\ell_W)}\log(eW)\right).
\end{align*}
In the other direction, the all-degree concentration bound and
\eqref{eq:dens-mean-upper} give
\begin{align*}
 X&\leq\max_r\E X_r+\max_r|X_r-\E X_r|\\
 &\leq K_cF_*+O_{\Prob}\!\left(
  K_cW\log(eW)+\sqrt{W(K_c\ell_W)}\log(eW)\right).
\end{align*}
Combining the two inequalities yields
\begin{align}
 \max_{0\leq r\leq2W}\log\|\mathscr M_{1:K_c}^{(r)}\|
 &=K_c\max_rF_{W,r}
 \notag\\
 &\quad+O_{\Prob}\left(
  K_cW\log(eW)+\sqrt{W(K_c\ell_W)}\log(eW)
 \right).
 \label{eq:dens-global-pressure-product}
\end{align}
This is the continuous analogue of \eqref{eq:gb-pressure-sandwich}.  It contains no
clipped cell variables.
\subsubsection{The error incurred in the last remaining interval}
Let an outside arc contain \(m_{\rm out}\) block sites, and write
\[
 m_{\rm out}=K_c c_W+q,\qquad 0\leq q<c_W.
\]
The first \(K_c c_W\) sites form complete cells, and the remaining
interval \(\mathcal R\) has \(q\) sites.  For \(0\leq r\leq2W\), set
\begin{equation}
 R_{\rm rem}^{(r)}
 :=
 \prod_{j\in\mathcal R}^{\leftarrow}\mathcal A_j^{(r)}.
 \label{eq:dens-remainder-product}
\end{equation}
On the probability-one event on which all interface blocks
\(\mathsf B_j,\mathsf C_j\) are invertible, every
\(R_{\rm rem}^{(r)}\) is invertible.  Let
\[
 H_{\mathcal R}:=\sum_{j\in\mathcal R}\mathfrak h_j,
\]
where the variables \(\mathfrak h_j\) are given by
Lemma~\ref{lem:dens-integrated-Hodge}.  Submultiplicativity, applied both
to the product and to its inverse, gives simultaneously for all exterior
degrees
\begin{equation}
 \max_{0\leq r\leq2W}
 \left\{
  \log_+\|R_{\rm rem}^{(r)}\|
  +
  \log_+\|(R_{\rm rem}^{(r)})^{-1}\|
 \right\}
 \leq 2H_{\mathcal R}.
 \label{eq:dens-remainder-Hodge}
\end{equation}

Let \(A^{(r)}\) denote the product of the \(K_c\) complete cells, before
the remaining interval \(\mathcal R\) is attached.  For every
\(0\leq r\leq2W\), the pathwise inequalities
\begin{equation}
 -\log\|(R_{\rm rem}^{(r)})^{-1}\|
 \leq
 \log\|R_{\rm rem}^{(r)}A^{(r)}\|-\log\|A^{(r)}\|
 \leq
 \log\|R_{\rm rem}^{(r)}\|
 \label{eq:dens-remainder-pathwise}
\end{equation}
imply
\[
 \left|
  \log\|R_{\rm rem}^{(r)}A^{(r)}\|
  -\log\|A^{(r)}\|
 \right|
 \leq 2H_{\mathcal R}.
\]
Consequently, the error caused by the remaining interval is upper bounded by
\begin{align}
 \Delta_{\rm rem}
 &:=
 \left|
  \max_{0\leq r\leq2W}
   \log\|R_{\rm rem}^{(r)}A^{(r)}\|
  -
  \max_{0\leq r\leq2W}
   \log\|A^{(r)}\|
 \right|
 \notag\\
 &\leq 2H_{\mathcal R}.
 \label{eq:dens-remainder-uniform}
\end{align}
Notice that no union bound over \(r\) is needed here.

By Lemma~\ref{lem:dens-integrated-Hodge} and \(q=|\mathcal{R}|<c_W\), the first moment of the error is
\begin{equation}
 \E H_{\mathcal R}
 \leq C_{L,z}qW\log(eW)
 \leq C_{L,z}\ell_W\log(eW).
 \label{eq:dens-remainder-L1}
\end{equation}
Therefore, for every fixed \(\varepsilon>0\), Markov's inequality gives
\begin{align}
 \Prob\left\{\frac{\Delta_{\rm rem}}{N}>\varepsilon\right\}
 &\leq
 \frac{2\E H_{\mathcal R}}{\varepsilon N}
 \notag\\
 &\leq
 \frac{C_{L,z}\ell_W\log(eW)}{\varepsilon N}.
 \label{eq:dens-remainder-Markov}
\end{align}
In the long branch \(N>W^{101/100}\), while
\(\ell_W=c_WW\leq C W^{201/200}\).  Hence the right-hand side of
\eqref{eq:dens-remainder-Markov} is bounded by
\[
 C_{L,z,\varepsilon}W^{-1/200}\log(eW)=o(1).
\]
Thus the unnormalized remainder cost is
\[
 \Delta_{\rm rem}
 =
 O_{\Prob}\bigl(\ell_W\log(eW)\bigr),
\]
and, after division by \(N\),
\begin{equation}\label{equations259}\frac{\Delta_{\rm rem}}{N}=o_{\Prob}(1).
\end{equation}

\subsection{Target ring, direct branch, and completion}
\label{subsec:dens-target}
We can now complete the proof of circular law, primarily for smaller bandwidth.
Suppose first that
\begin{equation}
 N>W^{101/100}.
 \label{eq:dens-long-branch}
\end{equation}
Reserve the final three block sites for the terminal seam.  Split the
remaining sites into \(K_N\) complete \(c_W\)-site cells and one remainder
of fewer than \(c_W\) sites, where
\begin{equation}
 K_N:=\left\lfloor\frac{m-3}{c_W}\right\rfloor,
 \qquad
 q_N:=m-3-K_Nc_W,
 \qquad 0\leq q_N<c_W.
 \label{eq:dens-target-cell-count}
\end{equation}
In particular,
\begin{equation}
 \frac{K_N\ell_W}{N}
 =\frac{K_Nc_W}{m}
 =1-\frac{3+q_N}{m}
 \longrightarrow1.
 \label{eq:dens-target-cell-ratio}
\end{equation}
Combining
\eqref{eq:dens-periodic-seam-comparison},
\eqref{eq:dens-global-pressure-product},
\eqref{equations259}, and
\eqref{eq:dens-pressure-calibration} gives, for any $z\in\mathbb{C}$,
\begin{equation}
 \frac1N\log|\det(X_N-zI_N)|
 \xrightarrow{\Prob}U_{\cir}(z).
 \label{eq:dens-long-logdet}
\end{equation}
The explicit normalized errors, apart from the anchor input, are
\begin{equation}
 O_{\Prob}\left(
  \frac{W\log(eW)}{\ell_W}
  +\sqrt{\frac WN}\log(eW)
  +\frac{\ell_W\log(eW)}N
  +\frac{W\log(eW)}N
  +W^{-1/400}\log(eW)
 \right),
 \label{eq:dens-target-errors}
\end{equation}
which are \(o_{\Prob}(1)\) under
\eqref{eq:dens-long-branch}.  The last displayed term is the
short-anchor all-degree fluctuation from
\eqref{eq:dens-anchor-errors}; the nonquantitative \(o_{\Prob}(1)\) from
the high-band anchor input remains separate.

For the large bandwidth regime (where circular law is already known)
\begin{equation}
 N\leq W^{101/100},
 \label{eq:dens-short-branch}
\end{equation}
Proposition~\ref{prop:density-block-high-band} applies directly to the
target: indeed, $W\ge N^{100/101}$ and
$100/101>8/9+\omega_*$.  It gives
\begin{equation}
 \frac1N\log|\det(X_N-zI_N)|
 \xrightarrow{\Prob}U_{\cir}(z).
 \label{eq:dens-direct-input}
\end{equation}
The two branches cover every sequence of $(W,N)$ with \(W\to\infty\).
Taking \(z=0\) in either
\eqref{eq:dens-long-logdet} or \eqref{eq:dens-direct-input} gives
\(\Prob\{\det X_N=0\}\to0\), because \(U_{\cir}(0)=-1/2\) is finite
whereas the normalized log determinant is \(-\infty\) when \(X_N\) is
singular.

Finally, the following in-probability convergence of the Hilbert-Schmidt norm
\begin{equation}
 \frac1N\|X_N\|_{\rm HS}^2
 =\frac1{3WN}
  \sum_{\text{displayed atoms}}|\xi_{ij}|^2
 \xrightarrow{\Prob}1
 \label{eq:dens-HS-tightness}
\end{equation}
uses only \(\E|\xi|^2=1\).  The in-probability Tao--Vu replacement principle
\cite[Theorem 2.1]{TaoVuKrishnapur2010}, together with \eqref{eq:dens-long-logdet} and
\eqref{eq:dens-direct-input}, completes the proof of
Theorem~\ref{thm:density-main}.

Only Proposition~\ref{prop:density-block-high-band} uses the third
moment.  The lifting, least-singular-value cutoff, and Hilbert--Schmidt
tightness use only the second moment and the density bound.  In
particular, the Jain--Jana--Luh--O'Rourke input and all subgaussian norm
estimates are absent from this branch.

\subsection{Remaining technical proofs}
\label{remainingsection3841}
We collect here the proofs of the remaining technical estimates.
\begin{proof}[\proofname\ of Lemma \ref{lem:dens-affine-log}]
Set \(b=\|v_0\|\) and \(\nu=\max_{s\geq1}\|v_s\|\).  If \(\nu=0\),
the result is immediate.  Otherwise, we denote by
\begin{equation}
 \lambda:=\max\{b,p\nu\}.
 \label{eq:dens-affine-lambda}
\end{equation}
We first show
\begin{equation}
 \E\left|\log\|G(\mathbf x)\|-\log\lambda\right|^2
 \leq C_L\log^2(ep).
 \label{eq:dens-affine-centered}
\end{equation}

For the upper tail, \(\E|x_s|\leq1\), and hence
\[
 \E\|G(\mathbf x)\|\leq b+\sum_s\|v_s\|\leq2\lambda.
\]
Markov's inequality gives
\begin{equation}
 \Prob\{\|G(\mathbf x)\|>e^u\lambda\}\leq2e^{-u},
 \qquad u\geq0,
 \label{eq:dens-affine-upper-tail}
\end{equation}
which has a bounded logarithmic second moment.

For the lower tail, choose a label \(s_*\) with
\(\|v_{s_*}\|=\nu\), and use Hahn--Banach to choose a norm-one scalar
functional \(\varphi\) with \(|\varphi(v_{s_*})|=\nu\).  After
conditioning on the other variables, the real small-ball estimate is
\[
 \Prob\{\|G\|\leq u\lambda\}
 \leq \min\{1,2Lu\lambda/\nu\},
\]
while integration of the planar density over a disk gives, in the complex
case,
\[
 \Prob\{\|G\|\leq u\lambda\}
 \leq \min\{1,\pi L(u\lambda/\nu)^2\}.
\]
Under the directional alternative, first condition on all orthogonal
coordinates and then use the first bound for the remaining coordinate.
All bounds are uniform in the conditioned center.  Letting the
small-ball radius decrease to zero also gives
\(\Prob\{G(\mathbf x)=0\}=0\).

If \(\|v_0\|\leq2p\nu\), then \(\lambda\leq2p\nu\), so we have
\begin{equation}
 \Prob\{\|G\|\leq u\lambda\}
 \leq\min\{1,C_L(pu)^d\},
 \label{eq:dens-affine-small-center}
\end{equation}
where \(d=1\) in the real and directional cases and \(d=2\) in the
planar-density case.  Integrating with \(u=e^{-t}\) gives a contribution
\(O_L(\log^2(ep))\).

Suppose now that \(b=\|v_0\|>2p\nu\), so \(\lambda=b\), and set
\(\delta:=p\nu/b<1/2\).  Write \(S=\sum_sx_sv_s\).  By the triangle
inequality and Cauchy--Schwarz,
\begin{equation}
 \E\|S\|^2
 \leq \E\left(\sum_s|x_s|\|v_s\|\right)^2
 \leq p\sum_s\|v_s\|^2\leq p^2\nu^2.
 \label{eq:dens-affine-noise-second}
\end{equation}
For \(0<u\leq1/2\), the event \(\|v_0+S\|\leq ub\) implies
\(\|S\|\geq b/2\).  Combining Markov's inequality with the preceding
one-coordinate density estimate yields
\begin{equation}
 \Prob\{\|G\|\leq ub\}
 \leq\min\left\{4\delta^2,
 C_L\left(\frac{pu}{\delta}\right)^d\right\}.
 \label{eq:dens-affine-large-center}
\end{equation}
Split the logarithmic integral at
\[
 t_0=\log\left(\frac{C_Lp}{\delta^{1+2/d}}\right).
\]
Its two pieces are bounded by
\[
 C\delta^2t_0^2+C\delta^2(t_0+1)
 \leq C_L\log^2(ep),
\]
uniformly over \(0<\delta<1/2\).  This proves
\eqref{eq:dens-affine-centered}.  Applying it to
\(\mathbf x\) and \(\mathbf x'\) and
using \(|a-b|^2\leq2a^2+2b^2\) proves
\eqref{eq:dens-affine-resampling}.  Since
\(\sum_s\|v_s\|^2\leq p\nu^2\),
\[
 2^{-1/2}\rho\leq\lambda\leq p\rho.
\]
Thus \(|\log\lambda-\log\rho|\leq\log(\sqrt2p)\), and
\eqref{eq:dens-affine-Hilbert} follows from
\eqref{eq:dens-affine-centered}.

\end{proof}

\begin{proof}[\proofname\ of Corollary \ref{cor:dens-coefficient-evaluation}]
Before the first group is evaluated, regard the coefficients in all
remaining groups as a vector in their Euclidean coefficient space.  As a
function of the first group this vector is Hilbert-valued affine, and the
quantity \(\rho\) in \eqref{eq:dens-affine-rho} is exactly the Euclidean
norm of the coefficient tensor before that group is evaluated.  After
evaluation, its Hilbert norm is the coefficient norm of the polynomial in
the remaining groups.  This remains exact for a complex group because its
atoms are evaluated as formal complex variables: the quantity \(\rho^2\)
is precisely the sum of the squared norms of the constant coefficient and
the \(p_j\) complex linear coefficients.  The first-moment consequence of
\eqref{eq:dens-affine-Hilbert} bounds the expected change of its logarithm
by \(C_L\log(ep)\).  Repeat this operation group by group and use the
tower property.  The logarithms telescope from
\(\log\|\Coeff P\|_2\) to \(\log|P|\), proving
\eqref{eq:dens-tensor-evaluation}.  Lemma~\ref{lem:dens-affine-log} also
shows at each stage that a nonzero coefficient vector evaluates to zero
with probability zero.
\end{proof}

\begin{proof}[\proofname\ of Lemma \ref{lem:dens-integrated-Hodge}]
Let
\[
 G:=W^{-1/2}(\xi_{ab})_{a,b\leq W}.
\]
The only determinant estimate needed is the Gram--Schmidt identity
\[
 |\det G|=\prod_{a=1}^W
 \dist(G_{a,*},\operatorname{span}\{G_{1,*},\ldots,G_{a-1,*}\}).
\]
Condition on the previous rows and choose a unit vector normal to their
span.  For real atoms, multiply the normal functional by a phase so that
the real part of its coefficient vector has norm at least $2^{-1/2}$.
After the $W^{-1/2}$ normalization, one real coefficient has modulus at
least $(\sqrt2W)^{-1}$; conditioning on the remaining entries bounds the
density of this real projection by $C_LW$.  The same argument works under
the directional alternative after conditioning on the orthogonal
coordinates in the new row.

For complex atoms with planar density, the coefficient vector of the
complex linear form has squared norm $W^{-1}$.  Hence
Lemma~\ref{lem:lsv-planar-linear-form} gives a planar density bounded by
$C_LW$, and therefore a disk small-ball bound $C_LWt^2$.  Since the row
distance dominates the modulus of either scalar projection, all three
cases satisfy the common estimate
\begin{equation}
 \Prob\{\dist(G_{a,*},H)<t\mid H\}
 \leq\min\{1,C_LWt\},
 \label{eq:dens-row-distance}
\end{equation}
and logarithmic integration yields
\begin{equation}
 \E\log_+\frac1{|\det G|}
 \leq C_LW\log(eW).
 \label{eq:dens-det-negative-log}
\end{equation}
More explicitly, if \(d_a\) denotes the successive row distance, then
\[
 \Prob\{\log_+(d_a^{-1})>u\mid H\}
 \leq\min\{1,C_LWe^{-u}\}.
\]
Thus \(\E\log_+^2(d_a^{-1})\leq C_L\log^2(eW)\), and Cauchy--Schwarz
gives
\begin{equation}
 \E\left(\log_+\frac1{|\det G|}\right)^2
 \leq C_LW^2\log^2(eW).
 \label{eq:dens-det-negative-log-L2}
\end{equation}
The constant factor between \(W^{-1/2}(\xi_{ab})\) and the model
normalization \((3W)^{-1/2}(\xi_{ab})\) contributes only \(O(W)\) to
these determinant logarithms.

The complementary-minor expansion in the proof of
Lemma~\ref{lem:dens-row-affinity}, the bound
\(\binom{2W}{r}\leq4^W\), and Hadamard's inequality give the pathwise
estimate
\begin{align}
 \max_r\log_+\|\mathcal A_j^{(r)}\|
 &\leq C W\log(eW)
 \notag\\
 &\quad+C W\log\!\left(
  1+|z|+\|\mathsf A_j\|_{\rm HS}
       +\|\mathsf B_j\|_{\rm HS}
       +\|\mathsf C_j\|_{\rm HS}\right).
 \label{eq:dens-cleared-pathwise-upper}
\end{align}
Indeed, each summand is a product of complementary minors of total degree
at most \(W\), and the number of summands and matrix entries is at most
\(e^{CW\log(eW)}\).  Since
\(\E\|\mathsf A_j\|_{\rm HS}^2=O(W)\), with the same identity for the
other two blocks, Markov integration gives, for the logarithm in the
second line of \eqref{eq:dens-cleared-pathwise-upper}, first and second
moments bounded respectively by \(C_z\log(eW)\) and
\(C_z\log^2(eW)\).  In particular,
\begin{equation}
 \E\max_r\log_+\|\mathcal A_j^{(r)}\|
 \leq C_{L,z}W\log(eW).
 \label{eq:dens-cleared-upper-L1}
\end{equation}

Finally the exact Hodge identity from \eqref{eq:gb-one-step-inverse} is
\begin{equation}
 \|(\mathcal A_j^{(r)})^{-1}\|
 =\frac{\|\mathcal A_j^{(2W-r)}\|}
 {|\det\mathsf B_j\det\mathsf C_j|}.
 \label{eq:dens-exact-Hodge}
\end{equation}
After increasing the constant, define explicitly
\begin{align*}
 \mathfrak h_j
 &:={C_zW\log(eW)}
 +CW\log\!\left(
  1+|z|+\|\mathsf A_j\|_{\rm HS}
       +\|\mathsf B_j\|_{\rm HS}
       +\|\mathsf C_j\|_{\rm HS}\right)\\
 &\quad+\log_+\frac1{|\det\mathsf B_j|}
       +\log_+\frac1{|\det\mathsf C_j|}.
\end{align*}
Equations \eqref{eq:dens-det-negative-log}--
\eqref{eq:dens-cleared-upper-L1} and the exact Hodge identity imply
\(\E\mathfrak h_j\leq C_{L,z}W\log(eW)\) and
\(\E\mathfrak h_j^2\leq C_{L,z}W^2\log^2(eW)\).  This proves
\eqref{eq:dens-one-site-Hodge-L1}.
Submultiplicativity for a product and for its inverse gives
\eqref{eq:dens-interval-Hodge-L1}.  Repeating the same estimates with
squared logarithms also completes the truncation justification in
Lemma~\ref{lem:dens-row-concentration}.
\end{proof}

\begin{proof}[\proofname\ of Proposition \ref{prop:dens-reset-seam}]
For the universal constant $C>0$ in Lemma~\ref{lem:local-all-minor}, define
\[
 V(A):=\det(I_{2W}+A^*A)^{1/2},
 \qquad E:=E_\partial=\operatorname{diag}(\mathsf C_L,\mathsf B_R),
 \qquad a_W:=CW\log(eW).
\]
Also set
\begin{equation}
 \Lambda_\partial:=
 2W\log_+(1+\|E_\partial\|)
 +\log_+\frac1{|\det E_\partial|}.
 \label{eq:dens-endpoint-loss}
\end{equation}
First suppose that \(\Theta\in\mathcal U\) with $\mathcal U$ defined in \eqref{eq:local-U-domain}.  By
\eqref{eq:local-double-elimination} and
Lemma~\ref{lem:local-all-minor},
\begin{equation}
 \mathscr C(\Theta;E_\partial)
 =e^{\pm a_W}
  |\det\Theta_{11}|\,V(ES(\Theta)).
 \label{eq:dens-endpoint-proof-1}
\end{equation}
Define
\[
 \kappa(E):=
 \max_{0\leq r\leq2W}
 \max\bigl\{\|\wedge^rE\|,\|\wedge^rE^{-1}\|\bigr\}.
\]
Functoriality of exterior powers and
\eqref{eq:local-compound-sum} give
\[
 \kappa(E)^{-1}V(S(\Theta))
 \leq V(ES(\Theta))
 \leq \kappa(E)V(S(\Theta)).
\]
Moreover, Hodge duality implies
\[
 \log\kappa(E)
 \leq
 2W\log(1+\|E\|)
 +\log_+\frac1{|\det E|}
 =\Lambda_\partial.
\]
Finally, the graph--Gram identity
\eqref{eq:local-graph-identity} gives
\[
 |\det\Theta_{11}|\,V(S(\Theta))=V(\Theta).
\]
Combining these relations yields
\[
 e^{-a_W-\Lambda_\partial}V(\Theta)
 \leq \mathscr C(\Theta;E_\partial)
 \leq e^{a_W+\Lambda_\partial}V(\Theta)
\]
for every \(\Theta\in\mathcal U\). We now extend this relation to all $\Theta\notin\mathcal U$ via continuity.

For arbitrary \(\Theta\in\GL_{2W}(\C)\), choose a sequence $\varepsilon_n$ such that
\[
 \Theta_n=\Theta+\varepsilon_n
 \begin{pmatrix}I_W&0\\0&0\end{pmatrix}
 \in\mathcal U,
 \qquad \varepsilon_n\longrightarrow0.
\]
Every coefficient of \(\mathscr D_\Theta\) is polynomial in the entries
of \(\Theta\).  Hence
\[
 \mathscr C(\Theta_n;E_\partial)\longrightarrow
 \mathscr C(\Theta;E_\partial),
 \qquad
 V(\Theta_n)\longrightarrow V(\Theta).
\]
Passing to the limit in the preceding multiplicative inequalities and
then taking logarithms proves the following pointwise estimate for every
\(\Theta\in\GL_{2W}(\C)\):
\begin{equation}
 \left|
 \log\mathscr C(\Theta;E_\partial)
 -\frac12\log\det(I_{2W}+\Theta^*\Theta)
 \right|
 \leq C_zW\log(eW)+\Lambda_\partial.
 \label{eq:dens-seam-pathwise}
\end{equation}
Lemma~\ref{lem:dens-integrated-Hodge} implies
\begin{equation}
 \E_{E_\partial}\Lambda_\partial\leq C_LW\log(eW).
 \label{eq:dens-endpoint-loss-L1}
\end{equation}
This completes the proof of \eqref{eq:dens-seam-L1}.
\end{proof}

\begin{proof}[\proofname\ of Proposition \ref{prop:dens-packet-coeff-evaluation2}]
Divide the \(7W^2\) variables into the \(3W\) packet-row groups
\begin{align*}
 \mathbf x_{L,a}
 &:=
 \bigl((A_L)_{ab},(B_L)_{ab}\bigr)_{b=1}^W,\\
 \mathbf x_{C,a}
 &:=
 \bigl((C_C)_{ab},(A_C)_{ab},(B_C)_{ab}\bigr)_{b=1}^W,\\
 \mathbf x_{R,a}
 &:=
 \bigl((C_R)_{ab},(A_R)_{ab}\bigr)_{b=1}^W,
 \qquad 1\leq a\leq W.
\end{align*}
Each group contains at most \(3W\) independent atoms.

By \eqref{eq:local-K-first-elimination},
\[
 P_{\Theta,E_\partial}(\mathbf x)
 =\det K_{\Theta,E_\partial}(\mathbf x).
\]
Every group \(\mathbf x_{\alpha,a}\) occurs only in the \(a\)-th scalar
row of the corresponding packet block row of
\(K_{\Theta,E_\partial}\).  Since the determinant is affine-linear in
each matrix row, \(P_{\Theta,E_\partial}\) is affine separately in every
one of the \(3W\) groups.

The pointwise comparison \eqref{eq:dens-seam-pathwise} shows that
\(\mathscr C(\Theta;E_\partial)>0\), so the polynomial is nonzero.  Applying
Corollary~\ref{cor:dens-coefficient-evaluation} with
\(n_{\rm grp}=3W\) and \(p\leq3W\) gives
\[
 \E_{\Xi_7}\left|
  \log|P_{\Theta,E_\partial}(\Xi_7)|
  -\log\mathscr C(\Theta;E_\partial)
 \right|
 \leq C_L(3W)\log(3eW),
\]
which is \eqref{eq:dens-pathwise-boundary-volume2} after changing the
constant.
\end{proof}

\begin{proof}[\proofname\ of Proposition \ref{propositions6.8}]
It remains to prove \eqref{eq:dens-reset-L1}.  Condition on
\(E_\partial\) and on the deterministic, or outside-measurable,
decomposable unit wedges \(u,v\).  Applying a complex QR decomposition, we can find isometries \(U,V\) such that
\(\widehat U=u\) and \(\widehat V=v\) in the sense of \eqref{eq:local-UhatVhat}, and write
\[
 Z:=Z_{U,V}^{(r)}
   =\langle u,\mathcal Q^{(r)}v\rangle,
 \qquad
 \Gamma:=\Gamma_{U,V}^{(r)}
   =\|\Coeff Z\|_2.
\]

Consider the matrix
\(\Theta_\lambda^{(r;U,V)}\) defined in \eqref{definetheframe}.  By
the deterministic computations in \eqref{eq:local-polynomial-limit}, coefficientwise convergence in the
fixed finite-dimensional polynomial space gives
\[
 \lambda^{-r}
 \mathscr C(\Theta_\lambda^{(r;U,V)};E_\partial)
 \longrightarrow \Gamma,\quad\lambda\to\infty.
\]
On the other hand, the deterministic computation in \eqref{eq:local-graph-limit} gives
\[
 \lambda^{-r}
 \det\!\left(
 I_{2W}
 +(\Theta_\lambda^{(r;U,V)})^*
  \Theta_\lambda^{(r;U,V)}
 \right)^{1/2}
 \longrightarrow1,\quad\lambda\to\infty.
\]
Applying the pointwise estimate \eqref{eq:dens-seam-pathwise} to
\(\Theta_\lambda^{(r;U,V)}\), exponentiating, dividing by
\(\lambda^r\), and using the preceding two deterministic limits gives
\begin{equation}
 e^{-C_zW\log(eW)-\Lambda_\partial}
 \leq\Gamma\leq
 e^{C_zW\log(eW)+\Lambda_\partial}.
 \label{eq:dens-fixed-degree-coeff-bound}
\end{equation}
In particular, \(\Gamma>0\).

After conditioning on \(E_\partial,u,v\), the polynomial \(Z\) is affine
separately in the \(3W\) packet-row groups, each containing at most
\(3W\) independent atoms.  Hence
Corollary~\ref{cor:dens-coefficient-evaluation} gives
\[
 \E_{\Xi_7}\left[
  \left|\log|Z|-\log\Gamma\right|
  \,\middle|\,E_\partial,u,v
 \right]
 \leq C_LW\log(eW).
\]
Since
\[
 \log_+\frac1{|Z|}
 \leq
 \left|\log|Z|-\log\Gamma\right|
 +\log_+\frac1{\Gamma},
\]
\eqref{eq:dens-fixed-degree-coeff-bound} first gives, conditionally on
\(E_\partial,u,v\),
\[
 \E_{\Xi_7}\left[
  \log_+\frac1{|Z|}
  \,\middle|\,E_\partial,u,v
 \right]
 \leq C_{L,z}W\log(eW)+\Lambda_\partial.
\]
Now average over the endpoint blocks.  Since $u,v$ are deterministic or
outside-measurable, they are independent of all nine packet blocks, and
\eqref{eq:dens-endpoint-loss-L1} yields
\begin{align*}
 &\E_{E_\partial}\left[
   \E_{\Xi_7}\left[
    \log_+\frac1{|Z|}
    \,\middle|\,E_\partial,u,v
   \right]
   \,\middle|\,u,v
  \right]\\
 &\qquad=
 \E_{\rm pkt}\left[
  \log_+\frac1{|Z|}
  \,\middle|\,u,v
 \right]
 \leq C_{L,z}W\log(eW).
\end{align*}
This is \eqref{eq:dens-reset-L1}.
\end{proof}

\iffalse

\end{document}
\fi

\appendix

\section{Repair of the full-block least-singular-value input}
\label{app:repaired-jjlo}

\begin{proof}[Proof of Proposition~\ref{prop:jjlo-block-lsv}]
We retain the argument of
\cite[Theorem~2.1]{JainJanaLuhORourke2021} after replacing the
operator-norm input used to construct its event $\mathcal E_K$.  On this
event, every normalized block has bounded operator norm and the two
off-diagonal blocks in each block row have least singular value at least
$b^{-5}$.

For an unnormalized $b\times b$ iid block $A$, the standard subgaussian
matrix-norm tail \cite[Theorem~4.4.5]{vershynin2019high} gives
\begin{equation}
 \Pp\{\|A\|>C_{\xi,K_0}\sqrt b\}\le 2e^{-c_\xi b}.
 \label{eq:repaired-jjlo-block-norm}
\end{equation}
For a normalized off-diagonal block $U=(3b)^{-1/2}A$,
\cite[Proposition~2.2]{JainJanaLuhORourke2021}, with its small parameter
chosen of order $b^{-2}$, gives
\begin{equation}
 \Pp\{s_{\min}(U)<b^{-5}\}
 \le C_{\xi,K_0}b^{-2}+C_{\xi,K_0}e^{-c_\xi b^{1/50}}.
 \label{eq:repaired-jjlo-block-floor}
\end{equation}
The diagonal shift $-zI_b$ changes the normalized block-norm bound only
by the fixed amount $|z|\le K_0$.  Since $m\le b$, a union bound over the
$O(m)$ blocks in \eqref{eq:repaired-jjlo-block-norm}--
\eqref{eq:repaired-jjlo-block-floor} yields
\begin{equation}
 \Pp(\mathcal E_K^c)
 \le C_{\xi,K_0}b^{-1}+C_{\xi,K_0}b e^{-c_\xi b^{1/50}}
 \le C_{\xi,K_0}b^{-1/2}.
 \label{eq:repaired-jjlo-good-event}
\end{equation}
The remainder of the proof of the cited Theorem~2.1 is carried out on
$\mathcal E_K$ and gives the floor $(3b)^{-25m}$ with an additional
$O(b^{-1/2})$ failure probability.  Combining that estimate with
\eqref{eq:repaired-jjlo-good-event} proves the proposition.
\end{proof}

\section{Hilbert--Schmidt-truncated least-singular-value estimate}
\label{app:high-band-lsv}

We prove Theorem~\ref{thm:high-band-lsv}.  We use the following consequence
of the marginal-density theorem of Livshyts, Paouris and Pivovarov.

\begin{lemma}[Projected densities]
\label{lem:lsv-projected-density}
Let $\mathbf x=(x_1,\ldots,x_m)$ have independent real coordinates whose
densities are bounded by $\rho$.  If $E\subset\R^m$ has dimension
$d\in\{1,2\}$, then the density of the orthogonal projection of
$\mathbf x$ onto $E$ is bounded by $C^d\rho^d$.
\end{lemma}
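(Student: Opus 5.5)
The plan is to reduce the statement to the marginal-density theorem of Livshyts--Paouris--Pivovarov.  That theorem says the following.  Suppose $\mathbf x\in\R^m$ has independent coordinates whose densities are bounded by $\rho$, and $P_E$ is the orthogonal projection onto a $d$-dimensional subspace $E$.  Then the density of $P_E\mathbf x$ on $E$, taken with respect to $d$-dimensional Lebesgue measure, is bounded by $(C\rho)^d$ for an absolute constant $C$.  Here the constant is $C=\sqrt2$ in Rudelson--Vershynin's simple form.  Since the statement only concerns $d\in\{1,2\}$, one can quote the theorem directly, or give a short self-contained argument in each case.

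For $d=1$, I would write $E=\operatorname{span}\{\theta\}$ with $|\theta|=1$, so that the projection is the scalar $\langle\theta,\mathbf x\rangle=\sum\theta_ix_i$.  Pick an index $i$ with $|\theta_i|\ge m^{-1/2}$.  Conditioning on the other coordinates then only gives a density bound $\rho\sqrt m$, which is not uniform in $m$.  So here the genuine input is the one-dimensional Rogozin/Ball--type inequality: the density of $\sum\theta_ix_i$ is at most $\sqrt2\,\rho$.  This is exactly the $d=1$ case of Livshyts--Paouris--Pivovarov (equivalently, Ball's cube-slicing bound combined with their rearrangement argument).

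For $d=2$, I would proceed in either of two ways.  The first is to invoke the theorem directly.  The second is to reduce to $d=1$ via conditional densities, but a naive conditioning of this kind fails for the same non-uniformity reason as above, so the direct citation is preferable.

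Concretely, the write-up will (i) state the Livshyts--Paouris--Pivovarov theorem for independent coordinates with densities bounded by $\rho$, where it asserts $\|f_{P_E\mathbf x}\|_\infty\le (C\rho)^d$; (ii) note that this is invariant under the choice of orthonormal coordinates on $E$; and (iii) specialize to $d=1,2$.

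The main obstacle is to make sure the cited theorem is used in exactly this normalization.  In particular, the bound must be uniform in $m$ and must not depend on the subspace $E$, and the density is on $E$ with its induced Lebesgue measure.  A dimension-free bound is essential here, because it is what feeds the row-distance small-ball estimates with constants independent of $W$.
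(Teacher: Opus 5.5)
Your proposal is correct and matches the paper, which proves the lemma in one line: it rescales and cites Theorem~1.1 of Livshyts--Paouris--Pivovarov, exactly the reduction you describe. One small attribution point: the sharp constant $\sqrt2$ is due to Livshyts--Paouris--Pivovarov (building on Ball's cube slicing), whereas Rudelson--Vershynin's original theorem gives only an unspecified absolute constant $C$; this does not affect the argument, since any absolute $C$ suffices here.
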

\begin{proof}
After rescaling, this is \cite[Theorem~1.1]{LivshytsPaourisPivovarov2016}.
\end{proof}

\begin{lemma}[Planar density of a complex linear form]
\label{lem:lsv-planar-linear-form}
Let $\eta_1,\ldots,\eta_m$ be independent complex random variables whose
densities with respect to planar Lebesgue measure are bounded by $L$.
For every nonzero $a=(a_1,\ldots,a_m)\in\C^m$, the random variable
$S=\sum_ja_j\eta_j$ has a planar density satisfying
\begin{equation}
 \|f_S\|_\infty\le \frac{C L}{\|a\|_2^2}.
 \label{eq:lsv-planar-linear-density}
\end{equation}
Consequently, uniformly in $w\in\C$ and $s>0$,
\begin{equation}
 \Pp\{|S-w|\le s\}
 \le \min\left\{1,\frac{C Ls^2}{\|a\|_2^2}\right\}.
 \label{eq:lsv-planar-linear-small-ball}
\end{equation}
\end{lemma}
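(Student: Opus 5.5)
The plan is to reduce the complex statement to a real projected-density bound and then integrate.

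\textbf{Step 1: realify.} Write $\eta_j=u_j+\mathrm i v_j$, so that $(u_j,v_j)\in\R^2$ has a density bounded by $L$, and the pairs are independent across $j$. Identify $\C^m\simeq\R^{2m}$. Then $S$ is the image of the random vector $\mathbf y=(u_1,v_1,\dots,u_m,v_m)$ under the real linear map $\R^{2m}\to\R^2$ corresponding to multiplication by $a$. Because $z\mapsto a_jz$ is a rotation scaled by $|a_j|$, the two real row vectors of this map are orthogonal, each has norm $\|a\|_2$, and together they span a plane $E\subset\R^{2m}$. Hence $S=\|a\|_2\cdot\pi_E(\mathbf y)$, after an isometric identification of $E$ with $\C$.

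\textbf{Step 2: density of the projection.} Lemma~\ref{lem:lsv-projected-density} assumes independent real coordinates, but here only the pairs are independent. I would instead use the block version of the Livshyts--Paouris--Pivovarov marginal theorem, which treats independent vectors in $\R^{2}$ with bounded densities. If only the coordinate version is available, a simpler direct argument works.

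\emph{Direct argument.} Pick $j_0$ with $|a_{j_0}|$ maximal and condition on all $\eta_j$ with $j\ne j_0$. Then $S$ has planar density at most $L/|a_{j_0}|^2$. This gives only $CLm/\|a\|_2^2$, which loses a factor of $m$ and is therefore insufficient.

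\emph{Averaging argument.} The right tool is convolution. The planar density of $a_j\eta_j$ is bounded by $L/|a_j|^2$. In $L^\infty$ these bounds combine through an $\ell^2$-type rule, and this is exactly the content of the Ball/LPP theorem for the Euclidean norm: for a sum of independent bounded-density random vectors in $\R^2$, the sup-density of $\sum a_j\eta_j$ is at most $C\,L/\sum|a_j|^2$. I would cite \cite[Theorem~1.1]{LivshytsPaourisPivovarov2016} in its product-measure form, with blocks of dimension $2$. That theorem covers products of bounded densities on $\R^{n_j}$ projected onto a subspace, which is exactly the situation of $E$ above.

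The main obstacle is checking that this cited theorem applies to the block setting, so that the constant does not depend on $m$. The fallback is the direct averaging argument, which needs no extra hypotheses but loses the factor $m$.

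\textbf{Step 3: small ball.} Once \eqref{eq:lsv-planar-linear-density} holds, integrate the density over the disk $\{|S-w|\le s\}$, which has area $\pi s^2$. This gives the bound $\pi s^2\,CL/\|a\|_2^2$, and taking the minimum with $1$ yields \eqref{eq:lsv-planar-linear-small-ball} uniformly in $w$ and $s$.
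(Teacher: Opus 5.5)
Your Steps 1 and 3 are correct. The two real rows of $w\mapsto\sum_j a_jw_j$ are orthogonal with norm $\|a\|_2$, so $S=\|a\|_2\,\pi_E(\mathbf y)$. Integrating a density bound over a disk of area $\pi s^2$ then gives \eqref{eq:lsv-planar-linear-small-ball}. The inequality you describe in words in your averaging paragraph is also exactly what is needed.

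As written, however, the proposal does not establish the $m$-independent bound \eqref{eq:lsv-planar-linear-density}, and that bound is the whole content of the lemma. The theorem you cite, in the form recorded in Lemma~\ref{lem:lsv-projected-density}, requires independent \emph{real} coordinates. The vector $\mathbf y$ does not have them, because the real and imaginary parts of each $\eta_j$ need not be independent. You do not verify a block version of that theorem, and your fallback loses the factor $m$, as you note yourself.

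The missing ingredient is the maximum-density inequality of Bobkov--Chistyakov, which is what the paper cites. For independent random vectors in $\R^d$ with bounded densities, it states $\|f_{X_1+\cdots+X_n}\|_\infty^{-2/d}\ge c\sum_j\|f_{X_j}\|_\infty^{-2/d}$.

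Take $d=2$ and $X_j=a_j\eta_j$ for $a_j\ne0$. Multiplication by $a_j$ has Jacobian $|a_j|^2$, so $X_j$ has planar density at most $L/|a_j|^2$. The inequality then gives $\|f_S\|_\infty^{-1}\ge c\sum_{j:a_j\ne0}|a_j|^2/L$, which is \eqref{eq:lsv-planar-linear-density}. This takes one line and needs no realification or projection onto $E$.

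If you want to keep the projection picture instead, you must cite a marginal-density bound for product measures whose factors are bounded densities on $\R^2$ blocks. The coordinate version is not enough.
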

\begin{proof}
For $a_j\ne0$, the planar density of $a_j\eta_j$ is bounded by
$L/|a_j|^2$.  The two-dimensional maximum-density inequality
\cite[Theorem~1]{BobkovChistyakov2014} therefore gives
\[
 \|f_S\|_\infty^{-1}
 \ge c\sum_{j:a_j\ne0}\frac{|a_j|^2}{L},
\]
which is \eqref{eq:lsv-planar-linear-density}.  Integrating this density
over a disk of radius $s$ proves
\eqref{eq:lsv-planar-linear-small-ball}.
\end{proof}

\begin{lemma}[Cyclic block partition]
\label{lem:lsv-block-partition}
There is a cyclic partition
\begin{equation}\label{eq:lsv-block-partition}
 [N]=B_1\sqcup\cdots\sqcup B_J
\end{equation}
into consecutive intervals such that, with $m_j=|B_j|$ and
$d=\min_jm_j$,
\begin{equation}\label{eq:lsv-block-sizes}
 c_*W\le d\le m_j\le d+1\le C_*W,
 \qquad J\le C_*\frac NW.
\end{equation}
Moreover, $a\in B_j$ and
$b\in B_{j-1}\cup B_j\cup B_{j+1}$ imply $|a-b|_N\le W$.
\end{lemma}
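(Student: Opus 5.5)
The plan is a greedy construction on the cycle $\mathbb Z/N\mathbb Z$, and I take the band half-width $W$ to satisfy $2W+1\le N$, as in the setting of Theorem~\ref{thm:high-band-lsv}. I would take all block lengths in $\{d,d+1\}$ with $d$ of order $W/2$. Concretely, I set $J:=\lceil N/\lfloor W/2\rfloor\rceil$ when this is at least $3$, and $d:=\lfloor N/J\rfloor$. Writing $N=Jd+\rho$ with $0\le\rho<J$, I take $\rho$ blocks of length $d+1$ and $J-\rho$ blocks of length $d$, laid out consecutively around the cycle. Then $m_j\in\{d,d+1\}$ and $\min_j m_j=d$ provided $\rho<J$, which holds automatically.

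\textbf{Size bounds.} From $J\ge N/\lfloor W/2\rfloor$ we get $d\le \lfloor W/2\rfloor$. Also $J\le N/\lfloor W/2\rfloor+1$, so $d\ge N/J-1\ge c\,W$ once $N\gtrsim W$; in the relevant regime $N\ge 2W+1$, we have $N/J\ge \lfloor W/2\rfloor/(1+\lfloor W/2\rfloor/N)\ge W/6$ for large $W$. This gives $c_*W\le d\le d+1\le C_*W$ and $J\le C_*N/W$.

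\textbf{Neighbour property.} Take $a\in B_j$ and $b\in B_{j-1}\cup B_j\cup B_{j+1}$. Since the three intervals are consecutive, the cyclic arc from $a$ to $b$ through these blocks has length at most $m_{j-1}+m_j+m_{j+1}-1\le 3(d+1)-1$. With $d\le\lfloor W/2\rfloor$ this only gives about $3W/2$, which is too large. I would therefore refine the choice to $d\approx W/3$: replace $\lfloor W/2\rfloor$ by $\lfloor W/3\rfloor$ above. A more careful count then helps. If $b\in B_{j+1}$ and $a\in B_j$, then $b-a\le (\text{end of }B_{j+1})-(\text{start of }B_j)=m_j+m_{j+1}-1\le 2d+1$, and the $B_{j-1}$ case is symmetric. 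So we only need $2d+1\le W$, i.e.\ $d\le (W-1)/2$. With $d\le\lfloor W/3\rfloor$ this holds comfortably, and the cyclic distance $|a-b|_N$ is at most this arc length.

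\textbf{Small-$J$ cases.} When $J\le 2$, i.e.\ $N$ is comparable to $W$, the blocks $B_{j\pm1}$ may coincide or wrap around. This is where I expect the only real care to be needed. When $J$ would be small I would instead force $J=3$ if $N\le 3W$; the pairwise distance is then at most $\lfloor N/2\rfloor$. To make this at most $W$ I need $N\le 2W+1$, which is exactly the standing hypothesis in that regime. For $2W+1<N\le 3W$ the choice $J=\lceil N/\lfloor W/3\rfloor\rceil\ge 4$ already applies. The bounds $c_*W\le d$ and $J\le C_*N/W$ remain valid with adjusted constants because $N\asymp W$ there.
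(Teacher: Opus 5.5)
Your construction is the same as the paper's: split the cycle as evenly as possible into consecutive intervals whose length is a fixed small fraction of $W$, and read off the neighbour property from the fact that two adjacent blocks have diameter at most $2d+1\le W$. It is correct in the regime $2W+1\le N$. The gap is that $2W+1\le N$ is not a hypothesis of Theorem~\ref{thm:high-band-lsv}, inside whose proof this lemma sits. The theorem is in fact applied with $W$ comparable to $N$: in the dense branch of Theorem~\ref{thm:gaussian-profile} the bandwidth parameter is taken to be $N$ itself. Every block has length at most $N$, so outside your regime the lower bound $c_*W\le d$ requires $W\lesssim N$. Your small-$J$ paragraph only asserts ``$N\asymp W$ there''. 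It also contradicts your own standing assumption, since it calls $N\le 2W+1$ the standing hypothesis. The missing ingredient, which is how the paper's proof begins, is a one-line consequence of double stochasticity and $\max\sigma_{ij}^2\le C_{\mathrm{lsv}}/W$: $1=\sum_j\sigma_{ij}^2\le NC_{\mathrm{lsv}}/W$, that is, $W\le C_{\mathrm{lsv}}N$.

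With this bound the fix is immediate. Run your construction with $\lfloor\min(W,N)/3\rfloor$ in place of $\lfloor W/3\rfloor$; the paper uses $\lfloor\min(W,N)/8\rfloor$. This gives $d\ge c\min(W,N)\ge c_*W$ and $J\le C_*N/W$. The neighbour property needs no case analysis: $J\ge3$ automatically, and two cyclically consecutive blocks always have diameter at most $2d+1\le W$. Under your own assumption $2W+1\le N$ the small-$J$ paragraph is vacuous anyway, because there $J=\lceil N/\lfloor W/3\rfloor\rceil\ge6$.
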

\begin{proof}
The row-sum condition and
$\sigma_{ij}^2\le C_{\mathrm{lsv}}/W$ imply
$W\le C_{\mathrm{lsv}}N$.
Denote by $d_0=\lfloor\min(W,N)/8\rfloor$ and divide the cyclically ordered set
$[N]$ as evenly as possible into $J=\lceil N/d_0\rceil$ intervals.
For large $N$ their sizes differ by at most one, lie between $d_0/2$ and
$d_0$, and two equal or adjacent intervals have cyclic diameter at most
$2d_0\le W$. This proves the claim, with constants allowed to depend on
$C_{\mathrm{lsv}}$.
\end{proof}

For $u\in\C^N$, write $u[j]$ for its restriction to $B_j$.  The next lemma
is the only net estimate needed below.  Its two-dimensional part gives the
usual cancellation between complex entropy and small-ball probability;
the one-dimensional part covers vectors that are nearly real after a phase
rotation.

\begin{lemma}[Complex block net]
\label{lem:lsv-block-net}
Let $0<h<s<1$, let $m\ge1$, and let
$\boldsymbol\xi=(\xi_1,\ldots,\xi_m)$ have either independent real
coordinates with densities bounded by $\rho$, or independent complex
coordinates with planar densities bounded by $\rho^2$.  The conclusion
also holds for complex coordinates satisfying alternative~\textnormal{(iii)}
of Assumption~\ref{ass:indicator-density}, with conditional density bounded
by $\rho$.  Suppose
$\rho s\ge h$.  The unit ball
of $\C^m$ is covered, up to distance $Ch$, by at most $(C/h)^3$ families
of finite nets.  Each family has parameters
$(\star,\alpha,x,y)\in\{R,I\}\times[-1,1]\times[0,2]^2$, taken on
$h$-meshes, and its net $\mathcal N(\star,\alpha,x,y)$ satisfies
\begin{equation}\label{eq:lsv-block-net-cardinality}
 |\mathcal N(\star,\alpha,x,y)|
 \le\left(\frac{C\max(x,h)\max(y,h)}{h^2}\right)^m.
\end{equation}
For every $u$ in this net and every $w\in\C$,
\begin{equation}\label{eq:lsv-block-small-ball}
 \Pp\{|\langle\boldsymbol\xi,u\rangle-w|\le s\}
 \le \min\left\{1,
 \frac{C\rho^2s^2}{\max(x,h)\max(y,h)}\right\}.
\end{equation}
\end{lemma}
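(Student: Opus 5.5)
The net will be built by a polar-type decomposition in which a single phase rotation first makes the dominant part of $u$ real.

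Fix $u$ in the unit ball of $\C^m$. Choose a phase $e^{\mathrm i\phi}$, taken from an $h$-mesh of $[0,2\pi)$ (this gives one of the $(C/h)^3$ parameter factors after reparametrization), such that for $v:=e^{\mathrm i\phi}u$ the real part $a=\Re v$ and imaginary part $b=\Im v$ are nearly orthogonal. Concretely, we rotate so that $\langle a,b\rangle_{\R}=0$ and $\|a\|\ge\|b\|$; this is possible because $\phi\mapsto\langle\Re(e^{\mathrm i\phi}u),\Im(e^{\mathrm i\phi}u)\rangle$ is $\tfrac12\Im$ of $e^{2\mathrm i\phi}\sum u_j^2$ and therefore vanishes for some $\phi$. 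We then record $x\approx\|a\|$ and $y\approx\|b\|$ on $h$-meshes of $[0,2]$. The label $\star\in\{R,I\}$ and the parameter $\alpha\in[-1,1]$ encode which of $\Re$ and $\Im$ is dominant and a discretized cosine of the phase, $\cos\phi$ up to sign. Together these account for the $(C/h)^3$ families. Within a family, $\mathcal N(\star,\alpha,x,y)$ is a product-type net: an $h$-net of the real sphere of radius $x$ in $\R^m$ for $a$, times an $h$-net of radius $y$ for $b$. The standard volumetric bound gives at most $(C\max(x,h)/h)^m(C\max(y,h)/h)^m$ points, which is \eqref{eq:lsv-block-net-cardinality}. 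After undoing the rotation, every $u$ lies within $Ch$ of a net point.

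For the small-ball bound \eqref{eq:lsv-block-small-ball}, take the net point $u$, write $e^{\mathrm i\phi}u=a+\mathrm i b$ with $a\perp b$, and observe that the events $|\langle\boldsymbol\xi,u\rangle-w|\le s$ and $|\langle\boldsymbol\xi,e^{\mathrm i\phi}u\rangle-w'|\le s$ coincide. In the real case $\langle\boldsymbol\xi,a+\mathrm i b\rangle=(\langle\boldsymbol\xi,a\rangle,\langle\boldsymbol\xi,b\rangle)$ is a linear image of the projection of $\boldsymbol\xi$ onto $E=\operatorname{span}(a,b)$. By Lemma~\ref{lem:lsv-projected-density} with $d=2$ (or $d=1$ when $y<h$), this pair has density at most $C\rho^2/(\|a\|\|b\|)$ in the coordinates scaled by $\|a\|,\|b\|$. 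Integrating over a disk of radius $s$ gives $C\rho^2s^2/(xy)$. When $y\le h$ we use only the one-dimensional projection onto $a$ and obtain $C\rho s/x\le C\rho^2s^2/(xh)$, where the last inequality uses $\rho s\ge h$; the case $x\le h$ is handled the same way. In the planar complex case, Lemma~\ref{lem:lsv-planar-linear-form} applies directly with $\|u\|\ge\max(x,y)$ and gives $C\rho^2s^2/\|u\|^2\le C\rho^2s^2/(\max(x,h)\max(y,h))$. For alternative (iii), we condition on the orthogonal coordinates and reduce to the real case with $\xi_j$ replaced by $U_j$. The linear form becomes $\sum_j U_j e^{\mathrm i\theta}u_j+$ (a conditioned center), which is again a real vector paired with a complex coefficient vector, so the real two-dimensional projection argument applies verbatim after the phase rotation.

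The main obstacle is making the mesh of phases compatible with exact orthogonality $a\perp b$ while keeping the approximation error $O(h)$ and the small-ball constants uniform. For this I plan to build the net from orthogonal decompositions of the net points themselves rather than of $u$, using that a phase change of size $h$ moves $u$ by at most $h$. If exact orthogonality proves awkward, I will instead require $|\langle a,b\rangle|\le h$ and absorb the resulting near-degeneracy of the $2\times2$ Gram matrix into the constant, using $\max(x,h)\max(y,h)$ rather than $xy$.
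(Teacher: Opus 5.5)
Your route is genuinely different from the paper's. The paper never rotates $u$. Assuming $\|\Re u\|_2\ge\|\Im u\|_2$ (this is the label $\star$), it writes $b=\alpha a+q$ with $q\perp a$. So $\alpha\in[-1,1]$ is a real projection coefficient, not a cosine of a phase. It then builds net points $u'=(1+\mathrm i\alpha')p+\mathrm i r$ with $p\perp r$ \emph{exactly}: it nets $a$ by $p$ first, then nets the projection of $q$ onto $p^\perp$ inside $p^\perp$. The density bound $C\rho^2/(xy)$ for $(\langle\boldsymbol\xi,p\rangle,\langle\boldsymbol\xi,r\rangle)$ is transferred to $\langle\boldsymbol\xi,u'\rangle$ by the determinant-one shear $(s_1,s_2)\mapsto s_1+\mathrm i(\alpha's_1+s_2)$.

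You use the polar normalization instead. A phase rotation of $u$ right-multiplies the real $m\times2$ matrix $[\,\Re u\ \ \Im u\,]$ by a $2\times2$ rotation, so after the orthogonalizing phase $\|a\|_2\ge\|b\|_2$ are exactly its singular values $\sigma_1\ge\sigma_2$. The paper's $\|a\|_2\operatorname{dist}(b,\operatorname{span}a)$ is the same area $\sigma_1\sigma_2$, so both arguments feed the same quantity into Lemma~\ref{lem:lsv-projected-density}. Your version needs no shear and no $\star$, since multiplying by $\mathrm i$ swaps the parts, but it needs a phase parameter. The paper's version needs no phase and gets orthogonality at the net points by construction.

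The step you flagged is where the proposal is still incomplete, and your fallback would not close it. A product of sphere nets for $a$ and $b$ contains points whose rotated parts are far from orthogonal, even parallel, and at such points the claimed small-ball bound is false. The condition $|\langle a,b\rangle|\le h$ is not scale-invariant. For $x=y=\sqrt h$ it permits $b=a$, and then $\langle\boldsymbol\xi,u\rangle=(1+\mathrm i)\langle\boldsymbol\xi,a\rangle$ is one-dimensional. Its small-ball probability can be of order $\rho s/\sqrt h$, far above $C\rho^2s^2/h$ when $\rho s\ll\sqrt h$, which is exactly the regime of Proposition~\ref{prop:lsv-normal-spread}.

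Your first idea does work once made precise with the singular-value picture. For the small-ball bound at a net point $u'$, use the exact orthogonalizing phase of $u'$ itself; it need not lie on any mesh, since $\langle\boldsymbol\xi,e^{\mathrm i\phi}u'\rangle=e^{\mathrm i\phi}\langle\boldsymbol\xi,u'\rangle$. Then use that $\sigma_1,\sigma_2$ are $1$-Lipschitz in $u'$ for $\|\cdot\|_2$, by Weyl's inequality, because $\|u-u'\|_2$ is the Frobenius distance between the two real $m\times2$ matrices. Discard net points not within $Ch$ of the class $\{\sigma_1\approx x,\ \sigma_2\approx y\}$ being covered; this only lowers the cardinality. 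Every retained point then has $\sigma_1\asymp\max(x,h)$, and $\sigma_2\asymp y$ when $y\ge C'h$, which is all your computation uses. Below that threshold your one-dimensional bound together with $\rho s\ge h$ takes over. Equivalent fixes are to net $b$ inside $p^\perp$ as the paper does, or to require $|\langle a/\|a\|_2,b\rangle|\le Ch$, which is the correct relative form of near-orthogonality.

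Finally, mesh $\phi$ itself, not $\cos\phi$. An $h$-mesh in $\cos\phi$ resolves $\phi$ only to order $\sqrt h$ near $\phi\in\{0,\pi\}$, which spoils the $Ch$ covering radius. The labels $(\star,\alpha)$ in the statement are bookkeeping; downstream only the count $(C/h)^3$ and the bounds in terms of $\max(x,h)\max(y,h)$ are used.
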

\begin{proof}
First suppose that the coordinates of $\boldsymbol\xi$ are real.  Write
$u=a+ib$ with $a,b\in\R^m$.  If $\|a\|_2\ge\|b\|_2$, write
\begin{equation}\label{eq:lsv-orthogonal-decomposition}
 b=\alpha a+q,\qquad q\perp a,\qquad |\alpha|\le1.
\end{equation}
Choose $h$-approximations to $\alpha$, $\|a\|_2$ and $\|q\|_2$.
Standard annular volume nets give $p,r\in\R^m$ with $p\perp r$ and
\[
 u'=(1+i\alpha')p+ir,\qquad \|u-u'\|_2\le Ch,
\]
where $\|p\|_2\asymp\max(x,h)$ and
$\|r\|_2\asymp\max(y,h)$ whenever the corresponding parameter is larger
than a fixed multiple of $h$.  Here one first approximates $a$ and then
projects $q$ onto $p^\perp$; the projection changes $q$ by $O(h)$.
The standard lattice-point bound
$|\mathbb Z^m\cap B(0,R)|\le(2+CR/\sqrt m)^m$, after rescaling, gives
\eqref{eq:lsv-block-net-cardinality}.

If $x,y\ge Ch$, Lemma~\ref{lem:lsv-projected-density}, applied to the
plane spanned by $p,r$, bounds the density of
$(\langle\boldsymbol\xi,p\rangle,
\langle\boldsymbol\xi,r\rangle)$ by $C\rho^2/(xy)$.
The map $(s_1,s_2)\mapsto s_1+i(\alpha's_1+s_2)$ has determinant one,
which proves \eqref{eq:lsv-block-small-ball}.  If $y<Ch$, the
one-dimensional version instead gives
$C\rho s/\max(x,h)$; this is bounded by the right-hand side of
\eqref{eq:lsv-block-small-ball} because $\rho s\ge h$.  If also $x<Ch$,
use the trivial bound one.  The case $\|b\|_2>\|a\|_2$ follows by
interchanging real and imaginary parts.  Finally, the choice of $\star$
and the three scalar meshes produces at most $(C/h)^3$ families.

For complex coordinates, the construction and cardinality of the nets
are unchanged.  Lemma~\ref{lem:lsv-planar-linear-form} gives
\[
 \Pp\{|\langle\boldsymbol\xi,u\rangle-w|\le s\}
 \le \min\left\{1,\frac{C\rho^2s^2}{\|u\|_2^2}\right\}.
\]
In the above decomposition,
$\max(x,h)\max(y,h)\le C\|u\|_2^2$ unless both parameters are $O(h)$;
in that remaining case the trivial bound one applies.  This proves
\eqref{eq:lsv-block-small-ball} in the complex case as well.

Under the directional alternative, condition on all orthogonal
coordinates.  Up to a common phase, the remaining random part is a real
linear form with independent coordinates whose conditional densities are
bounded by $\rho$.  The real proof applies conditionally, uniformly in
the resulting center, and averaging proves the claim.
\end{proof}

Set $X_z=X-zI_N$.  For $i\in[N]$, let $P_i$ set the $i$th coordinate to
zero and define
\begin{equation}\label{eq:lsv-deleted-row-matrix}
 M^{(i)}:=P_iX_z^*.
\end{equation}
Thus $\ker M^{(i)}$ is the space of normal vectors to the span of all
columns of $X_z$ except the $i$th.

In this proof, we still use the operator norm to control the approximation error, and we eventually use the simple bound $\|X\|_{\text{op}}\le\|X\|_{\text{HS}}$ to close. Thus in the following Proposition, we assume a generic polynomial upper bound  $\|X_z\|\le N^A$ on the operator norm of $X_z$ without assuming the optimal scale $\|X_z\|=O(1)$.
\begin{proposition}[Block spread of deleted-column normals]
\label{prop:lsv-normal-spread}
Assume the hypotheses of Theorem~\ref{thm:high-band-lsv}, let
$W\ge N^{1/2+\chi}$, and fix $K_z,A,K_0<\infty$ and
$\kappa\in(0,\chi/4)$.  Let $1\le K_N\le K_0N^A$.
Denote the constant by
\begin{equation}\label{eq:lsv-delta}
 \delta:=\exp\left(-N^\kappa\frac NW\right).
\end{equation}
Uniformly for $|z|\le K_z$ and all sufficiently large $N$,
\begin{align}
 \Pp\bigl\{&\|X_z\|_{\op}\le K_N,\ \exists i\in[N],\ \exists k\in[J],
 \ \exists v\in\ker M^{(i)}:\notag\\[-1mm]
 &\hspace{38mm}\|v\|_2=1,\ \|v[k]\|_2<\delta\bigr\}
 \le \exp(-N^{1+\kappa/4}).
 \label{eq:lsv-normal-spread}
\end{align}
\end{proposition}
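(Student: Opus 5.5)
The plan is to run a block-by-block propagation argument along the cyclic partition of Lemma~\ref{lem:lsv-block-partition}, combined with the complex block nets of Lemma~\ref{lem:lsv-block-net}.

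\textbf{Propagation through the band.} Suppose $v\in\ker M^{(i)}$ is a unit vector with $\|v[k]\|_2<\delta$. Then $X_z^*v$ vanishes outside coordinate $i$. Consider a row of $X_z^*$ indexed by a column $a\in B_{k\pm1}$ with $a\neq i$. By the last statement of Lemma~\ref{lem:lsv-block-partition}, this row meets $B_k$ in entries of variance at least $c_{\mathrm{lsv}}/W$. The equation $(X_z^*v)_a=0$ therefore expresses $\langle \overline{\xi}_{\cdot a}|_{B_k\cup\cdots},v\rangle$ through neighbouring blocks.

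The aim is a chain of geometric growth. Since $v$ has norm one, some block carries mass at least $(C_*W/N)^{1/2}$. Walking cyclically from $B_k$ to that block takes at most $J\le C_*N/W$ steps. At each step, the equations indexed by columns in the current small block force the next block to be small, with a loss factor of at most $N^{C}\,K_N$. This uses deterministic linear algebra only where it is cheap; the randomness is used to rule out the bad configurations.

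Concretely, I would define the event that there exist consecutive blocks $B_{j},B_{j+1}$ with $\|v[j]\|\le \varepsilon_j$ and $\|v[j+1]\|\ge \varepsilon_{j+1}$, where $\varepsilon_{j+1}=\varepsilon_j N^{C}$. Starting from $\varepsilon_0=\delta$, the thresholds after $J$ steps stay at most $\delta N^{CJ}=\exp(-N^{\kappa}N/W+CN\log N/W)\ll (W/N)^{1/2}$. Since the final block carries mass at least $(C_*W/N)^{1/2}$, some transition in the chain must occur, so it suffices to bound the probability of one transition and union bound over $j$, $i$, and $k$.

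\textbf{One transition.} Fix $j$. I would condition on all entries outside the $m_{j+1}\times m_j$ rectangle of $X^*$ with rows in $B_{j+1}$ and columns in $B_j$ (excluding row $i$). The equations for rows $a\in B_{j+1}\setminus\{i\}$ read
\[
\langle \text{row}_a|_{B_j},v[j]\rangle=w_a,
\]
where $w_a$ depends on $v$ outside $B_j$. The aim is then a tensorization bound over the $\ge m_{j+1}-1$ independent rows. I would net $v[j]$, normalized by its norm, using Lemma~\ref{lem:lsv-block-net} at mesh $h$ comparable to $s\,N^{-A-2}$. Each row supplies the small-ball factor \eqref{eq:lsv-block-small-ball}, and these multiply to $(C\rho^2s^2/(\max(x,h)\max(y,h)))^{m-1}$. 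This is matched against the net size \eqref{eq:lsv-block-net-cardinality}, so the entropy-versus-small-ball product is at most $(C\rho^2 s^2/h^2)^{m}$ times the $(C/h)^3$ family count.

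For this to be small one must also net the rest of $v$. To avoid that, I plan to reverse roles: use the large block $v[j+1]$ as the random-side data. The columns of $B_j$ see entries of rows in $B_{j+1}$, and $\|v[j+1]\|\ge\varepsilon_{j+1}$ while $\|v[j]\|\le\varepsilon_j$. The equation for each column $b\in B_j$ then says that $\langle \xi_{B_{j+1},b},v[j+1]\rangle$ is within $K_N\varepsilon_j+(\text{terms from }B_{j-1},B_{j+2})$ of a value. I would handle those terms by conditioning and netting them too. Each block is netted at polynomial scale, giving entropy $\exp(CW\log N)$ per block, while the gain is $(CN^{-c})^{W}$ from $\ge c_*W$ independent columns.

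\textbf{Main obstacle.} The hard step is making the entropy of netting the entire vector $v$ (dimension $N$) compatible with only $W$-dimensional small-ball gains. The obvious route, union bounding over a full net, costs $e^{CN\log N}$ against a gain of $e^{-cW\log N}$.

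I would try to localize: the transition event only involves $v$ restricted to $B_{j-1}\cup\cdots\cup B_{j+2}$ plus the scalar values $w_a$. The other coordinates enter only through sums with independent coefficients lying outside the conditioned rectangle, and those can be fixed by conditioning. This leaves a net of dimension $O(W)$ per transition.

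Summing failure probabilities of size $\exp(-cW\log N)$ over $N\cdot J^2$ choices does not yet give $\exp(-N^{1+\kappa/4})$. For the stronger bound I would require the transition simultaneously across many disjoint windows, exploiting independence of disjoint rectangles. The length-$N/W$ chain yields $\Omega(N^{\kappa/2}N/W)$ independent small-ball gains of size $N^{-cW}$, using the slack $\delta=\exp(-N^\kappa N/W)$ to force many steep steps rather than one. The product is $\exp(-cN^{1+\kappa/2}\log N)$, which beats the entropy. Verifying that the steep steps can be chosen disjoint, with $W\ge N^{1/2+\chi}$ ensuring the number of windows exceeds the net exponent, is where I expect the real work.
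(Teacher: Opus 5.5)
Your proposal has a genuine gap at exactly the step you flag as the main obstacle, and the proposed localization does not repair it.

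\textbf{Why the localization fails.} In $(Mv)_a=\langle \text{row}_a|_{B_j},v[j]\rangle+w_a$, the term $w_a=\sum_{b\notin B_j}\overline{(X_z)_{ba}}\,v_b$ depends on $v$. But $v\in\ker M^{(i)}$ is a function of the whole matrix, including the rectangle you want to keep random. Conditioning on the entries outside the rectangle therefore does not fix $w_a$. Small-ball bounds apply only to deterministic test vectors, so a union bound over a net covering all of $v$ cannot be avoided. Moreover, under the general profile of Theorem~\ref{thm:high-band-lsv}, entries at every distance may be nonzero, so $w_a$ genuinely involves every block.

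Netting $w\in\C^{m_{j+1}}$ as well does not help. At the mesh $s$ required by the small-ball estimate, its entropy, of order $(K_N/s)^{2m_{j+1}}$, already exceeds the inverse of the tensorized small-ball bound, so nothing is gained.

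The multi-window repair fails for the same reason. All the window events refer to the same random $v$, so independence of disjoint rectangles factorizes probabilities only after a union bound over a net for $v$, which is the original entropy problem. The count is also off: a cyclic chain of $J\le C_*N/W$ blocks cannot supply $\Omega(N^{\kappa/2}N/W)$ disjoint windows.

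\textbf{The missing idea: a global, anisotropic net.} The paper nets the entire unit vector $v$ at mesh $h=\delta/(C_1(K_N+1)\sqrt J)$, applying Lemma~\ref{lem:lsv-block-net} on every block. There are at most $(C/h)^{3J}$ parameter families, each of size $\prod_j(C\zeta_j/h^2)^{m_j}$ with $\zeta_j=\max(x_j,h)\max(y_j,h)$.
\begin{itemize}
\item The hypothesis $\|v[k]\|_2<\delta$ forces $\zeta_k\le C\delta^2$, while a block $\ell$ with $\|v[\ell]\|_2\ge J^{-1/2}$ has $\zeta_\ell\ge chJ^{-1/2}$.
\item From each $B_j$ take $r=d-1$ rows of $M$, avoiding the deleted row $i$.
\item In a selected row of $B_j$, keep only the random coordinates in $B_{\pi(j)}$. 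Here $\pi$ moves one step along a nearest-neighbour path from $k$ to $\ell$ and fixes all other blocks. Conditioning on the remaining coordinates is legitimate because the net point is deterministic, and it only shifts the centre of the ball.
\end{itemize}
The tensorized bound is $\prod_j(CW\delta^2/\zeta_{\pi(j)})^r$. Combined with $\prod_j\zeta_{\pi(j)}=(\prod_j\zeta_j)\,\zeta_\ell/\zeta_k$, it cancels the entropy block by block. What remains is the gain $(\zeta_k/\zeta_\ell)^r\le(C(K_N+1)J\delta)^r\le\exp(-cN^{1+\kappa})$. This beats a residual cost $O(J\Lambda+N\log N)$, where $\Lambda=N^\kappa N/W$. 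Since $J\Lambda\le CN^{1+\kappa-2\chi}$, this is exactly where $W\ge N^{1/2+\chi}$ enters.

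Your walk from $B_k$ to the massive block is the right combinatorial object. Its role, however, is to reassign which block each row's small-ball estimate is computed on, inside a single global union bound. It does not propagate smallness through separately estimated transitions.
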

\begin{proof}
Fix $i,k$ and abbreviate $M=M^{(i)}$.  Let
\begin{equation}\label{eq:lsv-net-mesh}
 h:=\frac{\delta}{C_1(K_N+1)\sqrt J},
\end{equation}
where $C_1$ is sufficiently large.  Apply
Lemma~\ref{lem:lsv-block-net} on every $B_j$.  A global parameter family
has at most $(C/h)^{3J}$ choices; writing
\[
 \zeta_j:=\max(x_j,h)\max(y_j,h),
\]
the corresponding net has size at most
\begin{equation}\label{eq:lsv-global-net-cardinality}
 \prod_{j=1}^J\left(\frac{C\zeta_j}{h^2}\right)^{m_j}.
\end{equation}
Every unit vector $v$ has an approximant $\widehat v$ with
\begin{equation}\label{eq:lsv-global-net-error}
 \|v-\widehat v\|_2\le C\sqrt Jh
 \le\frac{\delta}{4(K_N+1)}.
\end{equation}
Hence $Mv=0$ and $\|M\|_{\op}\le K_N$ imply
$\|M\widehat v\|_2\le\delta/2$.

If $\|v[k]\|_2<\delta$, then $\zeta_k\le C\delta^2$.  Since
$\|v\|_2=1$, some block $B_\ell$ satisfies
$\|v[\ell]\|_2\ge J^{-1/2}$.  In the decomposition
\eqref{eq:lsv-orthogonal-decomposition} on that block, the larger real or
imaginary component has norm at least $(2J)^{-1/2}$, and therefore
\begin{equation}\label{eq:lsv-zeta-ratio}
 \zeta_\ell\ge chJ^{-1/2},
 \qquad \frac{\zeta_k}{\zeta_\ell}\le C(K_N+1)J\delta.
\end{equation}
For large $N$, $k\ne\ell$.  Choose a simple nearest-neighbor cyclic path
$k=k_0,k_1,\ldots,k_s=\ell$ and set
\begin{equation}\label{eq:lsv-path-map}
 \pi(k_a)=k_{a+1}\ (0\le a<s),\qquad
 \pi(j)=j\quad\text{otherwise}.
\end{equation}
The map $\pi$ is intentionally not a permutation: $k$ has no preimage and
$\ell$ has two.  Consequently,
\begin{equation}\label{eq:lsv-path-telescope}
 \prod_{j=1}^J\zeta_{\pi(j)}
 =\left(\prod_{j=1}^J\zeta_j\right)\frac{\zeta_\ell}{\zeta_k}.
\end{equation}

Let $r=d-1$.  From each $B_j$ choose $r$ rows of $M$, avoiding its zero
row $i$.  In a selected row $q\in B_j$, retain only the random coordinates
indexed by $b\in B_{\pi(j)}$ and condition on the others.  The retained
coefficients satisfy $\sigma_{bq}^2\ge c_{\mathrm{lsv}}/W$ by
Lemma~\ref{lem:lsv-block-partition}.  Thus, after scaling, their densities
are bounded by $C_L\sqrt W$ in the real case and their planar densities
are bounded by $C_LW$ in the complex case.  Under the directional
alternative, the scaled conditional densities are bounded by
$C_L\sqrt W$.  Apply Lemma~\ref{lem:lsv-block-net} with
$\rho=C_L\sqrt W$ in the real and directional cases, and with
$\rho=C\sqrt{LW}$ in the planar-density case, always taking $s=\delta$.
It gives
for every selected row $q\in B_j$
\begin{equation}\label{eq:lsv-row-small-ball}
 \Pp\{|(M\widehat v)_q|\le\delta\mid\text{conditioned variables}\}
 \le \frac{C_LW\delta^2}{\zeta_{\pi(j)}}.
\end{equation}
Here $\rho\delta\ge h$ follows from \eqref{eq:lsv-net-mesh}; the
conditional deterministic terms only change the center of the ball.
The selected rows correspond to distinct original columns, so their
retained entry sets are disjoint and independent.  Tensorization yields
\begin{equation}\label{eq:lsv-tensorized-small-ball}
 \Pp\{\|M\widehat v\|_2\le\delta/2\}
 \le\prod_{j=1}^J
 \left(\frac{CW\delta^2}{\zeta_{\pi(j)}}\right)^r.
\end{equation}

Combining \eqref{eq:lsv-global-net-cardinality},
\eqref{eq:lsv-zeta-ratio}, \eqref{eq:lsv-path-telescope}, and
\eqref{eq:lsv-tensorized-small-ball}, and using
$m_j-r\in\{1,2\}$ and $\zeta_j\le C$, bounds the probability for fixed
$i,k,\ell$ by
\begin{equation}\label{eq:lsv-net-union-bound}
 \left(\frac Ch\right)^{3J}C^Nh^{-2N}
 (CW\delta^2)^{rJ}\bigl(C(K_N+1)J\delta\bigr)^r.
\end{equation}
The choice $r=d-1$ is precisely what permits us to avoid the deleted row.

Let
$\Lambda=\log(1/\delta)=N^\kappa N/W$ and
$H=\log(1/h)=\Lambda+O_A(\log N)$.  Since
$N-rJ=\sum_j(m_j-r)\in[J,2J]$, the coefficient of $\Lambda$ in the
logarithm of \eqref{eq:lsv-net-union-bound} is
\[
 3J+2(N-rJ)-r.
\]
All remaining factors cost $O_A(N\log N)$.  Denote the fixed-triple
probability bounded in \eqref{eq:lsv-net-union-bound} by $p_{i,k,\ell}$.
Then
\begin{equation}\label{eq:lsv-entropy-gain}
 \log p_{i,k,\ell}
 \le-cW\Lambda+CJ\Lambda+O_A(N\log N)
 \le-c'N^{1+\kappa}.
\end{equation}
Indeed, $W\Lambda=N^{1+\kappa}$ and
$J\Lambda\le CN^{2+\kappa}/W^2
\le CN^{1+\kappa-2\chi}$.  Summing over $i,k,\ell$ preserves an
$\exp(-c''N^{1+\kappa})$ bound, which is at most the right-hand side of
\eqref{eq:lsv-normal-spread} for large $N$.
\end{proof}

\begin{proof}[\proofname\ of Theorem~\ref{thm:high-band-lsv}]
Denote by
\[
 \tau:=t\exp\left(-N^{3\kappa}\frac NW\right),
 \qquad
 \delta:=\exp\left(-N^\kappa\frac NW\right).
\]
For the $i$th column $[X_z]^i$, let $H_i$ be the span of the other
columns.  For every $v\in\C^N$,
\begin{equation}\label{eq:lsv-distance-to-span}
 \|X_zv\|_2\ge |v_i|\operatorname{dist}([X_z]^i,H_i).
\end{equation}
Let $\mathcal G_i$ be the event that every unit vector normal to $H_i$ has
norm at least $\delta$ on every block, and write
$\mathcal G=\bigcap_i\mathcal G_i$ and
$\mathcal F_R=\{\|X\|_{\HS}\le R\sqrt N\}$.  Each $\mathcal G_i$ is measurable
with respect to the columns other than the $i$th.  On $\mathcal F_R$ and
$|z|\le K_z$,
$\|X_z\|_{\op}\le\|X\|_{\HS}+|z|
\le(R+K_z+1)\sqrt N$ for all large $N$.
Proposition~\ref{prop:lsv-normal-spread}, with
$K_N=(R+K_z+1)\sqrt N$, gives
\begin{equation}\label{eq:lsv-normal-good-event}
 \Pp\{\mathcal F_R,\ \mathcal G^c\}
 \le\exp(-N^{1+\kappa/4}).
\end{equation}

If $s_{\min}(X_z)\le\tau$, choose a unit $v$ with
$\|X_zv\|_2\le\tau$ and an $i$ with $|v_i|\ge N^{-1/2}$.
Then \eqref{eq:lsv-distance-to-span} gives
\begin{equation}\label{eq:lsv-small-distance}
 \operatorname{dist}([X_z]^i,H_i)\le\tau\sqrt N.
\end{equation}
Condition on the columns other than the $i$th and choose a unit normal
$n_i\in H_i^\perp$ by a fixed measurable rule.  If $B_{q(i)}$ contains
$i$, then on $\mathcal G_i$,
$\|n_i[q(i)]\|_2\ge\delta$.  In the real case, the random entries of
$[X_z]^i[q(i)]$ have scaled densities bounded by $C_L\sqrt W$.
Lemma~\ref{lem:lsv-projected-density}, after a phase rotation, therefore
gives for every $s>0$
\begin{equation}\label{eq:lsv-column-small-ball}
 \Pp\{ |\langle[X_z]^i,n_i\rangle|\le s
 \mid [X_z]^j,\ j\ne i\}
 \le \frac{C_L\sqrt W}{\delta}s.
\end{equation}
For complex entries, the coefficient vector of the random linear form on
$B_{q(i)}$ has squared norm at least $c\delta^2/W$.
Lemma~\ref{lem:lsv-planar-linear-form} instead gives the stronger bound
\[
 \Pp\{ |\langle[X_z]^i,n_i\rangle|\le s
 \mid [X_z]^j,\ j\ne i\}
 \le \min\left\{1,\frac{C_LWs^2}{\delta^2}\right\}
 \le \frac{C_L\sqrt W}{\delta}s,
\]
so \eqref{eq:lsv-column-small-ball} holds in both cases.
Under the directional alternative, condition on the orthogonal
coordinates in the $i$th column and apply the real projected-density
bound; this again gives \eqref{eq:lsv-column-small-ball}.

Distance at most $s$ implies
$|\langle[X_z]^i,n_i\rangle|\le s$.  Thus a union bound over $i$, applied
with the columnwise measurable events $\mathcal G_i$, yields
\begin{align*}
 &\Pp\{s_{\min}(X_z)\le\tau,\ \mathcal F_R,\ \mathcal G\}\\
 &\qquad\le C N^{3/2}\sqrt W\,\frac{\tau}{\delta}
 \le Ct.
\end{align*}
The last inequality holds uniformly for $|z|\le K_z$ and large $N$, because
$(N^{3\kappa}-N^\kappa)N/W$ dominates
$\log(N^{3/2}\sqrt W)$.  Adding
\eqref{eq:lsv-normal-good-event} proves the theorem.  Thus the net argument
is truncated only by a Hilbert--Schmidt event; in the iid applications the
second moment removes that cutoff through
\eqref{eq:lsv-remove-HS-cutoff}, and no operator-norm moment estimate is
used.
\end{proof}

\printbibliography

\begin{thebibliography}{99}

\bibitem{GuEisenstat}
M.~Gu and S.~C. Eisenstat,
\emph{Efficient algorithms for computing a strong rank-revealing QR
factorization},
SIAM J. Sci. Comput. \textbf{17} (1996), 848--869.

\bibitem{Cook}
N.~A. Cook,
\emph{Lower bounds for the smallest singular value of structured random
matrices},
Ann. Probab. \textbf{46} (2018), 3442--3500;
\href{https://arxiv.org/abs/1608.07347}{arXiv:1608.07347}.

\bibitem{Nguyen}
H.~H. Nguyen,
\emph{Random matrices: overcrowding estimates for the spectrum},
J. Funct. Anal. \textbf{275} (2018), 2197--2224;
\href{https://arxiv.org/abs/1709.06682}{arXiv:1709.06682}.

\bibitem{RV}
M.~Rudelson and R.~Vershynin,
\emph{The Littlewood--Offord problem and invertibility of random matrices},
Adv. Math. \textbf{218} (2008), 600--633;
\href{https://arxiv.org/abs/math/0703503}{arXiv:math/0703503}.

\bibitem{RVrect}
M.~Rudelson and R.~Vershynin,
\emph{The smallest singular value of a random rectangular matrix},
Comm. Pure Appl. Math. \textbf{62} (2009), 1707--1739;
\href{https://arxiv.org/abs/0802.3956}{arXiv:0802.3956}.

\bibitem{TaoVuReplacement}
T.~Tao and V.~Vu,
\emph{Random matrices: universality of ESDs and the circular law},
Ann. Probab. \textbf{38} (2010), 2023--2065.

\bibitem{vershynin2019high}Vershynin, R. High-dimensional probability. {\em Cambridge Series In Statistical And Probabilistic Mathematics}. \textbf{47} (2019)



\end{thebibliography}

\end{document}